\documentclass[12pt]{amsart}
\usepackage{amssymb}
\usepackage{amsmath}
\usepackage{eurosym}
\usepackage{paralist}
\usepackage{graphics}
\usepackage{epsfig}
\usepackage{tabu}
\usepackage{graphicx}
\usepackage{epstopdf}
\usepackage[colorlinks=true]{hyperref}
\usepackage{hyperref}
\usepackage{amsmath}
\usepackage{amsfonts}
\usepackage{fancyhdr}
\usepackage[margin=0.85in]{geometry}

\newcommand{\R}{\mathbb R}
\newcommand{\dha}{\dot H^1_a}
\newcommand{\bd}{\textbf{d}}
\newcommand{\eps}{\varepsilon}

\newcommand{\la}{\mathcal L_a}
\newcommand{\tq}{\tilde q}
\newcommand{\lsm}{\lesssim}
\newcommand{\nha}{H^1_a}
\newcommand{\Rd}{\mathbb R^d}
\newcommand{\ha}{\dot H^1_a}

\newcommand{\vertiii}[1]{{\left\vert\kern-0.25ex\left\vert\kern-0.25ex\left\vert #1 
    \right\vert\kern-0.25ex\right\vert\kern-0.25ex\right\vert}}
\newtheorem{theorem}{Theorem}[section]

\newtheorem{claim}[theorem]{Claim}

\newtheorem{corollary}[theorem]{Corollary}

\newtheorem{lemma}[theorem]{Lemma}

\newtheorem{proposition}[theorem]{Proposition}
\newtheorem{remark}[theorem]{Remark}

\DeclareMathOperator{\spa}{span}

\DeclareMathOperator{\area}{area}

\begin{document}
\title{The uniqueness of the ground state and the dynamics of nonlinear Sch\"odinger equation with inverse square potential}
\author[K. Yang]{Kai Yang}
\address[Kai Yang]{School of Mathematics, Southeast University, Nanjing, P.R. China 211189}
\email{yangkai99sk@gmail.com,kaiyang@seu.edu.cn}
\thanks{KY is supported in part by the Jiangsu Shuang Chuang Doctoral Plan and the Jiangsu Provincial Scientific Research Center of Applied Mathematics under Grant No. BK20233002}

\author[C. Zeng]{Chongchun Zeng}
\address[Chongchun Zeng]{School of Mathematics, Georgia Institute of Technology, Atlanta, GA 30332}
\email{zengch@math.gatech.edu}
\thanks{CZ is supported in part by the National  Science Foundation grant  DMS-2350115.} 

\author[X. Zhang]{Xiaoyi Zhang}
\address[Xiaoyi Zhang]{Department of Mathematics, University of Iowa, Iowa City, IA 52242}
\email{xiaoyi-zhang@uiowa.edu}
%\thanks{XZ was supported by Simons collaboration grant. } 
\maketitle

\begin{abstract}
In this paper, we first provide an alternative proof of the uniqueness of the ground state solution for NLS with inverse square potential and power nonlinearity $|u|^pu$ for all $0<p<\frac 4{d-2}$ in dimensions $d\ge 3$. While the uniqueness result was previously obtained by Mukherjee-Nam-Nguyen using a functional analytic approach, our method successfully adapts the classical ``shooting method'' to the case with the singular potential, accompanied by a more detailed analysis on the ground state equation. Based upon this result and a comprehensive spectral analysis, we construct the stable/unstable manifolds of the ground state standing wave solutions and classify solutions on the mass-energy level surface of the ground state in dimensions $d=3, 4, 5$. 

\end{abstract}

%\begin{center}
%{\scriptsize Keywords: dynamics, inverse square potential, NLS, energy
%critical, ground state solution.}
%\end{center}

%\address{Department of Mathematics, University of Iowa, Iowa City, IA 52242}

%\section{Preliminaries}

\section{Introduction} Let $a\in \big(-(\tfrac{d-2}2)^2,0\big)$ and $\la=-\Delta+\frac a{|x|^2}$, $0<p<\frac 4{d-2}$, and we consider the initial value problem 
\begin{align*}
\begin{cases}
(i\partial_t-\la)u+|u|^p u=0, \; (t,x)\in \R\times \R^d, \\
u(0,x)=u_0\in H^1(\R^d), 
\end{cases}\tag{NLS$_a$}
\end{align*}
for the complex-valued function $u:\R\times\R^d\to \mathbb C$. 
%Here $H^1(\R^d)$ is the non-homogeneous Sobolev space $\dot H^1\cap L^2$. 
For $a>-(\tfrac{d-2}2)^2$, the bilinear form $\langle \la f, f\rangle$ is positive definite due to the sharp Hardy's inequality, and the norm defined by
\[
\sqrt{\langle\la f,f\rangle}=\|(\la)^{\frac 12} f\|_{L_x^2}
\]
is equivalent to the usual Sobolev norm $\|f\|_{\dot H^1}=\|\nabla f\|_{L_x^2}$. We thus use $\dha(\R^d)$ to denote the Hilbert space $\dot H^1(\R^d)$ equipped with this equivalent norm and use $\nha(\R^d)$ to denote the space $H^1(\R^d)$ with the equivalent norm $\sqrt{\langle(\la+1)f, f\rangle}$. 

Throughout this paper, a solution  to (NLS$_a$) is referred to as a ``strong solution'', by which we mean a function $u(t,x)\in C_t(H^1)$ satisfying the integral equation 
\begin{align*}
u(t)=e^{-it\la} u_0+i\int_0^t e^{-i(t-s)\la}(|u|^p u)(s) ds.
\end{align*}
%with certain finite spacetime norm. 
Constructing a local strong solution using Strichartz methodology imposes further restrictions on $a$, one can see  \cite{KMVZ17,  KMVZZ17, LMM18} for the discussion of the local well-posedness and the explicit condition on $a$.

On the time of existence, the solution of (NLS$_a$) satisfies the conservation of mass and energy: 
\begin{align*}
M(u(t))&=\int_{\R^d} |u(t,x)|^2 dx, \\
E(u(t))&=\frac 12 \int_{\R^d} |\nabla u(t,x)|^2 +\frac a{2|x|^2}|u(t,x)|^2 -\frac 1{p+2} |u(t,x)|^{p+2} dx. 
\end{align*}
The cases $p=\frac 4d$ and $p=\frac 4{d-2}$ are called mass-critical and energy-critical, respectively as the rescaling of the equation 
\begin{align*}
u(t,x)\to u_{\lambda}(t,x)=\lambda^{-\frac 2p}u(\tfrac t{\lambda^2}, \tfrac x\lambda)
\end{align*}
also leave the mass and energy invariant. 

 In the energy-critical case, results concerning scattering below the ground state threshold and the dynamics of solutions on the energy surface of the ground state can be found in previous works \cite{Y20, Y21, YZ23, YZZ22}. The ground state in this setting is identified with the optimizer of the Sobolev embedding $\dha (\R^d)\subset L^{\frac{2d}{d-2}}(\R^d)$ and it also serves as a static solution of (NLS$_a$). 

 In the inter-critical regime $\frac 4d<p<\frac 4{d-2}$, which is the focus of this paper, the standing wave solution $e^{it}Q(x)$, where $Q(x)$  denotes the ground state, serves as the ``minimal'' threshold for non-scattering solutions. Here, the ground state $Q(x)$ is characterized as the maximizer of the functional 
\begin{align}\label{4}
J(f)=\frac{\|f\|_{L_x^{p+2}}^{p+2}}{\|f\|_{L_x^2}^{2-\frac p2(d-2)}\|f\|_{\dha}^{\frac{pd}2}}, \quad f \in H^1 (\R^d).
\end{align}
The minimality of the threshold can be further illustrated by the following result obtained in \cite{KMVZ17,  LMM18}:
on the mass level set of the ground state, i.e., $M(u)=M(Q)$, if the energy and kinetic energy are below those of the ground state, or more specifically $E(u)<E(Q)$, $\|u_0\|_{\dot H^1_a}<\|Q\|_{\dot H^1_a}$, then the solution scatters to $0$ when $t\to \pm \infty$.

    The purpose of this paper is twofold. First, we revisit the problem of the uniqueness of the ground state. For the classical nonlinear Schr\"odinger equation (NLS) without potential, the uniqueness was first established in the seminal work of Coffman \cite{C72} for the cubic NLS in three dimensions. This result was later extended to general dimensions and more general nonlinearities by McLeod and Serrin \cite{MS81, MS87} with certain restrictions. The widely cited work of Kwong \cite{Kwong} removed the restriction and further extended the analysis to general domains. Further discussions on this topic can be found in \cite{CJ93,KL92,KZ91,Tang03} and the references therein. While the underlying strategy in these works—the shooting method framework, which establishes a topological structure of the initial data of the reduced ODE by carefully analyzing trajectories initiated from different starting points—may appear conceptually straightforward, the detailed analysis is highly nontrivial. In particular, extending these techniques to cases involving singular potentials at the origin introduces significant complications.

A recent work by Mukherjee, Nam, and Nguyen \cite{MNN21} provides an elegant proof of the uniqueness for the NLS with inverse-square potential, using a generalized Pohozaev-type identity. Their approach follows the spirit of earlier works \cite{SW13} by Shioji and Watanabe and \cite{Y91} by Yanagida,  and includes several key innovations. Nevertheless, as noted in \cite{MNN21}, it remains a compelling question whether the classical shooting method can be adapted to overcome the challenges posed by the singular potential.

In the first part of this paper, we present an alternative proof of the uniqueness of the ground state following primarily the shooting method. Based on some blow-up analysis and the local invariant manifold theory, we first derive the exact asymptotic behavior of solutions to the ground state ODE near both the origin and the infinity. Subsequently we establish a smooth parametrization of trajectories corresponding to functions which are locally $H^1$  near the origin. The uniqueness result then follows from a detailed study of the dynamics of these trajectories, which reveals insights into the topological structure among the associated parameters.
 %Our approach involves deriving the exact asymptotic behavior of solutions to the ground state ODE and using this to smoothly parametrize trajectories which correspond to functions locally $H^1$  near the origin. 
    
  To state this result, we first note that a standard argument allows us to reduce the problem to finding positive, radial solutions of the associated Euler–Lagrange equation (see Section \ref{S:groundS} for further discussion).
  \begin{align}\label{2}
\la q+q-|q|^p q=0. 
\end{align}
  So our first result on the uniqueness of the ground state reads: 
  \begin{theorem}\label{thm: ground state}
Let $d\ge 3$, $a\in \big(-(\tfrac{d-2}2)^2,0\big)$, $0<p<\frac 4{d-2}$, $\beta=\sqrt{(d-2)^2+4a}$. Then there exists a unique radial positive solution $Q\in H^1(\R^d)$ to \eqref{2}. Moreover, $Q(x)$ satisfies 

1) $Q(x)\in C^{\infty}(\R^d \setminus \{0\})$ and the radial derivative $Q_r(r)<0$. 

2) There exists $b_0>0$ such that 
$$
\lim_{r\to 0^+} r^{\frac{d-2-\beta}2}Q(r)=b_0, \ \lim_{r\to 0^+} r^{\frac{d-\beta}2}Q_r(r)=-\tfrac{d-2-\beta}2 b_0. 
$$

3) There exists $c_0>0$ such that 
$$
\lim_{r\to \infty} r^{\frac{d-1}2}e^{r} Q(r)=c_0, \ \lim_{r\to \infty} r^{\frac{d-1}2}e^{r}Q_r(r)=-c_0. 
$$

4) Let $C_{GN}$ be the sharp constant in the Gagliardo-Nirenberg inequality 
\begin{align}\label{3}
\|f\|_{L_x^{p+2}}^{p+2}\le C_{GN} \|f\|_{L_x^2}^{\frac{4-p(d-2)}2}\|f\|_{\dha}^{\frac{pd}2}. 
\end{align}
The ``=" holds if and only if there exist $c\in \mathbb C$ and $\lambda>0$ such that 
\begin{align}
f(x)=cQ(\lambda x). 
\end{align}

%5) More properties of $Q$ and the linearized operator can be found in {\color{red} Section 2 and 3}. 
\end{theorem}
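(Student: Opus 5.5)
The plan is to reduce everything to the radial ODE
\[
q''+\tfrac{d-1}{r}q'-\tfrac{a}{r^2}q-q+q^{p+1}=0,\qquad r>0,
\]
and to carry out a shooting argument parametrized by the singular behaviour at the origin.

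\textbf{Existence.} I first obtain a maximizer of $J$ in \eqref{4}. This follows by concentration compactness, or by Schwarz symmetrization combined with compactness of the radial embedding into $L^{p+2}$. For $a<0$ the radial rearrangement increases the term $\int a|x|^{-2}|f|^2$ in absolute value, so $J(f^*)\ge J(f)$ and we may restrict to positive radial functions. After scaling $f\mapsto cf(\lambda\cdot)$, the Euler--Lagrange equation of a maximizer becomes \eqref{2}. Elliptic regularity away from $0$ gives $Q\in C^\infty(\R^d\setminus\{0\})$.

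\textbf{Asymptotics near the origin.} Substitute $r=e^{t}$ and $q=r^{-\frac{d-2-\beta}{2}}w$. This turns the equation into
\[
w''+\beta w'-w\,e^{2t}\bigl(1-r^{-\frac{p(d-2-\beta)}2}|w|^p\bigr)=0.
\]
The extra terms are $O(e^{\delta t})$ as $t\to-\infty$, with $\delta>0$ because $p<\frac4{d-2}$. The linear roots are $0$ and $-\beta$. The root $-\beta$ corresponds to $r^{-\frac{d-2+\beta}2}$, which is not locally $H^1$. A stable/unstable manifold argument for this asymptotically autonomous system (or a Volterra fixed point) then shows two things. First, $H^1$ solutions satisfy $w\to b_0$ and $w'\to0$, which is statement 2). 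Second, for each $b>0$ there is a unique local solution $q_b$ with $r^{\frac{d-2-\beta}2}q_b\to b$, and it depends smoothly on $b$.

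\textbf{Asymptotics at infinity.} Decay of $H^1$ radial functions, together with the stable manifold of $(0,0)$ for the system in $(q,q')$ at large $r$, gives $q\sim c_0r^{-\frac{d-1}2}e^{-r}$ with the derivative asymptotics, which is statement 3). Monotonicity $Q_r<0$ follows from the identity
\[
(r^{d-1}q')'=r^{d-1}\bigl(\tfrac a{r^2}+1-q^p\bigr)q .
\]
Near $0$ the right side is negative, so $q'<0$ initially. If $q'$ vanished at an interior point, that point would be a local minimum with $q>0$, and energy/Pohozaev-type considerations rule this out. The cleaner route is that the ground state comes from rearrangement, so it is nonincreasing, and a strict inequality argument then gives $Q_r<0$.

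\textbf{Uniqueness.} This is the main obstacle. Following Kwong/McLeod--Serrin, I partition $b\in(0,\infty)$ into three sets:
\begin{itemize}
\item $N$: $q_b$ has a first zero;
\item $G$: $q_b>0$ and $q_b\to0$;
\item $P$: $q_b>0$ and $q_b\not\to0$.
\end{itemize}
$N$ and $P$ are open, and each ground state gives $b_0\in G$. Next I study the variation $v=\partial_bq_b$, which solves the linearized equation
\[
\la v+v-(p+1)q_b^pv=0,\qquad v\sim r^{-\frac{d-2-\beta}2}\ \text{at }0.
\]
The key step is to show that for $b\in G$ the function $v$ has exactly one zero and $v\to-\infty$ (unbounded) as $r\to\infty$. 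The one-zero claim would come from Sturm comparison with $rq_b'$ and with $q_b+\tfrac p2 rq_b'$-type functions, which satisfy inhomogeneous versions of the linearized equation through scaling. The term $a/r^2$ is scale invariant, so these identities survive, and near $0$ the asymptotics above control the Wronskians. Unboundedness of $v$ makes each $b\in G$ a transition point from $P$ to $N$ with an isolated crossing. Combining this with the facts that small $b$ lies in $P$ and large $b$ lies in $N$ (via a blow-up rescaling to the $a$-Hardy Lane--Emden equation, which has sign-changing solutions in the subcritical case), connectedness forces $G$ to be a single point. The delicate parts are the Wronskian bookkeeping at the singular origin and excluding a second zero of $v$. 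I would follow Kwong's Sturm argument with the function $rq'$ as the comparison.

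\textbf{Statement 4).} Uniqueness of positive radial solutions of \eqref{2} identifies all maximizers of $J$, modulo symmetries, with $Q$. If equality holds in \eqref{3}, then $|f|$ is a maximizer. Its rearrangement is then also a maximizer, and equality in the rearrangement inequality for the Hardy term with $a<0$ forces $|f|$ to be radial about $0$ (the potential breaks translation invariance). Finally, equality in $|\nabla|f||\le|\nabla f|$ gives $f=c|f|$, hence $f=cQ(\lambda x)$.
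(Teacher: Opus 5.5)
Your existence argument, both asymptotic analyses and the overall shooting architecture match the paper's. That architecture consists of the parametrization by $b=\lim_{r\to0^+}r^{\frac{d-2-\beta}2}q$, the three sets, the variation $\tilde q=\partial_b q_b$, ``one zero plus unboundedness'' at ground-state parameters, and a transition argument. But the decisive step is left without a mechanism: you need that $\tilde q$ cannot have a second zero on $(0,r_0)$, and comparison with combinations of $q$ and $rq_r$ alone does not give this. With $q_1=\tfrac2pq+rq_r$ one has $L_1q_1=-2q$ and $L_1q=-pq^{p+1}$. Suppose $\tilde q$ vanished at $\tilde r<r^*$. The choice $s=2/(pq(\tilde r)^p)$ makes $L_1(q_1-sq)=q(spq^p-2)$ change sign exactly at $\tilde r$. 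The Wronskian identity on $(0,\tilde r)$ and on $(0,r^*)$ (boundary terms at $0$ vanish by the asymptotics) then gives only
\[
\tfrac{q_1}{q}(\tilde r)<s<\tfrac{q_1}{q}(r^*).
\]
To reach a contradiction you need that $q_1/q$, equivalently $rq_r/q$, is strictly decreasing; this is the missing idea. The paper obtains it from a Pohozaev-type function: $(q_1/q)'=-H_2/(r^{d-1}q^2)$, where $H_2=H_1+\tfrac p{p+2}r^dq^{p+2}$ and $H_1=2r^dH+(d-2)r^{d-1}qq_r$. The $a$-terms cancel by scale invariance, leaving $H_1'=r^{d-1}\bigl(\tfrac{4-p(d-2)}{p+2}q^{p+2}-2q^2\bigr)$. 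Because $q$ decreases, $H_1'$ changes sign once, from positive to negative. Combined with $H_1(0^+)=0$ (this is where $\beta>0$ and $p<\tfrac4{d-2}$ enter) and $H_1(r_0^-)\ge0$, this forces $H_1>0$, hence $H_2>0$. Until you carry out this or an equivalent argument, ``follow Kwong's Sturm argument'' leaves uniqueness unproved.

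Several supporting steps also need more than you state.
\begin{itemize}
\item The one-zero argument uses that $q_b$ is decreasing for every $b\in G$, not just for the maximizer, so the rearrangement route is insufficient. What works is the energy $H=\tfrac12q_r^2-\tfrac a{2r^2}q^2-\tfrac12q^2+\tfrac1{p+2}|q|^{p+2}$, whose derivative $-\tfrac{d-1}rq_r^2+\tfrac a{r^3}q^2$ is negative since $a<0$. This gives $q(r)>q(r_2)$ after any first critical point $r_2$.
\item That same fact is what makes your globally defined $P$ open; openness is not automatic. You also need to exclude monotone convergence to a constant $c>0$, and for $c=1$ this requires the rotation argument about $(1,0)$.
\item Unboundedness of $\tilde q$ comes from the same Wronskian on $(0,\infty)$. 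For $f=Q_1-sQ$, the quantity $r^{d-1}(\tilde q_rf-\tilde qf_r)$ is positive and nondecreasing, and the decay $f\sim -c\,r^{-\frac{d-3}2}e^{-r}$ then forces $\tilde q\lesssim -r^{-\frac{d+1}2}e^{r}$.
\item Turning this linear growth into ``$P$ to the left, $N$ to the right'' requires controlling the nonlinear flow for large $r$. The paper does this with the center-stable manifold at infinity.
\item The behavior for small and large $b$ is unnecessary. Openness of $N$ and $P$ plus this fixed orientation at each point of $G$ already excludes two points $b_1<b_2$ in $G$: consider the supremum of $b$ with $(b_1,b)\subset N$.
\item Your treatment of part 4) is fine and more explicit than the paper's. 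It uses the strict rearrangement inequality for the weight $|x|^{-2}$ and equality in $|\nabla|f||\le|\nabla f|$.
\end{itemize}
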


    The second main result of this paper is the construction of the stable and unstable manifolds of the ground state, as well as a characterization of the dynamics of solutions on the energy-mass level surface containing the ground state. In the energy-critical case, a similar analysis was carried out by the authors in \cite{YZZ22}. When $a>0$, the inverse square potential is repulsive, Miao, Murphy and Zheng obtain the threshold scattering for cubic NLS$_a$ in \cite{MMZ23}. One is referred to  \cite{Bourgain, CFR22, DHR08, DM08, DM08a, DR10, KM06, LZ09, LZ11, SZ23, MMMZ25} for similar studies in the classical case without potential. 

 Due to regularity constraints, our approach requires the nonlinearity exponent in the inter-critical regime to satisfy $p\ge 1$, which imposes a restriction on the spatial dimension: specifically, $d=3,4,5$. The additional requirement  $p\ge 1$ is mainly used in the construction of the stable manifold in Subsection~\ref{SS:stableM}.
    
   \begin{theorem} Let $d=3,4,5$ and $p\in (\frac 4d, \frac 4{d-2})\cap[1,\infty)$. Let $0>a>\tfrac{(d-2)^2}4(-1+\tfrac{p^2}{(p+1)^2})$. 
   
   $\bullet$ There exist two solutions $e^{it}Q_{\pm}(t, x)$ of NLS$_a$ satisfying 
   \begin{gather*}
   \lim_{t \to +\infty} \|Q_{\pm}-Q\|_{H^1}=0, \    M(Q_{\pm})=M(Q), \ E(Q_{\pm})=E(Q),\\
 \|Q_+(t)\|_{\dot H^1_a}>\|Q\|_{\dot H^1_a}, \ \ \|Q_-(t)\|_{\dot H^1_a}<\|Q\|_{\dot H^1_a}.
   \end{gather*}
Such solutions are unique up to time translations. Moreover, there exist $C, c_0>0$ such that    
\[
\|Q_{\pm}-Q\|_{H^1} \le C e^{- c_0t}, \ \forall t\ge 0.
\]
Finally,   $e^{it} Q_-$ is a global solution and scatters as $ t\to -\infty$. 
%   $Q_+$ blows up at finite time under certain conditions. ( {\color{red} to be checked})
   
  $\bullet$ Conversely, if a solution $u(t,x)$ of NLS$_a$ satisfies 
   \begin{align*}
   M(u)=M(Q), \ E(u)=E(Q), \ \|u_0\|_{\dot H^1_a}<\|Q\|_{\dot H^1_a}, \ \|u\|_{S^1([0,\infty))}=\infty,
   \end{align*}
   then there exists a unique pair $(\theta, T)\in \mathbb S^1\times \mathbb R$ such that 
\[
   u(t,x)=e^{i(\theta+t)}Q_-(t+T,x).
   \]
% In the opposite time direction, $u$ exists globally and scatters.  
If instead $\|u\|_{S^1((-\infty,0])}=\infty$, then $u$ must  scatter on $[0, \infty)$ and for some $\theta$ and $T$, 
   \[u(t,x)=e^{i(\theta+t)}\bar Q_-(-t+T,x).\]
      Here $S^1([0,\infty))$ is the Strichartz space defined in \eqref{snorm} in Section \ref{S:manifold}. 
   
   $\bullet$ \  If a solution $u(t,x)$ of NLS$_a$ on $ [0, \infty)$ satisfies 
   \begin{align*}
   M(u)=M(Q), \ E(u)=E(Q), \ \|u_0\|_{\dot H^1_a}>\|Q\|_{\dot H^1_a}, \ xu_0\in L^2(\R^d) \mbox{ or } u_0\in H^1_{rad}(\R^d),
   \end{align*}
   then there exists a unique pair $(\theta, T)\in \mathbb S^1\times \mathbb R$ such that 
   \begin{align*}
   u(t,x)=e^{i(\theta+t)}Q_+(t+T,x).
   \end{align*}
If instead $u$ is a solution on $(-\infty,0]$ obeying the same norm conditions, then $u(t,x)=e^{i(\theta+t)}\bar Q_+(-t+T,x)$. 
     \end{theorem}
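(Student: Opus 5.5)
The plan follows the Duyckaerts--Merle/Duyckaerts--Roudenko scheme, adapted to $\la$, and takes Theorem~\ref{thm: ground state} (uniqueness, decay and nondegeneracy-type information on $Q$) as given. Write $u=e^{it}(Q+h)$. The perturbation then satisfies
\[
\partial_t h+\mathcal L h=R(h),
\]
where $\mathcal L=\begin{pmatrix}0&-L_-\\ L_+&0\end{pmatrix}$ acts on $(\Re h,\Im h)$, with
\[
L_+=\la+1-(p+1)Q^p,\qquad L_-=\la+1-Q^p,
\]
and $R$ is at least quadratic (superlinear of order $\min(p+1,2)$).

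\textbf{Step 1: spectral analysis.} The first step is a spectral analysis of $\mathcal L$. One must show:
\begin{itemize}
\item $\ker L_-=\spa\{Q\}$;
\item $\ker L_+$ restricted to radial functions is trivial. This uses the shooting/ODE analysis behind Theorem~\ref{thm: ground state}: a radial kernel element would be $\partial_\lambda$ of a family of solutions, contradicting uniqueness via the asymptotics at $0$ and $\infty$.
\item $L_+$ has exactly one negative eigenvalue.
\end{itemize}
Since translations are not symmetries (the potential breaks them), the generalized kernel of $\mathcal L$ is spanned by $iQ$ and $\partial_\lambda$-type scaling elements. Together with mass-supercriticality, this yields a pair of real eigenvalues $\pm e_0$ with eigenfunctions $\mathcal Y_\pm$, $\overline{\mathcal Y_+}=\mathcal Y_-$. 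Finally one needs coercivity of the linearized energy $\Phi(h)=\tfrac12\langle L_+h_1,h_1\rangle+\tfrac12\langle L_-h_2,h_2\rangle$ on the symplectic complement of $\{\mathcal Y_\pm, iQ, Q\text{-direction}\}$. The restriction on $a$ is expected to enter here and in the decay of $Q^p$ near $0$, through $Q\sim r^{-(d-2-\beta)/2}$, since the lower bound on $a$ is exactly what makes $Q^{p-1}\cdot$ (eigenfunction) locally integrable enough.

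\textbf{Step 2: construction of $Q_\pm$.} Next comes the construction of $Q_\pm$, i.e.\ the stable manifold. The plan is a contraction argument in a Strichartz space weighted by $e^{(k+\frac12)e_0 t}$. First build approximate solutions
\[
Q^{A}_k=Q+\sum_{j\le k}e^{-je_0t}Z_j,\qquad Z_1=A\mathcal Y_+,
\]
solving the equation up to $O(e^{-(k+1)e_0t})$. Then solve for the remainder on $[t_k,\infty)$ via the Duhamel formula for the linearized flow. Here $p\ge1$ is needed so that $R$ is $C^1$ (Lipschitz differences in $W^{1,\cdot}$ Strichartz norms). Taking $A=\pm1$ gives $Q_\pm$, and scaling in time shows every $A>0$ is a time translate of $Q_+$. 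The sign of $\|Q_\pm\|_{\dha}-\|Q\|_{\dha}$ is read off from the sign of $\langle Q\text{-kinetic},\mathcal Y_+\rangle$. Uniqueness (up to translation) of any solution converging exponentially to $Q$ follows by showing its projection onto $\mathcal Y_-$ vanishes and its $\mathcal Y_+$ coefficient determines it.

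\textbf{Step 3: classification.} The classification is the main obstacle. For $\|u_0\|_{\dha}<\|Q\|_{\dha}$ with $\|u\|_{S^1([0,\infty))}=\infty$, I would argue in three stages:
\begin{itemize}
\item Use concentration-compactness / profile decomposition adapted to $\la$ (available from the sub-threshold scattering theory in \cite{KMVZ17,LMM18}) to get precompactness of the trajectory modulo phase; no translation parameter is needed thanks to the potential.
\item Apply modulation theory near $Q$, writing $u=e^{i\theta(t)}(Q+h)$ and $\delta(t)=|\|u\|^2_{\dha}-\|Q\|^2_{\dha}|$, which controls $\|h\|_{H^1}$.
\item Use a localized virial identity with the $a/|x|^2$ correction to show $\int_t^\infty\delta\lesssim\delta(t)$, hence exponential convergence to $Q$.
\end{itemize}
By the uniqueness from Step 2, $u$ equals $Q_-$ up to symmetries. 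The $t\to-\infty$ statement follows by time reversal and conjugation.

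For $\|u_0\|_{\dha}>\|Q\|_{\dha}$ with finite variance or radial data, the plan is to run the same virial estimate. Global existence on $[0,\infty)$ forces $\int_t^\infty\delta\lesssim\delta(t)$, using convexity of the virial and the $\delta$-control of the remainder; radial Strauss-type decay or the weight handles the localization errors. This again yields exponential convergence and identification with $Q_+$.

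The hardest parts are expected to be, first, the coercivity and spectral statement in the presence of the singular potential, and second, the virial error estimates near the origin, where $Q$ and $\mathcal Y_\pm$ are singular.
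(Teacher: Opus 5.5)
Your Steps 2 and 3 are sound in outline. Step 3 is essentially the paper's argument: compactness from the sub-threshold theory, phase-only modulation via Lemma \ref{lm: dec}, virial bounds giving $\int_t^\infty \mathbf d\lesssim \mathbf d(t)$, and identification through uniqueness. Step 2 takes a different but legitimate route. The paper uses the exponential trichotomy of Proposition \ref{prop:linzeng}, obtained from Lin--Zeng's index theory with at most linear growth on $E^c$, followed by a Lyapunov--Perron contraction. You use Duyckaerts--Merle approximate solutions instead, which additionally requires that $je_0$, $j\ge 2$, lie outside the spectrum of the linearized operator and that $|Z_j|\lesssim Q$ pointwise, so the non-polynomial nonlinearity can be expanded.

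\textbf{The gap is in Step 1.} You prove that $\ker L_+$ is trivial only on radial functions. You then conclude from ``translations are not symmetries'' that the generalized kernel is spanned by $iQ$ and the scaling element. That inference is not an argument: the absence of a symmetry does not imply the absence of a kernel. After decomposing in spherical harmonics, the $\ell=1$ sector is governed by $\mathcal L_{a+d-1}+1-(p+1)Q^p$. At $a=0$ this operator has the kernel element $Q'$, so one must actually prove that this kernel disappears for $a<0$.

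Full nondegeneracy, $\ker L_1=\{0\}$ in all of $H^1$, is indispensable downstream:
\begin{itemize}
\item It gives coercivity of $L$ on the $(\la+1)$-orthogonal complement of $\{Q,iQ\}$ (Lemma \ref{lm: l2}(ii)).
\item Lemma \ref{lm: dec} and the comparability $|\alpha|\sim\|\tilde u\|_{\nha}\sim\mathbf d$ rest on that coercivity, and so does your phase-only modulation.
\item It is needed to solve $L_1v_1=cQ$ uniquely when computing the generalized kernel.
\item It is needed for the coercivity of $\Phi$ that you invoke.
\end{itemize}
A nonradial kernel direction would be a neutral mode generated by no symmetry, which you could neither modulate away nor control.

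The paper closes this with an ODE argument that exploits $a<0$.
\begin{itemize}
\item For radial $g$, the function $g(r)Y_1(\theta)$ is automatically $\perp_{\la+1}Q$. Hence \eqref{5.10} shows that $\mathcal L_{a+d-1}+1-(p+1)Q^p\ge 0$ on radial functions, so any kernel element $G$ is a ground state and may be taken positive.
\item Comparing the equation for $G$ with the differentiated equation for $Q$ gives $\frac{d}{dr}\bigl[r^{d-1}(Q''G-Q'G')\bigr]=-2a\,r^{d-4}QG>0$.
\item This bracket tends to $0$ at infinity, so it is negative throughout, which gives $(Q'/G)'<0$ on $(0,\infty)$.
\item By Lemma \ref{lm: q} and Corollary \ref{cor: g}, $Q'/G\to-\infty$ as $r\to0^+$, so a strictly decreasing $Q'/G$ is a contradiction.
\item The sectors $\ell\ge2$ are strictly positive.
\end{itemize}
For $\ell=0$, note that uniqueness of $Q$ alone does not imply nondegeneracy. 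What works is the exponential growth of $q_b(b_0,\cdot)$ from Lemma \ref{lm: qb}, or the paper's sign-change argument with $Q-\frac p2Q(r_0)^pQ_1$.

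\textbf{A smaller omission.} Backward scattering of $Q_-$ does not follow from time reversal alone. Applied to $Q_-$, time reversal only says that a non-scattering $Q_-$ would approach the $Q$-orbit in both time directions. You still need the two-sided virial bound $\int_a^b\mathbf d\lesssim\mathbf d(a)+\mathbf d(b)$ to force $\int_{\R}\mathbf d=0$, which contradicts $Q_-\neq e^{i\theta}Q$.
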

     
   \begin{remark} \label{R:1.3}
   In the above, we constructed the 1-dimensional stable/unstable manifold of $Q$ 
    \[
    \{Q_{\pm}(T)\, , \ \forall\ T \in \mathbb{R}\} \ \text{ or } \ \{\bar Q_{\pm}(T)\, , \ \forall\ T \in \mathbb{R}\}.
    \] 
   
   In fact, due to the rotational and scaling symmetry of the equation NLS$_a$, the ground state manifold forms a cylindrical two-dimensional manifold 
   %with the two directions representing  symmetry: 
     \[
    \mathcal{M} = \{ \mathcal G_{\theta, \lambda} Q=e^{i\theta}\lambda^{-\frac 2p}Q(\tfrac x\lambda)\}.
     \]  
   Each point in the manifold generates a standing wave solution $e^{i\frac t{\lambda^2}}\mathcal G_{\theta,\lambda}Q \in \mathcal{M}$ to NLS$_a$. 
   The stable/unstable manifold of $\mathcal G_{\theta,\lambda}Q$ and the corresponding nonlinear solutions on the mass-energy level surface of $\mathcal G_{\theta,\lambda}Q$ are obtained by moduloing these symmetries. More specifically, the nonlinear solutions on the stable manifold $\{e^{i\theta}\lambda^{-\frac 2p}Q_{\pm}(T, \frac x\lambda), \ \forall T\}$ of $\mathcal G_{\theta,\lambda}Q$ are $e^{i(\theta+\frac t{\lambda^2} )}\lambda^{-\frac 2p}Q_{\pm}(\frac t{\lambda^2}+T,\frac x\lambda)$. The unstable manifold is obtained via time reversibility. 
%    or $e^{i(\theta+\frac t{\lambda^2} )}\lambda^{-\frac 2p}\bar Q_{\pm}(-\frac t{\lambda^2}+T,\frac x\lambda)$. 
\end{remark}

  The remainder of the paper is organized as follows. In Section \ref{S:groundS}, we prove the existence of the ground state as a solution to an ODE. Moreover, for this ODE, we derive detailed asymptotic behavior of solutions locally in $H^1(\R^d)$ near the origin and at infinity. We then provide a smooth parametrization of these local $H^1$ solutions based on their asymptotics and classify them into three categories. Section \ref{S:uniqueness} is dedicated to analyzing the solutions of the linearized ODE and establishing the uniqueness of the ground state by examining the topological structure of the set of parameters. The subsequent sections focus on constructing the stable and unstable manifolds and investigating the dynamics of solutions on the mass-energy surface of the ground state $Q$. Additional properties of the linearized operators are provided in the Appendix.

\section{Existence of Ground States and the parametrization of local $H^1$ solutions} \label{S:groundS}

In this section, under the assumptions of Theorem \ref{thm: ground state}, we establish the existence of the ground state and analyze the asymptotics of local $H^1$ solutions. We then provide a smooth parametrization of the local radial $H^1$ solutions of \eqref{2} based on their asymptotics near the origin, and classify them into three categories. Although some results in this section such as the existence have appeared elsewhere in \cite{MNN21}, our approach is somewhat different and we include full details for the sake of completeness.

\subsection{The existence of the ground state solution}

The result about the existence of the ground state is stated as follows: 

\begin{lemma}\label{lm: existence}
(1) There exists a maximizer $R(x)$ of the functional $J$  defined in \eqref{4}.  

(2) Any maximizer $R(x)$ of $J$ is sign definite (either $R>0$ or $R<0$),  $R\in H^1_{rad}(\R^d)\cap C^{\infty}(\R^d\setminus \{0\})$ and  $R(x)$ realizes the best constant in the sharp Gagliardo-Nirenberg inequality \eqref{3}. 

(3) There exist $c_1, c_2>0$ such that 
\[
Q(x)=sgn(R)\bigl(\frac{c_2}{c_1}\bigr)^{\frac 1p}R\bigl(\frac x{\sqrt{c_1}}) >0
\]
satisfies $Q_r<0$ and solves the Euler-Lagrange equation:
\begin{align}\label{3.1}
\la Q+Q-Q^{p+1}=0. 
\end{align}
Moreover, we have the non-negativity of the operator $L_1:=\la+1-(p+1) Q^p$ on a codim 1 subspace in the following sense: 
\begin{align}\label{5.10}
\langle L_1 g, g\rangle\ge 0,\ \forall g\perp_{\la+1}  Q, \textit{ i.e. }\langle (\la+1) Q,g\rangle=0. 
\end{align}

\end{lemma}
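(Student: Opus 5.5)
For part (1) I will run the Weinstein variational argument, restricted to radial decreasing functions. The functional $J$ is invariant under $f\mapsto cf(\lambda x)$, so a maximizing sequence can be normalized to $\|f_n\|_{L^2}=\|f_n\|_{\dha}=1$. The key reduction is that $J(|f|^*)\ge J(f)$, where $|f|^*$ is the symmetric decreasing rearrangement. This holds because $a<0$: $\int \frac{a}{|x|^2}|f|^2$ decreases under rearrangement by Hardy–Littlewood, since $|x|^{-2}$ is symmetric decreasing. Pólya–Szegő controls $\|\nabla |f|^*\|_2\le\|\nabla f\|_2$, and the $L^2$ and $L^{p+2}$ norms are unchanged. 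Hence $\|f\|_{\dha}$ does not increase and $J$ does not decrease.

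I therefore take a radial, nonnegative, nonincreasing maximizing sequence, which is bounded in $H^1$. The compact embedding $H^1_{rad}\hookrightarrow L^{p+2}$ (Strauss, valid for $2<p+2<2^*$) lets me extract a weak limit $R$ with $\|R\|_{L^{p+2}}=\lim\|f_n\|_{L^{p+2}}>0$. Weak lower semicontinuity of the $L^2$ and $\dha$ norms, the latter being an equivalent Hilbert norm, then gives $J(R)\ge\limsup J(f_n)=\sup J$. So $R$ is a maximizer, and the supremum equals $C_{GN}$, since it is finite by the Gagliardo–Nirenberg and Hardy inequalities.

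For part (2), I take an arbitrary maximizer $R$ and write the Euler–Lagrange equation. Differentiating $\log J$ at $R$ gives
\[
c_1\la R + c_2' R - |R|^pR=0
\]
for constants $c_1,c_2'>0$ fixed by the norms of $R$. The harder points are sign-definiteness and radiality of an arbitrary maximizer, not only the one built above.

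\emph{Sign.} Since $J(|R|)=J(R)$, the function $|R|$ is also a maximizer and solves the same equation. Being nonnegative, $|R|$ then satisfies $(c_1\la+c_2')|R| = |R|^{p+1}\ge0$. Away from the origin I apply the strong maximum principle or Harnack inequality, using that the potential is smooth on $\R^d\setminus\{0\}$, which is connected since $d\ge3$. This gives $|R|>0$ on $\R^d\setminus\{0\}$, so $R$ does not vanish there and by continuity has one sign.

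\emph{Radiality.} Assume $R>0$. Equality $J(R^*)=J(R)$ forces equality in the Hardy–Littlewood step $\int|x|^{-2}R^2\le\int|x|^{-2}(R^*)^2$. Because $|x|^{-2}$ is strictly symmetric decreasing, this equality forces $R=R^*$ (standard equality case, e.g.\ Lieb–Loss). Hence $R$ is radial and nonincreasing.

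\emph{Regularity.} Smoothness away from $0$ follows from elliptic bootstrapping on the equation in annuli, where the potential is smooth. That $R$ attains $C_{GN}$ is immediate from the definition.

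For part (3), I rescale $Q(x)=(c_2/c_1)^{1/p}R(x/\sqrt{c_1})$ up to sign and adjust the constants so that the equation becomes \eqref{3.1}. To get $Q_r<0$ strictly, I use that $Q$ is nonincreasing and apply a maximum principle to $Q_r$ in the radial ODE. If $Q_r(r_0)=0$ at an interior point, then $Q_r$ would reach a local maximum value $0$ there. The radial ODE gives
\[
Q_{rr}(r_0)=\tfrac{a}{r_0^2}Q+Q-Q^{p+1},
\]
and I expect differentiating the ODE and applying uniqueness for the linear ODE satisfied by $Q_r$ to give a contradiction.

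For \eqref{5.10}, I use the second variation of $\log J$ at the maximizer $Q$. Maximality gives $\frac{d^2}{d\varepsilon^2}\log J(Q+\varepsilon g)|_{0}\le0$. I would restrict to $g$ with $\langle(\la+1)Q,g\rangle=0$ and, after also using scaling, to directions keeping $\|Q+\varepsilon g\|_{\dha}$ and $\|Q+\varepsilon g\|_{L^2}$ fixed to first order. Both variations are combinations of $\langle \la Q,g\rangle$ and $\langle Q,g\rangle$; the $\la$-inner product combines these, so one more constraint can be removed by adding a multiple of the dilation generator.

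Expanding, the quadratic form reduces to $\langle L_1 g,g\rangle$ plus squared first-order terms. The sign of those terms works out because the mass exponent $2-\frac p2(d-2)$ and the kinetic exponent $\frac{pd}{2}$ both enter with negative second-order contributions. I expect this bookkeeping, namely showing that the leftover rank-one corrections have the right sign on the $(\la+1)$-orthogonal complement, to be the main obstacle, alongside the strict monotonicity $Q_r<0$.
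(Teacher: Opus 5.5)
Your plan for (1), for the Euler--Lagrange equation and rescaling in (3), and for \eqref{5.10} follows the paper's proof. Your handling of an \emph{arbitrary} maximizer in (2) is more careful than the paper's, which only appeals to ``the spherical rearrangement explained above'' and ``standard'' calculus of variations. Since $a<0$, the equality $J(R^*)=J(R)$ forces equality in $\int|x|^{-2}R^2\le\int|x|^{-2}(R^*)^2$. Strict monotonicity of the weight $|x|^{-2}$ then gives $R=R^*$, with no need to analyse the equality case of P\'olya--Szeg\H{o}.

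There is, however, a genuine gap in your argument for $Q_r<0$. Uniqueness for the linear ODE satisfied by $w=Q_r$ cannot give the contradiction, because differentiating the radial equation produces an \emph{inhomogeneous} equation:
\[
-w''-\tfrac{d-1}{r}w'+\Bigl(\tfrac{a+d-1}{r^2}+1-(p+1)Q^p\Bigr)w=\tfrac{2a}{r^3}\,Q .
\]
So $w(r_0)=w'(r_0)=0$ does not force $w\equiv0$; that classical argument works only when $a=0$. What closes the step is the sign of the source term. Suppose $Q_r\le 0$ and $Q_r(r_0)=0$ for some $r_0>0$. Then $r_0$ is an interior maximum of $Q_r$, so $Q_{rr}(r_0)=0$. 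The displayed equation then gives $Q_{rrr}(r_0)=-\tfrac{2a}{r_0^3}Q(r_0)>0$. Hence $Q_r>0$ on a punctured neighbourhood of $r_0$, a contradiction. This is exactly where $a<0$ is used; the paper carries out this computation in the transversality step \eqref{69} of Lemma~\ref{lm: b}.

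For \eqref{5.10}, the bookkeeping closes directly. The detour through fixing both norms and adding a dilation direction is unnecessary. Adding a kernel direction of the Hessian of $\log J$ leaves that Hessian unchanged but changes $\langle L_1g,g\rangle$, so by itself it reduces nothing. Write $N,M,K$ for $\|\cdot\|_{L^{p+2}}^{p+2}$, $\|\cdot\|_{L^2}^2$, $\|\cdot\|_{\dha}^2$. The only rank-one term of the wrong sign in $(\log J)''$ is $-(N'g)^2/N^2$. On your constraint it vanishes, since $N'g=(p+2)\langle Q^{p+1},g\rangle=(p+2)\langle(\la+1)Q,g\rangle=0$. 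The mass and kinetic terms contribute $+\alpha(M'g)^2/M^2+\gamma(K'g)^2/K^2\ge0$, where $\alpha=1-\frac{p(d-2)}4>0$ and $\gamma=\frac{pd}4$. Using the relations $N/(p+2)=M/(2\alpha)=K/(2\gamma)$, which come from the Euler--Lagrange normalization, one obtains
\[
\langle L_1g,g\rangle\ \ge\ 2\bigl(\|Q\|_{L^2}^{-2}+\|Q\|_{\dha}^{-2}\bigr)\langle Q,g\rangle^2\ \ge 0 .
\]
This is precisely the paper's \eqref{5.7}--\eqref{5.10}.
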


\begin{proof}
We first obtain the existence of the maximizer of $J$. From the Gagliardo-Nirenberg inequality, there exists a constant $C>0$ such that $J(f)\le C$ for any $f\in H^1(\R^d)$, $f\neq 0$. Denote the sharp constant
$$
C_{GN}=\sup_{f\neq 0, f\in H^1} J(f).
$$
We can take a maximizing sequence $f_n\in H^1(\R^d)$ such that $J(f_n)\to C_{GN}$. Note for $a<0$, 
the radial rearrangement decreases the $\dha$ norm. We can apply the radial rearrangement and assume $f_n(r)$ is spherically symmetric and monotone decreasing in the radial variable $r$. Now let 
\begin{align*}
c_n=\frac 1{\|f_n\|_{\dha}}, \; \lambda_n=c_n \|f_n\|_{L_x^2}  \textit{ and } g_n(x)=c_n\lambda_n^{\frac {d-2}2}f_n(\lambda_n x),
\end{align*}
it is easy to see that $\{g_n\}$, while radial and non-increasing in $r$, remains a maximizing sequence. Moreover 
\begin{align*}
\|g_n\|_{L_x^2}=\|g_n\|_{\dha}=1, \quad \|g_n\|_{L_x^{p+2}}^{p+2} \to C_{GN} \; \text{ as } n \to \infty. 
\end{align*}
From the compact embedding $H_{rad}^1(\R^d)\hookrightarrow L^{p+2}(\R^d)$, we can extract a subsequence (which we still denote as $g_n$) and an $H^1$-function $R(x)$ such that 
\begin{align}\label{5}
\begin{cases}
g_n(x)\to R(x), \textit{ strongly in } L^{p+2}(\R^d),\\
g_n(x) \rightharpoonup R(x), \textit{ weakly in } H^1(\R^d). 
\end{cases}
\end{align}
Hence, 
\begin{align}\label{5.1}
\|R\|_{L_x^{p+2}}^{p+2}= \lim_{n\to \infty} \|g_n\|_{L_x^{p+2}}^{p+2}=C_{GN}, \; \|R\|_{L_x^2}\le 1,\; \|R\|_{\dha}\le 1. 
\end{align}
As $J(R)\le C_{GN}$, applying the bound from \eqref{5.1} we immediately have
\begin{align}\label{6}
 \|R\|_{L_x^2}= 1,\; \|R\|_{\dha}= 1,  
 \end{align}
% This together with \eqref{5.1} implies 
 hence 
 \begin{align}
 %g_n(x)\to Q(x) \textit{ in }H^1(\R^d) \textit{ and } 
 J(R)=C_{GN}. 
 \end{align}
 The existence of the maximizer is proved. 
 
Let $R(x)$ be any maximizer. Due to the spherical rearrangement explained above, $R(x)$ must be radially symmetric and decreasing with respect to the radial variable $r$. It is also standard in the calculus of variations that such maximizer is sign definite. The equation for $R$ can be derived directly from $\langle J'(R), g\rangle=0$ for any test function $g \in H^1 (\R^d)$. By direct computation, we find 
\begin{align}\label{5.0}
\langle J'(f), g\rangle =c_0(f)\int_{\R^d}(\la f+c_1(f)f-c_2(f)|f|^pf)(x) g(x) dx, 
\end{align}
where 
\begin{align}\label{5.2}
\begin{cases}
c_0(f)=-\tfrac{pd}2\tfrac{\|f\|_{L_x^{p+2}}^{p+2}}{\|f\|_{L_x^2}^{2-\frac p2(d-2)}}, \; 
c_1(f)=\frac{4-p(d-2)}{pd}\tfrac{\|f\|_{\dha}^2}{\|f\|_{L_x^2}^2},\\
c_2(f)=\frac{2(p+2)}{pd}\frac{\|f\|_{\dha}^2}{\|f\|_{L_x^{p+2}}^{p+2}}. 
\end{cases}
\end{align}
Moreover, 
\begin{align}\label{5.3}
&\langle J''(f)h, g\rangle \\
=&\langle c_0'(f), h\rangle \int_{\R^d}\langle \la f+c_1(f)f-c_2(f)|f|^pf,  g\rangle dx \notag \\
&+ c_0(f)\int_{\R^d}\langle \la+c_1(f)-(p+1)c_2(f)|f|^p)h, g\rangle dx\notag\\
&+c_0(f)\langle c_1'(f), h\rangle \int_{\R^d} fg dx-c_0(f)\langle c_2'(f), h\rangle \int_{\R^d}|f|^pf gdx. \notag
\end{align}
From \eqref{5.0}, $R$ satisfies the Euler-Lagrange equation
\begin{align}\label{6.1}
 \la R+c_1(R)R-c_2(R) |R|^{p}R=0.
 \end{align}
To eliminate the constants, we define
 \[
 Q(x)=sgn R(x)\Big(\frac{c_2(R)}{c_1(R)}\Big)^{\frac 1p}R\Big(\frac x{\sqrt{c_1(R)}} \Big).
 \]
 It is easy to check $Q(x)$ satisfies \eqref{3.1} with
 \begin{align}\label{5.6}
 c_1( Q)=c_2(Q)=1. 
 \end{align}
To prove the non-negativity property \eqref{5.10}, we take $f= Q$ and $h=g$ in \eqref{5.3}, using \eqref{3.1} and \eqref{5.6} we obtain 
 \begin{equation}\label{5.7} \begin{split}
 \langle J''( Q)g, g\rangle=&c_0( Q)\langle L_1 g, g\rangle +c_0( Q)\langle c_1'( Q), g\rangle \langle Q, g\rangle\\
 &-c_0( Q) \langle c_2'( Q), g\rangle \langle (\la+1) Q,g\rangle. 
 \end{split} \end{equation}
Substituting 
 \begin{align}\label{5.8}
\langle c_1'( Q), g \rangle=\tfrac{4-p(d-2)}{pd}\biggl[\tfrac 2{\| Q\|_{L_x^2}^2}\langle (\la+1) Q,g\rangle -\tfrac{2\| Q\|_{H^1_a}^2}{\| Q\|_{L_x^2}^4}\langle  Q,g\rangle\biggr],
 \end{align}
 we finally obtain that, if $g\perp_{\la+1} Q$, then 
 \begin{align}
 \langle J''( Q)g, g\rangle =c_0( Q)\bigl[ \langle L_1 g, g\rangle -c_3\langle  Q, g\rangle^2\bigr], \ c_3=\tfrac{4-p(d-2)}{pd}\tfrac{2\| Q\|_{H^1_a}^2}{\| Q\|_{L_x^2}^4}.
 \end{align}
 Recalling $c_0( Q)<0$ and $J''(Q)\le 0$, we immediately have \eqref{5.10}. 
  Lemma \ref{lm: existence} is proved. 
 \end{proof}

 \subsection{Asymptotics of local $H^1_{rad}$ solutions of equation \eqref{2} as $r\to +\infty$}\label{S:22}

In this subsection, we derive the precise asymptotic behavior of all local radial $H^1$ solutions of \eqref{2} as $r\to \infty$ via the method of invariant manifolds. In radial coordinates, equation \eqref{2} takes the form 
\begin{align}\label{12}
-q_{rr}-\tfrac{d-1}r q_r+\tfrac a{r^2} q+q-|q|^{p} q=0. 
\end{align}

In the following subsections, we will also require the asymptotics of  the linearized equation 
\begin{align}\label{r1}
-q_{rr}-\tfrac{d-1}r q_r+\tfrac A{r^2}q+q-(p+1)Q^p q=0,
\end{align}
where $A\in \R$, and $Q(r)$ denotes a ground state solution of \eqref{12}. As we will show, all radial local $H^1$ solutions of \eqref{12} exhibit some exponential decay at infinity. To treat both cases in a unified framework, we consider the more general equation
\begin{align}\label{r2}
-q_{rr}-\tfrac{d-1}r q_r+q+f(\tfrac 1r, q)=0,
\end{align}
 where the function $f(\tau, q)$ corresponds to either $f_1$ or $f_2$, representing equations \eqref{12}, \eqref{r1}, respectively:
\begin{align} \label{f12}
f_1(\tau, q)=a\tau^2 q-|q|^{p} q, \quad f_2(\tau, q)=A\tau^2 q-(p+1)Q(\tfrac 1\tau)^p q.
\end{align}
Both $f_1$ and $f_2$ (after $Q$ is proved to decay exponentially) share the following property:
\begin{align}\label{r3}
f(\tau, q)\in C^1, \quad f(\tau, 0)=0, \quad Df(0, 0)=0.
\end{align}

We now obtain the precise exponential decay of solutions that are initially known only to decay in Lemma \ref{lm: existence}. As will be shown later in Corollary \ref{cor: infinity}, this decay property is readily ensured by the assumption that $q\in H^1_{rad}(\R^d)$ near infinity.  

\begin{lemma}\label{lm: infinity}
Let $f(\tau, q)$ be defined through \eqref{f12}-\eqref{r3} and $q(r)$ be a solution of \eqref{r2} satisfying
\begin{align*}
\lim_{r\to \infty}q(r)=\lim_{r\to \infty}q_r(r)=0. 
\end{align*}
Then there exists a constant $c\neq 0$ such that 
\begin{align}\label{18}
\lim_{r\to \infty}  r^{\frac{d-1}2}e^{r}q(r)=c, \quad \lim_{r\to \infty}  r^{\frac{d-1}2}e^{r}q_r(r)=-c.
\end{align}
Moreover for
\begin{align}\label{19}
q_1(r)=\tfrac 2p q(r)+rq_r(r),
\end{align}
we have  
\begin{align}\label{20}
\lim_{r\to \infty}  r^{\frac{d-3}2}e^{r} q_1(r)=-c, \ \lim_{r\to \infty}  r^{\frac{d-3}2}e^{r} (q_1)_r(r)=c. 
\end{align}
\end{lemma}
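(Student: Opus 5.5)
We plan to use the standard substitution $q(r)=r^{-\frac{d-1}2}w(r)$, which removes the first-order term. A direct computation turns \eqref{r2} into
\[
w_{rr}-w=\tfrac{(d-1)(d-3)}{4r^2}w+f\bigl(\tfrac1r,r^{-\frac{d-1}2}w\bigr)r^{\frac{d-1}2}=:g(r)w .
\]
Here we used $f(\tau,0)=0$ and that $f$ is linear in $q$ with a small coefficient. For $f_2$ this is immediate. For $f_1$ we write $f_1(\tau,q)=(a\tau^2-|q|^p)q$. Hence $g(r)=O(r^{-2})+O(|q(r)|^p)$, and $g(r)\to0$ as $r\to\infty$ because $q\to0$. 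The invariant-manifold language of the section amounts to the following. We view $(\tau,w,w_r)$, $\tau=1/r$, as an autonomous system near the equilibrium $(0,0,0)$ with eigenvalues $0,\pm1$. The hypothesis that $q,q_r\to0$ places the trajectory on the center-stable manifold. The center direction ($w=w_r=0$) is invariant since $f(\tau,0)=0$. So nontrivial solutions are governed by the stable direction.

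We will carry this out concretely by an ODE/Gronwall argument rather than quoting abstract theory. First, since $g\to0$, the ratio $z=w_r/w$ satisfies the Riccati equation $z_r=1+g-z^2$. A comparison argument shows that $w$ eventually has one sign, since $w\equiv0$ is excluded by uniqueness. It then shows $w_r/w\to-1$, because the alternative $z\to+1$ gives exponential growth of $w$, contradicting $q\to0$. This yields $|w(r)|\le C_\epsilon e^{-(1-\epsilon)r}$. Consequently $|q|^p$ decays exponentially, and so $g\in L^1(r_0,\infty)$.

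With $g$ integrable, we apply variation of constants, set up as a fixed point for $v=e^{r}w$:
\[
w(r)=c\,e^{-r}-\int_r^\infty \sinh(s-r)\,g(s)w(s)\,ds .
\]
The integral is $e^{-r}\,o(1)$ by the a priori bound $|w|\le Ce^{-r}$. That bound is itself obtained by a Gronwall argument on $v$ using $\int^\infty|g|<\infty$. Thus $e^r w\to c$ and $e^r w_r\to -c$. We have $c\ne0$, since otherwise the Volterra equation on $[r,\infty)$ with small kernel forces $w\equiv0$ for large $r$, hence $q\equiv0$. Translating back via $q=r^{-\frac{d-1}2}w$ and $q_r=r^{-\frac{d-1}2}(w_r-\frac{d-1}{2r}w)$ gives \eqref{18}.

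For \eqref{20}, \eqref{18} alone gives $rq_r\sim -c\,r^{1-\frac{d-1}2}e^{-r}$ while $\frac2p q$ is lower order. This already yields $r^{\frac{d-3}2}e^rq_1\to-c$. For the derivative, we compute $(q_1)_r=(\frac2p+1)q_r+rq_{rr}$ and substitute $q_{rr}=-\frac{d-1}r q_r+q+f$ from \eqref{r2}. The term $rq$ is dominant and gives $r^{\frac{d-3}2}e^r\,rq\to c$. All other terms carry an extra factor $r^{-1}$ or are exponentially smaller: $f=O(r^{-2}|q|+|q|^{p+1})$. This yields the limit $c$.

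The main obstacle is the first a priori step, showing that $w$ decays at rate essentially $e^{-r}$ rather than merely tending to zero. This is what makes $g$ integrable, which the $O(|q|^p)$ part needs. We will try the Riccati/comparison argument first. As a backup, we can take an energy-type functional $w_r^2-w^2$ and its monotonicity for large $r$.
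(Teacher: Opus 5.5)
Your proof is correct, and it takes a genuinely different route from the paper's. The paper works with $v_1=q+q_r$, $v_2=q-q_r$, $\tau=1/r$. It uses the center-stable manifold $W^\infty_{cs}$, a graph $v_1=\phi^\infty(\tau,v_2)$ with $\phi^\infty(\tau,0)=0$, to get $v_1=o(1)v_2$. Monotonicity of $e^rv_2^2$ then gives crude decay $|v_2|\lesssim e^{-r/2}$. The ratio $u=v_1/(rv_2)$ satisfies $u_r=2u+O(r^{-2})$, so $u=O(r^{-2})$. Finally, integrating $\frac{d}{dr}v_2=(-1-\frac{d-1}{2r}+O(r^{-2}))v_2$ produces the factor $r^{-\frac{d-1}2}e^{-r}$.

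Your version differs at each stage. The substitution $w=r^{\frac{d-1}2}q$ builds in the factor $r^{-\frac{d-1}2}$ from the start. The Riccati phase-line analysis of $z=w_r/w$ replaces the invariant manifold: it selects the decaying branch and gives the preliminary $e^{-(1-\epsilon)r}$ decay. A Levinson-type Volterra argument with $g\in L^1$ replaces the $u$-refinement.

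Your route is more elementary and uses only $q\to0$, never $q_r\to0$. For $f_2$ it needs only $Q\to0$ and $Q^p\in L^1$ near infinity, whereas the paper needs $f_2$ to be $C^1$ up to $\tau=0$. On the other hand, the paper's route produces $W^\infty_{cs}$ and the reparametrization \eqref{20.3}, which are used in an essential way in Lemmas \ref{lm: i} and \ref{lm: s}. Your argument proves this lemma but would not supply that tool.

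Two small corrections. First, the variation-of-constants formula should read $w(r)=c\,e^{-r}+\int_r^\infty\sinh(s-r)\,g(s)w(s)\,ds$. Indeed, $I(r)=\int_r^\infty\sinh(s-r)h(s)\,ds$ satisfies $I''-I=h$. The sign does not affect your estimates.

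Second, the bound $\sup_r e^r|w(r)|<\infty$ is not a plain forward Gronwall. For $v=e^rw$ one has $v_r(r)=-\int_r^\infty e^{-2(s-r)}g(s)v(s)\,ds$, which involves future values of $v$. The cleanest way to close this uses $y=z+1=v_r/v$. It solves $y_r=2y+g-y^2$ with $y\to0$, so $y(r)=-\int_r^\infty e^{-2(s-r)}(g-y^2)\,ds$. Hence $\int^\infty|y|<\infty$, and $v(r)=v(r_0)\exp\int_{r_0}^r y$ converges to some $c\neq0$ directly. This also makes your separate argument for $c\neq0$ unnecessary, and it is precisely the analogue of the paper's $u$-step.
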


\begin{proof}

We define
\begin{align}\label{12.1}
v_1(r)=q(r)+q_r(r),\; v_2(r)=q(r)-q_r(r),\textit{ and } \; \tau=\frac 1r,
\end{align}
and use this to transform \eqref{r2} into the system

\begin{align}\label{13}
\begin{cases}
\tfrac d{dr}v_1=v_1-\tfrac{d-1}2\tau(v_1-v_2)+f(\tau, \tfrac{v_1+v_2}2) \\
\tfrac d{dr}v_2=-v_2+\tfrac{d-1}2\tau(v_1-v_2)-f(\tau, \tfrac{v_1+v_2}2)\\
\tfrac d{dr}\tau=-\tau^2. 
\end{cases}
\end{align}
Clearly, $(\tau, v_1, v_2)=(0,0,0)$ is an unstable equilibrium state with center, unstable, and stable directions corresponding to the  $\tau$, $v_1$, and $v_2 $ components, respectively. In view of \eqref{r3}, there exists a $C^1$ center stable manifold $W_{cs}^\infty$  expressed as the graph of a function $\phi^\infty (\tau, v_2)$ \footnote{In the case where $f=f_2$, $\phi^\infty$ is linear in $v_2$.}
\begin{align}\label{21.1}
v_1=\phi^\infty (\tau, v_2), \; \phi^\infty \in C^{1}(\R^2),\; \phi^\infty(0,0)=0, \; D\phi^\infty(0,0)=0.
\end{align}
While the center-stable manifold $W_{cs}^\infty$ is not uniquely defined in general, its intersection with the set $\{\tau\ge0\}$ is unique due to the positive invariance under the ODE flow \eqref{13} of neighborhoods of $0$ in $W_{cs}^\infty$. An orbit of \eqref{13} belongs to $W_{cs}^\infty$ iff 
%contains all orbits of \eqref{13}
it converges to the origin as $r\to +\infty$, including the trivial solution $(\tau=\frac 1r, 0, 0)$. Thus,
\begin{align}\label{21.2}
\phi^{\infty}(\tau, 0)=0.
\end{align}
Applying the mean value theorem and using \eqref{21.1}, we estimate
\begin{align}
v_1&=\phi^{\infty}(\tau, v_2)=\phi^{\infty}(\tau, v_2)-\phi^{\infty}(\tau, 0)=D_{v_2}\phi^{\infty}(\tau, \tilde v_2) v_2 =o(1) v_2,\label{22.1}
\end{align}
for $|v_2|, |\tau|\ll 1$. 

With \eqref{f12}, this observation yields a preliminary estimate for $v_2$. 
%Let $\big( \tau=\frac 1r, v_1 =\phi^\infty (\tau, v_2(r)), v_2(r)\big)$ be a solution to \eqref{13} 
On $W_{cs}^\infty$ the $v_2$-equation implies
\begin{align}\label{23}
\tfrac d{dr}v_2=-v_2+v_2O(\tfrac 1r+|v_2|^p), \textit{ for } |v_2|, |\tau|\ll 1. 
\end{align}
In the case $f=f_1$, this estimate follows directly from the form of $f_1$, in the case of $f=f_2$, it can be deduced using \eqref{18}.  
Multiplying both sides of the equation by $e^rv_2(r)$, we obtain:
\[\tfrac 12\tfrac d{dr}(e^r v_2^2(r))=e^r v_2^2(-\tfrac 12+O(\tfrac 1r+|v_2|^p))<0,\]
which shows that $e^rv_2^2$ is strictly decreasing for $r\gg1 $ and $|v_2|\ll1$. Therefore,
\begin{align}\label{24}
v_2(r)\neq 0, \textit{ and } |v_2(r)|\le C e^{-\frac r2},
\end{align}
for sufficiently large $r$. 

Next, we refine  \eqref{22.1} to show
\begin{align}\label{24.1}
\tfrac{v_1(r)}{rv_2(r)}=O(\tfrac 1{r^2}). 
\end{align}
Indeed, let $u(r)=\tfrac{v_1(r)}{rv_2(r)}$, and differentiate it to obtain
\begin{align}\label{25}
u_r=2u- \tfrac 1{r^2}[\tfrac{d-1}2\tfrac{v_1^2-v_2^2}{v_2^2}+\tfrac{v_1}{v_2}]+\tfrac 1{r}\tfrac{v_1+v_2}{v_2^2}f\bigl(\tfrac 1r, \tfrac{v_1+v_2}2\bigr). 
\end{align}
Using the estimate in \eqref{22.1} and \eqref{24}, we simplify it to  
\begin{align}\label{26}
u_r=2u+O(\tfrac 1{r^2}).
\end{align}
 Since  $u(r)\to 0$ as $r\to \infty$,  
we solve \eqref{26} 
 \[u(r)=-\int_r^\infty e^{-2(\rho-r)}O(\tfrac 1{\rho^2}) d\rho,\]
which yields $|u(r)|\le \tfrac C{r^2}$, proving \eqref{24.1}. 

Using \eqref{24.1} and \eqref{24}, we rewrite the $v_2$ equation in \eqref{13}:  
\begin{align}\label{27}
\tfrac d{dr} v_2=(-1-\tfrac{d-1}2 \tau)v_2+O(\tfrac 1{r^2})v_2. 
\end{align}
Thus, for sufficiently large $r$ and $r_0$, we have 
\begin{align*}
v_2(r)&=v_2(r_0)e^{-\int_{r_0}^r 1+\frac{d-1}{2\rho} d\rho} e^{\int_{r_0}^r O(\frac 1{\rho^2}) d\rho}\\
&=v_2(r_0)e^{r_0}r_0^{\frac{d-1}2} e^{-r}r^{-\frac{d-1}2}\psi(r_0,r),
\end{align*}
where $\psi(r_0, r)\to \psi_0> 0$ as $r\to \infty$ due to the convergence of $\int_{r_0}^\infty \frac 1{r^2}dr$. We can summarize:
\begin{gather*}
v_2(r)=c(r) r^{-\frac{d-1}2}e^{-r}, \; c(r)\to 2c, \;  v_1(r)=O(r^{-\frac{d+1}2}e^{-r}) \ \textit{ as } r\to \infty.
\end{gather*}
The asymptotic behaviors of $q,\ q_r$ and $q_1$ follow from this and equations  \eqref{12.1} and \eqref{19}. This completes the proof of Lemma \ref{lm: infinity}. 
\end{proof}

As an immediate consequence of Lemma \ref{lm: infinity}  and the decay of local radial $H^1$ solutions established below, we obtain a precise exponential decay estimate for local $H^1_{rad}$ solutions of \eqref{r2}-\eqref{f12}. 
\begin{corollary}\label{cor: infinity}
Let $q(r)$ be a local $H^1_{rad}$ solution of \eqref{r2}-\eqref{f12} as $r\to +\infty$. Then $q(r)$ satisfies the exponential decay estimate \eqref{18}-\eqref{20}. 
\end{corollary}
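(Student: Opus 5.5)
The corollary reduces to Lemma \ref{lm: infinity} once we check its hypothesis $\lim_{r\to\infty}q(r)=\lim_{r\to\infty}q_r(r)=0$. Below, "local $H^1_{rad}$ near infinity" means $\int_{r_0}^\infty (q_r^2+q^2) r^{d-1}dr<\infty$ for some $r_0$.

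\emph{Step 1: $q\to0$.} For $r_2>r_1\ge r_0$,
\[
q(r_2)^2-q(r_1)^2=2\int_{r_1}^{r_2} q q_r\,dr,
\]
and $|qq_r|\le \tfrac12(q^2+q_r^2)r^{d-1}$ for $r\ge1$. The right-hand side is therefore integrable on $[r_0,\infty)$, so $q^2$ has a limit. Since $\int^\infty q^2r^{d-1}dr<\infty$, that limit must be $0$.

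\emph{Step 2: $q_r\to0$.} Write the equation as $q_{rr}=-\tfrac{d-1}r q_r+q+f(\tfrac1r,q)$. Multiplying by $2q_r$ gives
\[
\tfrac{d}{dr}(q_r^2)=-\tfrac{2(d-1)}r q_r^2+2q_r\bigl(q+f(\tfrac1r,q)\bigr).
\]
For $f=f_1$ we have $|f_1(\tfrac1r,q)|\lesssim |q|(r^{-2}+|q|^p)$, which is bounded by $C|q|$ once $q$ is bounded by Step 1. For $f=f_2$, $Q$ is bounded for large $r$ (it is positive, decreasing, and lies in $H^1$, so it tends to $0$ by Step 1 applied to $Q$), hence again $|f_2|\le C|q|$. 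So the right-hand side is $O(q_r^2+q^2)$, which is integrable on $[r_0,\infty)$ for $r\ge1$. Thus $q_r^2$ has a limit, and integrability of $q_r^2 r^{d-1}$ forces that limit to be $0$.

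\emph{Step 3.} Lemma \ref{lm: infinity} now applies and yields \eqref{18}--\eqref{20}. The constant $c$ is nonzero unless $q\equiv0$ near infinity, since \eqref{24} shows $v_2\neq0$ for nontrivial orbits; here we exclude the trivial solution, or else state the conclusion with $c$ possibly zero for $q\equiv0$.

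The main subtlety is Step 2 in the case $f=f_2$. Hypothesis \eqref{r3} for $f_2$, and the use of \eqref{18} for $Q$ in deriving \eqref{23}, both presuppose that $Q$ already decays exponentially. So one should first apply the $f_1$ case to $Q$ itself: Steps 1--2 apply to $Q\in H^1_{rad}$, and Lemma \ref{lm: infinity} with $f_1$ gives exponential decay of $Q$. Only afterwards does one treat the linearized equation with $f_2$. This ordering avoids circularity.
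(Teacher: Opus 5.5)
Your proof is correct and follows essentially the same route as the paper. You verify $q,q_r\to0$ from $H^1$ integrability near infinity together with the energy identity obtained by multiplying \eqref{r2} by $q_r$, then apply Lemma \ref{lm: infinity}, treating $Q$ via $f_1$ before the linearized case $f_2$. The paper instead uses the radial Sobolev bound \eqref{d1} and an $r^{d-1}$-weighted identity, which gives the slightly stronger $r^{\frac{d-1}2}q,\ r^{\frac{d-1}2}q_r\to0$, but that rate is not needed. Your remarks that $c\neq0$ requires $q\not\equiv0$, and that the $f_1$ case for $Q$ must come first to avoid circularity, are accurate refinements of points the paper leaves implicit.
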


\begin{proof}
It suffices to show that $q(r)\to 0$ and $q_r(r)\to 0$ as $r\to \infty$. From the radial Sobolev embedding, we have
\begin{align}\label{d1}
|q(r)|\le C_d r^{-\frac{d-1}2}\|q\|_{H^1(|x|\ge r)}, 
\end{align}
which implies  $r^{\frac{d-1}2}q(r)\to 0$ as $r\to \infty$. 

To estimate the decay of $q_r$, we multiply both sides of \eqref{r2} by $r^{d-1} q_r$ and integrate over $[r_1, r_2]$. Integration by parts yields
 \begin{align*}
 \tfrac 12 q_r^2 r^{d-1}|_{r_1}^{r_2}=-\tfrac {d-1}2 &\int_{r_1}^{r_2} r^{d-2} q_r^2 dr+\tfrac 12 q^2 r^{d-1}|_{r_1}^{r_2}\\
 &-
 \tfrac{d-1}2 \int_{r_1}^{r_2} r^{d-2} q^2 dr +\int_{r_1}^{r_2}f(\tfrac 1r, q) q_r r^{d-1}dr . 
 \end{align*}
 Using the Cauchy-Schwarz inequality and estimate \eqref{d1},  we can bound the right-hand side by (in the case of $f_1$)
 \begin{align*}
 \tfrac 12 \big| q^2 r^{d-1}|_{r_1}^{r_2} \big| +C\big({r_1}^{-1}+(r_1^{-\frac{d-1}2} \|q\|_{H^1} )^p\big)\|q\|_{H^1}^2,
 \end{align*}
which converges to 0 as $r_1, r_2\to\infty$. In the case of $f_2$, a similar estimate can be obtained using the exponential decay estimate for $Q$. Therefore, $ r^{\frac{d-1}2}q_r\to 0$ as $r\to \infty$. 
\end{proof}

%\begin{remark} It is worth pointing out this Corollary applies not only to the ground state, but also to the radial sign changing bound states. 
%\end{remark}

Before concluding this subsection, we provide a remark on reparametrizing the center stable manifold $W_{cs}^\infty$ using the variables $(\tau, q, q_r)$, a formulation that will be used in subsequent analysis. Substituting  \eqref{12.1} into \eqref{21.1}, we obtain
\begin{align}\label{20.1}
q+q_r=\phi^{\infty}(\tau, q-q_r) \Longleftrightarrow 
%\end{align}
%Defining $w(r)=q(r)+q_r(r)$, this equation is equivalent to 
%\begin{align}\label{20.2}
F(\tau, q, v_1):=v_1-\phi^{\infty}(\tau, 2q-v_1)=0.
\end{align}
Using \eqref{21.1}, we find that there exists $\delta>0$ such that 
\begin{align*}
F(\tau,0,0)=0,\ F_{v_1}(\tau, 0, 0)\ge \tfrac 12, \; \forall |\tau| <\delta. 
\end{align*}
By the implicit function theorem, there exist $\eta>0$ and a $C^1$ function $\tilde \phi^\infty(\tau,q)$ such that for $|\tau|, |q|, |v_1|<\eta$, \eqref{20.1} holds if and only if $
v_1 =\tilde\phi ^\infty(\tau,q)$. In this new coordinate system on $W^{\infty}_{cs}$, solutions of \eqref{12} restricted to $W^{\infty}_{cs} $ satisfy  
\begin{align}\label{20.3}
q_r=-q+\tilde \phi^\infty(\tau, q).
\end{align}
 Moreover, it follows directly from \eqref{21.1} that
  \begin{align}\label{20.4}
 \tilde \phi^\infty \in C^1(\R^2), \ \tilde \phi^\infty(\tau,0)=0, \; D \tilde \phi^\infty(0,0)=0. 
 \end{align}
It again indicates that the two-dimensional surface $W^{\infty}_{cs}$ is tangent to $q+q_r=0$ as given by the invariant manifold theorem.

\subsection{Asymptotics of local $H_{rad}^1$ solutions near $r=0^+$}

We establish the following asymptotics of the local $H_{rad}^1$ solutions and come up with a smooth parametrization of 
%use this to smoothly parametrize 
all such solutions. Derivative estimates are also provided for later use in the subsequent sections. 

\begin{lemma}\label{lm: q}
Let $q(x)$ be a local $H^1_{rad}$ solution of \eqref{12} near $r=0^+$, then 
%there must exist $b\in \R$ such that 
\begin{align}\label{20.5}
b= \lim_{r\to 0^+} q(r) r^{\frac{d-2-\beta}2} \text{ exists}.
\end{align}
Moreover, for each $b\in \R$, there exists a unique local $H_{rad}^1$ solution $q(b, r)$ to \eqref{12} near $r=0^+$ satisfying \eqref{20.5} and $q(-b, r) = - q(b, r)$. Additionally, $q \in C^\infty \big( 
%the functions $q_{rrr}$ and $q_{brr}$ are continuous 
U\backslash (\R \times \{0\}) \big)$, where $U$ is a neighborhood of $\R \times \{0\}$ in  $\R \times [0, \infty)$. Furthermore, we have the following asymptotics: 
%We then are able to parametrize all the locally $H^1_{rad}$ solutions near $r=0^+$ by the parameter $b$ and write $q(r)=q(b,r)$. We have the following asymptotics: 
\begin{align}\label{64}
\begin{cases}
\lim_{r\to 0^+} q_r(b,r) r^{\frac{d-\beta}2}=-\tfrac{d-2-\beta}2 b,\\
\lim_{r\to 0^+} q_b(b,r) r^{\frac{d-2-\beta}2}=1,\\
\lim_{r\to 0^+} q_{br}(b,r) r^{\frac{d-\beta}2}=-\tfrac{d-2-\beta}2<0, \\
\lim_{r\to 0^+} q_1(b,r)r^{\frac{d-2-\beta}2}=b(\tfrac 2p-\tfrac{d-2-\beta}2),
\end{cases}
\end{align}
where $q_1(r)=q_1(b,r)$ is defined in \eqref{19} from the previous subsection. 
% Moreover, suppose $I$ is any compact interval of $r$ on which $q(b,r)$ exists, then we have
%\begin{align*}
%q(r)\in C^3(I), \; q_b(r)\in C^2(I). 
%\end{align*}
\end{lemma}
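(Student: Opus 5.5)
The plan is to desingularize \eqref{12} at $r=0$ by an Emden--Fowler change of variables and then use local invariant manifold theory, in the same spirit as the argument at infinity in Lemma~\ref{lm: infinity}. The indicial equation of $-q_{rr}-\frac{d-1}r q_r+\frac a{r^2}q=0$ is $\gamma(\gamma+d-2)=a$, with roots $\gamma_\pm=\frac{-(d-2)\pm\beta}2$. The root $\gamma_+=-\frac{d-2-\beta}2$ is $H^1$ near $0$, while $r^{\gamma_-}$ is not, since $|\nabla r^{\gamma_-}|^2r^{d-1}\sim r^{-\beta-1}$. Set $s=\ln r\in(-\infty,s_0)$ and $w(s)=r^{\frac{d-2-\beta}2}q(r)$. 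A direct computation converts \eqref{12} into the autonomous-in-leading-order system
\[
w_{ss}-\beta w_s - e^{2s}\bigl(w-e^{-\frac{p(d-2-\beta)}2 s}|w|^pw\bigr)=0 .
\]
The nonlinear coefficient is $e^{(2-\frac{p(d-2-\beta)}2)s}$. Its exponent $\kappa:=2-\frac{p(d-2-\beta)}2>2-\frac{p(d-2)}2>0$ because $p<\frac4{d-2}$. Introducing $\sigma=e^{\min(2,\kappa)s}$, or simply keeping $\sigma_1=e^{2s},\sigma_2=e^{\kappa s}$ as extra variables with $\dot\sigma_i=c_i\sigma_i$, we get a system in $(w,w_s,\sigma_1,\sigma_2)$. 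Its linearization along the line of equilibria $\{(b,0,0,0)\}$ has eigenvalues $0$ (the $w$-direction), $\beta>0$ (the $w_s$ direction) and $2,\kappa>0$. Thus as $s\to-\infty$ every such equilibrium is reached along an unstable-type manifold.

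The key steps, in order, are as follows.

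(i) Show that any local $H^1_{rad}$ solution has $w$ bounded with $w_s\to0$, and in fact $\int_{-\infty}|w_s|$ finite. Write the general solution by variation of constants with the fundamental pair $r^{\gamma_\pm}$; the $r^{\gamma_-}$ coefficient must vanish because of the $H^1$ condition. Combined with a bootstrap on the small forcing $e^{2s}(\cdots)$, this gives $w(s)\to b$.

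(ii) Conversely, for each $b$, solve the integral equation
\[
w(s)=b+\int_{-\infty}^s\frac{1-e^{\beta(s-s')}}{-\beta}\,e^{2s'}\bigl(w-e^{-\frac{p(d-2-\beta)}2s'}|w|^pw\bigr)(s')\,ds'
\]
by contraction on $(-\infty,s_0]$ for $s_0$ sufficiently negative, locally uniformly in $b$. This yields existence and uniqueness. Equivalently, the solution is the unique orbit in the strong unstable fiber over the equilibrium $(b,0,0,0)$. Oddness, $q(-b)=-q(b)$, follows from uniqueness and the oddness of the nonlinearity.

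(iii) Smooth dependence. The map is $C^1$ in $b$, and the integrand is smooth away from $w=0$, since $|w|^pw$ is $C^{1}$. Differentiate the fixed-point equation in $b$ to get $q_b=r^{\gamma_+}(1+o(1))$. Smoothness for $r>0$ follows from standard ODE theory. To get $C^\infty$ in $b$ one must be careful where $w=0$. Near $r=0$ with $b\neq0$ we have $w\approx b\neq0$. For $b=0$ the solution is $q\equiv0$, and the $b$-derivatives of $|w|^pw$ have enough vanishing because of the factor $|w|^p$. I expect this regularity bookkeeping, the claimed $C^\infty$ at and near $b=0$ and at zeros of $q$ for $r>0$, to be the main obstacle. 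If full smoothness fails for small $p$, I would settle for the $C^2$ regularity needed later and justify the claim via the fixed-point representation.

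(iv) Derivative asymptotics. From the integral equation, $w_s=O(e^{\min(2,\kappa)s})$. Since $q_r=r^{\gamma_+-1}(\gamma_+w+w_s)$, we get $r^{\frac{d-\beta}2}q_r\to-\frac{d-2-\beta}2b$. The same argument applied to $q_b$ gives the second and third lines of \eqref{64}. Finally, $q_1=\frac2pq+rq_r$ gives the last line by combining the first two.
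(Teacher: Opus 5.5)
Your strategy can be made to work. As written, though, it has a sign error that breaks the uniqueness mechanism, and step (i) lacks the a priori input that makes it run.

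\emph{The sign.} With $q=r^{\gamma_+}w$ and $s=\ln r$ one gets $r^2q_{rr}+(d-1)rq_r-aq=e^{\gamma_+s}\bigl(w_{ss}+(2\gamma_++d-2)w_s\bigr)$. Since $2\gamma_++d-2=\beta$, the correct equation is
\[
w_{ss}+\beta w_s=e^{2s}\bigl(w-e^{-\frac{p(d-2-\beta)}{2}s}|w|^pw\bigr).
\]
As a check, $q=r^{\gamma_-}$ corresponds to $w=e^{-\beta s}$, which must solve the homogeneous equation. This is not cosmetic.
\begin{itemize}
\item The $w_s$-direction has eigenvalue $-\beta$, so it is stable in forward $s$. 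That is exactly why only one orbit tends to $(b,0,0,0)$ as $s\to-\infty$.
\item With your sign, $b+ce^{\beta s}$ solves the homogeneous equation and tends to $b$ for every $c$. Your spectral picture (a three-dimensional unstable set) therefore contradicts the uniqueness you claim.
\item The correct Duhamel kernel is $K(s,s')=\frac{1-e^{-\beta(s-s')}}{\beta}\in[0,\beta^{-1}]$. With your kernel, $\int_{-\infty}^s e^{\beta(s-s')}e^{2s'}ds'$ diverges once $\beta\ge2$, which happens for $d\ge5$.
\end{itemize}
With the corrected kernel, $\int_{-\infty}^s|K|(e^{2s'}+e^{\kappa s'})\,ds'\lesssim e^{\min(2,\kappa)s}$. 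So the contraction in (ii), and the bound $w_s=\int_{-\infty}^s e^{-\beta(s-s')}F\,ds'=O(e^{\min(2,\kappa)s})$ in (iv), hold for every admissible $\beta$.

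\emph{Step (i).} A ``bootstrap on the small forcing'' has nothing to start from. The term $e^{\kappa s}|w|^pw$ is small only once $w$ is controlled, which is what you are trying to prove. The missing input is the a priori consequence of the $H^1$ hypothesis that $r^{\frac{d-2}2}q(r)\to0$ (the paper's Lemma~\ref{L:H1-0a}, via Hardy's inequality), i.e.\ $|w|=o(e^{-\beta s/2})$. Then
\[
V:=e^{2s}-e^{\kappa s}|w|^p=O(e^{2s})+o\bigl(e^{(2-\frac{p(d-2)}{2})s}\bigr)
\]
is integrable at $-\infty$. This, rather than $\kappa>0$, is where $p<\frac4{d-2}$ is really needed. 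So $w$ solves the linear equation $w_{ss}+\beta w_s=Vw$ with $V\in L^1$. Levinson's theorem gives a fundamental system asymptotic to $1$ and $e^{-\beta s}$, and the growth bound excludes the second. Hence $w\to b$ and $w_s\to0$, so $w$ satisfies the Duhamel identity and coincides with your fixed point, which is the uniqueness statement.

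\emph{Comparison.} After these repairs your route genuinely differs from the paper's. The paper keeps $(u_1,u_2)=(r^{\frac{d-2}2}q,r^{\frac d2}q_r)$, for which the origin is an isolated equilibrium with a two-dimensional $C^1$ stable manifold. It must then show that $z=r^{-\beta/2}u_1$ (your $w$) has a limit along those orbits, which costs the estimate $|D_{v_2}\phi^0|\le C\tau^2$ of Lemma~\ref{lm: dv2} and the reduction to $z_r=g(r,z)$. Dividing out $r^{\gamma_+}$ first turns the isolated equilibrium into a line of equilibria, and replaces the invariant-manifold machinery with an explicit Duhamel formula.

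Your route also gives more regularity. For $b\ne0$ the fixed point stays within $|b|/2$ of $b$, where the Nemytskii map $w\mapsto|w|^pw$ is $C^\infty$, so the implicit function theorem gives $C^\infty$ dependence on $b$. The $C^1$ stable manifold alone gives only $C^1$.

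At $b=0$ your suspicion is right, but your ``enough vanishing'' remark is not. Expanding the fixed point gives $w(b,s)=bW_0(s)-|b|^pb\,W_1(s)(1+o(1))$ with $W_1(s)>0$ for $s\ll0$. So $q$ is not $C^\infty$ in $b$ at $b=0$ unless $p$ is an even integer. The smoothness claim is valid only on $\{b\ne0\}$, which is all the subsequent classification and uniqueness arguments use, since they take $b>0$.
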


The rest of this subsection is dedicated to proving this lemma. 
We begin by introducing 
%the variables
\begin{align}\label{8}
u_1(r)=r^{\frac{d-2} 2} q(r), \ \; u_2(r)=r^{\frac d2}q_r(r). 
\end{align}

\begin{lemma} \label{L:H1-0a}
If $q(r)$ solves \eqref{12} on $(0, r_0)$ and $q(|\cdot|) \in H^1(\R^d)$ on $ \{x \in \R^d \mid |x| < r_0\}$, then $\lim_{r\to 0^+} u_1(r) =\lim_{r\to 0^+} u_2(r)  =0$.   
\end{lemma}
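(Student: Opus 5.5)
The plan is to treat $u_1$ by a weighted Cauchy--Schwarz argument that uses only the radial $H^1$ bound, and then to use the equation \eqref{12} in divergence form to handle $u_2$. Throughout, the hypothesis $q(|\cdot|)\in H^1(\{|x|<r_0\})$ means
\[
\int_0^{r_0}\bigl(q_r^2+q^2\bigr)\rho^{d-1}\,d\rho<\infty,
\]
so the tail quantity $\varepsilon(\delta):=\bigl(\int_0^{\delta}q_r^2\rho^{d-1}d\rho\bigr)^{1/2}$ tends to $0$ as $\delta\to0^+$.

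\emph{Step 1: $u_1\to0$.} For $0<r<\delta<r_0$ write $q(r)=q(\delta)-\int_r^\delta q_r\,d\rho$. Applying Cauchy--Schwarz with the weights $\rho^{d-1}$ and $\rho^{1-d}$ gives
\[
|q(r)|\le |q(\delta)|+\varepsilon(\delta)\Bigl(\int_r^\delta\rho^{1-d}d\rho\Bigr)^{1/2}\le |q(\delta)|+C_d\,\varepsilon(\delta)\,r^{-\frac{d-2}2},
\]
where $d\ge3$ is used. Multiplying by $r^{\frac{d-2}2}$ and letting $r\to0^+$ yields $\limsup_{r\to0^+}|u_1(r)|\le C_d\varepsilon(\delta)$. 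Since $\delta$ is arbitrary, $u_1(r)\to0$. In particular $|q(\rho)|\le \eta(\rho)\rho^{-\frac{d-2}2}$ for some $\eta(\rho)\to0$ as $\rho\to0^+$.

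\emph{Step 2: the flux $r^{d-1}q_r$ has limit $0$.} Write \eqref{12} as
\[
(r^{d-1}q_r)_r=\rho^{d-3}\bigl(aq+\rho^2q-\rho^2|q|^pq\bigr)\Big|_{\rho=r}.
\]
Using the bound from Step 1, the right-hand side is integrable near $0$. Indeed, $\rho^{d-3}|q|\lesssim\rho^{\frac{d-4}2}$, and $\frac{d-4}2\ge-\frac12$. Also $\rho^{d-1}|q|^{p+1}\lesssim\rho^{d-1-(p+1)\frac{d-2}2}$, whose exponent exceeds $\frac{d-4}2>-1$ because $p<\frac4{d-2}$. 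Hence $L=\lim_{r\to0^+}r^{d-1}q_r(r)$ exists. If $L\ne0$, then $q_r^2r^{d-1}\sim L^2r^{1-d}$ near $0$, which is not integrable and contradicts $q\in H^1$. So $L=0$, and
\[
r^{d-1}q_r(r)=\int_0^r\rho^{d-3}\bigl(aq+\rho^2q-\rho^2|q|^pq\bigr)d\rho .
\]

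\emph{Step 3: $u_2\to0$.} Multiply the identity of Step 2 by $r^{1-\frac d2}$, so that the left-hand side becomes $u_2(r)$. The linear term is bounded by
\[
r^{1-\frac d2}\int_0^r\rho^{d-3}\eta(\rho)\rho^{-\frac{d-2}2}d\rho\lesssim\sup_{\rho<r}\eta(\rho)\to0,
\]
and the $\rho^2q$ term is even smaller. The nonlinear term is bounded by
\[
r^{1-\frac d2}\int_0^r\rho^{d-1}\rho^{-(p+1)\frac{d-2}2}d\rho\lesssim r^{2-\frac{p(d-2)}2}\to0,
\]
again because $p<\frac4{d-2}$. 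Therefore $u_2\to0$.

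The only delicate point is Step 2, namely excluding a nonzero flux limit $L$ (the analogue of the singular solution). This is exactly where the local $H^1$ hypothesis on the derivative enters. The remaining steps are bookkeeping of exponents, using $d\ge3$ and the subcriticality $p<\frac4{d-2}$.
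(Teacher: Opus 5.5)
Your argument is correct, but it is organized differently from the paper's. The paper treats $u_1$ and $u_2$ with one device. It differentiates $u_j^2$ (for $u_2$, after substituting \eqref{12}) and integrates over $[r_1,r_2]$. It then bounds every term by $H^1$ and Hardy norms on the annulus $\{r_1\le |x|\le r_2\}$; the nonlinear term carries the factor $r_2^{2-\frac{p(d-2)}2}\sup|u_1|^p$. This shows $u_j^2$ converges as $r\to0^+$. The limit must then vanish because $\int_0 u_1^2\,\frac{dr}{r}=\int_0 q^2 r^{d-3}\,dr$ and $\int_0 u_2^2\,\frac{dr}{r}=\int_0 q_r^2 r^{d-1}\,dr$ are finite.

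You instead get $u_1\to0$ from a Strauss-type pointwise bound. It uses only $\int_0 q_r^2\rho^{d-1}\,d\rho<\infty$, with no Hardy inequality and no $L^2$ bound on $q$. You then recover $u_2$ from the divergence form $(r^{d-1}q_r)_r$ by integrating from $0$. Your route buys an explicit modulus, $|u_2(r)|\lesssim \sup_{(0,r)}|u_1|+r^{2-\frac{p(d-2)}2}$, so $u_2$ is controlled by $u_1$. The paper's route is shorter and symmetric in $u_1,u_2$ and uses only norms on annuli, but it proves existence of the limits first and identifies them as zero only afterwards.

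One small correction to your closing remark. For $a<0$ the non-$H^1$ branch near $0$ behaves like $r^{-\frac{d-2+\beta}2}$, and its flux $r^{d-1}q_r\sim r^{\frac{d-2-\beta}2}$ tends to $0$. So that branch is excluded by Step 1, not Step 2. Moreover, a nonzero $L$ would force $|q|\gtrsim r^{2-d}$, so $L=0$ also follows directly from $u_1\to0$.
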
 

%One relevant implication of this change is the decay of $u_1, u_2$ as $r\to 0^+$. 

\begin{proof} 
Indeed, to see this for $u_1$, we apply the Fundamental Theorem of Calculus, the Cauchy-Schwarz inequality, and Hardy inequality:
\begin{align*}
\big|u_1(r)^2|_{r_1}^{r_2} \big|&= \Big| 2\int_{r_1}^{r_2} r^{d-2} qq_r dr+(d-2)\int_{r_1}^{r_2}r^{d-3} q^2 dr \Big|\\
&=  \Big| 2\int_{r_1}^{r_2}q_r\tfrac q r r^{d-1} dr+(d-2)\int_{r_1}^{r_2} \tfrac{q^2}{r^2} r^{d-1} dr  \Big|\\
& \le C(\|q\|_{H^1(r_1 \le |x| \le r_2)} + \|r^{-1} q \|_{L_x^2(r_1\le |x|\le r_2)} \big)  \|r^{-1} q \|_{L_x^2(r_1\le |x|\le r_2)},
\end{align*}
showing that the sequence $\{u_1(r)\}$ is Cauchy. Since $\frac qr \in L_x^2 (|x| < r_2)$ again due to  the Hardy's inequality, we obtain $\lim_{r\to 0^+} u_1(r) =0$. 
For $u_2$, we compute
\begin{align*}
(u_2^2)_r=(2-d) r^{d-1}q_r^2+2a r^{d-2}qq_r+2r^d qq_r-2r^d |q|^{p}qq_r,
\end{align*}
and integrate over $[r_1,r_2]$. Proceeding similarly, we obtain
\begin{align*}
\left|u_2(r)^2|_{r_1}^{r_2} \right|\le & C(\|q\|_{H^1(r_1\le |x|\le r_2)} + \|\tfrac qr \|_{L_x^2(r_1\le |x|\le r_2)} \big)\|q\|_{H^1(r_1 \le |x| \le r_2)} \\
& +2\int_{r_1}^{r_2} r^{2-p\frac{d-2}2}|u_1|^p r^{d-2}|qq_r |dr\\
\le & C(\|q\|_{H^1(r_1\le |x|\le r_2)} + \|\tfrac qr \|_{L_x^2(r_1\le |x|\le r_2)} \big) \|q\|_{H^1(r_1 \le |x| \le r_2)} \\
& +C r_2^{2-p\tfrac{d-2}2} \big(\sup_{r\in[r_1, r_2]}|u_1(r)|^p \big) \|q\|_{H^1(r_1 \le |x| \le r_2)}^2.
\end{align*}
Since $p<\frac 4{d-2}$, it follows that $\lim_{r\to 0^+} u_2(r) = 0$.
\end{proof}

We now rewrite \eqref{12} as the following ODE system: 
\begin{align}\label{9}
\begin{cases}
\tfrac d{dr} u_1=\tfrac 1r(\tfrac{d-2}2 u_1+u_2),\\
\tfrac d{dr} u_2=\tfrac 1r \big(au_1-\tfrac{d-2}2 u_2+r^2 u_1-r^{2-\tfrac{d-2}2p}|u_1|^{p}u_1\big). 
\end{cases}
\end{align}
The singular factor $\frac 1r$ can be eliminated via the substitution $s=-\ln r$, yielding
\begin{align}\label{10old}
\begin{cases}
\tfrac d{ds}u_1&=-\tfrac {d-2}2 u_1-u_2, \\
\tfrac d{ds}u_2&=-au_1+\frac{d-2}2 u_2-r^2 u_1+r^{2-\frac{d-2}2p}|u_1|^{p}u_1,\\
r_s&=-r. 
\end{cases}
\end{align}
To manage the potentially non-smooth factor $r^{2-\frac{d-2}2p}$, we define 
\[\sigma=1-\tfrac{d-2}4p \in (0, 1),\quad \tau=r^{\sigma}, 
\] 
and rewrite equation \eqref{10old} in terms of $(\tau, u_1, u_2)$ as a $C^1$ system of ODEs:
\begin{align}\label{35}
\begin{cases}
\tfrac d{ds}u_1&=-\tfrac {d-2}2 u_1-u_2, \\
\tfrac d{ds}u_2&=-au_1+\frac{d-2}2 u_2-\tau^{\frac 2\sigma} u_1+\tau^2|u_1|^{p}u_1,\\
\tau_s&=-\sigma \tau. 
\end{cases}
\end{align}

To diagonalize the linear part, we introduce new variables: 
\begin{align}\label{36}
u_1=v_1+v_2, \; u_2=(-\tfrac{d-2}2-\tfrac 12\beta)v_1+(-\tfrac{d-2}2+\tfrac 12\beta)v_2,
\end{align}
which equivalently gives
\begin{align}\label{37}
\begin{pmatrix}v_1\\ v_2\end{pmatrix}=\frac 1\beta\begin{pmatrix}-\tfrac{d-2}2+\tfrac 12\beta&-1\\
\tfrac{d-2}2+\tfrac 12 \beta&1\end{pmatrix}
\begin{pmatrix}u_1\\u_2\end{pmatrix}.
\end{align}
Substituting into \eqref{37}, we obtain:
\begin{align}\label{38}
\begin{cases}
\tfrac d{ds}v_1&=\tfrac 12\beta v_1+\tfrac 1\beta \tau^{\frac 2\sigma}(v_1+v_2)-\tfrac 1\beta \tau^2|v_1+v_2|^{p}(v_1+v_2)\\
\tfrac d{ds}v_2&=-\tfrac 12\beta v_2-\tfrac 1\beta \tau^{\frac 2\sigma}(v_1+v_2)+\tfrac 1\beta \tau^2|v_1+v_2|^{p}(v_1+v_2)\\
\tau_s&=-\sigma \tau. 
\end{cases}
\end{align}
By invariant manifold theory, there exists a unique 2-dim $C^1$ local stable manifold $W_s^0$ near $(\tau, v_1,v_2)=(0,0,0)$, described as the graph of a function:
\begin{align}\label{39}
v_1=\phi^0(\tau, v_2), 
\end{align}
with
\begin{align}\label{40}
\phi^0\in C^1, \quad \phi^0(\tau, 0)=0, \; D \phi^0(0, 0)=0. 
\end{align}
According to the stable manifold theorem, an orbit $(\tau (s), v_1(s), v_2(s))$ of \eqref{38} belongs to $W_s^0$ iff $\limsup_{s \to +\infty} e^{-\frac \beta4 s} |v_j(s)| < \infty$,  $j=1, 2$. 
Moreover, all orbits of \eqref{38} on $W_s^0$ satisfy $|v_1(s)| + |v_2(s)| \le O(e^{-\frac \beta4 s})$ as $ s\to +\infty$. Therefore, Lemma \ref{L:H1-0a} and direct computations yield 

\begin{lemma} \label{L:H1-0b}
Suppose $q(r)$ solves \eqref{12} on $(0, r_0)$, then $q(|\cdot|) \in H^1(\R^d)$ on $ \{x \in \R^d \mid |x| < r_0\}$ iff its corresponding orbit belongs to $W_s^0$.
%$\lim_{r\to 0+} u_1(r) =\lim_{r\to 0+} u_1(r)  =0$. 
\end{lemma}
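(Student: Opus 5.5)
The proof splits into two implications, linked by the characterization of $W_s^0$ recorded just before the statement. An orbit of \eqref{38} lies on $W_s^0$ exactly when $|v_1(s)|+|v_2(s)|$ decays exponentially as $s\to+\infty$. On $W_s^0$ one in fact has $|v_1|+|v_2|=O(e^{-\frac\beta4 s})$, and since $s=-\ln r$ this reads $|u_1(r)|+|u_2(r)|\lesssim r^{\beta/4}$.

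\emph{Necessity.} Assume $q(|\cdot|)\in H^1$ on $\{|x|<r_0\}$. By Lemma \ref{L:H1-0a}, $u_1(r),u_2(r)\to0$ as $r\to0^+$, so by \eqref{37} $(v_1,v_2)\to 0$, and clearly $\tau=r^\sigma\to0$. The orbit therefore enters any small neighborhood of the origin and stays there for all large $s$. On this neighborhood I would take a Lyapunov-type comparison in the $v_1$ equation of \eqref{38}. The perturbation terms are bounded by $C(\tau^{2/\sigma}+\tau^2)(|v_1|+|v_2|)$, which is small relative to $|v_1|+|v_2|$. Consequently, once $|v_1|$ dominates $|v_2|$, the derivative $\frac{d}{ds}|v_1|\ge\frac\beta4|v_1|$ holds, and this contradicts $v_1\to0$. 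Hence $|v_1|\le|v_2|$ for all large $s$. The $v_2$ equation then gives $\frac d{ds}|v_2|\le-\frac\beta4|v_2|$, which yields exponential decay. The orbit satisfies the $\limsup$ criterion and so lies on $W_s^0$. Equivalently, one can invoke the standard fact that the local stable manifold contains every orbit that remains in a small neighborhood for all forward time (here $\tau$ supplies the center-type direction, but $\tau_s=-\sigma\tau$ makes it stable).

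\emph{Sufficiency.} If the orbit lies on $W_s^0$, then $|u_1|+|u_2|\lesssim r^{\beta/4}$ near $0$. By \eqref{8}, $|q|\lesssim r^{-\frac{d-2}2+\frac\beta4}$ and $|q_r|\lesssim r^{-\frac d2+\frac\beta4}$. Then
\begin{align*}
\int_0^{r_0}\bigl(q_r^2+q^2\bigr)r^{d-1}\,dr\lesssim\int_0^{r_0}\bigl(r^{-1+\frac\beta2}+r^{1+\frac\beta2}\bigr)\,dr<\infty,
\end{align*}
and $\beta>0$ gives integrability at $0$. It remains to check that $q(|\cdot|)$ is an $H^1$ function on the ball, with no singular distributional part at the origin. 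Because $|q|\lesssim |x|^{-(d-2)/2+\beta/4}$, cutting off near $0$ and letting the cutoff radius $\epsilon\to0$ leaves an error term of size $\epsilon^{-1}\|q\|_{L^2(|x|\sim\epsilon)}\lesssim\epsilon^{\beta/4}\to0$. So the classical gradient is the weak gradient, and $q\in H^1$.

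I expect the main obstacle to be the necessity direction. Lemma \ref{L:H1-0a} gives only convergence to $0$, with no rate, so the essential step is upgrading convergence to exponential decay, which is what membership in $W_s^0$ requires. The cone-invariance argument above is the planned tool, and the point to check is that the perturbation coefficients $\tau^{2/\sigma},\tau^2$ tend to $0$, so the hyperbolic splitting $\pm\frac\beta2$ dominates.
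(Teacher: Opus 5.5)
Your proposal is correct and follows the paper's route: Lemma \ref{L:H1-0a} drives the orbit into every neighborhood of the origin, the stable-manifold characterization then places it on $W_s^0$, and conversely the $O(e^{-\beta s/4})$ decay on $W_s^0$ gives the $H^1$ bounds by direct computation. Your cone-invariance argument and your cutoff check that the classical gradient is the weak gradient add detail that the paper leaves to the invariant manifold theorem and to ``direct computations.''
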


Since our objective is both deriving the asymptotics and smoothly parametrizing all local $H_{rad}^1$ solutions near $r=0^+$, we proceed by solving the equivalent ODE on the stable manifold. This approach enables us to establish continuous dependence on the parameter $b$ in Lemma \ref{lm: q}. Some technical complications arise due to the limited regularity of the nonlinearity in \eqref{38} when $p$ is small. 
%A shorter way of giving rise to the asymptotics is sketched in Corollary \ref{cor: g}. 
We first prove a derivative estimate for $\phi^0$, which would be straightforward if the nonlinearity in \eqref{38} were smoother. 

\begin{lemma}\label{lm: dv2} 
There exists $C, \eta>0$ such that for all $|\tau|, |v_2|\le \eta$, 
\begin{align}\label{40.1}
|D_{v_2} \phi^0(\tau, v_2)|\le C\tau^2. 
\end{align}
\end{lemma}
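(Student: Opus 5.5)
The bound $|D_{v_2}\phi^0(\tau,v_2)|\le C\tau^2$ should follow from two facts. In \eqref{38} every nonlinear term carries a factor $\tau^{2/\sigma}$ or $\tau^2$ (note $2/\sigma>2$), and on $W_s^0$ the variable $\tau$ decays like $e^{-\sigma s}$. This decay is slower than the linear contraction rate $\frac\beta2$ only if $\sigma$ is large, so the rates need care. The manifold is the graph $v_1=\phi^0(\tau,v_2)$, which is invariant. Differentiating the invariance relation
\[
\tfrac d{ds}\phi^0(\tau(s),v_2(s))=\dot v_1(s)
\]
with respect to the initial value of $v_2$ gives a linear equation for $\psi(s):=D_{v_2}\phi^0(\tau(s),v_2(s))$ along each orbit.

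Rather than working with that equation directly, I will use the Lyapunov–Perron characterization. Let $(\tau(s),v_1(s),v_2(s))$ be the orbit on $W_s^0$ with data $(\tau_0,v_2^0)$ at $s=0$. Then
\[
v_1(0)=\phi^0(\tau_0,v_2^0)=-\int_0^\infty e^{-\frac\beta2 s'}\,N_1(s')\,ds',
\]
where $N_1=\frac1\beta\tau^{2/\sigma}(v_1+v_2)-\frac1\beta\tau^2|v_1+v_2|^p(v_1+v_2)$ and $\tau(s')=\tau_0e^{-\sigma s'}$. The integral converges because $v_j$ decays like $e^{-\frac\beta2 s}$ up to small perturbation, so it is bounded by $e^{-\frac\beta4 s}$ as stated.

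Differentiating in $v_2^0$ gives
\[
D_{v_2}\phi^0(\tau_0,v_2^0)=-\frac1\beta\int_0^\infty e^{-\frac\beta2 s'}\Bigl(\tau^{2/\sigma}-(p+1)\tau^2|v_1+v_2|^p\Bigr)\,\partial_{v_2^0}(v_1+v_2)(s')\,ds'.
\]
The map $u\mapsto|u|^pu$ is $C^1$ for $p>0$, so this is legitimate, with the usual care that the variational equation is linear with continuous coefficients. The variational equation for $(\partial_{v_2^0}v_1,\partial_{v_2^0}v_2)$ is linear with coefficient matrix $\mathrm{diag}(\frac\beta2,-\frac\beta2)+O(\tau^2)$. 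Hence, for $\eta$ small, the stable-manifold solution satisfies $|\partial_{v_2^0}v_j(s')|\le 2e^{-\frac\beta2 s'+\epsilon s'}$. This is a Gronwall or contraction argument in the weighted space $\sup_s e^{(\frac\beta2-\epsilon)s}|\cdot|$, exactly as in the construction of $W_s^0$. The terms in parentheses are bounded by $C\tau_0^2e^{-2\sigma s'}$. Therefore the integrand is at most $C\tau_0^2e^{-(\beta-\epsilon+2\sigma)s'}$, which is integrable, and $|D_{v_2}\phi^0|\le C\tau_0^2$ follows.

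The main obstacle is justifying the differentiation under the integral, and hence the derivative bound, when the nonlinearity is only $C^{1,p}$ with $p<1$. The stable manifold theorem gives $\phi^0\in C^1$ and the variational equation along orbits, but the uniform weighted estimate on $\partial_{v_2^0}v$ needs the derivative of the Lyapunov–Perron fixed-point map to be a contraction uniformly in $(\tau_0,v_2^0)$. I will first obtain this from the standard fiber-contraction argument: the derivative of the nonlinearity is $O(\tau^2)\le O(\eta^2)$, so the linearized operator has small norm in the weighted space. Then I will identify the derivative of the fixed point with the solution of the linearized fixed-point problem. If that identification becomes delicate, a fallback is to work directly with $\psi(s)$. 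Along any orbit, $\psi$ solves a Riccati-type equation
\[
\psi_s=\beta\psi+O(\tau^2)(1+\psi)^2.
\]
This is obtained by differentiating \eqref{39} along the flow. Requiring $\psi$ to stay bounded as $s\to\infty$ forces $\psi(0)=-\int_0^\infty e^{-\beta s'}O(\tau^2)\,ds'=O(\tau_0^2)$.
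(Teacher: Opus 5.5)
Your proposal is correct and follows essentially the paper's route: the paper likewise takes the variational solution $(\delta v_1,\delta v_2)$ along an orbit on $W_s^0$ (so $\delta v_1=\phi^0_{v_2}\delta v_2$ and $\delta v_2(s_0)=1$), integrates the $\delta v_1$-equation against $e^{-\frac12\beta(s'-s_0)}$ from $s_0$ to $\infty$, and uses that all coefficients are $O(e^{-2s'}+e^{-2\sigma s'})$ to get $|\phi^0_{v_2}(\tau_0,v_{20})|=|\delta v_1(s_0)|\le Ce^{-2\sigma s_0}=C\tau_0^2$. The one simplification worth noting is that only boundedness of the variational solution is needed, and the paper gets $|\delta v_1|,|\delta v_2|\le 1$ cheaply from $\delta v_1=o(1)\,\delta v_2$, which makes $(\delta v_2)^2$ decreasing; this removes the need for your fiber-contraction step, and your concern about comparing $\sigma$ with $\frac\beta2$ is unnecessary (and stated backwards), since the weight $e^{-\frac\beta2 s'}$ alone guarantees convergence for every $\sigma>0$.
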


\begin{proof}
Due to the invariance of $W_s^0$ and the form of \eqref{38}, there exists a sufficiently small $\eta>0$ such that for any $|\tau_0|+|v_{20}|\le \eta$, the solution to the equation \eqref{38} given by
\begin{align}\label{40.2}
(\tau(s), v_1(s), v_2(s))=\bigl(e^{-\sigma s}, \phi^0(e^{-\sigma s}, v_2(s)), v_2(s)\bigr)
\end{align}
with initial data at $s_0=-\frac 1\sigma \ln \tau_0$:
\[
(\tau(s_0), v_1(s_0), v_2(s_0))=\bigl(\tau_0, \phi^0(\tau_0, v_{20}), v_{20}\bigr)
\]
remains uniformly bounded for all later time:
\begin{align}\label{40.3}
|\tau(s)|+|v_1(s)|+|v_2(s)|\le 2\eta, \ s\ge s_0. 
\end{align}

To derive \eqref{40.1}, we consider the linearized solution along the trajectory $(\tau(s), v_1(s), v_2(s))$ described in \eqref{40.2}. Let 
\[\bigl(e^{-\sigma s}, v_1(s,\eps), v_2(s, \eps)\bigr)=\bigl(e^{-\sigma s}, \phi^0(e^{-\sigma s}, v_2(s,\eps)), v_2(s, \eps)\bigr)\]
be a perturbed solution on $W^0_s$ with the initial value 
\[\bigl(e^{-\sigma s_0}, v_1(s_0,\eps), v_2(s_0, \eps)\bigr)=\bigl(e^{-\sigma s_0}, \phi^0(e^{-\sigma s_0}, v_2(s_0)+\eps), v_2(s_0)+ \eps\bigr).\]
They yield a linearized solution $(\delta v_1(s), \delta v_2(s))$:
\begin{align}\label{43}
\delta v_1(s)=\tfrac{d}{d\eps}v_1(s,\eps)|_{\eps=0}, \;\; \delta v_2(s)=\tfrac{d}{d\eps}v_2(s,\eps)|_{\eps=0}. 
\end{align}
It follows that
\begin{align}\label{44}
%\begin{cases}
\delta v_1(s)=\phi_{v_2}^0(e^{-\sigma s}, v_2(s))\delta v_2(s), \; \forall s\ge s_0, \qquad 
\delta v_2(s_0)=1,
%\end{cases}
\end{align}
and from \eqref{38}, the linearized solution satisfies
\begin{align}\label{45}
\begin{cases}
\tfrac d{ds}\delta v_1=\tfrac 12\beta \delta v_1+\tfrac 1\beta e^{-2s}(\delta v_1+\delta v_2)-\tfrac {p+1}\beta e^{-2\sigma s}|v_1+v_2|^p (\delta v_1+\delta v_2),\\
\tfrac d{ds}\delta v_2=-\tfrac 12\beta \delta v_2-\tfrac 1\beta e^{-2s}(\delta v_1+\delta v_2)+\tfrac {p+1}\beta e^{-2\sigma s}|v_1+v_2|^p (\delta v_1+\delta v_2).
\end{cases}
\end{align}
From \eqref{44} and \eqref{40}, we deduce that 
\begin{align}\label{40.5}
|\delta v_1(s)|=o(1)|\delta v_2(s)|\le |\delta v_2(s)|.
\end{align}
Moreover, using the $\delta v_2$-equation and applying the bounds \eqref{40.3} and \eqref{40.5}, it follows that $(\delta v_2)^2(s)$ is strictly decreasing. Therefore,
 \[
|\delta v_2(s)|, \, |\delta v_1(s)|\le 1, \quad \forall s\ge s_0. 
\]
Next, we multiply both sides of the $\delta v_1$-equation in \eqref{45} by $e^{-\frac 12 \beta(s-s_0)}$ and integrate over $[s_0, \infty)$. This yields 
\[
\delta v_1(s_0)= \int_{s_0}^\infty e^{-\frac 12\beta(s'-s_0)}(\delta v_1+\delta v_2)O(e^{-2s'}+e^{-2\sigma s'} )ds',
\]
and we deduce
\begin{align}\label{47}
|\delta v_1(s_0)|\le Ce^{-2\sigma s_0}.
\end{align}
Combined with \eqref{44}, this implies 
\begin{align*}
\bigl|\phi^0_{v_2}(e^{-\sigma s_0}, v_{20})\bigr|=|\delta v_1(s_0)|\le C e^{-2\sigma s_0} = C \tau_0^2. 
\end{align*}
Since $\tau_0$ and $v_{20}$ were arbitrary, this proves \eqref{40.1}. Hence, the lemma is established. 
\end{proof}

We now proceed to rewrite the $q$-equation on the stable manifold $W_s^0$. To this end, we substitute $v_1, v_2$ in \eqref{39} using the relation given in \eqref{37}, expressing them in terms of $(u_1, u_2)$. This yields:
\begin{align}\label{49}
u_2+\tfrac{d-2-\beta}2 u_1+\beta \phi^0(\tau, \tfrac{d-2+\beta}{2\beta}u_1+\tfrac 1\beta u_2)=0. 
\end{align}
Let us define
\begin{align}\label{31}
w:=-\beta v_1= u_2+\tfrac{d-2-\beta}2 u_1.
\end{align}
Then equation \eqref{49} can be written as 
\begin{align}\label{32}
F(\tau, u_1, w):=w+\beta \phi^0(\tau, u_1+\tfrac 1\beta w)=0.
\end{align}
From \eqref{40}, it is straightforward to verify that 
\begin{align*}
F(0, 0, 0)=F_\tau(0,0,0)=F_{u_1}(0,0,0)=0, \; F_w(0,0,0)=1. 
\end{align*}
Therefore, by the implicit function theorem, there exists a $C^1(\R^2)$ function $\tilde \phi^0(\tau, u_1)$ defined in a neighborhood of $(0,0)$ such that 
%$F=0$ if and only if
\begin{align}\label{32.1}
F=0 \Longleftrightarrow w=\tilde \phi^0(\tau, u_1).
\end{align}
Moreover, we have
\begin{align}\label{33}
\tilde \phi^0(\tau, 0)=0, \; D \tilde \phi^0(0,0)=0.
\end{align}

We also claim that
\begin{align}\label{50}
|\tilde \phi^0_{u_1}(\tau, u_1)|\le C\tau ^2,
\end{align}
for sufficiently small $\tau$ and $u_1$. Indeed, inserting \eqref{32.1} into \eqref{32}, we obtain
\begin{align*}
\tilde \phi^0(\tau, u_1)=-\beta \phi^0(\tau, u_1+\tfrac 1\beta \tilde \phi^0(\tau, u_1)). 
\end{align*}
Differentiating both sides with respect to $u_1$ yields 
\begin{align*}
\partial_{u_1}\tilde \phi^0(\tau, u_1)=-\phi_{v_2}^0\bigl (\tau, u_1+\tfrac 1\beta\tilde \phi^0(\tau, u_1)\bigr)(\tilde \phi_{u_1}^0(\tau, u_1)+\beta), 
\end{align*}
which, together with \eqref{40.1}, implies the estimate in \eqref{50}. 

In summary, in a small neighborhood of $0$, 
%$(\tau=r^\sigma, u_1, u_2)=(0,0,0)$, 
equation \eqref{49} is equivalent to 
\begin{align*}
u_2+\tfrac{d-2-\beta}2 u_1=\tilde \phi^0(r^\sigma, u_1), 
\end{align*}
which, using \eqref{8}, can be rewritten as
\begin{align}\label{34}
q_r+\tfrac{d-2-\beta}{2r}q=r^{-\frac d2}\tilde \phi^0(r^\sigma, r^{\frac{d-2}2}q).
\end{align}
 Multiplying both sides of \eqref{34} by $r^{\frac{d-2-\beta}2}$, and defining the new variable 
%%we write \eqref{34} as
%%\begin{align*}
%%(r^{\frac{d-2-\beta}2}q)_r=r^{-1-\frac 12\beta}\tilde \phi^0(r^\sigma, r^{\frac{d-2}2}q). 
%%\end{align*}
%Define the new variable 
\begin{align}\label{34.1}
z(r)=r^{\frac{d-2-\beta}2}q(r),
\end{align}
we reduce \eqref{34} to 
\begin{align}\label{34.2}
z_r=r^{-1-\frac 12\beta}\tilde \phi^0(r^\sigma, r^{\frac \beta 2}z). 
\end{align}
If $q(r)$ is a local $H_{rad}^1$ solution near $r=0^+$, we have shown that $\lim_{r\to 0+} u_1(r) =0$. Therefore, using \eqref{33} and \eqref{50}, we obtain, for $r\ll1$, 
\begin{align}
|z_r|&=\bigl| r^{-1-\frac 12 \beta}\bigl[\tilde \phi^0(r^\sigma, r^{\frac \beta 2} z)-\tilde \phi^0(r^\sigma, 0)
\bigr]\bigr| =\bigl |r^{-1-\frac 12 \beta}D_{u_1}\tilde \phi^0(r^\sigma, \tilde r^{\frac\beta 2}z) r^{\frac \beta 2}z(r)\bigl| \le Cr^{-1+2\sigma}|z(r)|.\label{1210}
\end{align}
This estimate implies the existence of $\lim_{r\to 0+} z(r) \ne 0$ unless $z \equiv 0$, thereby establishing \eqref{20.5}.

From the above analysis, we conclude that finding the local $H_{rad}^1$ solution $q(b, r)$ to the equation \eqref{12} near $r=0^+$ is equivalent to solving the initial value problem 
\begin{align}\label{52}
\begin{cases}
z_r=g(r,z):=r^{-1-\frac 12\beta}\tilde \phi^0(r^\sigma, r^{\frac \beta 2} z), \\
z(0)=b, 
\end{cases}
\end{align}
where $\tilde \phi^0\in C^1$ satisfies properties in \eqref{33} and \eqref{50}. 
%We now turn to.
We have the following result of the well-posedness of \eqref{52}.

\begin{lemma}\label{lm: z}
For any $b_0 \in \R$ and $R>0$, there exists a sufficiently small $\eta>0$ such that \eqref{52} admits a unique solution 
\begin{align*}
z(b,r)\in C^1(S) \cap C^0 (\bar S), 
\textit{ where } S = [b_0-R, b_0+R] \times (0, \eta].
%I_{b,\eta}=[0, \min\bigl (\eta, (\tfrac \eta {|b|})^{\frac 2\beta}\bigr)]. 
\end{align*}
Moreover, for any $(b, r)\in \bar S$, 
%I_{b,\eta}$,
\begin{align}\label{53}
\begin{cases}
sgn (b) z(b,r)>0,\quad \text{ if } \ b\ne 0, \\
z_b(b,r)>0, \; |z(b,r)|+z_b(b,r)\le C, \\
|z_r(b,r)|+|z_{br}(b,r)|\le C r^{-1+2\sigma}, \quad r>0. 
\end{cases}
\end{align}
\end{lemma}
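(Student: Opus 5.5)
We recast \eqref{52} as a fixed point problem for the integral equation
\begin{align*}
z(b,r)=b+\int_0^r g(\rho, z(b,\rho))\,d\rho,\qquad g(\rho,z)=\rho^{-1-\frac\beta2}\tilde\phi^0(\rho^\sigma,\rho^{\frac\beta2}z),
\end{align*}
on the Banach space $X=C^0(\bar S)$ with the sup norm. The singular factor is harmless once we use \eqref{33} and \eqref{50}. For $|\rho^\sigma|$ and $|\rho^{\beta/2}z|$ inside the neighborhood where $\tilde\phi^0$ is defined, the mean value theorem gives
\begin{align*}
|g(\rho,z)|\le C\rho^{-1+2\sigma}|z|,\qquad |g(\rho,z)-g(\rho,\tilde z)|\le C\rho^{-1+2\sigma}|z-\tilde z|.
\end{align*}
Since $\rho^{-1+2\sigma}$ is integrable at $0$ with $\int_0^\eta\rho^{-1+2\sigma}d\rho=\eta^{2\sigma}/(2\sigma)$, we proceed as follows. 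On the ball $\{\|z\|_X\le 2(|b_0|+R)\}$, choose $\eta$ so small that $\eta^{\beta/2}\cdot 2(|b_0|+R)$ and $\eta^\sigma$ lie in the domain of $\tilde\phi^0$, and so that $C\eta^{2\sigma}/(2\sigma)<1/2$. The integral map then sends the ball to itself and is a contraction. This gives existence and uniqueness of $z\in C^0(\bar S)$ with $z(b,0)=b$.

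Uniqueness among all solutions of \eqref{52}, meaning those with $z(r)\to b$, follows from the same Lipschitz bound via Gronwall with the integrable weight. The estimate $|z_r|\le Cr^{-1+2\sigma}$ is immediate from \eqref{1210}. Positivity $\mathrm{sgn}(b)z>0$ follows as well, because $|z(b,r)-b|\le \tfrac12\sup|z|\cdot$(small). More precisely, Gronwall gives $|z(b,r)-b|\le |b|(e^{Cr^{2\sigma}/(2\sigma)}-1)<|b|$ for small $\eta$. Alternatively, we can note that $z\equiv0$ is the solution with $b=0$, so by uniqueness trajectories cannot cross zero.

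For the $b$-derivative we again use a contraction argument rather than differentiating the ODE directly, since $\tilde\phi^0$ is only $C^1$. The candidate derivative $w=z_b$ must solve the linear integral equation
\begin{align*}
w(b,r)=1+\int_0^r \partial_{u_1}\tilde\phi^0(\rho^\sigma,\rho^{\frac\beta2}z(b,\rho))\,\rho^{-1}\,w(b,\rho)\,d\rho .
\end{align*}
By \eqref{50}, the kernel is bounded by $C\rho^{-1+2\sigma}$ and is continuous in $(b,\rho)$ for $\rho>0$. Hence this equation has a unique solution in $C^0(\bar S)$, and $|w-1|\le \tfrac12$, which yields $z_b>0$ and boundedness. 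We show $w=z_b$ by the standard difference-quotient argument. Set $D_h=(z(b+h,\cdot)-z(b,\cdot))/h$. Then $D_h-w$ satisfies the linearized integral equation with a forcing term that tends to $0$ uniformly. This uses uniform continuity of $\partial_{u_1}\tilde\phi^0$ on compacts together with the integrable majorant $C\rho^{-1+2\sigma}$ for dominated convergence. Gronwall then gives $D_h\to w$ uniformly, so $z\in C^1$ in $b$ with $z_b=w$ continuous on $\bar S$.

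On $r>0$, differentiability in $r$ comes from the equation itself. Moreover, $z_{br}=\partial_r w=\partial_{u_1}\tilde\phi^0(\cdot)\,r^{-1}w$, which is continuous for $r>0$ and bounded by $Cr^{-1+2\sigma}$ by \eqref{50}. This gives $z\in C^1(S)$ and the last line of \eqref{53}. The main technical point is this differentiability step with only $C^1$ regularity and a singular weight. The key is that \eqref{50} converts the apparent $r^{-1}$ singularity into the integrable $r^{-1+2\sigma}$, uniformly in $b$. Throughout, $\eta$ must be chosen uniformly for $b\in[b_0-R,b_0+R]$, which the a priori bound $|z|\le 2(|b_0|+R)$ guarantees.
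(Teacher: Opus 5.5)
Your proposal is correct and follows essentially the same route as the paper. Both arguments run a contraction on $\Phi(b,z)(r)=b+\int_0^r g(\rho,z(\rho))\,d\rho$ driven by $|g(r,z)|\le Cr^{-1+2\sigma}|z|$ from \eqref{33} and \eqref{50}, and then use the linearized equation \eqref{62} for $z_b$; you differ only in justifying the $C^1$ dependence on $b$ by difference quotients (which the paper calls standard) and in using Gronwall for $z_b>0$ and $\mathrm{sgn}(b)z>0$, where the paper reads $z_b>0$ off the explicit formula $z_b=\exp\bigl(\int_0^r\rho^{-1}D_{u_1}\tilde\phi^0\,d\rho\bigr)$.
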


\begin{proof} 
Although $g(r, z)$ in \eqref{52} is not continuous at $r=0^+$, it follows from \eqref{33} and \eqref{50} that 
 $g$ is $L^1$ in $r \ll 1$ for any fixed $z$ and, for almost every $r$, $g$ is $C^1$ in $z$.
%$g\in L^1(dr)\cap C^1(dz)$ and $g_z\in L^1(dr)$ as well. 
Therefore,
the existence, uniqueness, and $C^1$ dependence on $b$ of solutions to \eqref{52} follow from a standard application of the contraction mapping principle. Below, we sketch the argument, through which we also derive the detailed estimates in \eqref{53}. 

For $b \in [b_0-R, b_0+R]$, consider 
% and define the solution map
%the solution map 
\begin{align}\label{55}
\Phi(b, z)(r)=b+\int_0^ rg(\rho, z(\rho))d\rho. 
\end{align}
We show that $\Phi (b, \cdot)$ is a contraction on the ball 
\begin{align}\label{56}
\mathcal B=\bigl \{z(r)\in C^0([0, \eta]) \mid |z|_{C^0([0, \eta])}\le |b_0| + R+1 \bigr\},
\end{align}
for sufficiently small $\eta$ to be determined. 
This requires suitable estimates on $g(r, z)$ and on the difference $g(r, z_1)-g(r,z_2)$ for $z, z_1, z_2\in \mathcal B$. 

By \eqref{1210}, we have 
\begin{align}
|g(r,z)|\le Cr^{-1+2\sigma}|z(r)|_{C^0}\le C (|b_0| + R+1)  r^{-1+2\sigma}. \label{57}
\end{align}
Thus, for any $r\in (0,\eta]$,
\begin{align}\label{57.1}
\bigl|\int_0^r g(\rho, z)d\rho\bigr|\le C\eta^{2\sigma} \le \frac{1}{2}, \; \textit{ and }|\Phi(b, z)(r)|\le |b_0| +R +1. 
\end{align}
Similarly, 
\begin{align*}
\bigl |g(r,z_1)-g(r,z_2)\bigr|\le Cr^{-1+2\sigma}|z_1-z_2|_{C^0}, 
\end{align*}
and hence
\begin{align*}
|\Phi(b, z_1)-\Phi(b, z_2)|\le C \eta^{2\sigma}|z_1-z_2|_{C^0}\le \frac 12|z_1-z_2|_{C^0}. 
\end{align*}
This proves existence and uniqueness, as well as the $C^0$ dependence on $b$, of the solution $z$ with initial value $b$.  %of the solution $z\in C^0([0, \eta])$ solution with initial value $b$.

Now let $z(b, r)$ denote the unique solution with the data $z(b,0)=b$. 
%We write $z(r)=z(b,r)$ to show the  dependence on $b$. 
The sign of $z(b, r)$ matches that of $b$ due to the continuity in $b$. From \eqref{57} and \eqref{57.1}, we also obtain 
\begin{align*}
%\frac 12 |b|\le sgn(b)z(b,r)\le 2|b|, \textit{ and }
|z_r(b,r)|=|g(r,z)|\le Cr^{-1+2\sigma}, \textit{ for all }r\in (0, \eta].
\end{align*}
To estimate $z_b$, differentiate \eqref{52} with respect to $b$ to obtain
\begin{align}\label{62}
\begin{cases}
(z_b)_r=r^{-1}D_{u_1}\tilde \phi^0(r^\sigma, r^{\frac{\beta}2} z)z_b, \\
z_b(b,0)=1. 
\end{cases}
\end{align}
It follows that 
\begin{align*}
z_b(b,r)=e^{\int_0^r \rho^{-1}D_{u_1}\tilde \phi^0(\rho^\sigma, \rho^{\frac \beta 2}z)d\rho} \in C^0([0, \eta]) \cap C^1 ((0, \eta]), 
\end{align*}
and hence
\begin{align*}
z_b(b,r)>0, \; z_b(b,r)\le \exp({Cr^{2\sigma}})\le C. 
\end{align*}
Substituting into \eqref{62}, we deduce
\begin{align*}
|z_{br} (b,r)|\le C r^{-1+2\sigma}.  
\end{align*}
Lemma \ref{lm: z} is proved. 
\end{proof}

%Finally, we remark that by transforming the bound for $z$ to $q$ via relation \eqref{34.1}, we are able to prove all the asymptotic estimates in Lemma \ref{lm: q}. Suppose $q(b,r)$ is the constructed local solution defined on the interval $(0, \eta]$. We can extend the solution by solving a non-singular initial value problem with data $(q(b,r_{b,\eta}), q_r(b,r_{b,\eta})$. Repeatedly using the equation \eqref{12}, we get the desired smoothness in $r$. 

From Lemma \ref{lm: z}, one obtains the limits in \eqref{64} through direct computations. The smoothness of $q$ in $(b, r)$ for $1 \gg r > 0$ follows from the fact $q \ne 0$ and the form of equation \eqref{12}. The proof of Lemma \ref{lm: q} is complete.
%At this point, we have completed the proof of Lemma \ref{lm: q}. The limited regularity arises primarily from the nonlinearity associated with small values of the exponent of $p$.  

Before concluding this subsection, we establish the asymptotic behavior of the linearized solution defined in \eqref{r1} for $A=a+d-1$, which will be used in later sections. 

\begin{corollary}\label{cor: g}
Let $Q(x)$ be the ground state solution of \eqref{12}. 
%that satisfies the asympotics in Lemma \ref{lm: q}. 
Let $G(x)$ be a radial, nontrivial local $H^1$ solution of the equation 
\begin{align}\label{r10}
\mathcal L_{a+d-1}G+G-(p+1)Q^p G=0
\end{align}
in a neighborhood of $r=0^+$. Then there exists $c\neq 0$ such that 
\begin{align}\label{r11}
\lim_{r\to 0^+}\frac {G(r)}{r^{-\frac{d-2}2+\frac 12\sqrt{d^2+4a}}}=c. 
\end{align}
\end{corollary}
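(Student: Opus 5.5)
We will repeat, for the linear equation \eqref{r10}, the scheme used for \eqref{12} in the proof of Lemma \ref{lm: q}. Write $A=a+d-1$ and $\beta_A=\sqrt{(d-2)^2+4A}=\sqrt{d^2+4a}$; the target exponent is $-\frac{d-2}2+\frac{\beta_A}2=-\frac{d-2-\beta_A}2$. Set $u_1=r^{\frac{d-2}2}G$ and $u_2=r^{\frac d2}G_r$, and change variables to $s=-\ln r$. System \eqref{35} then becomes linear:
\[
\tfrac{d}{ds}u_1=-\tfrac{d-2}2u_1-u_2,\qquad \tfrac{d}{ds}u_2=-Au_1+\tfrac{d-2}2u_2-r^2u_1+(p+1)r^2Q^p u_1 .
\]
By Lemma \ref{lm: q}, $Q(r)\sim b_0 r^{-\frac{d-2-\beta}2}$, so $r^2Q^p\sim b_0^p r^{2-\frac{(d-2-\beta)p}2}$. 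This exponent is positive, because $\frac{(d-2-\beta)p}{2}<\frac{(d-2)p}2<2$. Call it $\kappa>0$. So the perturbation is $O(r^{\kappa})=O(e^{-\kappa s})$, which is integrable in $s$.

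First we diagonalize as in \eqref{36}--\eqref{37} with $\beta$ replaced by $\beta_A$. This gives $v_1'=\frac{\beta_A}2 v_1+E(s)(v_1+v_2)$ and $v_2'=-\frac{\beta_A}2v_2-E(s)(v_1+v_2)$, with $|E(s)|\le Ce^{-\kappa' s}$ where $\kappa'=\min(\kappa,2)$. Next we show that local $H^1$ forces $u_1,u_2\to0$ as $s\to\infty$, i.e.\ $r\to0^+$. This is the argument of Lemma \ref{L:H1-0a}, which uses only Hardy's inequality and the equation; here the nonlinear term is replaced by the bounded-coefficient term $r^2Q^pG/r^2$, and it is controlled the same way. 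Because the system is linear with integrable perturbation, Levinson's theorem (or a direct contraction on $[s_0,\infty)$, as in the proof of Lemma \ref{lm: dv2}) gives a basis of solutions. One solution is asymptotic to $e^{\frac{\beta_A}2 s}(1,0)$ and the other to $e^{-\frac{\beta_A}2 s}(0,1)$. The decay condition $u_j\to0$ excludes the growing mode, so $v_2(s)=(c+o(1))e^{-\frac{\beta_A}2 s}$ and $v_1=o(e^{-\frac{\beta_A}2 s})$.

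To get the precise limit we argue as in \eqref{1210}. We show $|v_1|\le Ce^{-\kappa's}|v_2|$ by integrating the $v_1$ equation backward from infinity, exactly as in \eqref{47}. Then the $v_2$ equation reads $v_2'=(-\frac{\beta_A}2+O(e^{-\kappa' s}))v_2$, hence $e^{\frac{\beta_A}2 s}v_2(s)\to c$. We have $c\ne0$: if $c=0$, the same integral equations force $v_1\equiv v_2\equiv 0$ by a Gronwall/contraction argument on $[s_0,\infty)$, contradicting nontriviality of $G$. Finally $u_1=v_1+v_2$, so $r^{\frac{d-2}2}G=(c+o(1))r^{\frac{\beta_A}2}$, which is \eqref{r11}.

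The main obstacle is the step excluding the growing mode and proving $c\neq 0$. It requires checking that the $H^1$-decay lemma carries over with the potential $A/r^2$, where $A>0$ poses no Hardy issue, and that the perturbation $r^2Q^p$ is integrable in $s$. Both reduce to $\kappa>0$, which follows from $p<\frac4{d-2}$.
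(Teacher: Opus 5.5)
Your proof is correct and is essentially the paper's argument. It uses the same variables $u_1=r^{\frac{d-2}2}G$, $u_2=r^{\frac d2}G_r$ and $s=-\ln r$, the same diagonalization with $\sqrt{d^2+4a}$, shows $v_1$ is of lower order than $v_2$, and then integrates the scalar $v_2$-equation. The only difference is that you treat the nonautonomous linear system directly (Levinson's theorem or backward integration, with the growing mode excluded by the Hardy argument of Lemma \ref{L:H1-0a}) rather than adjoining $\tau=r^\sigma$ and reading $v_1=\omega(\tau)v_2$ off the stable manifold. This means you need only $r^2Q^p=O(e^{-\kappa s})$, and you make explicit why local $H^1$ selects the decaying mode, a step the paper leaves implicit. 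One harmless slip: $A=a+d-1$ can be negative when $d\ge 7$, but your argument only uses $\sqrt{d^2+4a}>0$.
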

\begin{proof} Due to similarity with the previous argument, we provide only a sketch of the main steps. Define
\begin{align}\label{r11.1}
u_1(r)=r^{\frac{d-2}2}G(r), \ u_2(r)=r^{\frac d 2} G_r(r),
\end{align}
and use the change of variable $s=-\ln r$. Then, equation \eqref{r10} transforms into the following ODE system: 
\begin{align}\label{r12}
\begin{cases}
\tfrac d{ds} u_1=-\tfrac{d-2}2 u_1-u_2,\\
\tfrac d{ds} u_2=-(a+d-1)u_1+\tfrac{d-2}2 u_2-r^2(1-(p+1)Q(r)^p)u_1,\\
r_s=-r. 
\end{cases}
\end{align}
From the asymptotic behavior of $Q$ near $r=0^+$, as established in Lemma \ref{lm: q}, we have
\begin{align*}
r^2 Q(r)^p=O(r^{2+p(-\frac{d-2}2+\frac 12\beta)}), 
\end{align*}
which might fail to be $C^1$ when $p$ is close to $\tfrac 4{d-2}$. To remove this nonessential singularity, we introduce the auxiliary variable
\begin{align}\label{r13}
\sigma=\tfrac 12\bigl(2+p(-\tfrac{d-2}2+\tfrac 12\beta)\bigr) \in (\tfrac \beta4 p, 1), \quad \tau=r^\sigma, 
\end{align}
as in \eqref{35}. Applying a diagonalization procedure, we rewrite \eqref{r12} in terms of new variables $v_1, v_2$ as 
\begin{align}\label{r15}
\begin{cases}
\tfrac d{ds} v_1=\tfrac 12 \tilde \beta v_1+\tfrac 1{\tilde \beta}\tau^{\frac 2{\sigma}}\bigl(1-(p+1)Q(\tau^{\tfrac 1\sigma})^p\bigr)(v_1+v_2)\\
\tfrac d{ds} v_2=-\tfrac 12 \tilde \beta v_2-\tfrac 1{\tilde \beta}\tau^{\frac 2\sigma}\bigl(1-(p+1)Q(\tau^{\tfrac 1\sigma})^p\bigr)(v_1+v_2),\\
\tau_s =-\sigma \tau. 
\end{cases}
\end{align}
where $\tilde \beta=\sqrt{d^2+4a}$. The transformation between variables is given by
\begin{align}\label{r16}
u_1=v_1+v_2, \; u_2=(-\tfrac{d-2}2-\tfrac 12 \tilde \beta)v_1+(-\tfrac{d-2}2+\tfrac 12 \tilde \beta)v_2. 
\end{align}
It is straightforward to verify that the right-hand sides of \eqref{r15} are $C^2$ in $v_1, v_2,\tau$. Due to the linearity of the system in $v_1, v_2$, there exists a unique $C^2$ stable manifold $W^0_s$ near $(\tau, v_1, v_2)=(0,0,0)$, which can be expressed as the graph of a linear function:
\begin{align*}
v_1=\omega(\tau)v_2, \textit{ for } |\tau|\ll 1, 
\end{align*}
with $\omega(\tau)\in C^1$ and $\omega(0)=0$. This yields the estimate
\begin{align}\label{r17}
v_1=O(\tau) v_2. 
\end{align}
Substituting \eqref{r17} into the second equation of \eqref{r15}, and using the estimate
\[
\tau^{\frac 2\sigma}Q(\tau^{\frac 1\sigma})^p =O(\tau^2),
\]
we see that the equation for $v_2$ becomes
\[
\tfrac d{ds}v_2=-\frac 12 \tilde \beta v_2+O(\tau^2)v_2=v_2(-\tfrac 12 \tilde \beta+O(e^{-2\sigma s})). 
\]
Thus, we obtain
\[
v_2(s)=C(s)e^{-\tfrac 12 \tilde \beta s},\ \textit{ for some } C(s)\to c\neq 0, \textit{ as } s\to \infty,
\]
and $v_1$ is a high-order term due to \eqref{r17}. Combining the identities \eqref{r16}, \eqref{r13} and \eqref{r11.1}, we arrive at the asymptotic formula \eqref{r11}. 
%This completes the proof of Corollary \ref{cor: g} and concludes the discussion of this subsection. 
\end{proof}

\subsection{Classification of positive  local $H_{rad}^1$ solutions}
Let $b>0$ and let $q(b,r)$ denote the positive radial local $H^1$ solution of \eqref{12} near $r=0^+$, as constructed in the last subsection. From Lemma \ref{lm: q}, for every $b>0$, there exists $r_0>0$ such that the pair $(q(b, r), q_r(b,r))$ lies in the fourth quadrant of the $(q,q_r)$-plane for all $r \in (0, r_0)$. 
%These include all possible $H_{rad}^1$ solutions \eqref{12}. 
The goal of this subsection is to classify the parameter space $b>0$ into three categories based on the eventual behavior of the curve 
\begin{align*}
\Gamma=\{(q(b,r), q_r(b,r)) \mid r >0\}. 
\end{align*}
As in the classical literature on the uniqueness of positive radial solutions without the inverse square potential, we will prove that only three qualitative behaviors are possible:
\begin{align*}
S^-&=\bigl\{b>0\mid \exists \, r_1>0 \textit{ such that } q(b,r_1)=0,\, q_r(b, r_1)<0,    \\
&\qquad\qquad\qquad q(b,r)>0, \ q_r(b,r)<0, \ \forall r\in (0, r_1) \bigr\},
\end{align*}
\begin{align*}
S^+&=\bigl\{b>0 \mid \exists \, r_2> 0 \textit{ such that } q_r(b,r_2)=0,\, q(b, r_2) >0,  \\
&\qquad\qquad\qquad q(b,r)>0, \ q_r(b,r)<0, \ \forall  r\in(0, r_2) \bigr\},
\end{align*}
\begin{align*}
S^0=\bigl\{b>0\mid  &q(b, r)>0,\, q_r(b, r)<0, \, \forall r>0, \\
&\lim_{r\to \infty} q(b,r)=\lim_{r\to \infty} q_r(b,r)=0 \bigr\}.
\end{align*}
Clearly $S^0$ corresponds to decaying solutions, which by Lemma \ref{lm: infinity} are ground states of \eqref{12}. 

\begin{lemma}\label{lm: b} 
Let $S^{\pm}$, $S^0$ be the sets of $b$ defined above. Then
%All solutions $q(b, r)$ exist for all $r>0$ and 

a) $\R^+=S^+\cup S^0\cup S^-$,

b) $S^+$ and $S^-$ are open subsets of $\R^+$. 

c) For any $b\in S^+$, let $r_2$ be given in the definition of $S^+$, then we have $q(b,r)> q(b,r_2)$ for any $r > r_2$. 
%we have the lower bound  
%\begin{align}\label{65}
%\inf_{r\ge r_2} q(b,r)\ge q(b,r_2). 
%\end{align}
\end{lemma}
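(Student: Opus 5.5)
We follow the classical phase-plane argument, with the energy
\[
E(r)=\tfrac12 q_r^2+\tfrac12\bigl(1-\tfrac{a}{r^2}\bigr)q^2-\tfrac1{p+2}|q|^{p+2}.
\]
Multiplying \eqref{12} by $q_r$ gives
\[
E_r=-\tfrac{d-1}{r}q_r^2-\tfrac{a}{r^3}q^2 .
\]
Since $a<0$, the second term is positive, so $E$ is not monotone. For this reason we avoid relying on $E$ as a Lyapunov function in part (a). We argue directly instead.

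\textbf{Part (a).} Fix $b>0$. By Lemma \ref{lm: q}, for small $r$ we have $q>0$ and $q_r<0$. Let $r_*\in(0,\infty]$ be the supremum of the $r$ such that $q>0$ and $q_r<0$ on $(0,r)$. The solution cannot blow up on $(0,r_*)$, because $0<q<q(r_0)$ there, and $q_r$ is bounded via the equation on compact subintervals.

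If $r_*<\infty$, then either $q(r_*)=0$ or $q_r(r_*)=0$.
\begin{itemize}
\item If $q(r_*)=0$, uniqueness for the regular ODE at $r_*>0$ forces $q_r(r_*)\ne 0$, hence $q_r(r_*)<0$ and $b\in S^-$.
\item If $q_r(r_*)=0$ and $q(r_*)>0$, then $b\in S^+$. Both cannot vanish simultaneously, again by uniqueness, since otherwise $q\equiv0$, contradicting $b>0$.
\end{itemize}

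If $r_*=\infty$, then $q$ is positive and decreasing, so $q\to \ell\ge0$. Writing \eqref{12} as
\[
(r^{d-1}q_r)_r=r^{d-1}\bigl(\tfrac a{r^2}+1-q^p\bigr)q,
\]
we see that $q_r$ is bounded and $q_{rr}\to (1-\ell^p)\ell$ (using $q_r\to0$ in the averaged sense, since $q_r$ is integrable). Hence $q_r\to0$ and $(1-\ell^p)\ell=0$, so $\ell\in\{0,1\}$.

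The case $\ell=1$ must be excluded. This is the main obstacle, since $E$ is not monotone. We first try the limiting system: near $(q,q_r)=(1,0)$ the linearization $q_{rr}=-pq$ (at $\ell=1$, $1-(p+1)\ell^p=-p$) is oscillatory. The damping $\tfrac{d-1}r q_r$ and the perturbation $a/r^2$ decay, so the solution should oscillate around $1$. A Sturm comparison with $q_{rr}+\tfrac{d-1}rq_r+(p-\epsilon)(q-1)\approx 0$ would then force $q_r$ to change sign, contradicting $q_r<0$. Concretely, we would set $w=q-1>0$, which satisfies $w''+\tfrac{d-1}rw'+c(r)w=\tfrac a{r^2}q$ with $c(r)\to p>0$. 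Since $\tfrac a{r^2}q<0$ is a favorable sign, the Sturm argument shows that $w$ must have a zero, which is impossible. Therefore $\ell=0$ and $b\in S^0$.

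\textbf{Part (b).} Openness follows from the continuous dependence of $(q,q_r)(b,r)$ on $b$, uniform on compact $r$-intervals. This combines Lemma \ref{lm: z} near $r=0$ with regular ODE theory afterwards.
\begin{itemize}
\item For $b\in S^-$, $q$ crosses zero transversally at $r_1$, and $q>0$, $q_r<0$ on $(0,r_1]$ apart from the endpoint. Near $0$ the sign conditions hold uniformly in $b$ by \eqref{53} and the limits \eqref{64}, so nearby $b$ also cross zero first.
\item For $b\in S^+$, \eqref{12} at $r_2$ gives $q_{rr}(r_2)=(a/r_2^2+1-q^p)q$. We need this to be nonzero for transversality. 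If $q_{rr}(r_2)=0$, then differentiating \eqref{12} gives $q_{rrr}(r_2)=-\tfrac{2a}{r_2^3}q>0$, so $q_r$ still changes sign from negative to positive near $r_2$. In either case, for nearby $b$ the derivative $q_r$ becomes positive slightly after $r_2$ while $q>0$, which yields membership in $S^+$.
\end{itemize}

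\textbf{Part (c).} Let $b\in S^+$. At $r_2$ we have $q_r=0$ and $q_{rr}\ge0$, and $q$ increases just after $r_2$. Suppose some $r>r_2$ has $q(r)\le q(r_2)$. Take the first $r_3>r_2$ where $q_r(r_3)=0$, which is a local maximum, so $q_{rr}(r_3)\le 0$.

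At $r_2$, the condition $q_{rr}\ge0$ gives $h(r_2)q(r_2)\ge0$, where $h(r)=a/r^2+1-q^p$. At $r_3$ we have $h(r_3)q(r_3)\le0$, with $q(r_3)>q(r_2)$ and $a/r_3^2>a/r_2^2$ since $a<0$.

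We plan to compare via $E$ on $[r_2,r_3]$. On this interval $E_r\ge -\tfrac{d-1}{r}q_r^2$, and the term $-a q^2/r^3$ is positive. Combined with $q_r=0$ at both ends, this gives a contradiction with the shape of $F(q)=\tfrac12(1-a/r^2)q^2-\tfrac{q^{p+2}}{p+2}$ unless $q$ stays above $q(r_2)$. Making this precise, possibly by tracking $E$ together with the monotone factor $1-a/r^2$, is the second delicate point.
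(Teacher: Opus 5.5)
\textbf{The gap is part (c), and it comes from a sign error in your energy.}

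The Hamiltonian for \eqref{12} is
$H=\tfrac12 q_r^2-\tfrac{a}{2r^2}q^2-\tfrac12 q^2+\tfrac1{p+2}|q|^{p+2}$. Its $q^2$ and $|q|^{p+2}$ terms have the opposite signs from your $E$. Multiplying \eqref{12} by $q_r$ gives
$H_r=-\tfrac{d-1}{r}q_r^2+\tfrac{a}{r^3}q^2$.
The explicit $r$-derivative of $-\tfrac{a}{2r^2}q^2$ is $+\tfrac{a}{r^3}q^2$, which is $\le 0$ because $a<0$. So $H$ is strictly decreasing, which is the opposite of your premise. Your $E$ does not even satisfy the identity you state: differentiating it along \eqref{12} leaves an extra term $2qq_r(1-|q|^p)$.

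Because of this, part (c) is left unproved. Your plan also has the wrong structure: taking $r_3$ to be the first critical point, a local maximum with $q(r_3)>q(r_2)$, does not locate where $q$ would come back down to $q(r_2)$. The missing argument is as follows.

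Since $q_{rr}(r_2)>0$, $q$ increases just after $r_2$. If (c) failed, let $r_3>r_2$ be the first return to the level $q(r_2)$, so that $q\ge q(r_2)$ on $[r_2,r_3]$. Integrate $H_r$ over $[r_2,r_3]$. The terms $-\tfrac12q^2+\tfrac1{p+2}q^{p+2}$ cancel because $q(r_3)=q(r_2)$, and $q_r(r_2)=0$. Drop the damping term, and use $q\ge q(r_2)$ together with $a<0$:
\[
\tfrac12 q_r(r_3)^2+\tfrac a2 q(r_2)^2\bigl(r_2^{-2}-r_3^{-2}\bigr)=\int_{r_2}^{r_3}\Bigl(-\tfrac{d-1}{r}q_r^2+\tfrac{a}{r^3}q^2\Bigr)dr<\int_{r_2}^{r_3}\tfrac{a}{r^3}q(r_2)^2\,dr=\tfrac a2 q(r_2)^2\bigl(r_2^{-2}-r_3^{-2}\bigr).
\]
This says $q_r(r_3)^2<0$, a contradiction.

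Part (c) concerns all $r>r_2$, so it also needs the solution to exist beyond the fourth quadrant. Your argument does not supply this, but it follows from the same monotonicity: $\tfrac12q_r^2-\tfrac12q^2+\tfrac1{p+2}|q|^{p+2}\le H(r)\le H(r_0)$ bounds the trajectory.

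\textbf{A smaller slip in (b).} If $q_{rr}(r_2)=0$, then $q_{rrr}(r_2)=-\tfrac{2a}{r_2^3}q(r_2)>0$. This makes $r_2$ a strict local minimum of $q_r$ with value $0$, so $q_r$ does not change sign there. Instead $q_r>0$ just before $r_2$, which contradicts the definition of $S^+$. So this case is vacuous, and $q_{rr}(r_2)>0$. That strict inequality is what gives transversality for openness, and it is what makes $q$ increase after $r_2$ in (c); your ``$q_{rr}\ge 0$'' alone does not give this.

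\textbf{Part (a) is fine.} Your Sturm-comparison exclusion of the limit $\ell=1$ is a legitimate alternative to the paper's polar-angle argument. With $w=q-1>0$ you have $(r^{d-1}w')'+r^{d-1}c(r)w=r^{a}$-free form $r^{d-1}\tfrac{a}{r^2}q<0$ and $c\to p>0$. Pair $w$ with an oscillating solution $y$ of the homogeneous equation between two consecutive zeros; the boundary terms have the wrong sign, which gives the contradiction directly. The boundedness of $q_r$ that you assert there without proof is also immediate from $H$.
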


\begin{proof} We first claim that the trajectory $\Gamma$ is bounded. Namely for any $r_0>0$, there exists $C>0$ such that
\begin{align}\label{65.1}
|q(b, r)|+|q_r(b, r)|<C, \quad \forall r\ge r_0. 
\end{align}
This boundedness ensures the global existence of the trajectory $\Gamma$. 
To establish this, define the energy function 
\begin{equation}\label{Hdef}
H(r)=\frac 12 q_r^2-\frac a{2r^2}q^2-\frac 12 q^2+\frac 1{p+2}|q|^{p+2}.
\end{equation}
Multiplying both sides of \eqref{12} by $q_r$, we obtain 
\begin{align}\label{66}
\frac d{dr}H(r)=-\frac{d-1} rq_r^2+\frac a{r^3}q^2<0,
\end{align}
so $H(r)<H(r_0), \ \forall r>r_0.$

On intervals where $q(r)$ is sufficiently large such that $\frac 1{p+2}|q|^{p+2}-\frac 12 q^2\gtrsim |q|^{p+2}$, we deduce: 
\[
q_r^2+|q|^{p+2}\lesssim H(r)< H(r_0).
\]
On the complementary regions where $|q(r)|\lesssim 1$, it follows immediately that 
\[
\frac 12 q_r^2< H(r_0)+\frac 12 q^2 \lesssim H(r_0)+ 1.
\]
Thus, the bound \eqref{65.1} is established. 

Now, consider the square in the $(q, q_r)$-plane bounded by $q=C$, $q_r=-C$ and the axes $q=0, \ q_r=0$. The trajectory $\Gamma$ must either remain inside this square for all $r$, or touch the boundary at some finite $r$. Due to \eqref{65.1} and uniqueness, any such boundary contact must occur on one of the axes, excluding the origin $(0,0)$. 

Suppose $\Gamma$ intersects the $q_r$-axis first at $r= r_1$,  
with $q(b,r_1)=0$ and $q_r(b,r)<0$ for all $r< r_1$. Then $b\in S^-$. Moreover, since $q_r(b,r_1)<0$, 
$q$ becomes negative for $r=r_1+$, implying a transverse crossing. The smoothness of $q(b,r)$ ensures that $S^-$ is open. 
%we must have $q(b, r_1+)<0$ and $\Gamma$ crosses over the vertical axis due to the monotonicity of $q$. The non-degeneracy of the zero point $r_1$ and the continuity of $q(b,r)$ also immediately imply that $S^-$ is open. 

If instead $\Gamma$ touches the $q$-axis  first at $r= r_2$, 
%$r_2>r_0$ is the first point at which 
then $q_r(b,r_2)=0$ with $q(b,r_2)>0$ and $q_r(b,r)<0$ for $r<r_2$, hence $b\in S^+$. We claim that $\Gamma $ crosses the $q$ axis transversally. Indeed, if $q_{rr}(b,r_2)=0$, then from \eqref{12}
\begin{align*}
q_{rrr}(b,r_2)=\bigl (-\tfrac{d-1}r q_r+\tfrac a{r^2}q+q-q^{p+1}\bigr)_r\bigl|_{r=r_2}=-\tfrac{2a}{r^3} q(b,r_2)>0, 
\end{align*}
implying that $q_r$ has a local minimum at $r_2$, which contradicts $q_r (b, r_2-)<0$. Thus
 \begin{equation} \label{69}
q_{rr}(b,r_2)> 0, 
\end{equation} 
confirming transversality and implying that $S^+$ is open. 

If $\Gamma$ remains strictly inside the square for all $r> 0$, then $q_r(b,r)<0$ and $q(b,r)>0$ is strictly decreasing in $r$, thus,
\begin{align}\label{70}
q(b,r)\to c\ge 0 \textit{ as } r\to \infty
%\end{align}
\; \text{ and } \; 
%Suppose a sequence $r_n\to \infty$ exists with $q_r(b,r_n)\to \tilde c<0$. Then the boundedness of $q_{rr}$ \eqref{65.1} and the consequent boundedness of $q_{rr}$ would immediately lead to a contradiction to \eqref{70}. Hence, the only possibility for $q_r$ is 
% \begin{align}\label{71}
0> q_r(b,r)\to 0, \textit{ as } r\to \infty. 
\end{align}

If $c=0$, then $b\in S^0$. If $c>0$ and $c\neq 1$, then from \eqref{12}, we obtain
\begin{align}\label{72}
q_{rr}(b,r)\to c(1-c^p)\neq 0, \textit{ as } r\to \infty, 
\end{align}
which contradicts $q_r(b,r)\to 0$.

The final case $c=1$ is more subtle. Set 
\begin{align*}
u=q-1, \; v=q_r,
\end{align*}
so that equation \eqref{12} can be rewritten as the system 
\begin{align}\label{74}
\begin{cases}
u_r=v\\
v_r=-\tfrac{d-1}r v+\tfrac a{r^2}(u+1)-pu+O(|u|^{\min\{ p+1, 2\}}). 
\end{cases}
\end{align}
Heuristically, from the linear analysis, it is not difficult to observe that the only scenario in which $(q,q_r)\to (1,0)$ (i.e. $(u,v)\to 0$) is when the trajectory $\Gamma$ spirals around and approaches $(1,0)$. Therefore, $\Gamma$ cannot approach $(1,0)$ without eventually entering into the upper plane. 

To make this argument rigorous, we switch to polar coordinates by letting 
\[u=\tfrac \rho{\sqrt p} \cos \theta, \ v=\rho \sin\theta,
\]
 where the angular variable $\theta$ satisfies the equation
\begin{align}\label{75}
\theta_r=-\sqrt p +\frac a{\rho r^2}\cos\theta-\tfrac{d-1}r \sin\theta\cos\theta +\tfrac a{\sqrt p r^2}\cos^2\theta
+O(\rho^{\min\{ p, 1\}}). 
\end{align}
From \eqref{70} we know for sufficiently large $r$, the trajectory $\Gamma$ approaches $(1,0)$ from the lower right, implying that $-\tfrac\pi 2<\theta<0$ and thus $\cos \theta>0$. Substituting this and the fact $\rho\to 0$ to \eqref{75} gives 
\begin{align}\label{76}
\theta_r<-\sqrt p+O(\tfrac 1r+\rho^{\min\{p, 1\}})<-\tfrac{\sqrt p}2
\end{align}
for sufficiently large $r$. This shows that $\theta$ is decreasing at a uniform rate, leading to a contradiction with $\theta> -\frac \pi2$. 

Thus, the first two statements in Lemma \ref{lm: b} are established. 

%The openness of the set $S^{\pm}$ follows simply from the fact that $ \frac{\partial^k}{\partial r^k} q\in C(\R^+\times I)$ for any compact interval of $r$ on which $q$ exists and $k=0,1, 2,3$ as shown in Lemma \ref{lm: q}. 

Finally, we prove the uniform lower bound in statement c) by contradiction. For convenience, we omit the dependence on the parameter $b$. 
%Suppose, for contradiction, that \eqref{65} fails, namely  
%\begin{align}\label{77}
%\inf_{r\ge r_2}q(r)<q(r_2).
%\end{align}
 From \eqref{69}, there must exist $r_3>r_2$ such that 
\begin{align}\label{78}
q(r_3)=q(r_2), \; q(r)\ge q(r_2), \ \forall r\in [r_2, r_3]. 
\end{align}
 Integrating \eqref{66} over the interval $[r_2, r_3]$, and noting that $q_r(r_2)=0$, we obtain 
 \begin{align}
 \tfrac 12 q_r( r_3)^2+\tfrac a2 q(r_2)^2[\tfrac 1{r_2^2}-\tfrac 1{r_3^2}]&=\int_{r_2}^{r_3}(-\tfrac{d-1}r q_r^2+\tfrac a{r^3} q^2) dr\notag\\
 &<\int_{r_2}^{r_3}\tfrac a{r^3} q(r)^2 dr\le \tfrac a2 q(r_2)^2(\tfrac 1{r_2^2}-\tfrac 1{r_3^2}).\label{79}
 \end{align}
 This leads to the contradiction $q_r(r_3)^2<0$. Therefore, statement c) holds, completing the proof of Lemma \ref{lm: b}.
\end{proof}

 \section{Uniqueness of the ground state \label{S:uniqueness}}

In this section, we prove the uniqueness of the ground state solution, which is equivalent to $|S^0|=1$. The strategy is to establish the isolation of the set $S^0$ (Lemma \ref{lm: i}) and the rigidity of the local structure around points in $S^0$ (Lemma \ref{lm: s}). One may compare with the classical treatment by McLeod \cite{McLeod} in the case without the singular inverse square potential.

\subsection{Zeros of solutions to the linearization of \eqref{12}}  

In this  subsection, we consider $b\in S^0\cup S^-$, and for simplicity, we write 
   \[
   q(r):=q(b,r).
   \]
  From the definition, $q(r)$ satisfies the asymptotics in Lemma \ref{lm: q} near $r=0^+$. Moreover, if $b \in S^-$, 
%  $q(r)$ corresponds to an $S^-$ solution, 
then there exists some $0<r_0<\infty$ such that $q(r_0)=0, \ q_r(r_0)<0$ and $q(r)>0,\  q_r(r)<0$ for all $r\in (0,r_0)$. Throughout this section, we define
   \begin{align*}
   r_0=\begin{cases} \mbox{first zero as described above}; & \mbox{ if } b\in S^-,\\
   \infty;&\mbox{ if } b\in S^0. 
   \end{cases}
   \end{align*}

Define    
    \[
    q_1(r)=\tfrac 2p q(r)+rq_r(r), \ \tilde q(r)=q_b(r,b)
    \]
  Recall the differential equations satisfied by $q$, $q_1$, $\tilde q$: 
    \begin{align*}
    \begin{cases}
    L_1 q=(\la+1-(p+1)q^p) q=-pq^{p+1},\\
    L_1 \tilde q
    %=(\la+1-(p+1)q^p)\tilde q
    =0,\\
    L_1 q_1=-2q.
    %L_2 q=(\la+1-q^p)q=0. 
    \end{cases}
    \end{align*}
 In particular, these equations imply that the zeros of $q$ and $\tilde q$ are always non-degenerate.   
    
    Our goal in this subsection is to prove the uniqueness of zeros for $\tilde q$. We first establish the existence of at least one zero. 
    
    \begin{lemma} \label{lm: qt} 
    Let $b \in S^0\cup S^-$ and $q_1(r), \tilde q(r)$, $r_0$ be defined as above. Then $q_1(r)$ has at least one zero in the interval $(0, r_0). $ Let $r_1$ denote the first zero of $q_1(r)$. Then $\tilde q(r)$ also has at least one zero in $(0, r_1)$. 
    \end{lemma}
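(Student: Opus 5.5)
Both claims are Sturm-type comparisons on $(0,r_0)$. The first uses the sign of $q_1$ at the two endpoints; the second applies a Wronskian identity between $\tilde q$ and $q_1$ on $(0,r_1)$.

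\emph{Zero of $q_1$.} Near $r=0^+$, Lemma \ref{lm: q} gives
\[
q_1(r)r^{\frac{d-2-\beta}2}\to b\big(\tfrac 2p-\tfrac{d-2-\beta}2\big).
\]
Since $p<\frac4{d-2}$ we have $\frac2p>\frac{d-2}2>\frac{d-2-\beta}2$. Hence $q_1>0$ near $0$.

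At the other end, if $b\in S^-$ then $q(r_0)=0$ and $q_r(r_0)<0$. So $q_1(r_0)=r_0q_r(r_0)<0$, and the intermediate value theorem produces a zero in $(0,r_0)$. If $b\in S^0$ then $r_0=\infty$. By Lemma \ref{lm: infinity} (via Corollary \ref{cor: infinity}), $r^{\frac{d-3}2}e^rq_1(r)\to -c$ with $c>0$, because $q>0$ forces $c>0$. So $q_1<0$ for large $r$, and again there is a zero.

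\emph{Zero of $\tilde q$ in $(0,r_1)$.} Suppose instead that $\tilde q>0$ on $(0,r_1)$; this is the correct sign near $0$, since $\tilde q\,r^{\frac{d-2-\beta}2}\to1$ by \eqref{64}. Write the radial operator in divergence form,
\[
L_1 f=-r^{1-d}(r^{d-1}f_r)_r+\big(\tfrac a{r^2}+1-(p+1)q^p\big)f.
\]
From $L_1\tilde q=0$ and $L_1q_1=-2q$ we obtain
\[
\big(r^{d-1}(\tilde q_r q_1-\tilde q q_{1r})\big)_r=r^{d-1}\big(\tilde q\,L_1q_1-q_1L_1\tilde q\big)=-2r^{d-1}q\tilde q.
\]
We integrate this identity over $(0,r_1)$.

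\emph{Boundary term at $0$.} Both $\tilde q$ and $q_1$ behave like multiples of $r^{-\frac{d-2-\beta}2}$, and their derivatives like the same multiple of $r^{-\frac{d-\beta}2}$ times $-\frac{d-2-\beta}2$. So the leading terms of the Wronskian cancel. To show that $r^{d-1}W(r)\to0$, I would write $q_1=b(\frac2p-\frac{d-2-\beta}2)\tilde q+h$. I would then argue, using the stable-manifold description near $0$ (the $z$-formulation, Lemma \ref{lm: z} and \eqref{53}), that the corrections are $O(r^{2\sigma})$ relative to the leading terms. Then
\[
r^{d-1}W(r)=O\big(r^{d-1}\cdot r^{-(d-2-\beta)}\cdot r^{-1}\cdot r^{2\sigma}\big)=O(r^{\beta+2\sigma})\to0.
\]
Getting this cleanly is the main technical obstacle. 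I would verify it through $u_1,u_2$, using that both $\tilde q$ and $q_1$ solve linear equations whose solution spaces near $0$ split into the decaying branch $r^{-\frac{d-2-\beta}2}$ and the singular branch $r^{-\frac{d-2+\beta}2}$. Since both functions are locally $H^1$, neither carries the singular branch, and the Wronskian of two solutions on the regular branch vanishes at $0$.

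\emph{Sign contradiction at $r_1$.} The boundary term there is $r_1^{d-1}(-\tilde q(r_1)q_{1r}(r_1))$, because $q_1(r_1)=0$. Since $q_1>0$ on $(0,r_1)$ and $q_1(r_1)=0$, we have $q_{1r}(r_1)\le0$, so this term is $\ge0$ when $\tilde q(r_1)\ge0$. The right side, $-2\int_0^{r_1}r^{d-1}q\tilde q\,dr$, is strictly negative because $q>0$ there ($r_1<r_0$). This is a contradiction, so $\tilde q$ must vanish somewhere in $(0,r_1)$.
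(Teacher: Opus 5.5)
Your proposal is correct and follows the paper's argument. You use the sign of $q_1$ near $0$ and at $r_0$ (or at infinity via Lemma~\ref{lm: infinity}), then the Green identity \eqref{ibp} with $f=q_1$, $g=\tilde q$ on $(0,r_1)$, which gives \eqref{11} and the same sign contradiction.

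The step you flag as the main obstacle is actually immediate, as the paper notes, and needs no cancellation of leading terms. Each boundary term $r^{d-1}\tilde q\,(q_1)_r$ and $r^{d-1}\tilde q_r\, q_1$ is separately $O\big(r^{d-1}\cdot r^{-\frac{d-2-\beta}2}\cdot r^{-\frac{d-\beta}2}\big)=O(r^{\beta})\to 0$ by \eqref{64}, with $(q_1)_r$ controlled through the equation for $q$. Your own exponent count already shows this before the extra factor $r^{2\sigma}$.
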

    \begin{proof} 
    The existence of a zero for $q_1(r)$ follows from its asymptotics. Due to  the assumptions $b>0$ and $p<\frac  4{d-2}$, Lemma \ref{lm: q} implies $q_1(0+)>0$. If $b \in S^0$, 
    %If $q(r)$ is an $S^0$ solution, 
    by Lemma \ref{lm: infinity} with $c>0$, 
% since $(q, q_r)$ remains in the 4-th quadrant, 
we find $q_1(\infty)<0$. Hence $q_1(r)$ must vanish somewhere on $(0, \infty)$. For $S^-$ solutions, we have  $q_1(r_0)=\frac 2p q(r_0)+r_0q_r(r_0)<0$, so the same conclusion follows. 
    
    Let $r_1$ be the first zero for $q_1(r)$, then it holds 
    \begin{align}\label{10}
    q_1(r_1)=0, \ (q_1)_r(r_1)\le0. 
    \end{align}
    To show that $\tilde q(r)$ changes sign at least once in $(0,r_1)$, we use the following integration-by-parts identity. For any $f(r)$, $g(r)$, and interval $(s,t)$, we have 
    \begin{align}\label{ibp}
    \int_s^t g(r) \big(L_1f(r) \big) r^{d-1} dr=(-g f_r+g_r f)r^{d-1}|_s^t+\int _s^t \big(L_1 g(r) \big) f(r) r^{d-1} dr. 
    \end{align}
 Applying \eqref{ibp} to $f=q_1$, $g=\tilde q$, and interval $(0, r_1)$, noting 
 \begin{align*}
 L_1 \tilde q=0, \quad r^{d-1}\tilde q(q_1)_r |_{r=0+}=r^{d-1}\tilde q_rq_1|_{r=0+}=q_1(r_1)=0,
 \end{align*}
   we have:
   \begin{align}\label{11}
   \tilde q(r_1)(q_1)_r(r_1)=2\int_0^{r_1} \tilde q(\rho)q(\rho)\rho^{d-1} d\rho. 
   \end{align}  
    If $\tilde q(r)\ge 0$ on $(0,r_1)$, then the right-hand side of \eqref{11} is positive, which contradicts the left-hand side being nonpositive by \eqref{10}. Therefore, $\tilde q(r)$ must change sign in $(0, r_1)$, proving the lemma. 
    \end{proof}

     \begin{proposition}\label{prop: uniq}   Let $b \in S^0\cup S^-$, then the function $\tilde q(r)$ has exactly one zero on $(0,r_0)$. 
     \end{proposition}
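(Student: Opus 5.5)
The plan is a Sturm-comparison (Wronskian) argument in the spirit of Kwong and McLeod. The inverse-square potential rules out the usual translation derivative $q_r$, so I replace it by the scaling derivative $q_1=\tfrac2p q+rq_r$, which is available because $\frac a{|x|^2}$ is homogeneous of degree $-2$. Lemma \ref{lm: qt} already gives at least one zero of $\tilde q$ in $(0,r_0)$. I argue by contradiction: suppose $\tilde q$ has at least two zeros, and let $z_1<z_2$ be the first two zeros in $(0,r_0)$. Both are simple, since $L_1\tilde q=0$. By Lemma \ref{lm: q}, $\tilde q>0$ near $0^+$, so $\tilde q>0$ on $(0,z_1)$, $\tilde q<0$ on $(z_1,z_2)$, $\tilde q_r(z_1)<0<\tilde q_r(z_2)$, and $\tilde q>0$ just to the right of $z_2$.

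The test function is $w=q_1+\mu q$ with $\mu<0$. From the equations for $q$ and $q_1$,
\[
L_1 w=-2q-\mu p q^{p+1}=-q\,(2+\mu p q^p).
\]
Because $q>0$ and $q_r<0$ on $(0,r_0)$, the factor $2+\mu pq^p$ changes sign exactly once, at the point where $q^p=-2/(\mu p)$. I fix $\mu$ so that this point is $z_1$, i.e.\ $\mu=-\tfrac{2}{p\,q(z_1)^p}$. Then $L_1w>0$ on $(0,z_1)$ and $L_1w<0$ on $(z_1,r_0)$.

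Next I apply \eqref{ibp} with $g=\tilde q$, $f=w$ on $(0,z_1)$ and on $(z_1,z_2)$. The boundary term at $0^+$ vanishes. Indeed, by \eqref{64}, $\tilde q$, $q_1$ and $q$ all behave like $r^{-\frac{d-2-\beta}2}$ and their derivatives like $r^{-\frac{d-\beta}2}$, so the Wronskian times $r^{d-1}$ is $O(r^{\beta})\to0$. On $(0,z_1)$ the integrand $\tilde q\,L_1w$ is positive, which forces $\tilde q_r(z_1)w(z_1)>0$, hence $w(z_1)<0$. On $(z_1,z_2)$ the integrand $\tilde q\,L_1w$ is again positive (both factors are negative), which gives
\[
\tilde q_r(z_2)w(z_2)z_2^{d-1}-\tilde q_r(z_1)w(z_1)z_1^{d-1}>0 .
\]
The first of these two boundary products carries the sign of $w(z_2)$. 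The second, $\tilde q_r(z_1)w(z_1)$, is a product of two negative factors and hence positive, so it enters with a minus sign. Therefore $w(z_2)>0$. I would also record the behaviour of $w$ at the right end. If $b\in S^-$, then $w(r_0)=r_0q_r(r_0)<0$. If $b\in S^0$, Lemma \ref{lm: infinity} gives $w\sim -c\,r^{-\frac{d-3}2}e^{-r}<0$ as $r\to\infty$.

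The contradiction has to come from the last interval $(z_2,r_0)$, or $(z_2,z_3)$ if there is a third zero, and this is the step I expect to be the main obstacle. There, $\tilde q>0$ and $L_1w<0$, while $w$ starts positive at $z_2$ and must become negative before $r_0$. The naive identity on this interval gives only consistent signs. I would therefore refine it in one of two ways. The first is to apply \eqref{ibp} on $(z_2,\zeta)$, where $\zeta$ is the first zero of $w$ after $z_2$, and combine it with the same identity for the pair $(\tilde q,q)$, using $L_1q=-pq^{p+1}$, to produce a second combination $w+\nu q$ whose forcing has a definite sign on the whole of $(z_2,r_0)$. The second is to re-choose $\mu$ so that the switch point of $2+\mu pq^p$ sits at $z_2$ instead of $z_1$, and then run the same sign-count backwards from the endpoint, where the sign of $w$ and the vanishing of the boundary terms at $r_0$ (using $q(r_0)=0$) or at $\infty$ are known. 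In both versions the remaining work is bookkeeping: the boundary terms must vanish at $0^+$ and at $\infty$, which follows from Lemma \ref{lm: q} and Corollary \ref{cor: infinity}, and the Wronskian signs must be tracked at each zero. The contradiction then shows that $\tilde q$ has exactly one zero on $(0,r_0)$.
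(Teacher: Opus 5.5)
There is a genuine gap, and it sits exactly at the step you flagged. Your first half matches the paper's argument. The test function $w=q_1+\mu q$ with $\mu=-\frac{2}{p\,q(z_1)^p}$, together with the two Wronskian identities on $(0,z_1)$ and $(z_1,z_2)$, correctly gives $w(z_1)<0<w(z_2)$. What you miss is that, since $q>0$ on $(0,r_0)$, these two inequalities say
\[
\frac{q_1}{q}(z_1)<-\mu<\frac{q_1}{q}(z_2).
\]
So the proof is already finished at $z_2$, provided one knows that $q_1/q$ is strictly decreasing on $(0,r_0)$.

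That monotonicity is the missing idea. It does not come from Sturm comparison but from a Pohozaev-type quantity. Set $H=\frac12q_r^2-\frac{a}{2r^2}q^2-\frac12q^2+\frac1{p+2}|q|^{p+2}$, $H_1=2r^dH+(d-2)r^{d-1}qq_r$ and $H_2=H_1+\frac{p}{p+2}r^d|q|^{p+2}$. The equation then gives
\[
H_1'=r^{d-1}\Bigl(\tfrac{4-p(d-2)}{p+2}|q|^{p+2}-2q^2\Bigr),\qquad \Bigl(\frac{q_1}{q}\Bigr)'=-\frac{H_2}{r^{d-1}q^2}.
\]
Because $q$ is decreasing, $H_1'$ changes sign at most once, from positive to negative. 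We also have $H_1(0^+)=0$, by \eqref{64} and $p<\frac4{d-2}$. And $H_1(r_0^-)\ge0$, using $q(r_0)=0$ if $b\in S^-$ and exponential decay if $b\in S^0$. Together these give $H_1>0$, hence $H_2>0$ and $(q_1/q)'<0$ on $(0,r_0)$.

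Without this ingredient your argument does not close, and neither proposed refinement is substantiated. Every combination $q_1+\mu' q$ has forcing $-q(2+\mu' p q^p)$, so re-choosing $\mu$ only moves the switch point. On $(z_2,r_0)$, or on $(z_2,z_3)$, the identity always balances a term of known sign at $z_2$ against an endpoint term of unknown sign.

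For $b\in S^-$, that endpoint term involves $\tilde q_r(r_0)$ and $w_r(r_0)=(\frac2p+2-d+\mu)q_r(r_0)$, neither of which has a determined sign.

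For $b\in S^0$, your claim that the boundary terms vanish at $\infty$ is wrong. Corollary \ref{cor: infinity} applies only to solutions known to be locally $H^1$ near infinity, and $\tilde q$ is not known to be one. In fact $|\tilde q|$ grows at least like $r^{-\frac{d+1}2}e^r$ (Lemma \ref{lm: qb}). So $r^{d-1}(\tilde q_r w-\tilde q w_r)$ stays bounded away from zero against the $e^{-r}$-decaying $w$.

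The remaining step is therefore not bookkeeping but a new estimate: the monotonicity of $q_1/q$.
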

     \begin{proof}
     We first reduce the uniqueness of zeros to the monotonicity of $\frac {q_1}{q}(r)$:   
     \begin{equation}\label{1003}
     \bigl(\frac{q_1}q\bigr)'(r)<0, \ r\in (0, r_0). 
     \end{equation}
   
  Assume \eqref{1003} holds. Suppose for contradiction that $\tilde q(r)$ has at least two zeros in $(0,r_0)$, say, at $\tilde r<r^*$. Define 
     \[
     v(r)=q_1(r)-sq(r),
     \]
     and compute 
     \begin{align*}
     L_1v(r)=L_1 q_1(r)-sL_1q(r)=-2q(r)+spq(r)^{p+1}=q(r)(sp q(r)^p-2). 
     \end{align*}
     We choose 
     \[
     s:=\frac 2{pq(\tilde r)^p}.
     \]
     Since $q(r)$ is decreasing in $(0,r_0)$, we obtain: 
     \begin{align*}
     L_1v(\tilde r)=0,\quad L_1v(r)>0 \mbox{ on } (0, \tilde r) \mbox{ and }L_1 v(r)<0 \mbox{ on } (\tilde r, r^*). 
     \end{align*}
 Applying \eqref{ibp} to $f=v$, $g=\tilde q$, and $(0, \tilde r)$ and $(0, r^*)$, respectively, we obtain: 
     \begin{align}
    0<\int_0^{\tilde r} \tilde q(r) L_1 v(r) r^{d-1} dr=(\tilde q_r v-\tilde q v_r)r^{d-1}|_0^{\tilde r}=\tilde q_r(\tilde r) v(\tilde r)\tilde r^{d-1},\label{1117}\\
    0<\int_0^{r^*} \tilde q(r)L_1 v(r) r^{d-1}dr=\tilde q_r(r^*)v(r^*) (r^*)^{d-1}. \label{1118}
     \end{align}
  Here in the last inequality we used the fact that $\tilde q$ always changes sign at its zeros.
   Along with $\tilde q_r(\tilde r)<0$ and $\tilde q_r(r^*)>0$, this implies
        \[
     v(\tilde r)<0, \ \mbox{ and } v(r^*)>0, 
     \]
or
     \begin{align}\label{1227}
     \frac{q_1}q(\tilde r)<s<\frac{q_1}q(r^*),
     \end{align}
  contradicting the monotonicity \eqref{1003}. 
  
    It remains to prove \eqref{1003}. Define the following Pohozaev-type quantities inspired by \cite{P65, Tang03}
          \begin{align}\label{3h}
      \begin{cases}
      H(r)=\frac 12 q_r^2-\frac a{2r^2} q^2-\frac 12 q^2+\frac 1{p+2} |q|^{p+2},\\
      H_1(r)=2r^d H(r)+(d-2)r^{d-1}qq_r,\\
      H_2(r)=H_1(r)+\frac p{p+2} r^d |q|^{p+2}.
      \end{cases}
      \end{align}
 Direct computation using the equation for $q$ yields:
     \begin{align*}
     %H'(r)&=-\frac{d-1} rq_r^2+\frac a{r^3}q^2;\\
     H_1'(r)&=r^{d-1}(\tfrac{4-p(d-2)}{p+2}|q|^{p+2}-2q^2);\\
     \biggl(\frac{q_1}q\biggr)'&=-\frac{H_2(r)}{r^{d-1}q^2}.
     \end{align*}
 From Lemma \ref{lm: q} %, Lemma \ref{lm: infinity}, 
    and the assumptions $\beta>0$ and $p<\frac 4{d-2}$, we deduce that:
    \begin{align}\label{hsign}
    \lim_{r\to r_0-} H(r) \ge 0, \quad \lim_{r\to r_0-}H_1(r) \ge 0, \quad \lim_{r\to 0+}H_1(r)=0. 
    %{\color{blue} H(r_0) >0, \quad H_1 (r_0) >0, \quad H_1(0) =0.}
    \end{align}
    
      %Consequently 
      %%the positivity of $H(r)$ follows quickly from 
      %$H(r_0)>0$ and  $H'(r)<0$ throughout $(0,r_0)$ yield $H(r)>0$  on $(0,r_0)$. As for $H_1$, 
  Since $q>0$ is strictly decreasing on $(0,r_0)$, we observe from the expression for $H_1'(r)$ that it can change sign exactly 
      %at most 
      once 
on this interval. Therefore, $H_1$ transitions from increasing to decreasing exactly once. Combined with the boundary behavior in \eqref{hsign}, this implies $H_1(r)>0$ for all $r\in (0, r_0)$. Consequently, $H_2(r)>0$ and thus 
      \[
       (\frac{q_1}q)'<0, \ \mbox{ on }(0,r_0). 
       \] 
              as claimed. 
  \end{proof}

 \subsection{Uniqueness of the ground state solution}

In this subsection, we complete the proof of the uniqueness of the ground state.
% by proving Lemma \ref{lm: i}: the isolation of $S^0$-point and Lemma \ref{lm: s}: the rigidity of the local structure around $S^0$ point. {\color{red} Without the singular inverse square potential, one can also check the classical treatment by McLeod \cite{McLeod}.}

We start by proving some asymptotic behavior of the linearized solution near the ground state. 

\begin{lemma}\label{lm: qb} 
Let $b_0\in S^0$ and $\tilde q(r)=q_b(b_0,r)$, then  
for some $r_0$ and $c_*>0$, 
\[
\tilde q(r), \; \tilde q_r(r) \le - c_* r^{-\frac {d+1}2} e^r, \quad \forall r > r_0.
\]

\end{lemma}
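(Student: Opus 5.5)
The plan is to combine the one-zero property of $\tilde q$ from Proposition \ref{prop: uniq} with an ODE analysis of the linearized equation at infinity. For $b_0\in S^0$ we have $r_0=\infty$ in Proposition \ref{prop: uniq}, so $\tilde q$ has exactly one zero $r^*\in(0,\infty)$. Since $\tilde q(r)\sim r^{-\frac{d-2-\beta}2}>0$ near $0^+$ by \eqref{64}, and zeros of $\tilde q$ are non-degenerate, $\tilde q>0$ on $(0,r^*)$ and $\tilde q<0$ on $(r^*,\infty)$. The function $\tilde q$ solves $L_1\tilde q=0$, which is \eqref{r1} with $A=a$ and $Q=q(b_0,\cdot)$; by Lemma \ref{lm: infinity}, $Q$ decays exponentially. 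For large $r$ this is a small perturbation of $-\tilde q''-\tfrac{d-1}r\tilde q'+\tilde q=0$, whose solutions behave like $r^{-\frac{d-1}2}e^{\pm r}$. The proof therefore splits into two steps: (i) $\tilde q$ does not decay, and (ii) every non-decaying solution grows like $c\,r^{-\frac{d-1}2}e^{r}$ with $c\neq0$, with $c<0$ here.

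\emph{Step (i): $\tilde q$ is not decaying.} Suppose $\tilde q,\tilde q_r\to0$ as $r\to\infty$. Then Lemma \ref{lm: infinity} (with $f=f_2$) gives exponential decay of $\tilde q$ and $\tilde q_r$. Apply \eqref{ibp} on $(0,\infty)$ twice, with $g=\tilde q$ and $f=q$, resp.\ $f=q_1$, using $L_1q=-pq^{p+1}$ and $L_1q_1=-2q$. The boundary terms vanish: at $0^+$ this follows from the asymptotics \eqref{64}, and at $\infty$ from exponential decay. This yields
\begin{align*}
\int_0^\infty \tilde q\, q^{p+1}r^{d-1}dr=0,\qquad \int_0^\infty \tilde q\, q\, r^{d-1}dr=0.
\end{align*}
Hence $\int_0^\infty \tilde q\,q\,(q^p-q(r^*)^p)r^{d-1}dr=0$. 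But $q$ is strictly decreasing, so $q^p-q(r^*)^p$ has the same sign as $\tilde q$ on both sides of $r^*$. The integrand is therefore positive away from $r^*$, which is a contradiction.

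\emph{Step (ii): growth asymptotics.} Set $v_1=\tilde q+\tilde q_r$ and $v_2=\tilde q-\tilde q_r$. As in \eqref{13}, with $f=f_2$ linear in $q$ and $|f_2(\tau,q)|\le C(\tau^2+Q(1/\tau)^p)|q|$, we get
\begin{align*}
v_1'=v_1+O(\tfrac1r)(|v_1|+|v_2|),\qquad v_2'=-v_2+O(\tfrac1r)(|v_1|+|v_2|).
\end{align*}
Then $\frac{d}{dr}(v_1^2-v_2^2)\ge (2-O(\tfrac1r))(v_1^2+v_2^2)>0$ for $r$ large, so the cone $\{|v_2|\le|v_1|\}$ is positively invariant. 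If the orbit never enters this cone, then $|v_1|<|v_2|$ for all large $r$. The $v_2$ equation then gives $(v_2^2)'\le-(2-O(1/r))v_2^2$, so $(v_1,v_2)\to0$, contradicting Step (i). Hence the orbit eventually lies in the cone.

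Inside the cone, let $w=v_2/v_1$. It satisfies $w'=-2w+O(\tfrac1r)$, and integrating as in \eqref{26} gives $w=O(\tfrac1r)$. A second pass then gives
\begin{align*}
v_1'=\bigl(1-\tfrac{d-1}{2r}+O(r^{-2})\bigr)v_1,
\end{align*}
so $v_1=C(r)\,r^{-\frac{d-1}2}e^{r}$ with $C(r)\to 2c\neq0$, exactly as in the proof of Lemma \ref{lm: infinity}. Consequently $\tilde q$ and $\tilde q_r$ are both $(c+o(1))\,r^{-\frac{d-1}2}e^r$. Since $\tilde q<0$ for $r>r^*$, we must have $c<0$. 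This gives $\tilde q,\tilde q_r\le \tfrac c2 r^{-\frac{d-1}2}e^r\le -c_*r^{-\frac{d+1}2}e^r$ for $r$ large, which is the claim (in a stronger form).

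I expect the main obstacle to be the cone and refined-asymptotics argument in Step (ii): one must show $v_2/v_1=O(1/r)$, not merely $o(1)$, so that $\int^\infty O(\rho^{-2})\,d\rho$ converges and $C(r)$ has a nonzero limit. The sign of $c$ itself comes for free from Proposition \ref{prop: uniq}, and Step (i) is the classical McLeod-type orthogonality argument.
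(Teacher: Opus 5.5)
Your proof is correct, but it is organized differently from the paper's.

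\textbf{The paper's route.} The paper never separates ``non-decay'' from ``growth''. It takes the comparison function $f=Q_1-sQ$, $s=\frac{2}{pQ(r_*)^p}$, already used in Proposition \ref{prop: uniq}, and applies \eqref{ibp} on $(0,r)$ rather than on $(0,\infty)$. From this it reads off that the Wronskian $W(r)=r^{d-1}(\tilde q_r f-\tilde q f_r)$ is positive and nondecreasing. By \eqref{20}, $f\sim -c\,r^{-\frac{d-3}2}e^{-r}$, so $(\tilde q/f)_r=W r^{1-d}f^{-2}\gtrsim r^{-2}e^{2r}$. One integration then gives the bound on $\tilde q$ directly, and $\tilde q_r$ is obtained afterwards from the equation.

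\textbf{How your route relates to it.} Your Step (i) is exactly the $r\to\infty$ limit of that same identity: $q(q^p-q(r^*)^p)$ is a positive multiple of $L_1(q_1-sq)$, which up to a constant is also the combination used in Case 1 of Lemma \ref{lm: l2}. You use it only qualitatively, to rule out decay. You then need the separate cone/dichotomy analysis of Step (ii) to convert non-decay into growth.

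\textbf{What each approach buys.} Your route gives sharper information. You get exact asymptotics $\tilde q,\tilde q_r=(c+o(1))\,r^{-\frac{d-1}2}e^{r}$ with $c<0$, and $\tilde q_r$ comes out on the same footing as $\tilde q$. The paper's factor $r^{-\frac{d+1}2}$ is non-sharp, an artifact of integrating $r^{-2}e^{2r}$ crudely. The paper's route buys brevity: no phase-plane analysis of the linearized equation at infinity is needed, because the monotonicity of $W$ does all the work.

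\textbf{A small wording point.} The equation $w'=-2w+O(\frac1r)$ is integrated forward from a finite $r$, not backward from infinity as in \eqref{26}. The conclusion $w=O(\frac1r)$ is unaffected.
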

\begin{proof} 
Denote $Q(r)=q(b_0,r)$, $Q_1=\frac 2p Q+rQ_r$ and let $r_*$ be the unique zero of $\tilde q$ on $(0,\infty)$ so that $\tilde q(r)<0$ on $(r_*, \infty)$. Let $s=\frac 2{pQ^p(r_*)}$, then from the monotonicity of $Q$ we have 
\begin{align*}
L_1(Q_1-sQ)=Q(spQ^p-2)\begin{cases} >0, & r<r_*\\=0, &r=r_*,\\<0, & r>r_*.\end{cases}
\end{align*}
On the one hand, applying \eqref{ibp} with 
\[
g(r)=\tilde q(r), \ f(r)=Q_1(r)-sQ(r), 
\]
and interval $(0,r)$, we have 
\begin{align}\label{212}
0<\int_0^r \tilde q(r') L_1(Q_1-sQ) (r')^{d-1} dr'=\bigl[(r')^{d-1} (\tilde q_r f -\tilde q f_r) \bigr]\bigr |_0^r.
\end{align}
Applying Lemma \ref{lm: q}, we have the vanishing of the limit as $r\to 0^+$, and thus \eqref{212} turns into 
\begin{align}\label{213}
\tilde q_r f  - \tilde q f_r >0, \; \big( r^{d-1} (\tilde q_r f  - \tilde q f_r) \big)_r \ge 0, \quad \forall r>0.
%\lim_{r\to \infty}\bigl[\tilde q_r f r^{d-1}-\tilde q f_r r^{d-1}\bigr]>0. 
\end{align}
Using the second inequality in the above and the asymptotics of $Q_1$ from Lemma \ref{lm: infinity} with $c>0$,  there exists $r_0\gg 1$ such that for $ r \ge r_0$, we have $f(r) <0$  and, for some $c_0>0$, 
\[
(\tilde q/f)_r \ge c_0 r^{1-d} f^{-2} \ge  \frac{c_0}{2c^2} r^{-2} e^{2r}. 
\]
Integrating this inequality on $[r_0, r]$ and using the asymptotics of $Q_1$ again yield 
\[
\tilde q/f \ge c_1 +  \frac{c_0}{4 c^2}  r^{-2} e^{2r} \implies \tilde q(r) \le - c \big( \frac{c_1}{2} + \frac{c_0}{8 c^2} r^{-2} e^{2r}\big) r^{\frac {3-d}2} e^{-r}.  
\] 
The estimate on $\tilde q_r$ follows from integrating the equation 
% On the other hand, for sufficiently large $r$, from 
\[
\partial_r(r^{d-1}\tilde q_r)=r^{d-1}(\frac a{r^2}+1-(p+1)Q^p)\tilde q
\]
and the proof of the lemma is complete. 
% we know $r^{d-1}\tilde q_r$, hence $\tilde q_r$ is monotone decreasing, consequently 
% \[
% \lim_{r\to \infty}\tilde q_r(r)=c
% \]
% for some $c\in[-\infty, \infty]$. 
% 
%
% 
% The case $c>0$ can be easily excluded due to the fact that $\tilde q(r)<0$ for $r>r_*$. In the case $c=0$, the only possible scenario is $\tilde q(r)$ monotone increases to some $\tilde c\le 0$, but this immediately contradicts   \eqref{213} together with the exponential decay property of $Q$ and $Q_1$ in Lemma \ref{lm: infinity}. The remaining cases $-\infty\le c< 0$ yield the negativity of $\tilde q_r(r)$ for sufficiently large $r$ and 
% \[
% \lim_{r\to \infty} \tilde q(r)=-\infty
% \] 
% as desired. The lemma is proved. 
 %
%{\color{red}  On the other hand, for sufficiently large $r$, from 
 %\[
 %\partial_r(r^{d-1}\tilde q_r)=r^{d-1}(\frac a{r^2}+1-(p+1)Q^p)\tilde q<0,
 %\]
 %we know $r^{d-1}\tilde q_r$ is monotone decreasing, consequently $\tilde{q}$ is monotone and Proposition \ref{prop: uniq} yields
 %\[
 %\lim_{r\to \infty}\tilde q(r)=c
 %\]
 %for some $c\in[-\infty, 0]$. If $c$ is finite, then this contradicts \eqref{213} together with the exponential decay property of $Q$ and $Q_1$ in Lemma \ref{lm: infinity}. Hence, $\lim_{r\to \infty}\tilde q(r)=-\infty$. } 
\end{proof}

%In the following, we continue to take $b_0\in S^0$ such that $q(b_0, r)$ is a ground state solution satisfying the asymptotics in \eqref{18} and \eqref{64}. We will show that $b_0$ is isolated in the following sense

Next, we show that any point in $S^0$ must be isolated:

\begin{lemma}[Isolation of $S^0$ points]\label{lm: i}
For any $b_0\in S^0$, there exists $\eps>0$ such that  $(b_0-\eps, b_0+\eps) \cap S^0=\{b_0\} $. 
\end{lemma}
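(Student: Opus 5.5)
The plan is to show that for $b$ close to $b_0$ the trajectory $q(b,\cdot)$ leaves the fourth quadrant in a definite way: for $b$ slightly above $b_0$ it lies in $S^-$, and for $b$ slightly below it lies in $S^+$. In either case $b\notin S^0$. The key tool is Lemma \ref{lm: qb}. It says $\tilde q=q_b(b_0,\cdot)$ satisfies $\tilde q,\tilde q_r\le -c_* r^{-\frac{d+1}2}e^r$ for $r>r_0$, while $Q=q(b_0,\cdot)$ decays like $c_0 r^{-\frac{d-1}2}e^{-r}$ by Theorem \ref{thm: ground state}/Lemma \ref{lm: infinity}. Hence at a fixed large radius $R$ the perturbation direction $\tilde q(R)$ dominates $Q(R)$ by a factor of order $e^{2R}/R$.

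First, I fix a large $R$ with $Q(R)$, $|Q_r(R)|$ tiny compared with $|\tilde q(R)|$, $|\tilde q_r(R)|$. By the $C^1$ (indeed smooth away from $r=0$) dependence on $b$ from Lemma \ref{lm: q}, extended to compact $r$-intervals by standard ODE theory, I have
\[
q(b,R)=Q(R)+(b-b_0)\tilde q(R)+o(|b-b_0|),
\]
and similarly for $q_r$. Here $o(\cdot)$ is uniform once $R$ is fixed. I also use that on $(0,R]$ the solution $Q$ stays in the open fourth quadrant. So for $|b-b_0|<\eps(R)$, $q(b,\cdot)$ stays positive and decreasing on $(0,R]$, by continuity on compact subintervals $[\delta,R]$ together with the uniform small-$r$ control of Lemma \ref{lm: z}. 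This last point needs a little care near $r=0$, but the bounds \eqref{53} give $q>0$ and $q_r<0$ uniformly in $b$ on $(0,\eta]$.

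A perturbation of size $|b-b_0|$ gives a deviation of order $|b-b_0|e^{R}$, which I must compare with $Q(R)\sim e^{-R}$. This means one cannot simply fix $R$ and shrink $b-b_0$: the $Q(R)$ term eventually dominates. The correct approach is instead to use the invariant-manifold description at infinity. Let $W^\infty_{cs}$ be the graph $q_r=-q+\tilde\phi^\infty(\tau,q)$ from \eqref{20.3}, with $\tilde\phi^\infty(\tau,0)=0$. The three-dimensional phase point $(\tau,q,q_r)=(1/R,q(b,R),q_r(b,R))$ lies on $W^\infty_{cs}$ if and only if $b\in S^0$. I therefore consider the transversality function
\[
F(b)=q_r(b,R)+q(b,R)-\tilde\phi^\infty(1/R,q(b,R)).
\]
By the previous paragraph $b\mapsto F(b)$ is $C^1$, and $F(b)=0$ for $b\in S^0$ near $b_0$, because $q(b,\cdot)$ is then a decaying solution and must lie on $W^\infty_{cs}$ for large $r$. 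It then suffices to show $F'(b_0)\ne0$, since that makes $b_0$ an isolated zero. Now
\[
F'(b_0)=\tilde q_r(R)+\tilde q(R)-\tilde\phi^\infty_q(1/R,Q(R))\,\tilde q(R),
\]
and $\tilde\phi^\infty_q$ is small near $(0,0)$ by \eqref{20.4}. So $F'(b_0)=\tilde q_r(R)+(1+o(1))\tilde q(R)$, which is strictly negative by Lemma \ref{lm: qb}: both terms are $\le -c_*R^{-\frac{d+1}2}e^R$.

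The main obstacle is justifying that every $b\in S^0$ near $b_0$ has its trajectory on the local graph at the \emph{same} fixed radius $R$. The local manifold is only guaranteed within an $\eta$-neighborhood, and the decay of $q(b,\cdot)$ might a priori set in only at radii depending on $b$. I would handle this with a uniqueness argument. At $r=R$, the point $(1/R,q(b,R),q_r(b,R))$ is within $\eta/2$ of the origin for $b$ near $b_0$, by continuity. A solution in $S^0$ is positive and decreasing with $(q,q_r)\to0$, and the energy $H$ in \eqref{Hdef} is decreasing. From this I want to derive that the orbit stays in the $\eta$-neighborhood for all $r\ge R$: if $H(R)$ is small and $q$ is decreasing, then $q\le q(R)$ and $\tfrac12 q_r^2\le H(R)+\tfrac12 q^2$ stays small. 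Since $W^\infty_{cs}$ characterizes exactly the orbits converging to the origin while remaining nearby, $F(b)=0$ follows. Combined with $F'(b_0)\neq0$, this gives the isolation.
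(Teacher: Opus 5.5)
Your argument is correct and is essentially the paper's. The paper also uses that $S^0$ trajectories near $b_0$ lie on the graph $q_r=-q+\tilde\phi^\infty(\tfrac1r,q)$ beyond a fixed radius, and differentiates this relation in $b$ via difference quotients along a sequence $b_n\to b_0$ in $S^0$ to get $(q_b)_r=-(1-D_q\tilde\phi^\infty)q_b$; this forces exponential decay of $q_b(b_0,\cdot)$, contradicting Lemma \ref{lm: qb}. You package the same contradiction as $F'(b_0)\neq 0$ at a single radius, using the signs of $\tilde q$ and $\tilde q_r$ there; your energy argument via $H$, showing that nearby $S^0$ orbits stay in the small neighborhood for all $r\ge R$, usefully justifies the uniform-radius step that the paper only asserts from invariance and continuity.
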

\begin{proof} Assume $b_0\in S^0$, so $q(b_0, r)$ is a ground state solution. From the discussion in Subsection \ref{S:22}, there exists a  sufficiently large $r_g$ such that for all $r\ge r_g-1$,
\[
(\tfrac 1r, q(b_0,r), q_r(b_0,r))\in W_{cs}^\infty,
\]
and $q(b_0,r)$ satisfies (c.f \eqref{20.3}):
\begin{align}\label{81}
q_r(b_0,r)=-q(b_0,r)+\tilde\phi^\infty(\tfrac 1r, q(b_0,r)), \; \forall r\ge r_g-1. 
\end{align}

%From the $C^1$ dependence of $q(b, r)$ on $b$ and $r$, for any $\gamma>0$, there exists a $\delta$ neighborhood $I_{b_0}$ of $b_0$ such that for all $b\in I_{b_0}$, the solution $q(b,r)$ of \eqref{12} 
%exists on $r\in [1, r_g+1]$ and 
%satisfies
%\begin{align}\label{82}
%\sup_{1\le r\le r_g+1}|q(b,r)-q(b_0,r)|+|q_r(b,r)-q_r(b_0,r)|+|q_b(b,r)-q_b(b_0,r)|\le \gamma. 
%\end{align}

Suppose, for contradiction, that the lemma fails. Then there must exist a sequence $b_n\to b_0$ with $b_n\in S^0$. By the invariance of $W_{cs}^\infty$ and the continuity of $q(b,r)$ in $b$ and $r$, it follows that
\begin{align*}
 (\frac 1{r}, q(b_n,r), q_r(b_n, r))\in W_{cs}^\infty, \mbox{ for all sufficiently large } n \mbox{ and }r\ge r_g,
 \end{align*}
 and, 
\begin{align}\label{83}
q_r(b_n, r)=-q(b_n,r)+\tilde \phi^\infty(\tfrac 1r, q(b_n,r)), \; \forall r\ge r_g. 
\end{align}
Since
%from \eqref{82}, $q_b(b_0,r)$ exists for all $b\in I_{b_0}$ and $r\in [1, r_g+1]$, 
$q(b, r)$ is $C^1$ in $b$ and $r$, we may compute the derivative $q_b(b_0,r)$ via the difference quotient:  
\begin{align*}
q_b(b_0, r)=\lim_{n\to \infty}\tfrac{q(b_n, r)-q(b_0,r)}{b_n-b_0}, \ \forall r\ge r_g.
\end{align*}
From this and applying equations \eqref{81}, \eqref{83}, we obtain: 
\begin{align}\label{85}
(q_b)_r(b_0,r)=-q_b(b_0,r)+D_q \tilde \phi^\infty(\tfrac 1r, q(b_0,r))q_b(b_0,r), \ \forall r\ge r_g.
\end{align}
Solving this linear ODE yields
\begin{align}\label{s954}
q_b(b_0,r)=q_b(b_0,r_g) e^{-\int_{r_g}^r(1- D_q\tilde\phi^\infty(\tfrac 1\rho,  q(b_0,\rho)))d\rho}. 
\end{align}
From \eqref{20.4} and the decay of $q(b_0,r)$(c.f \eqref{18}), we have 
\begin{align*}
|D_q\tilde \phi^{\infty}(\tfrac 1r, q(b_0,r))|\le \tfrac 12, \ \forall r\ge r_g.
\end{align*} 
Therefore, \eqref{s954} implies the exponential decay: 
\begin{align*}
|q_b(b_0,r)|\le C e^{-\frac r2}, \ \forall r\ge r_g.
\end{align*}
This obviously contradicts Lemma \ref{lm: qb}, hence Lemma \ref{lm: i} is proved. 
\end{proof}

Now we are ready to prove: 

\begin{lemma}[Local structure around $S^0$ point]\label{lm: s} Let $b_0\in S^0$. Then there exists $\eps>0$ such that $(b_0-\eps, b_0)\subset S^+$, $(b_0,b_0+\eps)\subset S^-$. 
\end{lemma}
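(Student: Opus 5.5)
We will use the sign structure of $\tilde q=q_b(b_0,\cdot)$ obtained in Proposition \ref{prop: uniq} and Lemma \ref{lm: qb}. By Proposition \ref{prop: uniq}, $\tilde q$ has a unique zero $r_*$; by Lemma \ref{lm: qb}, $\tilde q(r)\le -c_*r^{-\frac{d+1}2}e^r$ and $\tilde q_r(r)\le -c_*r^{-\frac{d+1}2}e^r$ for $r>R_0$. Write $Q=q(b_0,\cdot)$. Since $Q,|Q_r|$ decay like $r^{-\frac{d-1}2}e^{-r}$ by Lemma \ref{lm: infinity}, the linearized solution dominates $Q$ at large $r$. The plan is to compare $q(b,\cdot)=Q+(b-b_0)\tilde q+o(b-b_0)$ at a single well-chosen large radius $R$.

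\textbf{Choice of $R$.} Fix $R>R_0$ so large that $Q(R)$ and $|Q_r(R)|$ lie in the small neighborhood where $W^\infty_{cs}$ is the graph \eqref{20.3}, and so that
\[
|\tilde q(R)|\gg Q(R),\qquad |\tilde q_r(R)|\gg |Q_r(R)|.
\]
This is possible because the ratio is at least of order $r^{-1}e^{2r}$. Since $q(b,r)$ and $q_r(b,r)$ are $C^1$ in $b$ on the compact set $[r_1,R]$ with $r_1>0$ small, Taylor expansion gives
\[
q(b,R)=Q(R)+(b-b_0)\tilde q(R)+o(|b-b_0|),
\]
and similarly for $q_r$, uniformly on $[r_1,R]$. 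On $(0,r_1]$ we use Lemma \ref{lm: q} and Lemma \ref{lm: z}: $q(b,r)\sim br^{-\frac{d-2-\beta}2}$ and $q_r(b,r)<0$ uniformly for $b$ near $b_0$. Combined with $Q>0$ and $Q_r<0$ on $[r_1,R]$, which are bounded away from zero there, this shows $q(b,\cdot)>0$ and $q_r(b,\cdot)<0$ on $(0,R]$ for $|b-b_0|<\eps$. In particular such $b$ does not lie in $S^\pm$ with exit radius $\le R$.

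\textbf{Case $b>b_0$.} Here $b$ is close to $b_0$ but not small relative to $Q(R)$, so I would not fix $b$ first. Instead, for each small $\delta=b-b_0>0$, choose the radius adaptively. Let $R_\delta$ be the point where $\delta|\tilde q(R_\delta)|$ becomes comparable to $Q(R_\delta)$, which by the asymptotics means $e^{2R_\delta}\approx\delta^{-1}$. Then compare with the linear flow near $(0,0)$.

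A cleaner route, which I would try first, is a trapping argument. For $b>b_0$ near $b_0$, take $R$ fixed large. The expansion gives $q(b,R)<Q(R)$ and $q_r(b,R)<Q_r(R)$, i.e. the point $(q,q_r)$ lies strictly below the graph $q_r=-q+\tilde\phi^\infty(1/R,q)$ of $W^\infty_{cs}$, since that graph has slope $\approx-1$ and $\tilde q_r\approx\tilde q$ with both negative. In the $(\tau,v_1,v_2)$ coordinates of \eqref{13}, $v_1=q+q_r<\phi^\infty(\tau,v_2)$, and the unstable direction $v_1$ then grows like $e^r$ away from $W^\infty_{cs}$ toward $v_1<0$. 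Meanwhile $q$ decreases with $q_r<0$. So the trajectory must exit the quadrant through the $q_r$-axis, giving $q=0$ with $q_r<0$; it cannot reach $q_r=0$ because $q_r+q$ stays negative and $q>0$. Hence $b\in S^-$. Lemma \ref{lm: b} a) excludes $S^0$ as an alternative, and Lemma \ref{lm: i} excludes it as well.

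\textbf{Case $b<b_0$.} Now $v_1>\phi^\infty$, and the unstable growth pushes $q+q_r$ positive. While $q>0$ stays small, this forces $q_r$ to reach $0$ before $q$ does, since $q+q_r>0$ with $q>0$ decreasing implies $|q_r|<q$. This gives $b\in S^+$.

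\textbf{Main obstacle.} The hard step is making rigorous the claim that points off $W^\infty_{cs}$, on the correct side, exit through the correct axis. This requires the sign of $v_1-\phi^\infty(\tau,v_2)$ to be preserved and growing (a cone/Lyapunov argument using \eqref{13} and $D\phi^\infty$ small), together with ruling out exits before the quadrant region where the linearization is valid.
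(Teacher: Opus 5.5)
Your overall strategy is the paper's. You fix one large radius $R$ (the paper's $r^*$) with $(1/R,Q(R),Q_r(R))\in W^\infty_{cs}$ and $\tilde q(R),\tilde q_r(R)<0$ by Lemma~\ref{lm: qb}. You then choose $\eps$ so that, for $0<|b-b_0|<\eps$, the orbit stays in the fourth quadrant on $(0,R]$, is close to $Q$ at $R$, and lies strictly below $W^\infty_{cs}$ if $b>b_0$ and strictly above it if $b<b_0$.

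Since $R$ is fixed before $\eps$, the adaptive-radius detour and the domination $|\tilde q(R)|\gg Q(R)$ are unnecessary. The inequalities $q(b,R)<Q(R)$ and $q_r(b,R)<Q_r(R)$ already place the point below the decreasing graph $q_r=-q+\tilde\phi^\infty(1/R,q)$.

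The step left open is the one you flag yourself. As written, your two case arguments rest on claims you have not established (``$q+q_r$ stays negative'', ``unstable growth pushes $q+q_r$ positive''). All you actually know is $v_1<\phi^\infty(\tau,v_2)$ (resp.\ $>$). That inequality persists for $r\ge R$ only because an orbit off $W^\infty_{cs}$ can never meet it.

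No cone or growth estimate is needed to finish; the paper closes this step as follows.
\begin{itemize}
\item By \eqref{22.1}, $\phi^\infty(\tau,v_2)=o(1)v_2$. So near the origin the half-axis $\{q_r=0,\ q>0\}$, where $v_1=v_2>0$, lies strictly above $W^\infty_{cs}$. The half-axis $\{q=0,\ q_r<0\}$, where $v_1=-v_2<0$, lies strictly below it. The orbit can therefore only leave the quadrant through the axis on its own side.
\item To keep the orbit in the small box where this picture is valid, the paper uses the equation directly. For $0<q<\delta$, $-\delta<q_r<0$ and $r\ge R$ large, one has $q_{rr}\ge \frac12 q>0$. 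Hence $q_r$ increases and $q$ decreases, so neither $q_r=-\delta$ nor $q=\delta$ can be reached.
\item If the orbit never left the quadrant, then $q\downarrow c\ge 0$ and $q_r\uparrow 0$. The case $c>0$ is impossible since then $q_{rr}\ge c/2$. The case $c=0$ means convergence to the origin, so the orbit would lie on $W^\infty_{cs}$, which it does not.
\end{itemize}
Hence for $b>b_0$ the orbit exits through $q=0$ with $q_r<0$, so $b\in S^-$. For $b<b_0$ it exits through $q_r=0$ with $q>0$, so $b\in S^+$.

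One small correction: Lemma~\ref{lm: b}~a) does not exclude $S^0$; it only gives the trichotomy. What excludes $S^0$ is Lemma~\ref{lm: i}, or simply the fact that the orbit is off $W^\infty_{cs}$.
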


\begin{proof}  Denote $Q(r)=q(b_0,r)$, $\tilde q(r)=q_b(b_0,r)$ and let $r_0$ be the unique zero of $\tilde q(r)$. Let $\delta>0$ be a small number to be specified later. By Lemma ~\ref{lm: infinity} and Lemma~\ref{lm: qb} we can choose $r^*>r_0+1$ sufficiently large such that 
\begin{align}\label{103}
\begin{cases}
(\tfrac 1{r^*}, Q(r^*), Q_r(r^*))\in W^{\infty}_{cs}, \; Q(r^*)+|Q_r(r^*)|\le \frac 12\delta,\\
 \tilde q(r^*)<0,\; \tilde q_{r}(r^*)<0
 \end{cases}
\end{align}
holds. 

We now analyze three intervals of $r$ to describe the behavior of trajectories near the ground state solution $Q(r)$. 

From Lemma~\ref{lm: z}, there exist $\varepsilon, \eta>0$ such that $z(b,r)=r^{\frac{d-2-\beta}2}q(b,r)$ satisfies \eqref{53} on $[b_0-\eps,b_0+\eps]\times[0,\eta]$, which also implies, for all $r\in[0,\eta]$, $b^+\in (b_0,b_0+\eps)$ and $b^-\in(b_0-\eps,b_0)$,
\begin{align*}
z(b^+,r)-z(b_0,r)>0, \qquad  z(b^-,r)-z(b_0,r)<0.
 \end{align*}
Consequently, 
\[
q(b^+,r)-Q(r)>0, \qquad q(b^-,r)-Q(r)<0.
\]
Next, consider the compact interval $[\eta,r^*]$, on which $q(b,r)$ is smooth in both $b$ and $r$. Using the linear approximation of the smooth function $q(b,r)$ at $b_0$, for $r\in [\eta,r^*]$ we have
\begin{align}\label{104}
\begin{cases}
q(b,r)=Q(r)+\tilde q(r)(b-b_0)+o(b-b_0), \\
q_r(b,r)=Q_r(r)+\tilde q_r(r)(b-b_0)+o(b-b_0),
\end{cases}
\end{align}
where $\frac {o(b-b_0)}{|b-b_0|} \to 0$ as $b\to b_0$.  Since $\tilde q$ changes sign exactly once due to Proposition~\ref{prop: uniq}, shrinking $\eps$ if necessary, we conclude that $q(b,r)-Q(r)$ must change to the opposite sign when $r$ varies from $r=\eta$ to $r=r^*$ for all $b\in [b_0-\eps, b_0+\eps]$.  Hence for all the above defined $b^{\pm}$, the following holds:
\begin{align}\label{105}
\begin{cases}
b^{\pm}\in S^+\cup S^-, \\
q(b^+, r^*)<Q(r^*), q_r(b^+, r^*)<Q_r(r^*),\\
q(b^-, r^*)>Q(r^*), q_r(b^-, r^*)>Q_r(r^*),\\
|q(b^\pm, r^*)-Q(r^*)|+|q_r(b^\pm, r^*)-Q_r(r^*)|\le \delta^2. 
\end{cases}
\end{align}

%\begin{align}\label{104}
%\begin{cases}
%q(b,r^*)=Q(r^*)+\tilde q(r^*)(b-b_0)+o(b-b_0), \\
%q_r(b,r^*)=Q_r(r^*)+\tilde q_r(r^*)(b-b_0)+o(b-b_0),
%\end{cases}
%\end{align}
%where $\frac {o(b-b_0)}{|b-b_0|} \to 0$ as $b\to b_0$. Using \eqref{103}-\eqref{104} together with Lemmas~\ref{lm: i} and~\ref{lm: qb}, we find that there exists $\eps>0$ such that for all 
%$b^+\in (b_0, b_0+\eps)$ and $b^-\in (b_0-\eps, b_0)$, the following holds:
%\begin{align}\label{105}
%\begin{cases}
%b^{\pm}\in S^+\cup S^-, \\
%q(b^+, r^*)<Q(r^*), q_r(b^+, r^*)<Q_r(r^*),\\
%q(b^-, r^*)>Q(r^*), q_r(b^-, r^*)>Q_r(r^*),\\
%|q(b^\pm, r^*)-Q(r^*)|+|q_r(b^\pm, r^*)-Q_r(r^*)|\le \delta^2. 
%\end{cases}
%\end{align}
This implies that at $r^*$, the point 
\[
(\tfrac 1{r^*},q(b^+,r^*), q_r(b^+,r^*))
\] is inside the corner bounded by $W^\infty_{cs}$ and the planes $q=0$, $q_r=-\delta$, $\tau=0, \tau=\delta$; while 
\[
(\tfrac 1{r^*},q(b^-,r^*), q_r(b^-,r^*))
\] 
lies in the opposite corner bounded by $W^\infty_{cs}$ and the plane $q=\delta$, $q_r=0$, $\tau=0$, $\tau=\delta$. 

For all later $r\ge r^*$, as long as the trajectory stays within
\begin{align}\label{106}
0<q(b^{\pm}, r)<\delta, \; -\delta<q_r(b^{\pm},r)<0,
\end{align}
we use equation \eqref{12} and Lemma \ref{lm: q} to estimate
\begin{align*}
q_{rr}(b^\pm,r)&=-\tfrac{d-1}r q_r(b^{\pm}, r)+\tfrac a{r^2}q(b^\pm, r)+q(b^\pm, r)-q(b^\pm, r)^{p+1} \ge \frac 12 q(b^\pm, r)>0, 
\end{align*}
provided $\delta$ and $1/r^*$ are reasonably small. 
Hence, under \eqref{106}, $q_{rr}(b^{\pm},r)>0$ so the trajectory $(\tfrac 1r, q(b^+, r), q_r(b^+, r))$ approaches the plane $q=0$, while the trajectory $(\tfrac 1r, q(b^-, r), q_r(b^-, r))$ approaches the plane $q_r=0$. %In view of the first line in \eqref{105}, the $b^\pm$ trajectories do not belong to $W_{cs}^\infty$. 

Since any trajectory is either contained in $W_{cs}^\infty$ or never intersects it, both orbits must remain within their respective corners until they hit the boundary planes:
\begin{itemize}
\item The $b^+$ trajectory must intersect $q=0$, hence $b^+\in S^-$;
\item The $b^-$ trajectory must intersect $q_r=0$, hence $b^-\in S^+$.
\end{itemize}
Thus, we conclude that: 
\[
(b_0-\eps, b_0)\subset S^+, \ (b_0, b_0+\eps)\subset S^-,
\]
proving the lemma. 
\end{proof}

\section{Construction of Local Stable Manifold}\label{S:manifold}

In this section, we employ the Lyapunov–Perron framework to construct local stable and unstable manifolds of the two-dimensional manifold of ground states. These ground states are generated by applying rotational and scaling symmetries to the ground state solution $Q$ (see Remark \ref{R:1.3}). The construction proceeds by first establishing the stable and unstable manifolds of the single ground state $Q$, and then extending them via the action of these symmetries.

\subsection{Equation for perturbations}
We begin by expressing solutions $u(t,x)$ of NLS$_a$ in a rotating frame:
\[u(t,x)=e^{it}(Q(x)+v(t,x)).\]
Then $v=v_1+iv_2$ satisfies the equation 
\begin{align}\label{1205}
iv_t-\la v-v+(p+1)Q^p v_1+iQ^p v_2-iR(v)=0, 
\end{align}
where the remainder $R(v)$ is defined through
\begin{align}\label{r}
-iR(v)=|Q+v|^p(Q+v)-Q^{p+1}-(p+1)Q^pv_1-iQ^p v_2.
\end{align}
Putting the linear part in vector form, the equation for $v$ can be rewritten as 
\begin{align}\label{1102}
v_t=JL v+R(v),
\end{align}
where 
\[-i\sim J=\begin{pmatrix}0&1\\-1&0\end{pmatrix}, \ L=\begin{pmatrix}L_1&0\\0& L_2\end{pmatrix},\]
and 
\begin{align}\label{l1l2}
L_1=\la+1-(p+1)Q^p, \quad L_2=\la+1-Q^p. 
\end{align}

In view of Lemma \ref{lm: l2} and \eqref{503} in the Appendix, along with the fact that $\langle LQ,Q\rangle<0$, we are able to apply Theorem 2.1 in \cite{LZ22} for general linear Hamiltonian systems to obtain the following linear exponential dichotomy:

\begin{proposition}\label{prop:linzeng}
The flow $e^{tJL}$ is a well-defined group of bounded linear operators on $(H^1(\R^d))^2$ and there exist closed subspaces $E^u,\ E^s$ and $E^c$ such that 

a) 
%$H^1 = E^u \oplus E^s \oplus E^c$ and 
$\dim E^u=\dim E^s=1$.

b) $e^{tJL}(E^{u,s,c})=E^{u,s,c}$.

c) $ \langle Lu, u\rangle =0, \; \forall u\in E^{u,s},$ and $$E^c=\{u\in (H^1(\R^d))^2 \mid \langle Lu, v\rangle=0, \ \forall v\in E^u\oplus E^s\}.$$

%d) There exist $c,\lambda>0$ such that 
%$$
%JLu=\pm \lambda u, \forall u\in E^{u,s}, \mbox{ hence }\biggl |e^{tJL}|_{E^{u,s}}\biggr|\le Ce^{\mp \lambda t}, \ \forall t\ge 0. 
%$$

d) $\bigl |e^{tJL}|_{E^c}\big|\le C(1+|t|)$, $\forall t\in \mathbb R $.

e) There exist $\epsilon >0$, $e_0>0$, and a closed subspace $E^e$ such that $E^c=\ker L\oplus E^e$ and $(H^1(\R^d))^2=E^u\oplus E^s\oplus \ker L \oplus E^e$. Moreover,  

$$L\sim 
\begin{pmatrix} 0 & 1 & 0 & 0\\
1&0&0&0\\
0&0&0&0\\
0&0&0&L_e
\end{pmatrix}, \quad 
JL\sim \begin{pmatrix}
e_0 &0&0&0\\
0&-e_0 &0&0\\
0&0&0&A_{0e}\\
0&0&0&A_e
\end{pmatrix}
$$
and 
$$
L_e\ge \epsilon>0, \quad \langle L_e e^{tA_e}u, e^{tA_e}v\rangle=\langle L_e u, v\rangle.
$$   
Here $\sim$ indicates that the operators $L$ and $JL$ take those block forms in the direct sum decomposition. $L_e: E^e\to (E^e)^*$ is defined as $\langle L_e u, v\rangle=\langle Lu, v\rangle$, for any $u, v\in E^e$. Operators $A_e$ and $A_{0e}$ are defined as the projections of $JL|_{E^c}$ onto subspaces $E^e$ and   $\ker L$ based on the above direct sum decomposition. In particular, $A_{0e} \in L(E^e, \ker L)$.      
\end{proposition}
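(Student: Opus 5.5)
The plan is to verify the hypotheses of the abstract result, Theorem 2.1 of \cite{LZ22}, for the linear Hamiltonian system $v_t = JLv$ on $X=(H^1(\R^d))^2$, and then read off a)--e). That theorem requires three things. First, $J$ must be anti-self-adjoint and $L: X\to X^*$ bounded and symmetric. Second, $\langle Lu,u\rangle$ must have only finitely many negative directions, i.e.\ $n^-(L)<\infty$. Third, $L$ must be uniformly positive on a closed complement of $\ker L \oplus (\text{negative space})$. Under these hypotheses it yields the invariant decomposition, the $1+|t|$ growth bound on the center space, and the block forms, with $\dim E^u=\dim E^s\le n^-(L)$.

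\textbf{Step 1: boundedness and the essential spectrum.} Boundedness of $L$ on $H^1$ follows from the equivalence $\|\cdot\|_{H^1_a}\sim\|\cdot\|_{H^1}$ together with the decay of $Q$ from Theorem \ref{thm: ground state}. In particular, $Q^p$ is bounded by the singular weight $r^{-p\frac{d-2-\beta}{2}}$ near $0$, whose exponent is less than $2$, and decays exponentially at infinity. Hence multiplication by $Q^p$ is a relatively compact perturbation of $\la+1$. It follows that $L_1$ and $L_2$ are $\la+1$ plus compact, with essential spectrum in $[1,\infty)$, and so each has only finitely many eigenvalues below $1/2$.

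\textbf{Step 2: the negative and null directions.} For $L_2$ we have $L_2Q=0$ with $Q>0$, so $L_2\ge0$ and $\ker L_2=\spa\{Q\}$ (this is Lemma \ref{lm: l2} in the Appendix). For $L_1$, the identity $\langle L_1Q,Q\rangle=-p\int Q^{p+2}<0$ gives $n^-(L_1)\ge1$. Conversely, \eqref{5.10} says $L_1\ge0$ on a codimension-one subspace, so $n^-(L_1)=1$. For the kernel, I would decompose into spherical harmonics. On radial functions, $\ker L_1$ is spanned by $\tilde q=q_b$ if the latter lies in $H^1$, but Lemma \ref{lm: qb} shows $\tilde q$ grows like $e^r$; hence the radial kernel is trivial. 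For higher harmonics, the effective potential is $a+k(k+d-2)$, and $L_1$ restricted there is positive: for $k=1$ one compares with $\partial_rQ$, which solves the $A=a+d-1$ equation, is negative, and therefore serves as a positive ground state of that channel; larger $k$ only add positivity. This is the content of \eqref{503}. Altogether $\ker L=\{0\}\times\spa\{Q\}$ and $n^-(L)=1$.

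\textbf{Step 3: verifying the theorem's hypotheses and reading off the conclusions.} Apply \cite{LZ22} with $n^-(L)=1$. To obtain $\dim E^u=\dim E^s=1$ rather than $0$, I would use the condition $\langle LQ,Q\rangle<0$ quoted in the statement, understood as the pairing with the generalized kernel. Specifically, the generalized kernel direction $JL\,\xi=(0,Q)$ is solved by $\xi=(L_1^{-1}Q,0)$, and $\langle L_1^{-1}Q,Q\rangle$ relates to $\frac{d}{d\omega}\|Q_\omega\|_2^2$, which is negative in the mass-supercritical range $p>4/d$. Hence the negative direction of $L$ is not absorbed by the kernel/generalized-kernel block, and the index counting formula in \cite{LZ22} forces one real unstable/stable pair $\pm e_0$. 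Properties c)--e) are then direct outputs of the theorem, with $E^e$ the $L$-positive complement on which $L_e\ge\epsilon$ follows from Step 1's essential-spectrum gap.

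The main obstacle is the index count in Step 3, namely showing that the negative direction produces genuine hyperbolic eigenvalues. I would check it via the Vakhitov--Kolokolov-type computation of $\langle L_1^{-1}Q,Q\rangle$, using scaling: $L_1Q_1=-2Q$ with $Q_1=\frac2pQ+rQ_r$.
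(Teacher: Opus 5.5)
Your overall route is the paper's: you feed the spectral facts of Lemma \ref{lm: l2}, together with $\langle LQ,Q\rangle<0$ and \eqref{503}, into Theorem 2.1 of \cite{LZ22}. Your handling of the radial part of $\ker L_1$ is a valid alternative to the paper's Case 1: any radial $H^1$ zero mode is a multiple of $\tilde q=q_b(b_0,\cdot)$, which grows like $e^r$ by Lemma \ref{lm: qb}.

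The genuine gap is the $k=1$ channel. It is not true that $Q_r$ solves $\bigl(\mathcal L_{a+d-1}+1-(p+1)Q^p\bigr)G=0$. The inverse-square potential breaks translation invariance, and differentiating the radial equation gives (cf. \eqref{815})
\[
\bigl(\mathcal L_{a+d-1}+1-(p+1)Q^p\bigr)Q_r=\tfrac{2a}{r^3}\,Q .
\]
Your argument would fail even if $-Q_r$ were a positive zero-energy solution. That would only give nonnegativity of the channel, with $0$ at the bottom of its spectrum. This is exactly the case $a=0$, where $\partial_jQ\in\ker L_1$. So, as written, your reasoning cannot show that the kernel is trivial.

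The missing ingredient is the sign $a<0$ of this commutator term. With it your idea can be repaired. Since $\phi=-Q_r>0$ is a strict supersolution, writing $u=\phi w$ for radial $u\in C_c^\infty(\R^d\setminus\{0\})$ gives
\[
\bigl\langle\bigl(\mathcal L_{a+d-1}+1-(p+1)Q^p\bigr)u,u\bigr\rangle=\int_{\R^d}\phi^2|\nabla(u/\phi)|^2\,dx+\int_{\R^d}\frac{2|a|\,Q}{|x|^3|Q_r|}\,|u|^2\,dx .
\]
Drop the first term. By density and Fatou, the lower bound $\int W|u|^2$ with $W=\frac{2|a|Q}{|x|^3|Q_r|}>0$ persists on $H^1_{rad}$, which excludes any nontrivial zero mode. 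The paper reaches the same conclusion through the Wronskian identity \eqref{825}; its source term $2ar^{d-4}QG$ is this same commutator. Also, \eqref{503} has nothing to do with this channel.

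In Step 3 you conflate two different facts. The identity $\langle LQ,Q\rangle=\langle L_1Q,Q\rangle=-p\int Q^{p+2}<0$ only supplies the negative direction, $n^-(L)\ge 1$. The pairing on the generalized kernel is \eqref{503}: since $L_1^{-1}Q=-\frac12 Q_1$, your $\xi$ satisfies $\langle L\xi,\xi\rangle=\frac14\langle L_1Q_1,Q_1\rangle=\frac14\bigl(d-\frac4p\bigr)\|Q\|_{L^2}^2>0$.

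You should also record that this nonvanishing stops the Jordan chain at length two, as in Lemma \ref{lm: l2}(iii). This matters because $k_0^{\le 0}$ in the index formula $n^-(L)=k_r+2k_c+2k_i^{\le 0}+k_0^{\le 0}$ is computed on the whole generalized kernel modulo $\ker L$. Only then do $n^-(L)=1$ and $k_0^{\le 0}=0$ force $k_r=1$ by parity, which gives $\dim E^u=\dim E^s=1$.
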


In the rest of this section, we will assume $V^{\pm}\in H^1(\R^d)$ are the eigenfunction taken from $E^u$ and $E^s$ satisfying
\begin{align*}
JL V^{\pm}=\pm e_0 V^{\pm}, \quad \langle LV^+, V^-\rangle =1. 
\end{align*}

\subsection{Preliminary estimates}

   We begin by giving pointwise estimates for the remainder term $R(v)$. For clarity and future reference, we note that these estimates hold for any exponent $p>0$ and any function $Q\in C^1 (\R^d \setminus \{0\})$ and $Q(x)\neq 0, \ \forall x$.

\begin{lemma}\label{lm:rv} Let $R(v)$ be the remainder term defined in \eqref{r}. Then in the case $0<p\le 1$, we have
\begin{align}\label{R1}
\begin{cases}
|R(v)|\lsm |v|^{p+1}, \qquad |R(v_1)-R(v_2)|\lsm |v_1-v_2|(|v_1|^p+|v_2|^p), \\
|\nabla R(v)|\lsm |v|^p \bigl(|\nabla v|+|Q^{-1}\nabla Q v|\bigr),\\
|\nabla R(v_1)-\nabla R(v_2)|\lsm  \bigl(|Q^{-1}\nabla Q (v_1-v_2)|+|\nabla (v_1-v_2)|\bigr)\sum_{i=1}^2 |v_j|^p\\
\qquad \qquad\qquad+|v_1-v_2|^p\sum_{j=1}^2\bigl(|\nabla v_j|+|Q^{-1}\nabla Q v_j|\bigr). 
\end{cases}
\end{align}
In the case $p>1$, we have
\begin{align}\label{R2}
\begin{cases}
|R(v)|\lsm Q^{p-1}|v|^2+ |v|^{p+1}, \qquad |R(v_1)-R(v_2)|\lsm |v_1-v_2|\sum_{j=1}^2(Q^{p-1}|v_j|+|v_j|^p), \\
|\nabla R(v)|\lsm (Q^{p-1}|v|+|v|^p) \bigl(|\nabla v|+|Q^{-1}\nabla Q v|\bigr),\\
|\nabla R(v_1)-\nabla R(v_2)|\lsm  \bigl(|Q^{-1}\nabla Q (v_1-v_2)|+|\nabla (v_1-v_2)\bigr)\sum_{j=1}^2(Q^{p-1}|v_j|+ |v_j|^p)\\
\qquad \qquad\qquad+|v_1-v_2|\big(\sum_{j=1}^2 (|\nabla v_j|  +|Q^{-1}\nabla Q v_j|)\big)
%+|Q^{-1}\nabla Q v_i|\bigr)
\bigl(Q^{p-1}+\sum_{j=1}^2|v_j|^{p-1}\bigr). 
\end{cases}
\end{align}
In all the above estimates, the implicit constants depend only on $p$. 
\end{lemma}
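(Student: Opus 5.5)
The plan is to treat the nonlinearity pointwise as a function of the complex variable. Identify $v=v_1+iv_2$ with a complex number and write $F(z)=|z|^pz$. Definition \eqref{r} then becomes
\[
-iR(v)=F(Q+v)-F(Q)-DF(Q)[v],
\]
since $DF(Q)[v]=(p+1)Q^pv_1+iQ^pv_2$ for $Q>0$ real. So $-iR$ is the second-order Taylor remainder of $F$ at the real point $Q$. Write $DF(z)[w]=\frac{p+2}2|z|^pw+\frac p2|z|^{p-2}z^2\bar w$. This function is $C^1$ away from $0$ and homogeneous of degree $p$ in $z$, so $DF$ is $C^{0,\min(p,1)}$ and, when $p>1$, $D^2F$ is $C^{0,p-1}$ with $|D^2F(z)|\lsm|z|^{p-1}$. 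We have $|DF(z)|\lsm |z|^p$, and $D^2F$ is bounded by $|z|^{p-1}$ and homogeneous of degree $p-1$. All constants depend only on $p$.

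For the estimates without derivatives, use the integral form
\[
-iR(v)=\int_0^1\bigl(DF(Q+\theta v)-DF(Q)\bigr)[v]\,d\theta.
\]
If $0<p\le1$, Hölder continuity of $DF$ gives $|DF(Q+\theta v)-DF(Q)|\lsm|v|^p$, hence $|R(v)|\lsm|v|^{p+1}$. For the difference $R(v_1)-R(v_2)$, write it as $\int_0^1(DF(Q+v_2+\theta(v_1-v_2))-DF(Q))[v_1-v_2]\,d\theta$ and apply the same bound, which gives the factor $|v_1|^p+|v_2|^p$. If $p>1$, use the mean value theorem once more: $|DF(Q+w)-DF(Q)|\lsm\sup_\theta|Q+\theta w|^{p-1}|w|\lsm (Q^{p-1}+|w|^{p-1})|w|$. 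This gives $Q^{p-1}|v|^2+|v|^{p+1}$ and the corresponding difference bound.

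For the gradient, differentiate using the chain rule, with $\nabla Q$ real:
\[
-i\nabla R(v)=\bigl(DF(Q+v)-DF(Q)-D^2F(Q)[v,\cdot]\bigr)[\nabla Q]+\bigl(DF(Q+v)-DF(Q)\bigr)[\nabla v].
\]
The second term is handled exactly as above. The first term is the main obstacle, because it carries $\nabla Q$ without any factor of $v$ in front of $\nabla v$. For $p\le1$ and $|v|\le Q/2$, Taylor's formula and the homogeneity bound $|D^3F(z)|\lsm|z|^{p-2}\sim Q^{p-2}$ on the segment give a bound $Q^{p-2}|v|^2|\nabla Q|$. Since $|v|/Q\le(|v|/Q)^{1-p}$ up to constants there, this is $\lsm |v|^p\,|Q^{-1}\nabla Q\, v|$. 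For $|v|\ge Q/2$, bound each piece separately: $|DF(Q+v)-DF(Q)|\lsm|v|^p$ and $|D^2F(Q)[v]|\lsm Q^{p-1}|v|\lsm |v|^p$. Both are then $\lsm |v|^p|Q^{-1}v||\nabla Q|$ after multiplying by $|\nabla Q|\le |Q^{-1}\nabla Q v|\cdot Q/|v|$ and using $Q/|v|\le 2$. For $p>1$ the first term is bounded by $(Q^{p-1}+|v|^{p-1})|v|\cdot|v|/Q\cdot|\nabla Q|$, using the same two-regime splitting, which yields $(Q^{p-1}|v|+|v|^p)|Q^{-1}\nabla Q v|$.

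Finally, the Lipschitz-type estimates for $\nabla R(v_1)-\nabla R(v_2)$ follow from the same two-regime case analysis applied to the differences of the two terms. Subtract the expressions above and telescope. The term multiplying $\nabla(v_1-v_2)$ or $Q^{-1}\nabla Q(v_1-v_2)$ is controlled as before. The term in which $DF$ (respectively $D^2F$) is evaluated at different points multiplies $\nabla v_j$ or $Q^{-1}\nabla Q v_j$. For this term use $C^{0,p}$ continuity of $DF$ when $p\le1$, which gives the factor $|v_1-v_2|^p$. When $p>1$, use the bound $|D^2F|\lsm Q^{p-1}+|v_j|^{p-1}$ along the segment, which gives $|v_1-v_2|(Q^{p-1}+\sum|v_j|^{p-1})$. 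The only delicate bookkeeping is again the $\nabla Q$ term in the regime $|v_j|\lesssim Q$, where one must trade one power of $Q$ for $Q^{-1}|v|$ without losing homogeneity.
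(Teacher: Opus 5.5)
Your approach is correct in outline but differs from the paper's. The paper never expands at the variable base point $Q$. It rescales, writing $-iR(v)=Q^{p+1}J(v/Q)$ with $J(z)=|1+z|^p(1+z)-1-\frac{p+2}{2}z-\frac p2\bar z$, so every estimate reduces to bounds on $J$, $J_z$, $J_{\bar z}$ near the fixed point $1$. Those bounds come from a single two-regime split for the auxiliary functions $J_1(a,b)$, $J_2(a,b)$.

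The gradient is then $\nabla\bigl(Q^{p+1}J(z)\bigr)=(p+1)Q^p\nabla Q\,J(z)+Q^{p+1}\bigl(J_z\nabla z+J_{\bar z}\nabla\bar z\bigr)$, with $\nabla z=Q^{-1}\nabla v-Q^{-2}v\nabla Q$. Each $\nabla Q$-term is already of the right size on its own, since $|J(z)|\lesssim|z|^{p+1}$ and $|J_z(z)|\lesssim|z|^p$ for $p\le1$. So the paper needs no cancellation, no third derivatives, and no case split at the level of $\nabla R$; the factor $Q^{-1}\nabla Q\,v$ appears automatically from $\nabla(v/Q)$.

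In your formulation the $\nabla Q$-coefficient is $T(v)=DF(Q+v)-DF(Q)-D^2F(Q)[v]$. Its individual pieces are too large when $|v|\ll Q$, so you must use the $D^3F$ cancellation and split $|v|\lessgtr Q/2$. For differences the split is "both $|v_j|\le Q/2$" versus "$\max_j|v_j|\ge Q/2$". This does work: in the first regime $|T(v_1)-T(v_2)|\lesssim Q^{p-2}(|v_1|+|v_2|)|v_1-v_2|$ by the mean value theorem on $D^2F$; in the second you bound the three pieces separately. What your route buys is that it uses only symbol-type bounds $|D^kF(z)|\lesssim|z|^{p+1-k}$, not exact scaling. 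The cost is more bookkeeping, which is exactly where your write-up goes wrong.

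In the regime $|v|\ge Q/2$ with $p<1$ you claim $|D^2F(Q)[v]|\lesssim Q^{p-1}|v|\lesssim|v|^p$. The second inequality is false: $Q^{p-1}|v|/|v|^p=(|v|/Q)^{1-p}$ is unbounded as $Q/|v|\to0$. The conclusion survives if you keep $Q$ attached to $\nabla Q$ instead of discarding it first:
\[
Q^{p-1}|v|\,|\nabla Q|=Q^{p}\,|Q^{-1}\nabla Q\,v|\le 2^p|v|^p\,|Q^{-1}\nabla Q\,v|.
\]
The same fix handles the $Q^{p-1}|v_1-v_2|$ piece in the difference estimate.

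In the regime $|v|\le Q/2$, the inequality you actually need is $Q^{p-1}|v|\le|v|^p$, that is $|v|/Q\le(|v|/Q)^{p}$. The exponent $1-p$ you wrote is not the relevant one.

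Finally, $DF$ is not globally Lipschitz for $p>1$, and $D^2F$ is not globally $C^{0,p-1}$ when $p>2$ (allowed here for $d=3$). You never use these claims, only $|D^2F(z)|\lesssim|z|^{p-1}$ along segments, so drop them.
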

\begin{proof}
It suffices to prove the difference estimates, as the rest can be obtained by taking  $v_2=0$.

We introduce 
\[
J(z)=|1+z|^p(1+z)-1-\tfrac{p+2}2 z-\tfrac p2 \bar z.
\]
This function appears naturally in the following equations: 
\begin{align}\label{r4}
-iR(v)=Q^{p+1}J(z),\ \mbox{ for } z=\tfrac vQ, 
\end{align}
and 
\[
-i(R(v_1)-R(v_2))=Q^{p+1}(J(z_1)-J(z_2)) \ \mbox{for } z_i=\tfrac{v_i}Q.
\]
Differentiating $J(z)$ in $z$ and $\bar z$ yields
\begin{align*}
J_z(z)=\tfrac{p+2}2(|1+z|^p-1), \ \ J_{\bar z}(z)=\tfrac p2[ |1+z|^{p-2}(1+z)^2-1], 
\end{align*}
which are special cases for a broader class of functions:
\[
J_1(a, b)=\tfrac{p+2}2[|b+a|^p-|b|^p], \ \ J_2(a,b)=\frac p2[|b+a|^{p-2}(b+a)^2-|b|^{p-2}b^2],
\]
for $a=z$ and $b=1$. 

We now establish bounds for $J_1$, $J_2$ by considering two cases. When $|a|<\frac 12|b|$, $J_1$, $J_2$ are $C^1$ with derivatives bounded by $C|b|^{p-1}$. The Mean Value Theorem gives 
\[
|J_1|+|J_2|\lsm |a| |b|^{p-1}. 
\]
For $|a|\ge \frac 12 |b|$, we have  
\[
|J_1|+|J_2|\lsm |a|^p. 
\]
These combine to give the bound
\begin{align}\label{r6}
|J_1|+|J_2|\lsm \begin{cases}
|a|^p, & 0 < p\le 1;\\
|a|(|a|^{p-1}+|b|^{p-1}), & p>1. 
\end{cases}
\end{align}
Specializing to $a=z$ and $b=1$ yields:
\begin{align}\label{r41}
|J_z(z)|+|J_{\bar z}(z)|\lsm \begin{cases}
|z|^p, & 0<p\le 1;\\
|z|(|z|^{p-1}+1), & p>1,
\end{cases}
\end{align}
which then implies 
\begin{align}\label{r42}
|J(z_1)-J(z_2)|\lsm \begin{cases}
|z_1-z_2|(|z_1|^p+|z_2|^p), &\ 0<p\le 1;\\
|z_1-z_2|(|z_1|+|z_2|)(|z_1|^{p-1}+|z_2|^{p-1}+1), & p>1.
\end{cases}
\end{align}
The estimates of $|R(v_1)-R(v_2)|$ then follow from \eqref{r42} with $z_i=\tfrac{v_i}Q$, $i=1,2$.

The estimates of the derivative differences rely on both \eqref{r42} and the following: 
\begin{align}\label{r5}
|J_z(z_1)-J_z(z_2)|+|J_{\bar z}(z_1)-J_{\bar z} (z_2)|\lsm \begin{cases}
|z_1-z_2|^p, & p\le 1, \\
|z_1-z_2|(|z_1|^{p-1}+|z_2|^{p-1}+1), & p>1.
\end{cases}
\end{align}
which can be derived from \eqref{r6} by taking $a=z_1-z_2$ and $b=1+z_2$. 

We now proceed to write the derivative differences as
\begin{align}
-i[\nabla R(v_1)-\nabla R(v_2)]=&(p+1) Q^p\nabla Q[J(\tfrac{v_1}Q)-J(\tfrac{v_2}Q)]\label{R6}\\
&+Q^{p+1}[J_z(\tfrac{v_1}Q)\nabla(\tfrac{v_1}Q)-J_z(\tfrac{v_2}Q)\nabla(\tfrac{v_2}Q)]\label{r7}\\
&+Q^{p+1}[J_{\bar z}(\tfrac{v_1}Q)\nabla(\tfrac{\bar v_1}Q)-J_{\bar z}(\tfrac{v_2}Q)\nabla(\tfrac{\bar v_2}Q)].\label{r8}
\end{align} 
The estimate of \eqref{R6} follows from \eqref{r42} for $z_i=\tfrac {v_i}Q$ and \eqref{r7} can be rewritten as
\begin{align*}
\eqref{r7}=Q^{p+1}&[J_z(\tfrac{v_1}Q)-J_z(\tfrac{v_2}Q)](\tfrac{\nabla v_1}Q-\tfrac{\nabla Q v_1}{Q^2})+Q^{p+1}J_z(\tfrac{v_2}Q)\bigl(\tfrac{\nabla(v_1-v_2)}Q-\tfrac{\nabla Q(v_1-v_2)}{Q^2}\bigr). 
\end{align*}
The above two terms can be controlled using \eqref{r5} and \eqref{r41} for $z_i=\tfrac {v_i}Q$ respectively. Since \eqref{r8} is conjugate to \eqref{r7}, we complete the estimate of the derivative difference, hence complete the proof of Lemma \ref{lm:rv}. 
\end{proof}

\subsection{Strichartz estimate of the linear flow}

Next, we prove a linear Strichartz estimate for the flow $e^{tJL}$. To this end, we define 
\begin{align}\label{qqstar}
r=\tfrac{2d(p+1)}{d-2},\ r^*=\tfrac{2d(p+1)}{2p+d}, \ q^*=\tfrac{4{(p+1)}}{p(d-2)}. 
\end{align}
It is easy to check 
\begin{align}\label{jj5}
\tfrac 1r = \tfrac 1{r^*} - \tfrac 1d, \quad  \tfrac 12=\tfrac 1{r^*}+\tfrac pr,\quad \tfrac 2{q^*}+\tfrac d{r^*}=\tfrac d2, 
\end{align}
hence, $H^{1, r^*}(\R^d)\subset L^r (\R^d)$ and $(q^*,r^*)$ is an admissible pair. We thus define the Strichartz space over a time interval $I$ by: 
\begin{equation}\label{snorm}
S^1(I)=L_t^\infty H^1\cap L_t^{q^*}  H^{1,r^*}(I\times\R^d). 
\end{equation}
The homogeneous Sobolev norm $\|\cdot \|_{\dot H^{1, r^*}}$ will be estimated via the operator $(\la)^{\frac12}$  
using the equivalence of Sobolev norms developed in an earlier work \cite{KMVZZ18}. In the following lemma, we record a specific version applied to our setting and state the integrability of $Q$:

\begin{lemma}\label{lm: normequal}
Assume $\tfrac 4d<p<\tfrac 4{d-2}$ and $a> \tfrac{(d-2)^2}4\bigl(-1+\frac{p^2}{(p+1)^2}\bigr)$, then we have 
\[
\|\nabla f\|_{L_x^m}\sim \|(\la)^{\frac 12} f\|_{L_x^m}, \quad  \forall \ 2\le m\le r^*, \; f\in C_c^\infty(\R^d). 
\]
Moreover, the asymptotic behavior of $Q$ guarantees
\begin{align}\label{qspace}
Q\in L^1(\R^d)\cap H_a^{1,r^*}(\R^d)\cap L^r (\R^d). 
\end{align}
\end{lemma}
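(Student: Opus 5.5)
The lemma has two parts. The norm equivalence is quoted from the theory of Killip--Miao--Visan--Zhang--Zheng \cite{KMVZZ18}. The integrability \eqref{qspace} is checked by hand from the asymptotics of $Q$ in Theorem \ref{thm: ground state} (equivalently Lemma \ref{lm: q} and Lemma \ref{lm: infinity}).

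\emph{Norm equivalence.} The result of \cite{KMVZZ18} gives $\|\nabla f\|_{L^m}\sim\|(\la)^{1/2}f\|_{L^m}$ whenever
\[
\frac{d-\beta}{2}\cdot\frac{1}{d} < \frac1m < \min\Bigl\{1,\ \frac{d+\beta}{2d}\Bigr\}, \qquad \beta=\sqrt{(d-2)^2+4a}.
\]
The direction $\lesssim$ needs $s=1$ and $\frac{1+\rho}{d}<\frac1m<\min\{1,\frac{d-\rho}{d}\}$, where $\rho=\frac{d-2}{2}-\frac{\beta}{2}$. For $a<0$ we have $\rho>0$, so the binding constraint is the lower bound
\[
\frac1m>\frac{1+\rho}{d}=\frac{d-\beta}{2d}.
\]
Since $m\le r^*$, it suffices to check this at $m=r^*$, where $\frac1{r^*}=\frac{2p+d}{2d(p+1)}$. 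The condition becomes
\[
\frac{2p+d}{p+1}>d-\beta,\qquad\text{i.e.}\qquad \beta>\frac{(d-2)p}{p+1}.
\]
Squaring gives $(d-2)^2+4a>(d-2)^2\frac{p^2}{(p+1)^2}$, which is exactly the hypothesis on $a$.

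The upper bounds $\frac1m<\frac{d-\rho}{d}$ and $\frac1m<1$ hold automatically, because $\frac1m\le\frac12$ and $\rho<\frac{d-2}{2}$. The reverse direction has a constraint of the same type, and it is also implied by the lower bound. So the equivalence holds for $2\le m\le r^*$, by monotonicity of the range conditions in $\frac1m$. This part is routine; the main work is to record the exact range from \cite{KMVZZ18} and verify the arithmetic above.

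\emph{Integrability of $Q$.} Near $0$, $Q\sim b_0 r^{-\rho}$ and $|\nabla Q|\sim r^{-\rho-1}$. At infinity, both $Q$ and $\nabla Q$ decay like $e^{-r}$, so only the origin matters.

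\begin{itemize}
\item \textbf{$L^1$ and $L^r$.} $Q\in L^1$ because $\rho<d$. $Q\in L^r$ requires $\rho r<d$, i.e. $\rho<\frac{d-2}{2(p+1)}$. This is equivalent to $\beta>\frac{(d-2)p}{p+1}$ again, the same hypothesis.
\item \textbf{$\nabla Q\in L^{r^*}$.} This requires $(\rho+1)r^*<d$, i.e. $\rho+1<\frac{2p+d}{2(p+1)}$. That is the same inequality as above, since $\frac1r=\frac1{r^*}-\frac1d$. The same computation gives $Q\in L^{r^*}$.
\end{itemize}

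Hence $Q\in H^{1,r^*}$. By the equivalence just proved, this is the same as $Q\in H_a^{1,r^*}$.

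The expected obstacle is only bookkeeping: pinning down the exact admissible range from \cite{KMVZZ18} and confirming that every condition reduces to the single inequality $\beta>\frac{(d-2)p}{p+1}$.
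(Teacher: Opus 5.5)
Your proposal is correct and follows the same route as the paper, which simply quotes the norm equivalence from \cite{KMVZZ18} and reads off \eqref{qspace} from the asymptotics of $Q$ at $0$ and $\infty$; your check that every condition reduces to $\beta>\frac{(d-2)p}{p+1}$, i.e.\ the hypothesis on $a$, is exactly the bookkeeping the paper leaves implicit. One harmless slip: the upper endpoint in your first display should be $\frac{d-\rho}{d}=\frac{d+2+\beta}{2d}$ (as you use afterwards), which does not matter since $\frac1m\le\frac12$.
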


We first prove the linear Strichartz estimate in the central space $E^c$ with algebraic growth in time. 
 An alternative approach is also possible, see \cite{YZ22}.

\begin{lemma}\label{Lemma3.1}
	Let $u_0\in E^c$, $f(t)\in E^c$ and $T>0$. Let 
\[u(t,x)=e^{tJL} u_0, \quad v(t,x)=\int_0^t e^{(t-s)JL}f(s)ds,\quad w(t,x)=\int_t^T e^{(t-s)JL}f(s)ds.\]
Then,  
	\begin{align}
		\|u(\pm t)\|_{ H^1} &\lesssim \left\langle t \right\rangle \|u_0\|_{ H^1}, \label{H1bd}\\
		% \|u(\pm t)\|_2&\lsm \|u_0\|_2+\left\langle t\right\rangle^2\|u_0\|_{\dot H^1},\label{607}\\
		\|u\|_{ S^1([0,T])} &\lesssim \left\langle T \right \rangle^2 \|u_0\|_{ H^1}. \label{local u}\\
		\|v\|_{ S^1([0,T])}+\|w\|_{ S^1([0,T])}&\lsm \langle T\rangle ^2 \|f\|_{L_t^1 H^1([0,T])}. \label{918}
	\end{align}
\end{lemma}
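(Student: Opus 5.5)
The plan is to treat $JL$ as a perturbation of the free flow $e^{tJ(\la+1)}$. Under $J\sim -i$ this flow is the unitary group $e^{-it(\la+1)}$, which is an isometry on $H^1_a$ and satisfies the Strichartz estimates for NLS$_a$ from \cite{KMVZZ17,KMVZZ18}. Lemma \ref{lm: normequal} lets us pass between $(\la)^{1/2}$ and $\nabla$ in $L^m$, $2\le m\le r^*$. The three estimates are then obtained in order: \eqref{H1bd} from the spectral structure, \eqref{local u} from a Duhamel argument that uses \eqref{H1bd}, and \eqref{918} from \eqref{local u} by superposition.

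\emph{Step 1.} The bound \eqref{H1bd} is exactly Proposition \ref{prop:linzeng} d), since $u_0\in E^c$ and $E^c$ is invariant under $e^{tJL}$ by b). This holds for both signs of $t$.

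\emph{Step 2.} Write $JL=J(\la+1)-JV$, where $V=\mathrm{diag}\big((p+1)Q^p,\,Q^p\big)$. Duhamel's formula gives
\[
u(t)=e^{tJ(\la+1)}u_0-\int_0^t e^{(t-s)J(\la+1)}JVu(s)\,ds .
\]
Applying the Strichartz estimate on $[0,T]$, with the $(\la)^{1/2}$ and $\nabla$ norms interchanged via Lemma \ref{lm: normequal}, yields
\[
\|u\|_{S^1([0,T])}\lsm \|u_0\|_{H^1}+\|Q^pu\|_{N^1([0,T])}.
\]
Here $N^1$ is a dual Strichartz norm. I would take $L^1_tH^1$ where possible, and otherwise $L^{\tilde q'}_tH^{1,\tilde r'}$ for a suitable admissible pair $(\tilde q,\tilde r)$. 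The aim is a pointwise-in-time bound
\[
\|Q^p u(s)\|_{H^{1,\tilde r'}}\lsm \|u(s)\|_{H^1},
\]
obtained by expanding $\nabla(Q^pu)=pQ^{p-1}\nabla Q\,u+Q^p\nabla u$ and applying H\"older's inequality. For this I use the integrability of $Q$ and $\nabla Q$ from \eqref{qspace} together with the asymptotics of $Q$ near $0$ and at $\infty$ from Theorem \ref{thm: ground state}. The singular factor $Q^p\sim r^{-p\frac{d-2-\beta}2}$ near the origin is placed in $L^{d}$, or in a nearby $L^m$. Sobolev and Hardy inequalities absorb the corresponding powers of $u$, and a weight $|x|^{-1}u$ is controlled by $\|u\|_{H^1}$. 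Once this bound holds, Step 1 and H\"older in time give
\[
\|Q^pu\|_{N^1([0,T])}\lsm \Big(\int_0^T\langle s\rangle^{\tilde q'}ds\Big)^{1/\tilde q'}\|u_0\|_{H^1}\lsm \langle T\rangle^2\|u_0\|_{H^1},
\]
and \eqref{local u} follows.

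\emph{Step 3.} For $v$, note that $f(s)\in E^c$, so $v(t)=\int_0^t e^{(t-s)JL}f(s)\,ds$ is a superposition of solutions $e^{(t-s)JL}f(s)$, each with data in $E^c$ at time $s$. By time-translation invariance, \eqref{local u} on $[s,T]$ gives an $S^1([s,T])$ bound of order $\langle T\rangle^2\|f(s)\|_{H^1}$ for each such solution. Minkowski's inequality in $s$, applied to the truncated pieces $\mathbf 1_{t\ge s}\,e^{(t-s)JL}f(s)$, then gives the bound for $v$. The term $w$ is handled the same way, since $w(t)=\int_t^T e^{(t-s)JL}f(s)\,ds$ is a superposition of backward solutions from time $s$ with $t\le s$. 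Here \eqref{H1bd} with negative times and the backward version of Step 2 supply the same $\langle T\rangle^2$ bound.

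The main obstacle is Step 2, namely controlling the singular potential $Q^p$, and in particular the term $Q^p\nabla u$, in a dual Strichartz space using only $\|u\|_{H^1}$. This is where the restrictions $a>\tfrac{(d-2)^2}{4}\big(-1+\tfrac{p^2}{(p+1)^2}\big)$ and $p<\tfrac4{d-2}$ should enter, through Lemma \ref{lm: normequal}, to make $Q^p$ sufficiently integrable near the origin. If $L^1_tH^1$ fails because $Q^p\notin L^\infty$, I would first try the dual pair $(2,\tfrac{2d}{d+2})$ or $(q^{*\prime},r^{*\prime})$ from \eqref{qqstar}. The exponent bookkeeping in \eqref{jj5} suggests that the $(q^{*\prime},r^{*\prime})$ choice is the intended one.
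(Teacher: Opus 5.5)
Your proof is correct, but Steps 2 and 3 take a different route from the paper.

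\textbf{The paper's route.} The paper splits $[0,T]$ into intervals of a small fixed length $\eta$. After applying $(\la+1)^{1/2}$, it puts the whole potential term in $L^1_tL^2_x$. It then bounds $Q^p\nabla u$ by $\|Q\|_{L^r}^p\|\nabla u\|_{L^{q^*}_tL^{r^*}_x}$, i.e.\ by the Strichartz norm of $u$ itself. This forces an absorption of the factor $\eta^{1-1/q^*}$ on each interval, followed by summing the bounds $\langle j\eta\rangle\|u_0\|_{H^1}$. Estimate \eqref{918} is obtained by rerunning the same argument with $f$ as an extra forcing term.

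\textbf{Your route.} You measure the potential term in the dual norm $L^{q^{*\prime}}_tH^{1,r^{*\prime}}_x$, and this works using only $\|u(s)\|_{H^1}$. By \eqref{jj5}, $\frac1{r^{*\prime}}=\frac12+\frac pr$, so $\|Q^p\nabla u\|_{L^{r^{*\prime}}}\le\|Q\|_{L^r}^p\|\nabla u\|_{L^2}$. Since $|\nabla Q|\lesssim Q(1+|x|^{-1})$ by the asymptotics in Theorem \ref{thm: ground state}, Hardy's inequality handles $\nabla(Q^p)u$ the same way. Combined with \eqref{H1bd}, this gives a bound $\langle T\rangle^{1+1/q^{*\prime}}\le\langle T\rangle^2$ with no partition or bootstrap. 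Your Minkowski superposition for $v$ and $w$ is also shorter than the paper's rerun. It uses $f(s)\in E^c$ together with the time-translated (resp.\ time-reversed) form of \eqref{local u}.

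\textbf{What each approach costs.} Your route needs the equivalence $\|(\la)^{1/2}F\|_{L^{r^{*\prime}}}\lesssim\|\nabla F\|_{L^{r^{*\prime}}}$ at an exponent below $2$, which Lemma \ref{lm: normequal} does not state. You should note that it follows by duality from the $L^{r^*}$ case there, plus Hardy's inequality. The paper sidesteps this by keeping the forcing in $L^2_x$. Its $L^1_tH^1$ setup is also the one reused for the nonlinear terms in Claim \ref{JJ3} and in \eqref{a101}.

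\textbf{Two smaller points.} Commit to the pair $(q^{*\prime},r^{*\prime})$ rather than hedging. $L^1_tH^1$ genuinely fails for the linear term, because $Q^p$ is unbounded at the origin. Also, the paper does not use $(q^{*\prime},r^{*\prime})$ as a dual pair: $(q^*,r^*)$ appears only on the solution side.
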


\begin{proof}  We begin by estimating the $H^1$ norm of $u$. According to the block decomposition of the operator $JL$ in Proposition \ref{prop:linzeng}, for any $u_0\in E^c$, we decompose $u_0=u_0^k+u_0^e$ where $u_0^k\in \ker L$, $u_0^e\in E^e$. The solution evolves as:
	\begin{align*}
		u(t)=e^{tJL}u_0 &= 
		\begin{pmatrix}
			I &  \int_0^t A_{0e}e^{\tau A_e}d\tau \\
			0 & e^{tA_e}
		\end{pmatrix}
		\begin{pmatrix}
			u_0^k \\
			u_0^e
		\end{pmatrix}
		=
		\begin{pmatrix}
			u_0^k +\int_0^t A_{0e} e^{\tau A_e}u_0^e d\tau \\
			e^{tA_e}u_0^e
		\end{pmatrix}.
	\end{align*}
As the second row is under control due to the ellipticity and the invariance of $L_e$: 
		$$
	\|e^{tA_e}u_0^e\|_{ H^1} \sim \|u_0^e\|_{H^1} \lesssim \|u_0\|_{ H^1},
	$$
we immediately obtain:
	\begin{align*}
		\|u(t)\|_{ H^1} \le \|e^{tA_e}u_0^e\|_{H^1} + \|u_0^k\|_{ H^1} 
                  +\int_0^t\|A_{0e} e^{\tau A_e}u_0^e\|_{ H^1} d\tau 
		 \lesssim \langle t \rangle \|u_0\|_{ H^1}.
	\end{align*}
Here we used the boundedness of $A_{0e}$ in $H^1(\R^d)$ and thus \eqref{H1bd} is proved. 
	
	Next, we prove \eqref{local u} using the Strichartz estimate for $e^{it\la}$, simply treating other terms as perturbations. Fix $\eta>0$ and partition$[0,T]$ as:
	$$
	[0, T] = \bigcup_{j=0}^{N} I_j, \; \mbox{ with } \ I_j=[j\eta, (j+1)\eta], \; j\le N-1; \; I_{N}=[N\eta,T].
	$$
	On each interval $I_j$, applying Strichartz estimate for $e^{it\la}$ in \cite{BPSTZ03, KT98, ZZ20} and using H\"older inequality and \eqref{jj5} yields: 
	\begin{align*}
		\|u\|_{ S^1(I_j)} &\lesssim \|u (j\eta)\|_{H^1} +
		    \|(\la+1)^{\frac 12} \bigl(u-Q^p((p+1)u_1+iu_2)\bigr)\|_{L^1_tL_x^2(I_j)} \\
		 &\lesssim \langle j\eta \rangle \|u_0\|_{H^1} +
		   \sum_{i=1}^2 \bigl(\eta \|u_i\|_{L_t^\infty H^1(I_j)}+\|Q^p\langle \nabla \rangle u_i\|_{L_t^1L_x^2(I_j)} +\|\langle  \nabla \rangle QQ^{p-1} u_i\|_{L_t^1L_x^2(I_j)} \bigr)\\
		   &\lsm \langle j\eta \rangle \|u_0\|_{H^1} +
		   \eta \|u\|_{L_t^\infty H^1(I_j)}\\
		   &\qquad+\eta^{1-\frac 1{q^*}}\bigl(\|Q\|_{L_x^r}^p \|\langle \nabla \rangle u\|_{L_t^{q^*}L_x^{r^*}(I_j)}+\|\langle \nabla \rangle Q\|_{L_x^{r^*}}\|Q\|_{L_x^r}^{p-1}\|u\|_{L_t^{q^*}L_x^r(I_j)}\bigr)\bigr]\\
		   &\lsm \langle j\eta\rangle \|u_0\|_{ H^1}+\eta \|u\|_{L_t^\infty H^1(I_j)}+\eta^{1-\frac 1{q^*}}\|u\|_{S^1(I_j)}. 	   \end{align*}	      
	For sufficiently small $\eta$, we obtain
	$$
	\|u\|_{ S^1(I_j)} \lsm\langle j\eta \rangle \|u_0\|_{H^1} 
	$$
	and \eqref{local u} follows by summing over $j$. 	
				
   For $v-$estimates (with $w$ being similar), from its equation:
	$$
	iv_t=(\la+1) v-((p+1)Q^pv_1+iQ^pv_2)+if, 
	$$ 
we apply Strichartz estimates on each subinterval $I_j$ to get:
	\begin{align}
		\|v\|_{ S^1(I_j)} &\lesssim \|v(j\eta)\|_{H^1} + \|v-\bigl((p+1)Q^p v_1 +iQ^pv_2\bigr)\|_{L_t^1 H^1(I_j)}
                  +\|f\|_{L_t^1H^1(I_j)}\label{JJ6} \\
		&\lesssim \|v(j\eta)\|_{H^1} +\eta^{1-\frac 1{q^*}}\|v\|_{ S^1(I_j)} +\|f\|_{L_t^1H^1(I_j)}. \notag
	\end{align}
Note from \eqref{H1bd} and $v(0)=0$, we have
	\begin{align*}
		\|v(t)\|_{ H^1} &\le \int_0^t\|e^{(t-s)JL}f(s)\|_{H^1} ds 
		 \lesssim \int_0^t\langle t \rangle \|f(s)\|_{H^1} ds 
		\le \langle t \rangle \|f\|_{L_t^1H^1([0,t])},
	\end{align*}
Taking $\eta$ sufficiently small in \eqref{JJ6} yields:
	$$
	\|v\|_{S^1(I_j)} \lesssim  \langle j\eta \rangle  \|f\|_{L_t^1  H^1([0,(j+1)\eta))},
	$$
	and summing over $j$ gives \eqref{918}.
	\end{proof}

\subsection{Solving equation \eqref{1102}}\label{SS:stableM}

We now establish the existence and uniqueness of an exponentially decaying solution to equation \eqref{1102}. To avoid technical complications arising from rough nonlinearities, we assume $p\ge 1$. Thus, the admissible range for $p$ becomes 
\[ 
p\in (\tfrac 4d, \tfrac 4{d-2})\cap [1,\infty),
\]
which is nonempty in dimensions $3\le d\le 5$. 

\begin{remark}
The case $p<1$ may also be tractable via a partial contraction argument provided the Strichartz estimate for $e^{tJL}$ holds in $(L^2(\R^d))^2$, even with weak exponential temporal growth. This extension is nontrivial and will be addressed in future work. 
\end{remark}

Combining these restrictions, we state our result: 

\begin{theorem}\label{exiuni} 
Let $d=3,4,5$ and $p\in (\tfrac 4d, \tfrac 4{d-2})\cap [1,\infty)$. Let $0>a>\tfrac{(d-2)^2}4(-1+\frac{p^2}{(p+1)^2}).$ There exists a constant $C>0$ such that, for any $\lambda\in(0, e_0]$ and $y_0^- \in (-\delta, \delta)$ where 
% $\delta\in(0,\delta_\lambda)$ with 
$\delta=\frac 1{C} \min(\lambda,\lambda^4)$, equation \eqref{1102} admits a unique solution satisfying  
\begin{equation} \label{E:class}
v(0)=y_0^-V^-+y_0^+V^++v^c(0), \textit{ and } \|v(t) \|_{H^1}
%\dot S^1([t,\infty)\times\R^3)}
\le C \delta e^{-\lambda t},  \quad \forall t\ge 0.
\end{equation}
Moreover, the following estimates hold:
\begin{align}\label{438}
& |y_0^+|+\|v^c(0)\|_{H^1_x}\le C |y_0^-|^2, \\
& \|v\|_{ S^1([t,\infty))}\le C^2 \delta e^{-e_0 t}, \quad \forall t\ge 0. 
\end{align}
For any $y_0$, $\tilde y_0$ with $y_0^-\tilde y_0^->0$ and $|y_0^-|, |\tilde y_0^-|< \delta$, the corresponding solutions satisfy $v(t,x)=\tilde v(t+T,x)$ for some $T=T(y_0, \tilde y_0)$. 
\end{theorem}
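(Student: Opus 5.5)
We construct the solution by a Lyapunov--Perron fixed point on the half line $[0,\infty)$, using the dichotomy of Proposition~\ref{prop:linzeng}. Write $v(t)=y^+(t)V^++y^-(t)V^-+v^c(t)$ with $v^c\in E^c$, and let $P^\pm,P^c$ denote the associated projections, which are bounded on $(H^1)^2$. A solution of \eqref{1102} decaying like $e^{-\lambda t}$ must satisfy the integral system
\begin{align*}
y^-(t)&=e^{-e_0t}y_0^-+\int_0^t e^{-e_0(t-s)}P^-R(v(s))\,ds,\\
y^+(t)&=-\int_t^\infty e^{e_0(t-s)}P^+R(v(s))\,ds,\\
v^c(t)&=-\int_t^\infty e^{(t-s)JL}P^cR(v(s))\,ds.
\end{align*}
The $y^+$ and $v^c$ components are integrated backward from $+\infty$; the $v^c$ equation is justified because $e^{tJL}|_{E^c}$ grows only polynomially (Proposition~\ref{prop:linzeng}\,d)) while $R(v)$ decays exponentially. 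We define $\Phi(v)$ to be the right-hand side. We then seek a fixed point in the weighted space
\[
X=\Bigl\{v:\ \|v\|_X:=\sup_{t\ge0}e^{\lambda t}\|v\|_{S^1([t,t+1])}\le C_1\delta\Bigr\},
\]
with $\delta=\min(\lambda,\lambda^4)/C$.

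The nonlinear estimates come from Lemma~\ref{lm:rv} with $p\ge1$. Using H\"older, \eqref{jj5}, Lemma~\ref{lm: normequal} and \eqref{qspace}, we aim to prove, on unit time intervals $I$,
\[
\|R(v)\|_{L^1_tH^1(I)}\lesssim \|v\|_{S^1(I)}^2+\|v\|_{S^1(I)}^{p+1},
\]
together with the corresponding Lipschitz bound for $R(v_1)-R(v_2)$. The terms containing $Q^{-1}\nabla Q\,v$ are handled by the exact asymptotics of $Q$ (Theorem~\ref{thm: ground state}): $|Q^{-1}\nabla Q|\lesssim r^{-1}+1$, so these terms are controlled by Hardy's inequality, and the factor $Q^{p-1}$ is bounded thanks to $p\ge1$. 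Consequently $R(v)$ has size $\delta^2e^{-2\lambda t}$ in $L^1H^1$ on each unit interval.

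Next we estimate each component.
\begin{itemize}
\item[(i)] The $y^\pm$ integrals are scalar. The $y^-$ integral is bounded by $\delta e^{-\lambda t}$ when $\lambda\le e_0$, with a factor $1/(2\lambda-e_0)$ or $1/\lambda$ arising from the convolution.
\item[(ii)] For $v^c$, we split $[t,\infty)$ into unit intervals $[t+k,t+k+1]$ and apply Lemma~\ref{Lemma3.1} (\eqref{918} and \eqref{H1bd}) to each piece. This gives
\[
\|v^c\|_{S^1([t,t+1])}\lesssim\sum_k\langle k\rangle^2 \delta^2e^{-2\lambda(t+k)}\lesssim \lambda^{-3}\delta^2e^{-2\lambda t}.
\]
\end{itemize}
The factor $\lambda^{-3}$ from $\sum_k\langle k\rangle^2e^{-2\lambda k}$ is exactly what forces $\delta\lesssim\lambda^4$: the requirement $\lambda^{-3}\delta^2\le\delta\lambda$ gives $\delta\lesssim\lambda^4$. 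With this choice $\Phi$ maps $X$ into itself and is a contraction, which yields existence and uniqueness in the class \eqref{E:class}.

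Evaluating the fixed point at $t=0$ gives
\[
|y_0^+|+\|v^c(0)\|_{H^1}\lesssim\|R\|\lesssim\delta^2,
\]
and in fact $\lesssim|y_0^-|^2$, because $\|v\|_X\lesssim|y_0^-|$ by the same contraction argument; this proves \eqref{438}. For the improved rate $e^{-e_0t}$, we bootstrap. Since $R$ is quadratic, the decay $e^{-\lambda t}$ upgrades $R$ to $e^{-2\lambda t}$. The $v^c$ and $y^+$ components then decay like $e^{-2\lambda t}$, while $y^-$ decays like $e^{-\min(e_0,2\lambda)t}$. Iterating finitely many times reaches $e^{-e_0t}$. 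At the last step the logarithmic loss is absorbed, because $R$ decays like $e^{-2e_0t}$ and the convolution with $e^{-e_0(t-s)}$ costs at most a constant.

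For the time-translation statement, uniqueness shows that the solutions form a one-dimensional curve parametrized by $y_0^-$, and time translates of a solution in the class are again in the class provided $|y^-(T)|<\delta$. The map $y_0^-\mapsto y^-$ of $v(T)$ is continuous and strictly monotone in $T$; we obtain this from $y^-(t)=e^{-e_0t}(y_0^-+O(\delta^2))$ together with the derivative $\dot y^-=-e_0y^-+O(|y^-|^2)$, which is nonzero. Hence for $y_0^-$ and $\tilde y_0^-$ of the same sign, one is a time translate of the other. We expect the main obstacle to be the $v^c$ estimate: controlling the polynomial growth of $e^{tJL}|_{E^c}$ in Strichartz norms, and closing the $H^{1,r^*}$ bounds for $\nabla R$ with the singular weight $Q^{-1}\nabla Q$. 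This is where the $\lambda^4$ threshold and the assumptions $p\ge1$ and the lower bound on $a$ (needed for Lemma~\ref{lm: normequal}) enter.
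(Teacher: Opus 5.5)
Your architecture matches the paper's. You use the same splitting along $E^u\oplus E^s\oplus E^c$, the same Lyapunov--Perron system \eqref{inteqn} with $y^+$ and $v^c$ integrated back from $+\infty$, and a contraction driven by Lemma~\ref{lm:rv} and Lemma~\ref{Lemma3.1}.

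\textbf{The false step.} Your claim that $Q^{p-1}$ is bounded ``thanks to $p\ge1$'' is wrong. Since $a<0$ we have $\beta<d-2$, and part 2) of Theorem~\ref{thm: ground state} gives $Q(r)\sim b_0r^{-\frac{d-2-\beta}2}\to\infty$ as $r\to0^+$. So $Q^{p-1}$ blows up at the origin for every admissible $p>1$; only $p=1$, $d=5$ escapes.

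The terms $Q^{p-1}|v|\,(|\nabla v|+|Q^{-1}\nabla Q\,v|)$ must be handled as in \eqref{525}. Use H\"older with $\frac12=\frac1{r^*}+\frac pr$, placing $Q^{p-1}$ in $L^{r/(p-1)}$, $v$ in $L^r$, and $\nabla v$, $v/|x|$ in $L^{r^*}$ (Hardy, since $r^*<d$). This uses $Q\in L^r$ from \eqref{qspace}. Near the origin that integrability is equivalent to $\beta>\frac{p(d-2)}{p+1}$, which is exactly the hypothesis $a>\frac{(d-2)^2}4(-1+\frac{p^2}{(p+1)^2})$. So the lower bound on $a$ is what compensates for the singularity of $Q$, not merely an input to Lemma~\ref{lm: normequal}. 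With this fix, your unit-interval bound $\|R(v)\|_{L^1_tH^1(I)}\lesssim\|v\|_{S^1(I)}^2+\|v\|_{S^1(I)}^{p+1}$ holds.

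\textbf{Differences of route, and loose ends.}

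\emph{The norm.} Your norm $\sup_te^{\lambda t}\|v\|_{S^1([t,t+1])}$ avoids the paper's double sum over $N<M$ in the term $II$. That double sum is where $\sum_M(M+1)^3e^{-2\lambda M}\sim\lambda^{-4}$ comes from. With your norm, $\delta\lesssim\min(\lambda,\lambda^3)$ already suffices. Your ``requirement'' $\lambda^{-3}\delta^2\le\delta\lambda$ is therefore spurious, though harmless since the stated $\delta$ is smaller. You do need to sum at the end to recover the tail norm $S^1([t,\infty))$ in the statement.

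\emph{The rate $e^{-e_0t}$ and \eqref{438}.} The paper does not iterate. Since $|y_0^-|<\delta(\lambda)\le\delta(e_0)$, it reruns the contraction at rate $e_0$ in a ball of radius comparable to $|y_0^-|$. That ball lies inside $B_{\delta,\lambda}$, and uniqueness identifies the two fixed points, giving $\lambda$-independent constants at once. By contrast, evaluating your rate-$\lambda$ fixed point at $t=0$ gives, by your own $v^c$ bound, only $\|v^c(0)\|_{H^1}\lesssim\lambda^{-3}|y_0^-|^2$, not $C|y_0^-|^2$. Your bootstrap stays uniform only if you check at each step that the bound remains $\lesssim|y_0^-|$. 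You must also avoid rates $2\lambda_k$ close to $e_0$ in the $y^-$ convolution.

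\emph{Uniqueness in the stated class.} The class \eqref{E:class} controls only $\|v(t)\|_{H^1}$, while your space controls $L^{q^*}_tH^{1,r^*}_x$. So uniqueness in $X$ does not yet give uniqueness in \eqref{E:class}. You need the paper's extra step: an $H^1$-exponentially decaying solution obeys $\|v\|_{S^1([\tau,\tau+\eta])}\lesssim\|v(\tau)\|_{H^1}$ on intervals of a fixed length $\eta$, by local Strichartz plus the same nonlinear bounds.

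\emph{Time translation.} Your argument is correct and more explicit than the paper's one line. Monotonicity is not needed: continuity of $T\mapsto y^-(T)$, together with $y^-(T)\to0$ and the intermediate value theorem, already suffices.
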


\begin{proof} 
Using the decomposition $(H^1(\R^d))^2=E^s\oplus E^u\oplus E^c$ from Proposition \ref{prop:linzeng}, we express: 
\begin{align}\label{decu}
v=y^+V^++y^-V^-+v^c
\end{align}
where $y^{\pm}=\langle LV^{\mp}, v\rangle$ and $v^c=v-y^-V^--y^+V^+$. Equation \eqref{1102}  decouples into:
\begin{align*}
\begin{cases}
\dot y^-&=-e_0 y^-+R^-(v)\\
\dot y^+&=e_0 y^++R^+(v)\\
\frac{\partial}{\partial t}v^c&=JL v^c+R^c(v), 
\end{cases}
\end{align*}
whose Duhamel formulation (incorporating exponential decay) becomes:
\begin{align}\label{inteqn}
\begin{cases}
y^-(t)&=e^{-e_0 t} y_0^-+\int_0^t e^{-e_0(t-s) }R^-(v(s))ds\\
y^+(t)&=-\int_t^\infty e^{e_0(t-s)}R^+(v(s))ds\\
v^c(t)&=-\int_t^\infty e^{JL(t-s)}R^c(v(s))ds. 
\end{cases}
\end{align}
Here, $R^{\pm}(v)$ and $R^c(v)$ are defined via
\begin{align*}
R^{\pm}(v)=\langle LV^{\mp}, R(v)\rangle, \ R^c(v)=R(v)-R^+(v)V^+-R^-(v)V^-. 
\end{align*}
For functions $v(t, x)$ expressed as in \eqref{decu}, define the norm 
\[
\vertiii{ v}=\max\bigl\{\sup_{t\ge 0} e^{\lambda t}|y^+(t)|, \ \sup_{t\ge 0} e^{\lambda t}|y^-(t)|,\ \sup_{t\ge 0} e^{\lambda t}\|v^c\|_{S^1([t,\infty))}\bigr\},
\]
and define the mapping: 
\[
\Phi (v) (t,x) = \tilde v (t, x)  = \tilde y^+  (t) V^+(x) + \tilde y^-(t) V^-(x) + \tilde v^c(t, x),
\]
where $\tilde y^\pm (t)$ and $\tilde v^c$ are given by the right sides of \eqref{inteqn}. 

% we and equip the space with the norm:
We shall prove $\Phi$ is a contraction on the closed ball:
%defined by 
\begin{align*}
B_{\delta,\lambda}=\bigl \{v = y^+ V^+ + y^- V^-+ v^c \bigl | y^{\pm} \in C^0([0, \infty))\;, v^c\in  S^1 ([0, \infty)),\; \vertiii{v}\le 2\delta.\bigr\}
\end{align*}
It is easy to see $B_{\delta, \lambda}$ increases in $\delta$ and decreases in $\lambda$. 

For $v_1, v_2 \in B_{\delta, \lambda}$ expressed as:  
\[v_j=y_j^+V^++y_j^-V^-+v_j^c\in B_{\delta, \lambda}, \; j=1,2. \]
we denote their images under $\Phi$ by $\tilde v_1$ and $\tilde v_2$. The contraction property follows from estimates of the nonlinear terms $R^{\pm}$ and $R^c$, which we now detail. 

\begin{claim}\label{JJ3} For $p \in [1, \frac 4{d-2})$, there exists $C>0$ such that, for any $v_1, v_2 \in B_{\delta, \lambda}$, the following estimates hold:
\begin{align*}
\begin{cases}
|R^{\pm}(v_1(s))|\le C\delta^2 e^{-2\lambda s},\\
|R^{\pm}(v_1(s))-R^{\pm}(v_2(s))|\le C \delta e^{-2\lambda s}\vertiii{v_1-v_2},\\
\|R^c(v_1)\|_{L_t^1 H^1([T_0, T_0+T_1])}\le C\langle T_1\rangle^{1-\frac 2{q^*}}\delta^2 e^{-2\lambda T_0}, \\
\|R^c(v_1)-R^c(v_2)\|_{L_t^1 H^1([T_0, T_0+T_1])}\le C\langle T_1\rangle ^{1-\frac 2{q^*}}\delta e^{-2\lambda T_0}
\vertiii{v_1-v_2}. 
\end{cases}
\end{align*}
\end{claim}
We focus on proving the difference estimates (the second and the fourth inequalities) as the rest follow by setting one of $v_i=0$. 

From the pointwise estimates \eqref{R2} and H\"older inequality, we compute: 
\begin{align*}
|R^{\pm}(v_1(s))-R^{\pm}(v_2(s))|&=|\langle LV^{\mp}, R(v_1)-R(v_2)\rangle |\\
&\lesssim  \|LV^{\mp}\|_{L_x^{\frac{4d}{2d-p(d-2)}}}\|v_1-v_2\|_{L_x^{\frac{4d}{2d-p(d-2)}}} \sum_{j=1,2}\bigl(\|Q\|_{L_x^{2^*}}^{p-1}\|v_j\|_{L_x^{2^*}}+\|v_j\|_{L_x^{2^*}}^p\bigr)
\\
&\lesssim \|v_1-v_2\|_{H^1}\sum_{j=1,2}( \|v_j\|_{H^1}+\|v_j\|_{H^1}^p)\\
&\le C \delta e^{-2\lambda s}\vertiii{v_1-v_2}. 
\end{align*}
Here we have used $\frac{4d}{2d-p(d-2)} \in (2, 2^*)$.

Similarly, for the gradient term, we have
\begin{align}
\|\nabla(R(v_1)-R(v_2))&\|_{L_t^1 L_x^2([T_0,T_0+T_1])}\le \|\nabla(v_1-v_2)\|_{L_t^{q^*}H_x^{1, r^*}}\label{525}\\
&\qquad\cdot\bigl[
T_1^{1-\frac 2{q^*}}\|Q\|_{L_x^{r}}^{p-1}\sum_{j=1,2}\|v_j\|_{L_t^{q^*}L_x^r}
+T_1^{1-\frac {p+1}{q^*}}\sum_{j=1,2}\|v_j\|_{L_t^{q^*}L_x^r}^p\bigr]\notag\\
&+\|v_1-v_2\|_{L_t^{q^*}L_x^{r}}\sum_{j=1,2} \big( \|\nabla v_j\|_{L_t^{q^*}L_x^{r^*}} +  \|v_j\|_{L_t^{q^*}L_x^{r^*}}  \big) \notag\\
&\qquad\cdot\bigl[T_1^{1-\frac 2{q^*}}\|Q\|_{L_x^{r}}^{p-1}+T_1^{1-\frac {p+1}{q^*}}
\sum_{j=1,2}\|v_j\|_{L_t^{q^*}L_x^r}^{p-1}\bigr]\notag\\
&\le C\langle T_1\rangle^{1-\frac {2}{q^*}}\delta e^{-2\lambda T_0}\vertiii{v_1-v_2}. \notag
\end{align}
The same bound holds for: 
\[
\|R(v_1)-R(v_2)\|_{L_t^1L_x^2([T_0, T_0+T_1])}\le C\langle T_1\rangle^{1-\frac {2}{q^*}}\delta e^{-2\lambda T_0}\vertiii{v_1-v_2}.
\]
These two estimates together with the boundedness of the projection onto center space yield the last estimate in Claim \ref{JJ3}.

We now verify the contraction property for $\Phi$. For any $v_1, v_2\in B_{\delta, \lambda}$, Claim \ref{JJ3} implies 
\begin{align*}
	e^{\lambda t}\bigl|\tilde y_1^-(t)-\tilde y_2^-(t)\bigr| &\le  e^{\lambda t}
	  \int_0^t e^{-e_0(t-s)}\bigl |R^-(v_1(s))-R^-(v_2(s))\bigr|ds \\
	&\le C\delta \int_0^t e^{(\lambda- e_0)(t-s)} e^{-\lambda s}ds \vertiii{v_1-v_2}\\
	&\le \frac 14\vertiii{v_1-v_2}, \; \forall t\ge0,
\end{align*}
for $\delta$ sufficiently small as in Theorem \ref{exiuni}. Similarly, 
\begin{align}
		e^{\lambda t}\bigl |\tilde y_1^+(t)-\tilde y_2^+(t)\bigr |&\le C\delta\int_t^{\infty}e^{(\lambda+e_0)(t-s)}e^{-\lambda s}ds\vertiii{v_1-v_2}\le \frac 14\vertiii{v_1-v_2}. \label{1009}
\end{align}
%for the same choice of $\delta$. 

For Strichartz estimate of $\tilde v_1^c-\tilde v_2^c$, we decompose
\begin{align*}
	\|\tilde v_1^c-&\tilde v_2^c\|_{ S^1([T,\infty))}\\
%	\biggl\|\int_t^{\infty} e^{(t-s)JL}R^c(v(s))ds\biggr \|_{\dot S^1([T,\infty))} \\
%	%&\le \sum_{N\ge 1, N\in\mathbb N
	%			}\biggl\|\int_t^{\infty} e^{(t-s)JL} R^c(v(s))ds \biggr\|_{\dot S^1([NT, (N+1)T])} \\
	&\le \sum_{
		N\ge 1, N\in\mathbb N
		} \biggl\|\int_t^{T +N}e^{(t-s)JL} \bigl(R^c(v_1(s))-R^c(v_2(s))\bigr)ds\biggr\|_{ S^1([T+ N-1, T+ N])}  \\
	&\quad +\sum_{
		N\ge 1, N\in\mathbb N
		} \biggl\|\int_{T+N}^{\infty} e^{(t-s)JL}\bigl(R^c(v_1(s))-R^c(v_2(s))\bigr)ds\biggr\|_{S^1([T+N-1, T+ N])} \\
	&:= I + II.
\end{align*}
Using \eqref{918} and Claim \ref{JJ3} yields: 
\begin{align*}
	I &\le \sum_{
		N\ge 1, N\in\mathbb N} 
		%\langle T\rangle^2 
		\|R^c(v_1(s))-R^c(v_2(s))\|_{L_t^1 H^1([T+N-1, T+N])} \\
	&\le C\delta\vertiii{v_1-v_2} \sum_{
		N\ge 1, N\in\mathbb N		} 
		%\langle T\rangle^3 
		 e^{-2\lambda (T+N-1)}\\
	& \le \frac 1{\lambda} C \delta  e^{-2\lambda T}\vertiii{v_1-v_2}\le \frac 18 \vertiii{v_1-v_2}e^{-2\lambda T}.
	 % \le \delta e^{-\lambda T}.
\end{align*}
For the term $II$, we further partition the integral and compute 
\begin{align*}
	II &\le \sum_{
		N\ge 1	}\sum_{
			 M\ge N+1
		 }\biggl \|\int_{T+M-1}^{T+M} e^{(t-s)JL} 
	     \bigl (R^c(v_1(s))-R^c(v_2(s))\bigr)ds\biggr\|_{ S^1([T+N-1, T+N])} \\
	   &\le \sum_{
			   N\ge 1,
			   M\ge N+1
               }\int_{T+M-1}^{T+M}\bigl \|e^{(t-s)JL}  \bigl (R^c(v_1(s))-R^c(v_2(s))\bigr)\bigr\|_{ S^1([T+N-1, T+N])}ds. 
\end{align*}
Note $0 \le s-t \le M+1$, applying Lemma \ref{Lemma3.1} and Claim \ref{JJ3} we obtain
\begin{align*}               
	   II&\le \sum_{
	N\ge 1, M\ge N+1        
	      }\int_{T+M-1}^{T+M} (M+1)^2\bigl \|R^c(v_1(s))-R^c(v_2(s))\bigr\|_{ H^1} ds\\
	   &\le \sum_{
			   M\ge2
                         }(M+1)^3 \|R^c(v_1(s))-R^c(v_2(s))\|_{L_t^1 H^1([T+M-1, T+M])} \\
	   &\le C\delta \vertiii{v_1-v_2} \sum_{
			   M\ge 2
                   	      }(M+1)^3  e^{-2\lambda (T+M-1)} \\
%	   &\lesssim (2\delta)^2 T^2 e^{-\frac 32 e_0 T} \\
	  & \le \frac C{\lambda^4} \delta e^{-2\lambda T}\vertiii{v_1-v_2}\le \frac 18\vertiii{v_1-v_2} e^{-2\lambda T}. 
	   \end{align*}
Combining these yields:
%and combining the case $T<1$, 
\begin{align}\label{1016}
\sup_{T\ge 0}e^{\lambda T}\|\tilde v^1_c-\tilde v^2_c\|_{ S^1([T,\infty))}\le\frac 14 \vertiii{v_1-v_2}.
\end{align}
This proves that 
\begin{align*}
\vertiii{\tilde v_1-\tilde v_2}\le \frac 12\vertiii{v_1-v_2}.
\end{align*}
Since $\vertiii {\Phi(0)} = |y_0^-| < \delta$,  the contraction estimate $Lip( \Phi) \le \frac 12$ also implies that  
%A similar argument shows 
$\Phi$ maps $B_{\delta,\lambda}$ into itself. The existence and uniqueness of solutions to \eqref{inteqn} in $B_{\delta,\lambda}$ are proved.

To prove the uniqueness in the class defined by \eqref{E:class}, we verify that 
exponential decay in $H^1(\R^d)$ implies the same decay as in the Strichartz space $S^1$. Specifically, given 
%solution  
%assume $v(t)$ is a solution to \eqref{1102} satisfying   
\[
\|v(t) \|_{H^1} \lsm e^{-\lambda t}, \quad t\ge 0,
\]
we shall prove
\begin{align}\label{a101}
\| v\|_{S^1 ([t, \infty))} \lsm e^{-\lambda t}, \quad t\ge 0.
\end{align}
Indeed, for any $\tau\ge 0$ and sufficiently small $\eta>0$ (independent of $\tau$), applying Strichartz estimate on $[\tau, \tau+\eta]$ and follow the similar procedure as in \eqref{JJ6}, \eqref{525}, we obtain
\begin{align*}
\|v\|_{ S^1{([\tau,\tau+\eta])}}\le &C\|v(\tau)\|_{H^1}+C\eta^{1-\frac 1{q^*}}\|v\|_{ S^1{([\tau,\tau+\eta])}}\\
&+C\eta^{1-\frac 2{q^*}}\|v\|_{ S^1{([\tau,\tau+\eta])}}^2+C\eta^{1-\frac{p+1}{q^*}}\|v\|_{ S^1{([\tau,\tau+\eta])}}^{p+1}.
\end{align*}
 Recall $\|v(\tau)\|_{H^1}\lsm e^{-\lambda \tau}$. Choosing sufficiently small $\eta$ and using a standard argument gives 
\begin{align*}
\|v\|_{S^1([\tau,\tau+\eta])}\le 2C\|v(\tau)\|_{H^1}\lsm e^{-\lambda \tau}. 
\end{align*}
The estimate of $\|v\|_{ S^1([t,\infty))}$ then follows by partitioning the interval $[t,\infty)$ into subintervals of length $\eta$ and summing all the local estimates.

Finally, the quadratic estimate \eqref{438} follows from the existence and uniqueness of solutions in the smaller ball $B_{C |y_0^-|, e_0}$ and inequality \eqref{1009} with $y_2^+ =0$. The time translation invariance follows from the uniqueness and decay of solutions. The proof of Theorem \ref{exiuni} is complete. 
\end{proof}

As an immediate consequence of Theorem \ref{exiuni}, we have

\begin{corollary}\label{cor:411}
Under the same conditions as in Theorem \ref{exiuni}, there exist exactly two solutions (up to time translation and phase rotation) $e^{it}Q_{\pm} (t)$ to NLS$_a$, satisfying 
\begin{align*}
\begin{cases}
\|Q_{\pm}-Q\|_{H^1}\le C e^{-e_0t}, \ \forall t\ge 0. \\
\|Q_+(0)\|_{H_a^1}>\|Q\|_{H^1_a},\ \ \|Q_-(0)\|_{H_a^1}<\|Q\|_{H^1_a}.
\end{cases}
\end{align*}
Moreover, any solution $u(t,x) \ne e^{it}Q$ with 
\begin{align*}
\|u(t)-e^{it}Q\|_{H^1}\le C e^{-\lambda t},\ \forall t\ge 0
\end{align*}
for some $C>0$ and $\lambda\in (0, e_0]$ uniquely coincides with either
\begin{align*}
\begin{cases}
u(t)=e^{i(t+T^+)}Q_+(t+T^+) & \textit{ if } \|u\|_{H_a^1}>\|Q\|_{H_a^1},\\
u(t)=e^{i(t+T^-)}Q_-(t+T^-) & \textit{ if } \|u\|_{H_a^1}<\|Q\|_{H_a^1},
\end{cases}
\end{align*}
for some $T^{\pm}\in \mathbb R$. 
\end{corollary}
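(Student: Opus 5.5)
The plan is to read everything off Theorem \ref{exiuni}, using the one-parameter family of solutions indexed by $y_0^-\in(-\delta,\delta)$. First fix $\lambda=e_0$ and let $v_{y_0^-}$ be the solution given by Theorem \ref{exiuni}. Set $Q_\pm(t)=Q+v_{y_0^-}(t)$ with $y_0^->0$, respectively $y_0^-<0$ (the sign assignment may have to be swapped, see below). Then $e^{it}Q_\pm(t)$ solve NLS$_a$ and satisfy $\|Q_\pm(t)-Q\|_{H^1}\le C\delta e^{-e_0 t}$. By the last assertion of Theorem \ref{exiuni}, any two choices of $y_0^-$ with the same sign give the same solution up to a time translation. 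So there are at most two orbits, one for each sign of $y_0^-$. The case $y_0^-=0$ gives $v\equiv0$ by uniqueness in the ball, since $\Phi(0)=0$.

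Next I would identify the sign of $\|Q_\pm\|_{H^1_a}-\|Q\|_{H^1_a}$. Using \eqref{438}, write $v(0)=y_0^-V^-+O(|y_0^-|^2)$ in $H^1$. Then
\[
\|Q+v(0)\|_{H^1_a}^2-\|Q\|_{H^1_a}^2=2y_0^-\langle(\la+1)Q,\mathrm{Re}\,V^-\rangle+O(|y_0^-|^2).
\]
The key claim is that $\langle(\la+1)Q,\mathrm{Re}\,V^-\rangle\ne0$. If it vanished, then $\mathrm{Re}\,V^-\perp_{\la+1}Q$, and \eqref{5.10} would give $\langle L_1\mathrm{Re}V^-,\mathrm{Re}V^-\rangle\ge0$. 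We also have $\langle L_2\mathrm{Im}V^-,\mathrm{Im}V^-\rangle\ge0$, since $L_2\ge0$ because $Q>0$ lies in its kernel (Lemma \ref{lm: l2}). Together with $\langle LV^-,V^-\rangle=0$ from Proposition \ref{prop:linzeng}(c), this forces both quadratic forms to vanish. From the eigen-equation $JLV^-=-e_0V^-$ one then derives a contradiction: $L_2\mathrm{Im}V^-=e_0\mathrm{Re}V^-$ and $L_1\mathrm{Re}V^-=-e_0\mathrm{Im}V^-$ would make the pairing $\langle L_2\mathrm{Im}V^-,\mathrm{Im}V^-\rangle$ a nonzero multiple of $e_0\langle\mathrm{Re}V^-,\mathrm{Im}V^-\rangle$, and minimizing under the constraint shows that $\mathrm{Re}V^-$ would lie in $\ker L_1+\mathrm{span}\,Q$. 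This step needs care and I expect it to be the main obstacle; I would try to use the kernel description from the Appendix. Given the claim, choosing $|y_0^-|$ small fixes the sign at $t=0$. After relabeling $V^-\to -V^-$ if needed, $y_0^->0$ gives $Q_+$ and $y_0^-<0$ gives $Q_-$.

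For the strict inequality at all times, note that mass and energy equal those of $Q$ (by the $H^1$ convergence and the conservation laws). The variational characterization (Gagliardo--Nirenberg, Theorem \ref{thm: ground state} 4)) then shows that $\|u(t)\|_{H^1_a}=\|Q\|_{H^1_a}$ forces $u(t)=e^{i\theta}Q$, which is excluded. So by continuity the sign persists.

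For the uniqueness part, suppose $u\ne e^{it}Q$ with $\|u(t)-e^{it}Q\|_{H^1}\le Ce^{-\lambda t}$. Translate time so that $\|v(t)\|_{H^1}\le \delta' e^{-\lambda t}$, with $\delta'$ small enough for the ball $B_{\delta,\lambda}$. The argument around \eqref{a101} upgrades this to the Strichartz bound, so $v$ lies in the uniqueness class of Theorem \ref{exiuni}. It therefore coincides with the solution with data $y_0^-=\langle LV^+,v(0)\rangle$. We have $y_0^-\ne0$, since otherwise $v\equiv0$. Hence $u$ is a time translate of $e^{it}Q_\pm$, with the sign determined by the $H^1_a$ comparison. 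Finally, the statement that all $y_0^-$ of the same sign lie on one orbit also yields decay at the rate $e_0$ for every such solution.
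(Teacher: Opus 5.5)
Your proposal is correct and follows the route the paper intends: the paper states the corollary as an immediate consequence of Theorem \ref{exiuni} with no further argument, and your steps fill in what it leaves implicit (one orbit per sign of $y_0^-$, the first-order sign computation via \eqref{438}, persistence of the sign via the variational characterization, and the time-translation reduction together with \eqref{a101} for uniqueness). The step you flag as the main obstacle takes only a few lines: writing $V^-=(a,b)$, the equation $JLV^-=-e_0V^-$ reads $L_1a=e_0b$, $L_2b=-e_0a$ (the signs you wrote are those of $V^+$), so $\langle L_1a,a\rangle=e_0\langle a,b\rangle=-\langle L_2b,b\rangle\le0$, and equality would force $b\in\spa\{Q\}$ (since $L_2\ge0$ with $\ker L_2=\spa\{Q\}$), hence $a=-e_0^{-1}L_2b=0$ and then $b=e_0^{-1}L_1a=0$; thus $\langle L_1a,a\rangle<0$, and \eqref{5.10} gives $\langle(\la+1)Q,a\rangle\neq0$.
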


We remark that the behavior of $Q_{\pm}$ for $t<0$ requires separate analysis which will be addressed in later sections.

\section{Rewrite (NLS$_a$) near the standing wave solution}\label{S:wave}

Let $Q$ be the ground state studied in the previous sections. We examine functions in the small neighborhood (in the invariant energy-mass level surface) of the ground state orbit $\{e^{i\theta}Q, \ \theta\in \mathbb S^1\}$ to obtain modulation decomposition for solutions in this regime. 

We start with the following preliminary result: 

\begin{lemma}[ $\nha$ Linear profile decomposition]\label{lm:lp}

Let $\{f_n\}$ be a bounded sequence in $\nha(\mathbb R^d)$.  There exists  a subsequence of $\{f_n\}$ which we still denote by $\{f_n\}$, $J^*\in \{0, 1, 2,\cdots\}\cup{\infty}$, $\{\phi^j\}_{j=1}^{J^*}\subset \nha(\R^d)$, $\{x_n^j\}_{j=1}^{J^*}\subset \Rd$ such that for every $0\le J\le J^*$, we have the decomposition 
\begin{align*}
f_n=\sum_{j=1}^{J} \phi^j(x-x_n^j)+r_n^J, \ r_n^J\in \nha(\Rd), 
\end{align*}
satisfying 
\begin{gather*}
x_n^j\equiv0, \; \textit{ or } \; x_n^j\to \infty,\ \ \lim_{n\to \infty}|x_n^j-x_n^k|=\infty, \ \textit{ for any } j\neq k, \\
\lim_{J\to J^*}\limsup_{n\to \infty}\|r_n^J\|_{{2^*}}=0,\\
\lim_{n\to \infty}\bigl(\|f_n\|_{L_x^2}^2-\sum_{j=1}^J \|\phi^j\|_{L_x^2}^2-\|r_n^J\|_{L_x^2}^2\bigr)=0,\\
\lim_{n\to \infty}\bigl(\|f_n\|_{L_x^{p+2}}^{p+2}-\sum_{j=1}^J \|\phi^j\|_{L_x^{p+2}}^{p+2}-\|r_n^J\|_{L_x^{p+2}}^{p+2}\bigr)=0,\\
\lim_{n\to \infty}\bigl(\|f_n\|_{\ha}^2-\sum_{j=1}^J \|\phi^j\|_{X_j}^2-\|r_n^J\|_{\ha}^2\bigr)=0. \\
\end{gather*}
Here, $X_j=\dot H^1$ if $x_n^j\to \infty$ and $X_j=\ha$ if $x_n^j\equiv0$. 
\end{lemma}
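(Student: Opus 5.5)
This is a profile decomposition for bounded sequences in $\nha$ using translations only, since there is no scaling at the $H^1$ level. I would follow the standard inductive extraction scheme (as in Killip–Murphy–Visan–Zheng for the inverse-square potential). The one new ingredient is how $\la$ acts on translated profiles: it converges to $-\Delta$ when $x_n\to\infty$.

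\textbf{Step 1: inverse Strichartz/refined Sobolev.} Suppose $\{f_n\}$ is bounded in $\nha$ with $\|f_n\|_{L^{2^*}}\ge\eps$ and $\limsup\|f_n\|_{H^1}\le A$. I would show there exist $x_n\in\Rd$ and $\phi\in\nha$ with
\[
f_n(\cdot+x_n)\rightharpoonup\phi \ \text{ in } H^1, \qquad \|\phi\|_{H^1}\gtrsim \eps^{\alpha}A^{1-\alpha}.
\]
To get this, decompose $f_n$ with Littlewood–Paley projections. The low frequencies are harmless by Bernstein and the $L^2$ bound, and the high frequencies are controlled by the $\dot H^1$ bound. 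So there is a fixed dyadic scale $N\sim_{\eps,A}1$ carrying a nontrivial part of the $L^{2^*}$ norm. Interpolating $L^{2^*}$ between $L^2$ and $L^\infty$ for $P_Nf_n$ then yields points $x_n$ with $|P_Nf_n(x_n)|\gtrsim_{\eps,A}1$. Passing to a subsequence, either $x_n$ stays bounded, in which case I set $x_n\equiv0$ by absorbing the bounded translation into $\phi$, or $|x_n|\to\infty$. In either case the weak limit $\phi$ pairs nontrivially with the kernel of $P_N$, which gives the lower bound.

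\textbf{Step 2: decoupling.} From the weak convergence, the $L^2$ and $\dot H^1$ norms decouple by Hilbert space orthogonality. The $L^{p+2}$ norm decouples by the Brezis–Lieb lemma, using a.e.\ convergence from Rellich on compact sets. The delicate term is
\[
\int \frac{a}{|x|^2}\,|f_n|^2\,dx .
\]
When $x_n\equiv0$, it decouples directly by weak convergence in $\ha$. When $|x_n|\to\infty$, I need
\[
\int\frac{|\phi(x-x_n)|^2}{|x|^2}\,dx\to0
\qquad\text{and}\qquad
\int \frac{\phi(x-x_n)\,\overline{r_n(x)}}{|x|^2}\,dx\to0 .
\]
For the first, approximate $\phi$ by $C_c^\infty$ functions and use Hardy's inequality for the error; for a compactly supported approximant the integrand is supported where $|x|\gtrsim|x_n|\to\infty$. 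For the cross term, use Cauchy–Schwarz together with the first limit. This is why $X_j=\dot H^1$ in the case $x_n^j\to\infty$.

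\textbf{Step 3: induction.} Apply Step 1 to $r_n^{J}$, starting from $r_n^0=f_n$, and extract $\phi^{J+1}$ and $x_n^{J+1}$. Orthogonality of the parameters, $|x_n^j-x_n^k|\to\infty$, follows in the usual way: if two translation sequences stayed within bounded distance, the weak limit of the remainder at that location would vanish, which would contradict the nontriviality of the new profile. The decoupling identities then force $\sum_j\|\phi^j\|_{H^1}^2\le \limsup\|f_n\|_{H^1}^2<\infty$, so $\|\phi^j\|_{H^1}\to0$. The lower bound from Step 1 then gives $\limsup_n\|r_n^J\|_{2^*}\to0$ as $J\to J^*$. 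A subsequence is taken at each stage, and a diagonal argument finishes the construction.

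The main obstacle is the potential term in Step 2. Once that is settled, the $\ha$-decoupling identity with the mixed norms $X_j$ follows, and everything else is standard.
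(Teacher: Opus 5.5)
Your Step~1 fails at the endpoint exponent. In fact no argument can give the remainder bound with $2^*$ as printed. Take $0\ne\phi\in C_c^\infty(\R^d)$, $\lambda_n\to\infty$ and $f_n(x)=\lambda_n^{\frac{d-2}2}\phi(\lambda_n x)$.
\begin{itemize}
\item $\|f_n\|_{\dot H^1_a}=\|\phi\|_{\dot H^1_a}$ and $\|f_n\|_{L^2}=\lambda_n^{-1}\|\phi\|_{L^2}\to0$, so $\{f_n\}$ is bounded in $\nha$.
\item $\|f_n\|_{L^{2^*}}=\|\phi\|_{L^{2^*}}$ is constant.
\item $f_n(\cdot+x_n)\rightharpoonup0$ in $H^1$ for every choice of $x_n$, since $\|f_n\|_{L^1}=\lambda_n^{-\frac{d+2}2}\|\phi\|_{L^1}\to0$.
\end{itemize}
So your inverse inequality, ``$\|f_n\|_{L^{2^*}}\ge\eps$ implies a nontrivial translated weak limit,'' is false. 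In the lemma itself, the $L^2$ decoupling identity with $\|f_n\|_{L^2}\to0$ forces every $\phi^j=0$. Hence $r_n^J=f_n$ and $\|r_n^J\|_{L^{2^*}}\not\to0$.

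The step that breaks is ``the high frequencies are controlled by the $\dot H^1$ bound.'' Bernstein gives $\|P_{>N}f\|_{L^q}\lesssim N^{d(\frac12-\frac1q)-1}\|\nabla f\|_{L^2}$, and this exponent is negative only for $q<2^*$. At $q=2^*$ the $L^{2^*}$ mass can sit at frequency $\sim\lambda_n\to\infty$, so there is no fixed scale $N\sim_{\eps,A}1$. Capturing that mass would require scaling parameters, as in the $\dot H^1_a$ decomposition of \cite{YZZ22} that the paper cites. A translation-only $H^1$ decomposition does not have them.

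If you replace $2^*$ by any $q\in(2,2^*)$, your argument goes through essentially as written. Low frequencies are small in $L^q$ by Bernstein and the $L^2$ bound, and high frequencies are small because of the decaying factor $N^{d(\frac12-\frac1q)-1}$. Your Steps~2--3 are correct as written:
\begin{itemize}
\item Hilbert-space orthogonality for $L^2$ and $\dot H^1$, and Brezis--Lieb for $L^{p+2}$;
\item the Hardy/Cauchy--Schwarz argument showing the potential term of a profile with $|x_n^j|\to\infty$ vanishes, which is why $X_j=\dot H^1$ there;
\item the standard induction with orthogonality of the $x_n^j$.
\end{itemize}
The corrected statement, $\lim_{J\to J^*}\limsup_{n\to\infty}\|r_n^J\|_{L^{p+2}}=0$, is exactly what Lemma~\ref{lm: vc} uses to obtain $\|Q\|_{L^{p+2}}^{p+2}=\sum_j\|\phi^j\|_{L^{p+2}}^{p+2}$. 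That is the version you should prove, not the $2^*$ one.
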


The proof of this lemma is a minor modification of Lemma 9.2 in \cite{YZZ22}, we skip the details. 

Define
\begin{equation}\label{df}
\mathbf{d}(f)=\bigl |\Vert f\Vert _{\dot{H}_{a}^{1}}^{2}-\Vert Q\Vert _{\dot{%
H}_{a}^{1}}^{2}\bigr|,
\end{equation}%
our first result shows $\mathbf d(f)$ measures the distance to the ground state orbit on the mass-energy surface of the ground state. 

\begin{lemma}\label{lm: vc} Let $f\in H_{a}^{1}(\mathbb{R}^{d})$ satisfy 
\begin{align*}
 M(f)=M(Q), \quad E(f)=E(Q). 
 \end{align*}
  Then for any $\varepsilon >0$, there exists $\delta >0$ such
that if $\mathbf{d}(f)<\delta $, 
\begin{equation*}
\ \inf_{\theta \in \mathbb{S}^{1}}\Vert f-e^{i\theta }Q\Vert
_{H_{a}^{1}}<\varepsilon .
\end{equation*}
\end{lemma}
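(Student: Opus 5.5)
We argue by contradiction combined with concentration compactness. Suppose the claim fails. Then there exist $\varepsilon_0>0$ and a sequence $f_n\in H^1_a$ with $M(f_n)=M(Q)$, $E(f_n)=E(Q)$ and $\mathbf d(f_n)\to 0$, yet
\[
\inf_\theta\|f_n-e^{i\theta}Q\|_{H^1_a}\ge\varepsilon_0 .
\]
The constraints give $\|f_n\|_{\dot H^1_a}\to\|Q\|_{\dot H^1_a}$ and $\|f_n\|_{L^2}=\|Q\|_{L^2}$. From the energy identity, $\|f_n\|_{L^{p+2}}^{p+2}\to\|Q\|_{L^{p+2}}^{p+2}$. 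Since $Q$ attains equality in the sharp Gagliardo--Nirenberg inequality \eqref{3} (Theorem \ref{thm: ground state}, item 4), $\{f_n\}$ is a bounded maximizing sequence for $J$: indeed $J(f_n)\to J(Q)=C_{GN}$.

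Next we apply the linear profile decomposition of Lemma \ref{lm:lp} to $\{f_n\}$ and show there is exactly one profile, centered at the origin, with vanishing remainder. Write $f_n=\sum_{j\le J}\phi^j(x-x_n^j)+r_n^J$. If $\limsup\|r_n^J\|_{L^{p+2}}$ did not tend to zero we would still interpolate between $L^2$ boundedness and the $L^{2^*}$ smallness, so $\|r_n^J\|_{L^{p+2}}\to0$ as $J\to J^*$. The orthogonality relations then give
\[
\|Q\|_{p+2}^{p+2}=\sum_j\|\phi^j\|_{p+2}^{p+2}.
\]
For each $j$ apply the Gagliardo--Nirenberg inequality. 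For profiles with $x_n^j\to\infty$, use the potential-free version with $X_j=\dot H^1$. Its sharp constant is strictly smaller than $C_{GN}$ when $a<0$: the $\dot H^1$ norm dominates the $\dot H^1_a$ norm and strictly exceeds it for nonzero $f$, so a $\dot H^1$ norm appears in the denominator. Combining this with $\sum\|\phi^j\|_2^2\le\|Q\|_2^2$, $\sum\|\phi^j\|_{X_j}^2\le\|Q\|_{\dot H^1_a}^2$, and the elementary superadditivity inequality (the exponents satisfy $\tfrac{4-p(d-2)}{4}+\tfrac{pd}{4}=1+\tfrac p2>1$), we get $\sum_j\|\phi^j\|_{p+2}^{p+2}\le C_{GN}\|Q\|_2^{\cdots}\|Q\|_{\dot H^1_a}^{\cdots}$. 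Equality can hold only if a single profile carries everything, that profile has $x_n^j\equiv0$, and the norms are saturated. Hence $f_n\to\phi$ strongly in $H^1_a$, with $\|\phi\|_2=\|Q\|_2$, $\|\phi\|_{\dot H^1_a}=\|Q\|_{\dot H^1_a}$, and $J(\phi)=C_{GN}$.

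By Theorem \ref{thm: ground state} item 4, which rests on uniqueness, $\phi=cQ(\lambda x)$. Matching the $L^2$ and $\dot H^1_a$ norms of $Q$ forces $\lambda=1$ and $|c|=1$, so $\phi=e^{i\theta_0}Q$. This contradicts $\inf_\theta\|f_n-e^{i\theta}Q\|_{H^1_a}\ge\varepsilon_0$.

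The main obstacle is the dichotomy step: ruling out both splitting of mass among several profiles and escape of profiles to infinity. The first is handled by the strict superadditivity of the Gagliardo--Nirenberg functional. The second requires the strict inequality between the potential-free sharp constant and $C_{GN}$ for $a<0$. I would justify this inequality by noting that a potential-free optimizer $Q_0$ satisfies $\|Q_0\|_{\dot H^1_a}<\|Q_0\|_{\dot H^1}$, so $J(Q_0)$ exceeds the free constant.
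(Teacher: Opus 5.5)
Your proposal is correct and follows essentially the same route as the paper: argue by contradiction, apply the profile decomposition of Lemma \ref{lm:lp}, use the sharp Gagliardo--Nirenberg inequality profile by profile together with superadditivity (exponents summing to $1+\frac p2>1$) to force a single profile with $x_n^1\equiv 0$ and saturated norms, and conclude with the variational characterization. The only difference is cosmetic: for escaping profiles you pass through the potential-free sharp constant and show it is strictly below $C_{GN}$ via a free optimizer, whereas the paper applies the $\dot H^1_a$ inequality to every profile and uses $\|\phi^j\|_{\dot H^1_a}<\|\phi^j\|_{\dot H^1}$ for $\phi^j\neq 0$, which gives the same strictness without needing a potential-free optimizer to exist.
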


\begin{proof}
We argue by contradiction. Suppose the statement fails; there must 
exist $\varepsilon _{0}>0$ and a sequence $\{f_{n}\}\subset H_{a}^{1}(%
\mathbb{R}^{d})$ satisfying 
\begin{equation}\label{311}
M(f_{n})=M(Q),\ \ E(f_{n})=E(Q),\textit{ and  }\mathbf{d}(f_{n})\rightarrow 0,
\end{equation}
yet for which
\begin{equation}\label{b531}
\inf_{\theta \in \mathbb{S}^{1}}\Vert f_n-e^{i\theta }Q\Vert
_{H_{a}^{1}}>\varepsilon _{0}
\end{equation}%
holds for all $n$. Clearly \eqref{311} implies the convergence 
\begin{align}\label{532}
\|f_n\|_{\ha}\to \|Q\|_{\ha}, \textit{ and } \|f_n\|_{L_x^{p+2}}^{p+2}\to \|Q\|_{L_x^{p+2}}^{p+2}. 
\end{align}
Applying Lemma \ref{lm:lp} to $\{f_n\}$, we obtain the decomposition 
\begin{align*}
f_n=\sum_{j=1}^J\phi^j(x-x_n^j)+r_n^J
\end{align*}
for each $0 \le J \le J^*$ with the stated properties. In particular, the norm decoupling together with \eqref{311}, \eqref{532} implies that 
\begin{gather*}
\|Q\|_{L_x^2}^2=\sum_{j=1}^J \|\phi^j\|_{L_x^2}^2+\lim_{n\to \infty}\|r_n^J\|_{L_x^2}^2, \\
\|Q\|_{L_x^{p+2}}^{p+2}=\sum_{j=1}^J\|\phi^j\|_{L_x^{p+2}}^{p+2}+\lim_{n\to \infty}\|r_n^J\|_{L_x^{p+2}}^{p+2},\\
\|Q\|_{\ha}^2=\sum_{j=1}^J\|\phi^j\|_{X_j}^2+\lim_{n\to \infty} \|r_n^J\|_{\ha}^2. 
\end{gather*}
Taking the limit as $J\to J^*$, we find  
\begin{align}\label{557}
\sum_{j=1}^{J^*} \|\phi^j\|_{L_x^2}^2\le \|Q\|_{L_x^2}^2, \quad \sum_{j=1}^{J^*}\|\phi^j\|_{X_j}^2\le \|Q\|_{\ha}^2, 
%\end{align}
%and
%\begin{align}\label{558}
\quad \|Q\|_{L_x^{p+2}}^{p+2}=\sum_{j=1}^{J^*}\|\phi^j\|_{L_x^{p+2}}^{p+2}. 
\end{align}
%\eqref{558} 
The last equality together with the sharp Gagliardo-Nirenberg inequality yields:
\begin{align}\label{906}
\|Q\|_{L_x^{p+2}}^{p+2}\le C_{GN}\sum_{j=1}^{J^*}\|\phi^j\|_{2}^{\frac{4-p(d-2)}2}\|\phi^j\|_{\ha}^{\frac{pd}2},
\end{align}
where $C_{GN}$ is the sharp constant: $C_{GN}=\tfrac{\|Q\|_{L_x^{p+2}}^{p+2}}{\|Q\|_{L_x^2}^{\frac{4-p(d-2)}2}\|Q\|_{\ha}^{\frac{pd}2}}$. Inserting this into \eqref{906}, we obtain 
\begin{align}\label{907}
\|Q\|_{L_x^2}^{\frac{4-p(d-2)}2}\|Q\|_{\ha}^{\frac{pd}2}\le \sum_{j=1}^{J^*}\|\phi^j\|_{L_x^2}^{\frac{4-d(p-2)}2}\|\phi^j\|_{\ha}^{\frac{pd}2},
\end{align}
which together with \eqref{557} gives
\begin{align}\label{1153}
\bigl(\sum_{j=1}^{J^*}\|\phi^j\|_{L_x^2}^2\bigr)^{\frac{4-p(d-2)}4}\bigl(\sum_{j=1}^{J^*}\|\phi^j\|_{X_j}^2\bigr)^{\frac{pd}4}
\le \sum_{j=1}^{J^*}\|\phi^j\|_{L_x^2}^{\frac{4-p(d-2)}2}\|\phi^j\|_{\ha}^{\frac{pd}2}. 
\end{align} 
As $p>\frac 4d$, $a<0$, $\|\phi^j\|_{\ha}\le \|\phi^j\|_{X_j}$, and $\|\phi^j\|_{\ha}<\|\phi^j\|_{\dot H^1}$, 
we conclude \eqref{557} holds if and only if $J^*=1$ and $x_n^1\equiv0$ hence (in a simplified notation):
 \begin{align*}
 f_n=\phi+r_n.
 \end{align*}
 Reapplying \eqref{907} and \eqref{557}, 
 %and \eqref{558}, 
 we obtain 
 \begin{align*}
 \|\phi\|_{L_x^2}=\|Q\|_{L_x^2}, \ \|\phi\|_{\ha}=\|Q\|_{\ha}, \ \|\phi\|_{L_x^{p+2}}=\|Q\|_{L_x^{p+2}},
 \end{align*}
 and $\lim_{n\to \infty}\|r_n\|_{\nha}=0$. The variational characterization of the ground state in Theorem \ref{thm: ground state} then implies there exists $\theta\in \mathbb S^1$ such that $\phi=e^{i\theta}Q$, and consequently 
 \begin{align*}
 \lim_{n\to \infty}\|f_n-e^{i\theta}Q\|_{\nha}=0.
 \end{align*}
We get a contradiction to \eqref{b531} and thus the lemma is proved. 
\end{proof}

\begin{lemma}\label{lm: dec} There exist $\varepsilon_0$ and $\delta _{0}$ such that for any $f\in H_a^1(\mathbb R^d)$ satisfying
\begin{align}\label{condf}
M(f)=M(Q), \ E(f)=E(Q), \ \textit{ and } \mathbf{d}(f)<\delta_0, 
\end{align}
there exists a unique $\theta\in \mathbb S^1$ depending on $f$ smoothly such that  
\begin{align}\label{ortho}
e^{-i\theta}f\perp_{\la+1}iQ,
%\textit{ and } \|f-e^{i\theta}Q\|_{\nha}<\eps_0. 
\end{align}
and 
\begin{align}\label{bound}
\|e^{-i\theta} f-Q\|_{\nha}\lesssim \eps_0. 
\end{align}
Moreover, the function admits the decomposition 
\begin{align}\label{decom}
e^{-i\theta}f=(1+\alpha)Q+\tilde u,
\end{align}
where $\alpha\in \mathbb R$ depending on $f$ smoothly and $\tilde u\perp_{\la+1}\{Q, iQ\}$ satisfy the bounds: 
\begin{align}\label{EQ0}
|\alpha|\sim \|\tilde u\|_{\nha}\sim \|e^{-i\theta}f-Q\|_{\nha}\sim \mathbf{d}(f).   
\end{align}
\end{lemma}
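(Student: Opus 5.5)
We begin with the modulation parameter $\theta$. By Lemma \ref{lm: vc}, for $\delta_0$ small there is $\theta_1$ with $\|f-e^{i\theta_1}Q\|_{\nha}<\eps_0$. Define
\[
F(\theta,f)=\langle(\la+1)e^{-i\theta}f, iQ\rangle=\mathrm{Re}\int \overline{e^{-i\theta}f}\,(\la+1)(iQ)\,dx .
\]
This function is smooth in $(\theta,f)$, and $F(0,Q)=0$. Moreover,
\[
\partial_\theta F(0,Q)=\langle(\la+1)(-iQ), iQ\rangle=-\|Q\|_{\nha}^2\neq0 .
\]
The implicit function theorem, applied on the compact orbit $\{e^{i\theta}Q\}$ by rotation invariance, gives a unique $\theta$ near $\theta_1$ that depends smoothly on $f$ and satisfies \eqref{ortho}. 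It also gives \eqref{bound}, since $|\theta-\theta_1|\lesssim\eps_0$.

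\textbf{Decomposition.} With $\theta$ fixed, write $e^{-i\theta}f-Q=\alpha Q+\tilde u$, where
\[
\alpha=\frac{\langle(\la+1)Q,e^{-i\theta}f-Q\rangle}{\|Q\|_{\nha}^2}.
\]
Then $\tilde u$ is $\perp_{\la+1}Q$. It is also $\perp_{\la+1}iQ$, because of \eqref{ortho} and the fact that $Q\perp_{\la+1} iQ$. Clearly $\alpha$ is smooth in $f$, and
\[
|\alpha|+\|\tilde u\|_{\nha}\sim\|e^{-i\theta}f-Q\|_{\nha}.
\]

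\textbf{The equivalences \eqref{EQ0}.} Set $g=e^{-i\theta}f$, so that $M(g)=M(Q)$ and $E(g)=E(Q)$. Expand mass and energy about $Q$, writing $v=\alpha Q+\tilde u=v_1+iv_2$.

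Mass conservation gives
\[
2\alpha\|Q\|_2^2+2\langle Q,\tilde u_1\rangle+O(\|v\|^2)=0 .
\]
For the energy, use \eqref{3.1}: $E'(Q)=-Q$ in the $L^2$ pairing. Hence
\[
E(g)-E(Q)=-\langle Q,v_1\rangle+\tfrac12\langle L_1v_1,v_1\rangle+\tfrac12\langle L_2v_2,v_2\rangle+o(\|v\|^2_{\nha}).
\]
The linear terms cancel against the mass identity. Using $\tilde u\perp_{\la+1}Q$, which makes the cross term reduce to $\alpha\langle(\la+1)Q,\tilde u_1\rangle-(p+1)\alpha\langle Q^{p+1},\tilde u_1\rangle$, one arrives at an identity of the form
\[
\tfrac12\langle L_1\tilde u_1,\tilde u_1\rangle+\tfrac12\langle L_2\tilde u_2,\tilde u_2\rangle=O(\alpha^2)+O(|\alpha|\|\tilde u\|)+o(\|v\|^2).
\]

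\textbf{Coercivity (main step).} The key input is coercivity: $\langle L_1h,h\rangle\gtrsim\|h\|^2_{\nha}$ for real $h\perp_{\la+1}Q$, and $\langle L_2h,h\rangle\gtrsim\|h\|^2$ for $h\perp_{\la+1}Q$. I would upgrade the nonnegativity \eqref{5.10} to coercivity using the appendix spectral facts (Lemma \ref{lm: l2}, the kernel of $L_1$ restricted to radial/nonradial sectors being spanned by $\nabla Q$ is absent for the singular potential, and nondegeneracy coming from Proposition \ref{prop: uniq}). Since I expect the appendix supplies the coercivity statement, this is the main obstacle to quote correctly.

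From coercivity, $\|\tilde u\|^2\lesssim\alpha^2+o(\alpha^2)$, i.e. $\|\tilde u\|\lesssim|\alpha|$. The mass identity gives the reverse bound $|\alpha|\lesssim\|\tilde u\|_{L^2}+\|v\|^2$. To make this effective, I would instead relate both to $\mathbf d(f)$, as follows.

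\textbf{Relating to $\mathbf d(f)$.} Compute
\[
\|g\|^2_{\ha}-\|Q\|^2_{\ha}=2\alpha\|Q\|_{\ha}^2+2\langle\la Q,\tilde u_1\rangle+O(\|v\|^2).
\]
By \eqref{3.1} and $\tilde u\perp_{\la+1}Q$, the term $\langle\la Q,\tilde u_1\rangle$ equals $-\langle Q,\tilde u_1\rangle$. The mass identity then ties this to $\alpha\|Q\|_2^2+O(\|v\|^2)$. This yields
\[
\mathbf d(f)=2|\alpha|\bigl(\|Q\|_{\ha}^2+\|Q\|_2^2\bigr)+O(\|v\|^2).
\]
Combined with $\|\tilde u\|\lesssim|\alpha|$ and $\|v\|\lesssim\eps_0$, this gives $|\alpha|\sim\mathbf d(f)\sim\|v\|$, and hence $\|\tilde u\|\lesssim|\alpha|$.

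\textbf{Remaining gap.} This does not yet give the lower bound $\|\tilde u\|\gtrsim|\alpha|$. To obtain it, I would use $\langle Q,\tilde u_1\rangle$ from the mass identity: $\alpha\|Q\|_2^2=-\langle Q,\tilde u_1\rangle+O(\|v\|^2)$, so $|\alpha|\lesssim\|\tilde u\|+|\alpha|^2$. This closes \eqref{EQ0}.
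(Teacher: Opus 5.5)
Your argument is correct, and it reaches \eqref{EQ0} by a somewhat different route than the paper.

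\textbf{The paper's route.} The paper works with $I=M+2E$, for which $I'(Q)=0$ and $I''(Q)=2L$. It notes that the cross term $\alpha\langle L_1Q,\tilde u_1\rangle$ vanishes identically, because $L_1Q=-pQ^{p+1}=-p(\la+1)Q$. So $I(f)=I(Q)$ gives
\[
-\alpha^2\langle LQ,Q\rangle=\langle L\tilde u,\tilde u\rangle+o\bigl(\alpha^2+\|\tilde u\|_{\nha}^2\bigr).
\]
Since $\langle LQ,Q\rangle=-p\int Q^{p+2}<0$ and $L$ is two-sided elliptic on $\{Q,iQ\}^{\perp_{\la+1}}$ (Lemma \ref{lm: l2}(ii)), both directions of $|\alpha|\sim\|\tilde u\|_{\nha}$ come out of this one identity. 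It then computes $\mathbf d(f)=\bigl|(2\alpha+\alpha^2)\|Q\|_{\nha}^2+\|\tilde u\|_{\nha}^2\bigr|$ exactly, using $M(f)=M(Q)$ and $\nha$-orthogonality.

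\textbf{Your route.} You use the quadratic identity with lower coercivity only to get $\|\tilde u\|\lesssim|\alpha|$. You get the reverse bound from mass conservation alone, exploiting that $\perp_{\la+1}Q$ is not $L^2$-orthogonality:
\[
\alpha\|Q\|_2^2=-\langle Q,\tilde u_1\rangle-\tfrac12\|v\|_2^2 .
\]
That step is more elementary. It needs neither the sign of $\langle LQ,Q\rangle$ nor the upper bound on $L$. Since $\|v\|_{\nha}^2=\alpha^2\|Q\|_{\nha}^2+\|\tilde u\|_{\nha}^2$, it does not even depend on your earlier bound $\|\tilde u\|\lesssim|\alpha|$. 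The paper's route is more economical, and it also yields the sharper relation $\langle L\tilde u,\tilde u\rangle\approx p\,\alpha^2\int Q^{p+2}$.

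\textbf{A harmless slip.} The Hessian of $E$ at $Q$ is $L-1$, not $L$, so your energy expansion is missing the term $-\tfrac12\|v\|_2^2$. With it restored, adding half the mass identity gives exactly $\langle Lv,v\rangle=o(\|v\|_{\nha}^2)$. The identity you state then holds, in fact with zero cross term.

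\textbf{Coercivity.} The coercivity you were unsure how to quote is exactly Lemma \ref{lm: l2}(ii). Apply it separately to $\tilde u_1$ and $\tilde u_2$: $\tilde u\perp_{\la+1}\{Q,iQ\}$ makes each of them $\perp_{\la+1}Q$.
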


\begin{proof} We begin by considering functions near $Q$. 
Define functional $\mathcal F: \mathbb S^1\times \nha(\R^d)\to \R$ by  
\begin{align*}
\mathcal F(\theta, h)=\langle(\la+1)(e^{-i\theta}h), iQ\rangle.
\end{align*}
At $(\theta, h)=(0,Q)$, we have
\begin{align*}
\mathcal F(0, Q)=0, \ \frac{\partial \mathcal F}{\partial \theta}\biggr|_{(0, Q)}=-\|Q\|_{\nha}^2.
\end{align*}
By the Implicit Function Theorem, there exist $\eps_0, \eta_0>0$ and a $C^1$ map
\begin{align*}
\gamma: \nha(\R^d)\supset B_{{\eps_0}}(Q)\to (-\eta_0, \eta_0), \ ( C^1\mbox{ smoothness implies } \eta_0\le C \eps_0 )
\end{align*}
such that for any $f\in B_{{\eps_0}}(Q)$, there exists a unique phase $\theta_0=\gamma(f)\in (-\eta_0,\eta_0)$ satisfying 
$
\mathcal F(\theta_0, f)=0. 
%\ \| e^{-i\theta_0}f-Q\|_{\nha}\le \|f-Q\|_{\nha}+\|Q-e^{i\theta_0}Q\|_{\nha}\le \frac{\eps_0}2+C\eta_0<\eps_0.
$
This yields \eqref{ortho} with $\theta=\theta_0$. The decomposition \eqref{decom} and the orthogonality of $\tilde u$ hold with 
\begin{align}\label{339}
 \alpha=\frac{\langle(\la+1)(e^{-i\theta}f-Q),Q\rangle }{\|Q\|_{\nha}^2}.
\end{align}
The bound \eqref{bound} follows from:
\[
\ \| e^{-i\theta_0}f-Q\|_{\nha}\le \|f-Q\|_{\nha}+\|Q-e^{i\theta_0}Q\|_{\nha}\le {\eps_0}+C\eta_0\lesssim \eps_0.
\]

Given $\varepsilon_0$, from Lemma \ref{lm: vc}, there exists $\delta_0$ such that for general $f$ satisfying \eqref{condf}, there exists $\theta_1\in \mathbb S^1$ such that $e^{-i\theta_1}f\in B_{{\eps_0}}(Q)$. Repeating the same argument to $e^{-i\theta_1} f$ yields \eqref{ortho}- \eqref{decom} with $\theta=\theta_0+\theta_1$.

To prove the uniqueness of $\theta$ in $\mathbb S^1$, we suppose the uniqueness of $\theta$ fails for any choice of $\eps_0$ in \eqref{bound}. Then there is a sequence $\{f_n\}\subset \nha(\R^d)$ and two phase sequences $\{\theta_n\}, \{\tilde \theta_n\}\subset \mathbb S^1$ such that 
\begin{align*}
e^{-i\theta_n}f_n\perp_{\la+1}iQ,\ e^{-i\tilde\theta_n}f_n\perp_{\la+1} iQ
\end{align*}
and 
\begin{align*}
\|e^{-i\theta_n}f_n-Q\|_{\nha}\le \frac 1n, \ \ \|e^{-i\tilde \theta_n}f_n-Q\|_{\nha}\le \frac 1n. 
\end{align*}
From the triangle inequality, this implies 
\begin{align*}
\|e^{i(\theta_n-\tilde \theta_n)}Q-Q\|_{\nha}\le \frac 2n, 
\end{align*}
which implies $\theta_n-\tilde \theta_n\to 0$ as $n\to \infty$. For sufficiently large $n$, this clearly contradicts the uniqueness of the $\theta$-parameter for $e^{-i \theta_n} f_n$ close to $Q$.
% on the small interval $(-\eta_0, \eta_0)$. 

Finally, we prove the estimates in \eqref{EQ0}. Define the functional 
\begin{align*}
I(f)=M(f)+2E(f)=\langle(\la+1)f, f\rangle-\frac 2{p+2}\int_{\R^d} |f|^{p+2}dx.
\end{align*}
We compute the derivatives of $I$: 
\begin{gather}
I'(f) g=2\langle(\la+1)-|f|^p) f, g\rangle,\notag\\
\frac 12\langle I''(f)h, h\rangle =\langle(\la+1)h, h\rangle-p\int_{\R^d}|f|^{p-2}(f\cdot h)^2dx -\int_{\R^d}|f|^p|h|^2 dx. \label{1134}
\end{gather}
Here $f$ and $h$ are viewed as 2D real vectors in $f\cdot h$.
% is naturally defined when viewing complex functions. 
Clearly, for $f=Q$, we have 
\begin{align}\label{1133}
I'(Q)=0, \textit{ and } I''(Q)=2L =2\begin{pmatrix}L_1&0\\0&L_2\end{pmatrix}. 
\end{align}
Now taking $f$ satisfying \eqref{condf}, there is $\theta$ such that
\begin{align*}
e^{-i\theta}f=Q+\alpha Q+\tilde u:=Q+v,
\end{align*}
with the stated orthogonality. From Taylor's expansion, we have 
\begin{align}
I(f)&=I(e^{-i\theta} f)=I(Q+v)=I(Q)+\langle L v, v\rangle 
\label{451}\\&\qquad 
+\langle \int_0^1 (1-t)\bigl (I''(Q+tv)-I''(Q)\bigr) dt \, v, v\rangle. \label{452}
\end{align}
%In \eqref{451}, 
Due to the orthogonality and the fact that $(\la+ 1)Q = Q^{p+1}$, we have
\begin{align}\label{453}
\langle Lv, v\rangle=\alpha^2\langle LQ, Q\rangle +\langle L\tilde  u, \tilde u\rangle. 
\end{align}
 Assuming momentarily that \eqref{452} has the estimate 
 \begin{align}
 \eqref{452}\le C(\|v\|_{\nha}^{p+2}+\|v\|_{\nha}^3)\le C(\alpha^{p+2}+\|\tilde u\|_{\nha}^{p+2}+\alpha^3+\|\tilde u\|_{\nha}^3), \label{454}
 \end{align}
 we proceed to prove \eqref{EQ0} in the next few lines. 
 
 Indeed, inserting \eqref{452} and \eqref{453} into \eqref{451} and noting the fact $I(f)=I(Q)$, we obtain  
 \begin{align*}
 -\alpha^2 \langle LQ, Q\rangle =\langle L\tilde u, \tilde u\rangle+o(\alpha^2+\|\tilde u\|_{\nha}^2),
 \end{align*}
which together with the coercivity of $L$ on $\spa\{Q, iQ\}^{\perp}$ from Lemma \ref{lm: l2} gives 
\begin{align*}
\alpha\sim \|\tilde u\|_{\nha}.
\end{align*}
Moreover, from $M(f)=M(Q)$ and the definition of $\bd (f)$ in \eqref{df}, we have
\begin{align*}
\mathbf{d}(f)&=\mathbf{d}(e^{-i\theta} f)=\biggl |\bigl\|(1+\alpha)Q+\tilde u\bigr\|_{\nha}^2-\|Q\|_{\nha}^2\biggr|\\
&=\bigl |(1+\alpha)^2 \|Q\|_{\nha}^2 -\|Q\|_{\nha}^2+\|\tilde u\|_{\nha}^2\bigr|=\bigl|(2\alpha+\alpha^2)\|Q\|_{\nha}^2+
\|\tilde u\|_{\nha}^2\bigr|,
\end{align*}
which clearly establishes \eqref{EQ0}. 

To complete the proof, we must verify the estimate \eqref{454}. The second half of this estimate holds trivially. For the first half, it suffices to bound 
\begin{align}
& \langle (I''(Q+tv)-I''(Q))v, v\rangle\notag\\
=&-2p \int_{\R^d} |Q+tv|^{p-2}\bigl((Q+tv)\cdot v\bigr)^2-Q^{p-2}\bigl(Q\cdot v\bigr)^2 dx\label{343}\\
& -2\int_{\R^d}\bigl(|Q+tv|^p-Q^p)|v|^2 dx, \label{344}
\end{align}
uniformly for $t\in [0,1]$. The estimate of \eqref{344} follows from H\"older inequality, the embedding $\nha(\R^d)\subset L^{p+2}(\R^d)$ and the following point-wise estimate 
\begin{align}\label{345}
\bigl | |Q+tv|^p-Q^p\bigr|\le 
\begin{cases}
|v|^p,& 0<p\le 1;\\
C_p(|Q|^{p-1}+|v|^{p-1})|v|, & p>1. 
\end{cases}
\end{align}
Despite its complicated form, \eqref{343} can be estimated similarly. Lemma \ref{lm: dec} is finally proved. 
\end{proof}

Let $u(t)$ be a solution to (NLS$_a$) on a time interval $I$ satisfying 
\begin{equation}\label{1000}
M(u)=M(Q),\; E(u)=E(Q),\; \mathbf{d}(u(t))<\delta _{0},\ \forall t\in I.
\end{equation}
Applying Lemma \ref{lm: dec} to $e^{-it}u(t)$, we obtain continuous local coordinates 
\begin{align}
\theta(t)
%\in \mathbb S^1
, \, \alpha(t)\in \R, \ \tilde u(t)\in \nha (\R^d), \ \tilde u(t)\perp_{\la+1}\{iQ, Q\},\label{1001}
\end{align}
such that
\begin{align}\label{1002}
u(t)=e^{i(\theta(t)+t)}\bigl[(1+\alpha(t))Q+\tilde u(t)\bigr]:=e^{i(\theta(t)+t)}[Q+v(t)],
\end{align}
where
\begin{align}\label{vtu}
v(t)=\alpha(t) Q+\tilde u(t). 
\end{align}
Beyond the comparability guaranteed by Lemma \ref{lm: dec}, these local quantities also satisfy the following derivative estimates.

\begin{lemma}
\label{L: Modulation} Let $\frac 4d<p<\frac 4{d-2}$, $0>a>\frac{(d-2)^2}4\bigl(-1+\frac{p^2}{(p+1)^2}\bigr)$ and $u(t,x)$ be a solution of (NLS$_a$) on a time interval $I$ satisfying \eqref{1000}--\eqref{1002}. Then the local coordinates in \eqref{1001} satisfy 
\begin{equation}
|\alpha (t)|\sim \left\Vert v(t)\right\Vert _{H_{a}^{1}}\sim \left\Vert 
\tilde{u}(t)\right\Vert _{H_{a}^{1}}\sim \mathbf{d}(u(t)),
\label{modulation 1}
\end{equation}%
\begin{equation}
|\alpha ^{\prime }(t)|+|\theta ^{\prime }(t)|\lesssim \mathbf{d}(u(t)),
\label{modulation 2}
\end{equation}%
with implicit constants independent of $t$. 
\end{lemma}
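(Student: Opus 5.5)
The first estimate \eqref{modulation 1} is just \eqref{EQ0} of Lemma \ref{lm: dec} applied at each fixed time to $f=e^{-it}u(t)$. Note that $\mathbf d(u(t))=\mathbf d(e^{-it}u(t))$, and that $M$ and $E$ are conserved. So only the derivative bound \eqref{modulation 2} needs work.

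To get it, I will write down the equation for $v$ in the rotating, modulated frame and test it against the two orthogonality directions.

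\textbf{Step 1: the equation for $v$.} Insert $u=e^{i(\theta+t)}(Q+v)$ into (NLS$_a$) and use $(\la+1)Q=Q^{p+1}$. This gives
\begin{align*}
i\partial_t v-\theta'(Q+v)-(\la+1)v+|Q+v|^p(Q+v)-Q^{p+1}=0,
\end{align*}
and, in the notation of \eqref{1102},
\begin{align*}
\partial_t v=JLv+R(v)+i\theta'(Q+v).
\end{align*}
A priori $v$ is only $C_tH^1$, so $\partial_t v\in H^{-1}$. I will therefore work with the weak form: pair against the fixed smooth functions $(\la+1)Q$ and $(\la+1)iQ=iQ^{p+1}$. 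By \eqref{qspace}, these lie in $H^{-1}\cap L^{2}$, and $Q^{p+1}\in H^1$.

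\textbf{Step 2: pairing with the orthogonality directions.} Write $v=\alpha Q+\tilde u$ with $\tilde u\perp_{\la+1}\{Q,iQ\}$. Then
\begin{align*}
\langle(\la+1)v,Q\rangle=\alpha\|Q\|_{\nha}^2,\qquad \langle(\la+1)v,iQ\rangle=0 .
\end{align*}
Differentiating these identities in $t$, which is justified through the weak formulation, gives
\begin{align*}
\alpha'\|Q\|_{\nha}^2=\langle \partial_t v,Q^{p+1}\rangle,\qquad 0=\langle \partial_t v,iQ^{p+1}\rangle .
\end{align*}
Now substitute the equation from Step 1.

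The linear term $\langle JLv,\cdot\rangle$ is bounded by $C\|v\|_{H^1}$, since $L$ moves onto $Q^{p+1}$ via symmetry and $Q^{p+1}$ is in $H^1$. The term $\langle R(v),Q^{p+1}\rangle$ is bounded by $\|v\|^2+\|v\|^{p+1}$ using Lemma \ref{lm:rv} and H\"older, as in Claim \ref{JJ3}.

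The modulation term contributes $\theta'\langle i(Q+v),Q^{p+1}\rangle$, which is $\theta'\langle iv,Q^{p+1}\rangle=O(\theta'\|v\|)$, to the $\alpha'$ equation. It contributes $\theta'\langle i(Q+v),iQ^{p+1}\rangle=\theta'(\|Q\|_{\nha}^2+O(\|v\|))$ to the second equation.

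\textbf{Step 3: solving the $2\times2$ system.} The result is a linear system for $(\alpha',\theta')$:
\begin{align*}
\begin{pmatrix}\|Q\|_{\nha}^2 & O(\|v\|)\\ 0 & \|Q\|_{\nha}^2+O(\|v\|)\end{pmatrix}\begin{pmatrix}\alpha'\\ \theta'\end{pmatrix}=O(\|v\|_{\nha}).
\end{align*}
For $\delta_0$ small this matrix is invertible with a uniform bound. Hence $|\alpha'|+|\theta'|\lesssim\|v\|_{\nha}\sim\mathbf d(u(t))$ by \eqref{modulation 1}.

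\textbf{Main obstacle.} The hardest part is justifying the differentiability of $\alpha,\theta$ and the pairing computation for merely $H^1$ solutions. I would handle it by approximating with smooth (e.g. $H^2$, or $D(\la)$) data, using local well-posedness and continuous dependence. Alternatively, I would use that $\theta$ depends smoothly on $f$ (Lemma \ref{lm: dec}) together with the fact that $t\mapsto \langle u(t),\psi\rangle$ is $C^1$ for $\psi\in H^1$.

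The other delicate point is bounding $\langle JLv,(\la+1)Q\rangle$ without losing derivatives. Here I rely on $(\la+1)Q=Q^{p+1}\in H^1$, which holds by the asymptotics in Theorem \ref{thm: ground state}, so that the pairing costs only $\|v\|_{H^1}$. The restriction on $a$ in the statement is what guarantees the needed integrability of $Q$ through Lemma \ref{lm: normequal}.
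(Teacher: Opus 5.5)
Your proposal is correct and follows essentially the same route as the paper. You derive the equation for $v$ (equivalently $\tilde u$), pair it with $(\la+1)Q=Q^{p+1}$ and $(\la+1)iQ=iQ^{p+1}$, control the linear part through $\|Q^{p+1}\|_{H^1_a}<\infty$ (which is exactly where the lower bound on $a$ enters) and the remainder via Lemma \ref{lm:rv}, and invert the nearly triangular system for $(\alpha',\theta')$. There is one harmless sign slip: your first display actually gives $\partial_t v=JLv+R(v)-i\theta'(Q+v)$, not $+i\theta'(Q+v)$, but this only flips the sign of the $\theta'$ coefficient and does not affect invertibility.
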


\begin{proof}
Estimate \eqref{modulation 1} follows directly from \eqref{EQ0} in Lemma \ref{lm: dec}. To prove \eqref{modulation 2}, we first derive equations for $v=v_1+iv_2$ and $\tilde u=\tilde u_1+i\tilde u_2$. 

Inserting \eqref{1002} into (NLS$_a$) yields:
\begin{align}\label{1109} 
i\partial_t v-\la v+|Q+v|^p(Q+v)-(1+\theta'(t))(Q+v)-\la Q=0. 
\end{align}
Using $\la Q=-Q+Q^{p+1}$ and the fact that the linear terms (in $v$) in the expansion of $|Q+v|^p(Q+v)$ are given by 
\[
(p+1)Q^pv_1+iQ^p v_2,
\]
 \eqref{1109} becomes: 
\begin{align}\label{1162}
\partial_t v+i L_1 v_1-L_2 v_2+i\theta'(t)Q=R(v)-i\theta'(t)v, 
\end{align}
where $L_1, L_2$ and $R(v)$ are defined by \eqref{l1l2} and \eqref{r} in the previous section. 
The equation of $\tilde u$ follows from this and the expression \eqref{vtu}:
\begin{align}\label{1124}
\partial_t \tilde u+iL_1\tilde u_1-L_2\tilde u_2+\alpha'(t)Q - i  (p+1)\alpha(t)Q^{p+1}+i\theta'(t)Q=R(v)-i\theta'(t) v. 
\end{align}
Pairing \eqref{1124} against $(\la+1)Q$, $(\la+1)iQ$, using the orthogonality and the fact that $(\la+1)Q=Q^{p+1}$ gives 
\begin{gather}
\alpha'(t)\|Q\|_{\nha}^2=\langle L_2 \tilde u_2, Q^{p+1}\rangle +\langle R(v), Q^{p+1}\rangle-\langle i\theta'(t) v, Q^{p+1}\rangle,\label{1139}\\
\theta'(t)\|Q\|_{\nha}^2 =-\langle L_1\tilde u_1, Q^{p+1}\rangle +  (p+1) \alpha(t)\langle Q^{p+1},Q^{p+1}\rangle+\langle R(v)-i\theta'(t)v,  i Q^{p+1}\rangle. \label{1148}
\end{gather}
Using the following set of estimates:
\begin{align}\label{1159}
\begin{cases}
\bigl|\langle(\la+1)\tilde u, Q^{p+1}\rangle\bigr|+\bigl| \langle Q^p\tilde u, Q^{p+1}\rangle\bigr|+\bigl|\langle v, Q^{p+1}\rangle\bigr|\lesssim \mathbf{d}(u(t)), \quad  Q \in L^{2p+2}(\R^d), \\
%\langle Q^{p+1},Q^{p+1}\rangle \lesssim 1,\\
\bigl|\langle R(v), Q^{p+1}\rangle\bigr|\lesssim \mathbf{d}(u(t))^2+\mathbf{d}(u(t))^{p+1},
\end{cases}
\end{align}
we can control the right-hand sides of \eqref{1139} and \eqref{1148}, yielding
\begin{align*}
\bigl(|\alpha'(t)|+|\theta'(t)|\bigr)\|Q\|_{\nha}^2\lesssim \mathbf{d}(u(t))(1+|\theta'(t)|),
\end{align*}
from which \eqref{modulation 2} follows directly. 

It remains to prove \eqref{1159}. From H\"older inequality and \eqref{modulation 1}, the first row holds if  
%amounts to estimating 
\begin{align}\label{qnorm}
\qquad\|Q^{p+1}\|_{H^1_a}+\|Q^{2p+1}\|_{L_x^{\frac{2d}{d+2}}}+\|Q^{2p+2}\|_{L_x^1} <\infty.
\end{align}
These terms can be controlled using norms in \eqref{qspace} (recall  $r=\tfrac{2d(p+1)}{d-2},\ r^*=\tfrac{2d(p+1)}{2p+d} $ in \eqref{qqstar}):
\begin{align*}
\eqref{qnorm}\le \|\nabla Q\|_{L_x^{r^*}}\|Q\|_{L_x^r}^p+\|Q\|_{L_x^{\frac{2d(2p+1)}{d+2}}}^{2p+1}+\|Q\|_{L_x^{2p+2}}^{2p+2},
\end{align*}
which is bounded from \eqref{qspace} since $\frac{2d(2p+1)}{d+2}<r$, $2p+2<r$ under the constraint for $p$. 

For the last row of \eqref{1159}, we apply the pointwise estimate for $R(v)$ from Lemma \ref{lm:rv}
and H\"older inequality to obtain 
\begin{align*}
|\langle R(v), Q^{p+1}\rangle|\lesssim \|v\|_{L_x^{2^*}}^{p+1}\|Q\|^{p+1}_{\frac{2^*(p+1)}{2^*-p-1}}\lesssim \textbf{d}(u(t))^{p+1}, 
\end{align*}
for $0<p\le 1$, and:
\begin{align*}
|\langle R(v), Q^{p+1}\rangle|\lesssim \|v\|_{L_x^{2^*}}^{p+1}\|Q\|^{p+1}_{\frac{2^*(p+1)}{2^*-p-1}}+\|v\|_{L_x^{2^*}}^2\|Q\|_{L_x^{pd}}^{2p}\lesssim \textbf{d}(u(t))^{p+1} +\textbf{d}(u(t))^{2} 
\end{align*}
for $p>1$. 
Here the integrability of $Q$ follows again from \eqref{qspace} and the fact that $\frac{2^*(p+1)}{2^*-p-1}<r$ and $pd\le r$. This establishes \eqref{1159} and completes the proof of \eqref{modulation 2}. 
\end{proof}

\section{Dynamics of solutions on the mass-energy surface of $Q$}\label{S:Q}
In this section, we classify the dynamical behavior of solutions on the mass-energy level surface of the ground state. This classification is achieved by combining the local dynamic analysis from the previous section with a virial analysis.  

\subsection{Virial estimates}\label{virial}

Let $\phi (x)$ be a smooth radial function such that 
\begin{equation*}
\phi (x)=%
\begin{cases}
|x|^{2}, & |x|\leq 1; \\ 
0, & |x|>2,%
\end{cases}%
\;\mathit{and}\text{ }\phi''(r)\leq 2.
\end{equation*}%
For $R>0$, define the rescaled function $\phi _{R}(x)=R^{2}\phi (\frac{x}{R})$. Let $u(t,x)$ be a solution of (NLS$_{a}$) satisfying $M(u)=M(Q)$, $E(u)=E(Q)$. We define
the truncated virial %
\begin{equation*}
V_{R}(t)=\int_{\mathbb{R}^{d}}\phi _{R}(x)|u(t,x)|^{2}dx.
\end{equation*}%
A direct computation yields:
\begin{align*}
\partial _{t}V_{R}(t)& =2\mathbf{Im}\int_{\mathbb{R}^{d}}\bar u(t)%
\,\nabla u(t)\cdot \nabla \phi _{R}dx,
\end{align*}
and the second derivative is given by
\begin{align*}
\partial _{tt}V_{R}(t)& =4\mathbf{Re}\int_{\mathbb{R}^{d}}(\phi
_{R})_{jk}(x)u_{j}(t)\bar{u}_{k}(t)\,dx-\frac{2p}{p+2}\int_{\mathbb{R}%
^{d}}(\Delta \phi _{R})|u(t)|^{p+2}\,dx \\
& \quad -\int_{\mathbb{R}^{d}}(\Delta ^{2}\phi _{R})|u(t)|^{2}\,dx+\int_{%
\mathbb{R}^{d}}\frac{4ax\cdot \nabla \phi _{R}}{|x|^{4}}|u(t)|^{2}\,dx \\
& =8\Vert u(t)\Vert _{\dot{H}_{a}^{1}}^{2}-\frac{4pd}{p+2}\Vert u(t)\Vert
_{p+2}^{p+2}+A_{R}(u(t)) \\
& =(2pd-8)(\Vert Q\Vert _{\dot{H}_{a}^{1}}^{2}-\Vert u(t)\Vert _{\dot{H}%
_{a}^{1}}^{2})+A_{R}(u(t)). 
\end{align*}%
We express this in terms of distance function $\mathbf{d}(u(t))$ as
\begin{equation}
\partial _{tt}V_{R}(t)=%
\begin{cases}
(2pd-8)\mathbf{d}(u(t))+A_{R}(u(t)), & \mathit{if}\Vert u(t)\Vert _{\dot{H}%
_{a}^{1}}<\Vert Q\Vert _{\dot{H}_{a}^{1}},\\ 
(8-2pd)\mathbf{d}(u(t))+A_{R}(u(t)), & \mathit{if}\Vert u(t)\Vert _{\dot{H}%
_{a}^{1}}>\Vert Q\Vert _{\dot{H}_{a}^{1}},%
\end{cases}%
  \label{Vtt}
\end{equation}
where $A_R(u(t))$ is an error term given by 
\begin{eqnarray}
A_{R}(u(t)) &=&\int_{|x|>R}(\frac{4\partial _{r}\phi _{R}}{r}-8)|\nabla
u(t)|^{2}+\frac{2p}{p+2}(-\Delta \phi _{R}+2d)|u(t)|^{p+2}dx\label{458} \\
&&-\int_{|x|>R}\Delta ^{2}\phi _{R}|u(t)|^{2}+\frac{4a(x\cdot \nabla \phi
_{R}-2|x|^{2})}{|x|^{4}}|u(t)|^{2}dx\notag \\
&&+\int_{|x|>R}\frac{4(r\partial _{rr}\phi _{R}-\partial _{r}\phi _{R})}{%
r^{3}}|x\cdot \nabla u(t)|^{2}dx.\notag
\end{eqnarray}%
If  $u(t)$ is radial, the first and last integrals combine to yield
\begin{eqnarray}
A_{R}(u(t)) &=&\int_{|x|>R}(4\partial _{rr}\phi _{R}-8)|\nabla u(t)|^{2}+%
\frac{2p}{p+2}(-\Delta \phi _{R}+2d)|u(t)|^{p+2}dx \label{459}\\
&&-\int_{|x|>R}\Delta ^{2}\phi _{R}|u(t)|^{2}+\frac{4a(x\cdot \nabla \phi
_{R}-2|x|^{2})}{|x|^{4}}|u(t)|^{2}dx.\notag
\end{eqnarray}
Lastly we estimate $\partial_{t}V_{R} $ and $A_{R}$.

\begin{lemma}[Virial estimate]\label{LV} 
Let $\delta_0>0$ be the constant given in Lemma \ref{lm: dec}. Let $u(t,x)$ be a solution of (NLS$_{a}$) satisfying $M(u)=M(Q)$ and $%
E(u)=E(Q)$. Then the following  hold:
\begin{equation}
|\partial _{t}V_{R}(t)|\lesssim R^{2}\mathbf{d}(u(t)),  \label{VR}
\end{equation}%
\begin{equation}
|A_{R}(u(t))|\lesssim \left\{ 
\begin{array}{l}
\int_{|x|>R}\bigl(|\nabla u(t)|^{2}+|u(t)|^{p+2}+\frac{|u(t)|^{2}}{|x|^{2}}%
\bigr)dx, \\ 
e^{-R}\mathbf{d}(u(t))+\mathbf{d}(u(t))^{2},\ \ \mathit{\ }\text{ if } \; \mathbf{d}%
(u(t))<\delta _{0}\mathit{\ and}\text{ }|R|\gtrsim 1.
\end{array}%
\right.  \label{AR2}
\end{equation}%
%for some $c>0$ determined only by the ground state $Q$. 
If $u$ is radially symmetric, we also have 
\begin{equation}
A_{R}(u(t))\lesssim \int_{|x|>R}\bigr(|u(t)|^{p+2}+\frac{|u(t)|^{2}}{|x|^{2}}%
\bigr)dx.  \label{AR1}
\end{equation}%
\end{lemma}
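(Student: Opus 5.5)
We treat the three estimates separately, each by a direct computation with the support and derivative properties of $\phi_R$, using the modulation decomposition of Lemma \ref{lm: dec} and Lemma \ref{L: Modulation} when $\mathbf d(u(t))<\delta_0$.

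\emph{Estimate \eqref{VR}.} Write $\partial_t V_R=2\,\mathbf{Im}\int \bar u\,\nabla u\cdot\nabla\phi_R\,dx$ with $|\nabla\phi_R|\lesssim R$, supported in $|x|\le 2R$. If $\mathbf d(u(t))\ge\delta_0$, bound trivially by $R\|u\|_{L^2}\|\nabla u\|_{L^2}\lesssim R\,(1+\mathbf d(u(t)))^{1/2}$. Here $\|u\|_{\dot H^1_a}^2\le\|Q\|_{\dot H^1_a}^2+\mathbf d$ and the mass is fixed, and $\delta_0\lesssim\mathbf d$ absorbs the constant, giving $\lesssim R^2\mathbf d$ for $R\gtrsim 1$. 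If $\mathbf d(u(t))<\delta_0$, write $u=e^{i(\theta+t)}(Q+v)$ with $Q$ real. The term $\mathbf{Im}\int Q\nabla Q\cdot\nabla\phi_R$ vanishes, so only terms linear and quadratic in $v$ survive:
\[
\mathbf{Im}\int (Q\nabla v+\bar v\nabla Q)\cdot\nabla\phi_R+\mathbf{Im}\int\bar v\nabla v\cdot\nabla\phi_R .
\]
These are bounded by $R(\|Q\|_{H^1}\|v\|_{H^1}+\|v\|_{H^1}^2)\lesssim R\,\mathbf d$ by \eqref{modulation 1}.

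\emph{Estimate \eqref{AR2}.} The first bound is immediate from \eqref{458}. Every coefficient there is supported in $|x|>R$ and is bounded uniformly in $R$: $\partial_r\phi_R/r$, $\partial_{rr}\phi_R$, $\Delta\phi_R$ are $O(1)$, $\Delta^2\phi_R=O(R^{-2})\lesssim |x|^{-2}$ on the support, and $a(x\cdot\nabla\phi_R-2|x|^2)/|x|^4=O(|x|^{-2})$. The term $|x\cdot\nabla u|^2/r^2$ is bounded by $|\nabla u|^2$. For the second bound, insert $u=e^{i(\theta+t)}(Q+v)$ into that integral and expand $|\nabla u|^2$, $|u|^{p+2}$ and $|u|^2/|x|^2$ around $Q$. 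This produces three kinds of terms:
\begin{itemize}
\item pure-$Q$ terms, bounded by $\int_{|x|>R}(|\nabla Q|^2+Q^{p+2}+Q^2)\lesssim e^{-R}$ by Theorem \ref{thm: ground state} 3);
\item terms linear in $v$, bounded via Cauchy--Schwarz by $\|Q\|_{H^1(|x|>R)}\|v\|_{H^1}\lesssim e^{-R}\mathbf d$;
\item terms at least quadratic in $v$, bounded by $\|v\|_{H^1}^2+\|v\|_{H^1}^{p+2}\lesssim\mathbf d^2$, using Sobolev embedding for the $L^{p+2}$ part and Hardy for the $|x|^{-2}$ part.
\end{itemize}

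The main obstacle is the pure-$Q$ contribution. It is of size $e^{-R}$, not $e^{-R}\mathbf d$, so it is not controlled by the right-hand side unless it vanishes identically. To handle it, note that $A_R(Q)$ is exactly $\partial_{tt}V_R$ along the standing wave $e^{it}Q$, whose $V_R$ is constant, so $\partial_{tt}V_R=0$. Moreover \eqref{Vtt} applied to $u=e^{it}Q$ (where $\mathbf d=0$) gives $A_R(Q)=0$. Thus the zeroth-order term drops out. The first-order term is then the derivative of the functional $A_R$ at $Q$ in direction $v$, which is $\lesssim e^{-R}\|v\|_{H^1}$. For the polynomial-type expansion of $|Q+v|^{p+2}$ we need a pointwise bound $\big||Q+v|^{p+2}-Q^{p+2}-(p+2)Q^{p+1}v_1\big|\lesssim Q^p|v|^2+|v|^{p+2}$, which follows as in Lemma \ref{lm:rv}.

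\emph{Estimate \eqref{AR1}.} For radial $u$, use \eqref{459}. Since $\phi''\le2$, we have $4\partial_{rr}\phi_R-8\le0$, so the gradient term is nonpositive and can be dropped. The remaining terms have coefficients bounded by $|u|^{p+2}$ and $|u|^2/|x|^2$ as above, which gives the one-sided bound.
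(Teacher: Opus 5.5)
Your proposal is correct and follows the paper's proof essentially step by step. It uses the same split according to $\mathbf d(u(t))\gtrless\delta_0$ together with the modulation decomposition, the same cancellation $A_R(Q)=0$ (from $\partial_{tt}V_R\equiv0$ along $e^{it}Q$) to remove the zeroth-order term, and the same sign argument $\phi''\le 2$ in the radial case.

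The one small difference is in \eqref{VR}. Bounding $|\nabla\phi_R|\lesssim R$ in $L^\infty$ only gives $R\,\mathbf d(u(t))$, hence $R^2\mathbf d(u(t))$ only for $R\gtrsim 1$. The paper instead uses $\|\nabla\phi_R\|_{L^d}=R^2\|\nabla\phi\|_{L^d}$ together with $\dot H^1\subset L^{2^*}$, which gives the stated $R^2$ bound for every $R>0$. The restriction is harmless here, since the lemma is only applied with $R\ge 1$.
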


\begin{proof}
Using H\"{o}lder inequality and Sobolev embedding, we estimate
\begin{equation*}
|\partial _{t}V_{R}(t)|\leq \Vert \nabla u\Vert _{L_x^2}\Vert u\Vert _{L_x^{2^{\ast}}}\Vert \nabla \phi _{R}\Vert _{L_x^d}\lesssim R^{2}\Vert \nabla \phi \Vert _{L_x^d}\Vert u\Vert _{\dot{H}_{a}^{1}}^{2}\lesssim R^{2}(\mathbf{d}(u(t))+\Vert
Q\Vert _{\dot{H}_{a}^{1}}^{2}).
\end{equation*}%
For $\mathbf{d}(u(t))\geq \delta _{0}$, this directly implies \eqref{VR}. 
For $\mathbf{d}(u(t))<\delta _{0}$, we use the decomposition $u(t)=e^{i(\theta(t)+t)}(Q+v(t))$ from \eqref{1002} and compute:
\begin{align*}
|\partial _{t}V_{R}(t)|& =\bigl|2\mathbf{Im}\int_{\mathbb{R}^{d}}\nabla \phi
_{R}\nabla (Q+v(t))(Q+\bar{v}(t))dx\bigr| \\
& =\bigl|2\mathbf{Im}\int_{\mathbb{R}^{d}}\nabla \phi _{R}(\nabla Q\bar{v}%
(t)+\nabla v(t)Q+\nabla v(t)\bar{v}(t))dx\bigr| \\
& \leq \Vert \nabla \phi _{R}\Vert _{L_x^d}(\Vert \nabla Q\Vert _{L_x^2}\Vert \nabla
v\Vert _{L_x^2}+\Vert \nabla v\Vert _{L_x^2}^{2}) \\
& \lesssim R^{2}\mathbf{d}(u(t)).
\end{align*}
This completes the proof of \eqref{VR}. 

We turn to estimating $A_{R}(u(t))$. The first bound in \eqref{AR2} follows directly from the bound of $\phi$ and its derivatives. For the refined estimate when $\mathbf{d}(u(t))<\delta_0$ and $R\gtrsim 1$, we use the decomposition \eqref{1002} and the fact that $A_R(Q)=A_R(e^{it}Q)=0$. 
This yields
\begin{align*}
|A_{R}(u(t))|& =|A_{R}(e^{i(\theta (t)+t)}(Q+v(t)))|=|A_{R}(Q+v(t))-A_{R}(Q)|
\\
& \lesssim \Vert \nabla Q\Vert _{L_x^{2}(|x|\geq R)}\Vert \nabla v(t)\Vert _{L_x^2}+\Vert \nabla v(t)\Vert _{L_x^2}^{2}+\Vert \frac{Q}{|x|}\Vert
_{L_x^{2}(|x|\geq R)}\Vert \nabla v(t)\Vert _{L_x^2} \\
& +\Vert v\Vert _{L_x^{2^{\ast }}}(\Vert Q\Vert _{L_x^{\frac{2d(p+1)}{d+2}}(|x|\geq
R)}^{p+1}+\Vert v(t)\Vert _{H_{a}^{1}}^{p+1}) \\
& \lesssim e^{{ -R}}\mathbf{d}(u(t))+\mathbf{d}(u(t))^{2},
\end{align*}%
where the exponential decay $e^{-R}$ arises from the asymptotic behavior of $Q$ (Lemma \ref{lm: infinity}).

In the radial case, the term involving $\nabla u$ in \eqref{459} can be dropped due to $\phi''(r)\le 2$, simplifying the estimate to \eqref{AR1}. Lemma \ref{LV} is proved. 
\end{proof}

\subsection{Classification of non-scattering solutions in the sub-critical case\label{decay_sub}}

In this subsection, we prove that the orbits of non-scattering solutions must be on the stable/unstable manifold of some ground state $e^{i\theta} Q$ when they lie on the mass-energy level surface of $Q$ with subcritical kinetic energy. Our main result is: 

\begin{theorem}
\label{thm: subcase} Let $e^{it}Q_-$ be the exponentially convergent solution constructed in Corollary \ref{cor:411}. Let $u$ be a solution of NLS$_{a}$ satisfying 
\begin{equation}
M(u)=M(Q),E(u)=E(Q),\Vert u_{0}\Vert _{\dot{H}_{a}^{1}}<\Vert Q\Vert _{\dot{H%
}_{a}^{1}},\Vert u\Vert _{S^1([0,\infty ))}=\infty .  \label{1122}
\end{equation}%
Then there exists a unique pair of parameters $(\theta, T)\in \mathbb S^1\times \R$ such
that:
\begin{equation}
u(t,x)=e^{i(\theta +t)}Q_{-}(t+T,x).  \label{357}
\end{equation}%
Moreover, in the negative time direction, $u$ exists globally with $\Vert u\Vert
_{S^1((-\infty ,0])}<\infty $.

Conversely, if $\|u\|_{S^1((-\infty, 0])}=\infty$, then $u$ must  scatter forward in time and takes the form:  
\begin{align*}
u(t,x)=e^{i(\theta+t)}\bar Q_-(-t+T, x),
\end{align*}
for some $\theta$ and $T$. 
\end{theorem}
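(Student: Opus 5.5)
The plan follows the Duyckaerts--Merle scheme: a compactness step, a virial-based decay of $\mathbf d$, modulation to upgrade to exponential convergence, and Corollary \ref{cor:411} to identify $u$.

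\textbf{Step 1: sub-threshold region and compactness.} By $M(u)=M(Q)$, $E(u)=E(Q)$, the sharp Gagliardo--Nirenberg inequality of Theorem \ref{thm: ground state}, and continuity in time, the condition $\|u(t)\|_{\dot H^1_a}<\|Q\|_{\dot H^1_a}$ persists on the whole lifespan. Hence $u$ is global, $\mathbf d(u(t))=\|Q\|_{\dot H^1_a}^2-\|u(t)\|_{\dot H^1_a}^2>0$, and \eqref{Vtt} gives $\partial_{tt}V_R=(2pd-8)\mathbf d(u(t))+A_R(u(t))$. Since $\|u\|_{S^1([0,\infty))}=\infty$, I would run the profile-decomposition/concentration-compactness argument already behind the sub-threshold scattering results of \cite{KMVZ17, LMM18}, using Lemma \ref{lm:lp}. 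The inverse-square potential breaks translation invariance, so profiles with $x_n^j\to\infty$ evolve under the potential-free NLS, which lies strictly below its own threshold and therefore scatters. This should yield precompactness of $\{u(t):t\ge 0\}$ in $H^1$ with no spatial translation parameter, and hence uniform smallness of the tails $\int_{|x|>R}(|\nabla u|^2+|u|^{p+2}+|u|^2/|x|^2)$ for $R$ large. I expect this to be the \textbf{main obstacle}. One must check that the nonlinear profiles of Lemma \ref{lm:lp} and the long-time perturbation theory for NLS$_a$ work at exactly the threshold mass and energy, with the only non-scattering obstruction being proximity to $e^{i\theta}Q$.

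\textbf{Step 2: averaged decay of $\mathbf d$.} Combine the first bound in \eqref{AR2} (tails small, away from $Q$) with the second ($e^{-R}\mathbf d+\mathbf d^2$ when $\mathbf d<\delta_0$) to get $|A_R(u(t))|\le \tfrac12(2pd-8)\mathbf d(u(t))$ for $R$ large. Integrate $\partial_{tt}V_R$ on $[t_1,t_2]$ and use \eqref{VR}:
\[
\int_{t_1}^{t_2}\mathbf d(u(t))\,dt\lesssim R^2\bigl(\mathbf d(u(t_1))+\mathbf d(u(t_2))\bigr).
\]
Compactness forces $\mathbf d(u(t_n))\to 0$ along some sequence, otherwise the left side grows linearly. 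Letting $t_2=t_n\to\infty$ gives $\int_t^\infty \mathbf d\lesssim \mathbf d(u(t))$. The $R$-dependence is handled as in \cite{DM08}, using the modulation parameters from Lemma \ref{L: Modulation} to keep $R$ fixed on intervals where $\mathbf d<\delta_0$. A Gronwall-type argument then gives $\int_t^\infty \mathbf d\le Ce^{-ct}$.

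\textbf{Step 3: exponential convergence and identification.} From \eqref{modulation 2}, $|\theta'|+|\alpha'|\lesssim\mathbf d$, so $\theta(t)\to\theta_\infty$ and
\[
|\alpha(t)|\le\int_t^\infty |\alpha'|\lesssim e^{-ct}.
\]
With \eqref{modulation 1} this gives $\|u(t)-e^{i(t+\theta_\infty)}Q\|_{H^1}\lesssim e^{-ct}$. Since $\|u\|_{\dot H^1_a}<\|Q\|_{\dot H^1_a}$, Corollary \ref{cor:411}, applied after absorbing $\theta_\infty$ by phase invariance, gives $u=e^{i(\theta+t)}Q_-(t+T)$, and uniqueness of $(\theta,T)$ follows from the uniqueness there.

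\textbf{Step 4: negative time direction.} Suppose $\|u\|_{S^1((-\infty,0])}=\infty$ as well. Then Step 2 applies in both directions, giving $\int_{\mathbb R}\mathbf d\lesssim \mathbf d(u(t))$ for every $t$. Letting $t\to\pm\infty$ along sequences where $\mathbf d\to 0$ forces $\mathbf d\equiv0$, contradicting $\|u_0\|_{\dot H^1_a}<\|Q\|_{\dot H^1_a}$. So $e^{it}Q_-$ scatters backward, which is the statement $\|u\|_{S^1((-\infty,0])}<\infty$. The converse case follows from time reversal: $\bar u(-t)$ is again a solution on the same mass-energy surface, so applying the first part to it yields $u(t)=e^{i(\theta+t)}\bar Q_-(-t+T)$, and $u$ scatters forward.
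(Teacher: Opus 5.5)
Your overall scheme is the paper's: precompactness of the forward orbit from the sub-threshold theory, and a fixed-$R$ truncated virial giving $\int_{t_1}^{t_2}\mathbf d\lesssim \mathbf d(u(t_1))+\mathbf d(u(t_2))$. Then Gronwall for $\int_t^\infty\mathbf d\le Ce^{-ct}$, modulation, and Corollary \ref{cor:411}. Finally, the two-sided virial argument and time reversal handle the backward direction. Two small side remarks:
\begin{itemize}
\item Getting $\mathbf d(u(t_n))\to0$ directly from the fixed-$R$ inequality is slightly more economical than the growing-$R$ argument in Lemma \ref{L: sub tn}(a).
\item Your remark about handling ``the $R$-dependence'' via \cite{DM08} is unnecessary. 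Once $R_0$ is fixed by compactness, nothing depends on it: there is no translation parameter, and scaling leaves the mass level $M=M(Q)$.
\end{itemize}

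The gap is in Step 3. The modulation parameters $\alpha(t),\theta(t)$ exist only while $\mathbf d(u(t))<\delta_0$ (Lemma \ref{lm: dec}). Your bound $|\alpha(t)|\le\int_t^\infty|\alpha'|$, and likewise $\theta(t)\to\theta_\infty$, presuppose two things: $u(s)$ stays in that neighborhood for all $s\ge t$, and $\alpha(s)\to0$ as $s\to\infty$. At that point you only know $\mathbf d(u(t_n))\to0$ along a sequence and $\int_t^\infty\mathbf d\le Ce^{-ct}$. Integrability of $\mathbf d$ does not by itself give pointwise convergence, so a priori the trajectory could leave the neighborhood of $\{e^{i\theta}Q\}$ infinitely often.

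You need the bootstrap the paper uses to prove \eqref{141}. For $n$ large, let $\tau_n>t_n$ be the first time with $\mathbf d(u(\tau_n))=\delta_1$. On $[t_n,\tau_n]$ the modulation is defined, so
\[
|\alpha(\tau_n)-\alpha(t_n)|\le C\int_{t_n}^{\tau_n}\mathbf d\le Ce^{-ct_n}.
\]
But $|\alpha(\tau_n)|\sim\delta_1$ while $\alpha(t_n)\to0$, a contradiction. Hence $\mathbf d(u(t))<\delta_1$ on $[t_n,\infty)$, so $\alpha(t)\to0$ and your estimate becomes legitimate. Alternatively, precompactness of the orbit plus continuous dependence makes $t\mapsto\mathbf d(u(t))$ uniformly continuous, which together with $\int_0^\infty\mathbf d<\infty$ gives $\mathbf d(u(t))\to0$. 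With this step inserted, the rest of your argument goes through as in the paper.
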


The remainder of this subsection is devoted to proving Theorem \ref{thm: subcase}. We begin by establishing key properties of solutions satisfying \eqref{1122}.

\subsubsection{Compactness and its implications.}

Building on previous work in
%Killip-Murphy-Visan-Zheng and Lu-Miao-Murphy in 
\cite{KMVZ17} and \cite{LMM18}, we recall that $E(Q)$ represents the minimal energy threshold for non-scattering solutions on the mass level surface of $Q$. Consequently, solutions satisfying \eqref{1122} exhibit precompactness in $\nha(\R^d)$:
\begin{equation}
\{u(t)\}_{t\in \lbrack 0,\infty )}\mathit{\ is}\text{ }\textit{precompact in } H_{a}^{1}(\mathbb{R}^{d}). \label{precomp}
\end{equation}%
This compactness property implies the following spatial localization: for any $\eps>0$, 
there exists $C(\eps)>0$ such that 
\begin{equation}
\int_{|x|>C(\varepsilon )}|\nabla
u(t)|^{2}+|u(t)|^{p+2}+|u(t)|^{2}dx<\varepsilon .  \label{AA char}
\end{equation}
Combining \eqref{AA char} with virial estimates yields: 

\begin{lemma}
\label{L: sub tn} For any solution $u(t)$ of NLS$_{a}$ satisfying %
\eqref{1122}, the distance function $\mathbf{d}(u(t))$ has the following properties:

a) There exists a sequence $t_{n}\rightarrow +\infty $
with $\mathbf{d}(u(t_{n}))\rightarrow 0$.

b) For any interval $[a,b]\subset
\lbrack 0,\infty )$, we have:
\begin{equation}
\int_{a}^{b}\mathbf{d}(u(t))dt\leq C[\mathbf{d}(u(a))+\mathbf{d}(u(b))].
\label{1019}
\end{equation}
Consequently, there exist constants $C, c>0$ such that
\begin{equation}
\int_{t}^{\infty}\mathbf{d}(u(s))ds\leq Ce^{-ct}, \ t\ge 0.  \label{1113}
\end{equation}%

\end{lemma}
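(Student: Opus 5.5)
The plan is to run the standard truncated virial argument (as in Duyckaerts--Merle) with $V_R$ and \eqref{Vtt}. Since $\|u_0\|_{\dot H^1_a}<\|Q\|_{\dot H^1_a}$ and $M(u)=M(Q)$, $E(u)=E(Q)$, the energy trapping coming from the sharp Gagliardo--Nirenberg inequality (Theorem \ref{thm: ground state}) keeps $\|u(t)\|_{\dot H^1_a}<\|Q\|_{\dot H^1_a}$ for all $t$. Hence $\partial_{tt}V_R=(2pd-8)\mathbf d(u(t))+A_R(u(t))$ with $2pd-8>0$. The core of the argument is a bound of the form
\[
|A_R(u(t))|\le \tfrac{pd-4}{2}\,\mathbf d(u(t)) \quad \text{for all } t\ge0,
\]
for a single fixed, sufficiently large $R$, which gives $\partial_{tt}V_R\ge (pd-4)\mathbf d(u(t))$.

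To get this bound I will split into two regimes. When $\mathbf d(u(t))<\delta_1$, with $\delta_1\le\delta_0$ small, the second line of \eqref{AR2} gives $|A_R|\lesssim (e^{-R}+\mathbf d)\mathbf d$, which is acceptable once $R\gg1$ and $\delta_1\ll1$. When $\mathbf d(u(t))\ge\delta_1$, I use the first line of \eqref{AR2} together with the tightness \eqref{AA char}. Choosing $\varepsilon$ small compared to $\delta_1$ and $R\ge C(\varepsilon)$ makes $|A_R|\le C\varepsilon\le \tfrac{pd-4}{2}\delta_1$. The weight $|x|^{-2}|u|^2$ over $|x|>R$ is bounded by $R^{-2}M(u)$ in this step. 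This is the main point where care is needed: $\delta_1$ has to be fixed first, and only then $\varepsilon$ and $R$, so that the constants stay uniform in $t$.

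Part b) then follows by integration. Integrating $\partial_{tt}V_R\ge (pd-4)\mathbf d$ over $[a,b]$ and applying \eqref{VR} gives
\[
(pd-4)\int_a^b\mathbf d\,dt\le |\partial_tV_R(b)|+|\partial_tV_R(a)|\lesssim R^2\bigl(\mathbf d(u(a))+\mathbf d(u(b))\bigr),
\]
which is \eqref{1019} with a constant depending on the fixed $R$.

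For part a), the first step is to bound $\mathbf d(u(b))$ uniformly in $b$, since $\mathbf d\le\|Q\|_{\dot H^1_a}^2$. Plugging this into \eqref{1019} with $a=0$ gives $\int_0^\infty\mathbf d\,dt<\infty$. Since $\mathbf d\ge0$, this forces $\liminf_{t\to\infty}\mathbf d(u(t))=0$, which produces the sequence $t_n\to\infty$.

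For \eqref{1113}, I apply \eqref{1019} on $[t,t_n]$ and let $n\to\infty$; the term $\mathbf d(u(t_n))$ tends to zero by part a). This gives $\int_t^\infty\mathbf d\,ds\le C\mathbf d(u(t))$. Setting $F(t)=\int_t^\infty\mathbf d\,ds$, this reads $F\le -CF'$, and Gronwall then gives $F(t)\le F(0)e^{-t/C}$.
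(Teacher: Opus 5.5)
Your proposal is correct. For part b) and for \eqref{1113} it follows the paper's argument. The paper also fixes a single radius $R_0$ and bounds $|A_{R_0}(u(t))|\le (pd-4)\mathbf d(u(t))$ ``from \eqref{AR2} in both cases combined with compactness.'' It then integrates $\partial_{tt}V_{R_0}$ against \eqref{VR}, applies \eqref{1019} on $[t,t_n]$ with $t_n\to\infty$, and finishes with Gr\"onwall. Your second threshold $\delta_1\le\delta_0$, fixed before $\varepsilon$ and $R$, makes explicit a point the paper leaves implicit. The $\mathbf d^2$ term in the second line of \eqref{AR2} is absorbed only when $\mathbf d$ is small relative to the implicit constant, so either $\delta_0$ must be shrunk or the intermediate range must be handled by tightness, as you do.

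Where you genuinely differ is part a). The paper proves it separately by time-averaging with a growing radius $R=(\varepsilon T)^{1/2}$, which gives $\frac1T\int_0^T\mathbf d\,dt\to0$. That argument uses only the tightness \eqref{AA char} and the crude bound $|\partial_tV_R|\lesssim R^2$. You instead obtain a) as a corollary of b). Since $0<\mathbf d(u(t))\le\|Q\|_{\dot H^1_a}^2$ in the subcritical regime, \eqref{1019} with $a=0$ yields $\int_0^\infty\mathbf d\,dt<\infty$. That is stronger than vanishing time averages and forces $\liminf_{t\to\infty}\mathbf d(u(t))=0$.

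Your route is shorter and makes the averaging step redundant. The paper's route for a) has the merit of not using the near-$Q$ modulation analysis (Lemma \ref{lm: dec} and the refined bound in \eqref{AR2}), so it would survive where only compactness is available. Since b) needs that estimate anyway, your ordering costs nothing here.
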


\begin{proof}
We first prove statement a). Let $\varepsilon >0$ and define $T_{0}(\varepsilon )=({C(\varepsilon ))}^{2}/{%
\varepsilon }$. Clearly,
\begin{equation*}
(\varepsilon t)^{\frac{1}{2}}\geq C(\varepsilon ),\ \forall t\ge T_{0}.
\end{equation*}%
For $T\ge T_0$, set $R=\left( \varepsilon T\right) ^{\frac{1}{2}}$ and consider the virial quantity $V_R(t)$. From Lemma \ref{LV}, we obtain
\begin{align*}
|\partial _{t}V_{R}(t)|& \le C R^{2}\mathbf{d}(u(t)) =C\varepsilon T, \\
|A_{R}(u(t))|& \le C \int_{|x|>R}\bigl(|\nabla u(t)|^{2}+|u(t)|^{p+2}+%
\frac{|u(t)|^{2}}{|x|^{2}}\bigr)dx \\
& \le C\int_{|x|>C(\varepsilon )}\bigl(|\nabla u(t)|^{2}+|u(t)|^{p+2}+%
\frac{|u(t)|^{2}}{|x|^{2}}\bigr)dx \\
& \leq C \varepsilon.
\end{align*}
Substituting into \eqref{Vtt} yields:
\begin{equation*}
\partial _{tt}V_{R}(t)\geq (2pd-8)\mathbf{d}(u(t))-C\varepsilon.
\end{equation*}%
Integrating over $[T_{0},T]$ gives:
\begin{equation*}
\frac{1}{T}\int_{T_{0}}^{T}\mathbf{d}(u(t))dt\le C \frac{\varepsilon
(T-T_{0})+\varepsilon T}{T}\lesssim \varepsilon.
\end{equation*}%
Taking the limit as $T\to \infty$ then $\eps\to 0$, we obtain
\begin{equation*}
\lim_{T\rightarrow \infty }\frac{1}{T}\int_{0}^{T}\mathbf{d}(u(t))dt=0, 
\end{equation*}%
establishing the existence of $t_n\to \infty$ with $\mathbf{d}(u(t_n))\to 0$.

We now establish the integral estimate b). From the virial estimate
 for $\partial_t V_R$ in \eqref{VR}, the desired inequality \eqref{1019} follows from applying the Fundamental Theorem of Calculus to the lower bound estimate: 
\begin{equation}
 \partial _{tt}V_{R_0}(t)\geq (pd-4)\mathbf{d}(u(t)),
\label{gap 3}
\end{equation}
for some $R_0>1$. 

To verify this lower bound, we observe from \eqref{Vtt} that it suffices to show:
\begin{align}\label{1010}
|A_{R_0}(u(t))|\leq
(pd-4)\mathbf{d}(u(t)).
\end{align}
This inequality follows immediately from the error estimate \eqref{AR2} in both cases combined with the compactness of $u(t)$, thus ending the proof of \eqref{1019}. 

Finally let $t_n$ be the sequence from part a), we apply \eqref{1019} to the interval $[t, t_n]$ and obtain
\begin{equation}
\int_{t}^{t_{n}}\mathbf{d}(u(s))ds\leq C[\mathbf{d}(u(t))+\mathbf{d}%
(u(t_{n}))],  \label{decay 0}
\end{equation}%
yielding
\begin{equation*}
\int_{t}^{\infty }\mathbf{d}(u(s))ds\leq C\mathbf{d}(u(t)),\ \forall t\geq 0
\end{equation*}%
by taking $t_n\to \infty$. The exponential decay estimate \eqref{1113} then follows by applying  Gr\"{o}nwall's inequality to this inequality. 
\end{proof}

We are ready to prove Theorem \ref{thm: subcase}.

\subsubsection{Proof of Theorem \protect\ref{thm: subcase}}

\begin{proof} Let $u$ be a solution satisfying \eqref{1122}. By Corollary \ref{cor:411}, it suffices to establish
\begin{align}\label{448}
\|u(t)-e^{i(\theta+t)}Q\|_{\nha}\le Ce^{-ct}, \, \forall t\ge 0,
\end{align}
for some $\theta\in \mathbb S^1$ and contants $c, C>0$. 

A crucial step is proving 
\begin{align}\label{141}
\lim_{t\to \infty}\mathbf{d}(u(t))=0.
\end{align} 
We proceed by a contradiction argument. Let $\{t_n\}$ be the sequence from Lemma \ref{L: sub tn} with $\mathbf{d}(u(t_n))\to 0$. If \eqref{141} fails, there exists a subsequence (still denoted $\{t_n\}$), 
$\delta _{1}\in (0,\delta _{0})$ and times $\tau_n\in(t_n, t_{n+1})$ such that:
\begin{equation*}
\mathbf{d}(u(\tau _{n}))=\delta _{1}.
%\mathit{\ and}\text{ }\mathbf{d}%
%(u(t))\leq \delta _{1},\ \forall t\in \lbrack t_{n},\tau _{n}].  \label{840}
\end{equation*}%
From the modulation estimates \eqref{modulation 1}, we have:
\[
\alpha(\tau _{n})\sim \delta_1>0,\ \text{ while } \alpha(t _{n})\to 0.
\]
However, integrating $\alpha'(t)$ over $[t_n,\tau_n]$ and applying \eqref{1113} yields:
\begin{equation}
|\alpha (t_{n})-\alpha (\tau _{n})|\leq \int_{t_{n}}^{\tau _{n}}|\alpha
^{\prime }(s)|ds\leq C\int_{t_{n}}^{\tau _{n}}\mathbf{d}(u(s))ds\leq
Ce^{-ct_{n}}\to 0, \label{sub 1}
\end{equation}%
leading to a contradiction. This establishes \eqref{141}. 

For sufficiently large $T_0$ and $t\ge T_0$, we define modulation parameters $\alpha(t), \theta(t)$ via the orthogonal decomposition \eqref{1001}. The convergence \eqref{141} implies $\alpha(t)\to 0$ by \eqref{modulation 1}. 

For any $[t,\tau]\subset[T_0, \infty)$, estimate \eqref{sub 1} generalizes to: 
\begin{equation*}
|\alpha (t)-\alpha (\tau )|\leq Ce^{-ct},\ \tau \geq t\geq T_{0}.
\end{equation*}%
Taking $\tau\to \infty$ gives the exponential decay: 
\begin{equation}
|\alpha (t)|\leq Ce^{-ct},\ \forall t\geq T_{0}.\label{251}
\end{equation}%
 Consequently, from \eqref{modulation 1} we obtain 
\begin{equation}
\Vert v(t)\Vert _{H_{a}^{1}}+\mathbf{d}(u(t))\leq Ce^{-ct},\forall t\geq
T_{0}.  \label{sub 2}
\end{equation}%

With \eqref{sub 2} we integrate the $\theta'(t)$ estimate from \eqref{modulation 2} over $[t_n, t_m]\subset[T_0,\infty)$ to obtain
\begin{equation}
|\theta (t_{n})-\theta (t_{m})|\leq \int_{t_{n}}^{t_{m}}|\theta ^{\prime
}(t)|dt\leq C\int_{t_{n}}^{t_{m}}\mathbf{d}(u(t))dt\leq Ce^{-ct_{n}}.
\label{sub 3}
\end{equation}%
Thus, there exists $\theta \in \mathbb{R}$ such that:
\begin{equation}
|\theta (t)-\theta |\leq Ce^{-ct},\ \forall t\geq T_{0}.  \label{sub 4}
\end{equation}
Combining \eqref{1002} with the decay estimates \eqref{251}-\eqref{sub 4} establishes \eqref{448}, from which \eqref{357} follows via Corollary \ref{cor:411}. 

We are left to show the scattering in the opposite time direction by contradiction. Assume 
\[
\Vert Q_{-}\Vert _{S^1((-\infty ,0])}=\infty, 
\]
the compactness of $\{Q_-(t), t\in \R\}$ extends the distance estimates to all times:
\begin{equation*}
\int_{a}^{b}\mathbf{d}(Q_{-}(t))dt\leq C\big(\mathbf{d}(Q_{-}(a))+\mathbf{d}%
(Q_{-}(b))\big)\quad\textit{ and }\lim_{|t|\to \infty}\mathbf{d}(Q_-(t))=0. 
\end{equation*}%
This implies $\int_{-\infty
}^{\infty }\mathbf{d}(Q_{-}(t))dt=0$, forcing $Q_{-}\equiv e^{i\theta
}Q$, which contradicts the subcritical kinetic energy assumption. 

The final statement follows simply from the time reversibility of (NLS$_a$). The proof of Theorem \ref{thm: subcase} is then completed. 
\end{proof}

\subsection{Exponential convergence in the super-critical case}

This subsection is devoted to characterizing solutions of NLS$_{a}$ on the mass-energy level surface of $Q$ if the kinetic energy exceeds that of $Q$. Unlike the subcritical case, such solutions do not necessarily exhibit the compactness in $\nha(\R^d)$. We therefore restrict our analysis to solutions with additional spatial decay.

\begin{theorem}
\label{thm: super case} Let $e^{it}Q_+$ be the exponentially convergent solution constructed in Corollary \ref{cor:411}. Let $u$ be a solution to (NLS$_{a}$) on $%
[0,+\infty )$ satisfying 
\begin{equation}
M(u)=M(Q),E(u)=E(Q),\Vert u\Vert _{\dot{H}_{a}^{1}}>\Vert Q\Vert _{\dot{H}%
_{a}^{1}}.  \label{549}
\end{equation}%
Assume further that either
\begin{equation*}
xu_{0}\in L^{2}(\mathbb{R}^{d})\text{ or }u_{0}\in H_{rad}^{1}(\mathbb{R}%
^{d}).
\end{equation*}%
Then there exist $\theta, T\in \mathbb R$ such that:
\[
u(t,x)=e^{i(\theta +t)}Q_{+}(t+T,x).
\] 
For solutions defined on $(-\infty, 0]$ with the same other conditions, we have
\[
u(t,x)=e^{i(\theta+t)}\bar Q_+(-t+T, x). 
\]
\end{theorem}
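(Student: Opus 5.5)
The plan mirrors the proof of Theorem \ref{thm: subcase}, with the virial identity now used in the opposite direction. By Corollary \ref{cor:411}, it suffices to show
\[
\|u(t)-e^{i(\theta+t)}Q\|_{\nha}\le Ce^{-ct}
\]
for some $\theta$. The first observation is that $\|u(t)\|_{\dha}>\|Q\|_{\dha}$ persists for all $t$. Indeed, equality at some time would force $u(t)=e^{i\theta}Q$ by the variational characterization (item 4 of Theorem \ref{thm: ground state}, together with the mass and energy constraints), and then uniqueness would make $u$ the standing wave. So $\mathbf d(u(t))>0$, and \eqref{Vtt} reads
\[
\partial_{tt}V_R=(8-2pd)\,\mathbf d(u(t))+A_R(u(t)).
\]
Here $8-2pd<0$.

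In the finite variance case $xu_0\in L^2$, I would take $R=\infty$, i.e.\ the untruncated virial $V(t)=\int|x|^2|u|^2\,dx$. Then $V''=-(2pd-8)\mathbf d(u(t))<0$ and $V>0$. The standard Glassey argument on $[0,\infty)$ forces $V'(t)>0$ for all $t$: otherwise $V'(t_0)\le 0$ at some $t_0$, after which $V'$ stays negative and bounded away from zero, making $V$ eventually negative. Integrating from $t$ to $\infty$ gives
\[
(2pd-8)\int_t^\infty \mathbf d(u(s))\,ds\le V'(t).
\]
To make this an estimate in terms of $\mathbf d$, I need $|V'(t)|\lesssim \mathbf d(u(t))$. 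Away from $Q$ this follows from a Cauchy--Schwarz/uncertainty bound, $|V'|\lesssim \|xu\|_{L^2}\|\nabla u\|_{L^2}$, combined with the lower bound on $\mathbf d$ once $\mathbf d\ge\delta_0$. Near $Q$ it follows from the modulation decomposition \eqref{1002}, exactly as in \eqref{VR}, since the contribution of $Q$ itself vanishes. The key extra input is control of $\|xu(t)\|_{L^2}$, which a priori grows, and the identity $V'=4\,\mathrm{Im}\int \bar u\, x\cdot\nabla u$. I would first establish $\int_0^\infty \mathbf d(u(s))\,ds<\infty$ and a sequence $t_n$ with $\mathbf d(u(t_n))\to 0$. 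This follows by contradiction: if $\mathbf d\ge\delta$ for large $t$, then $V''\le -c\delta$ and $V$ eventually becomes negative. Once this is known, I would derive $\int_t^\infty \mathbf d\le C\,\mathbf d(u(t))$ using the bound on $V'$ near $Q$, and close with Gr\"onwall to get \eqref{1113}.

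In the radial case I would use the truncated $V_R$ together with \eqref{AR1} and the radial Sobolev estimate \eqref{d1}. These give $A_R\lesssim R^{-(d-1)p/2}+R^{-2}$ uniformly in time, since mass and $\|u\|_{H^1}$ are bounded. Bounded $H^1$ norm is itself needed here, and I would get it from global existence together with energy conservation, which controls $\|u\|_{\dha}$ in terms of $\|u\|_{L^{p+2}}$ via Gagliardo--Nirenberg; this may need an extra argument. Near $Q$ the sharper bound $A_R\lesssim e^{-R}\mathbf d+\mathbf d^2$ from \eqref{AR2} applies, and far from $Q$, i.e.\ when $\mathbf d\ge\delta_0$, the crude bound with $R$ large gives
\[
\partial_{tt}V_R\le -(pd-4)\,\mathbf d(u(t)).
\]
Together with $|\partial_tV_R|\lesssim R^2\mathbf d$ from \eqref{VR}, this yields the analogue of \eqref{1019} with reversed signs, namely $\int_a^b\mathbf d\lesssim \mathbf d(a)+\mathbf d(b)$. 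Arguing as in Lemma \ref{L: sub tn}, with boundedness of $V_R\le R^2M(Q)$ replacing compactness, I would obtain $t_n\to\infty$ with $\mathbf d(u(t_n))\to 0$, and then exponential decay of $\int_t^\infty \mathbf d$.

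With \eqref{1113} in hand, the rest repeats the proof of Theorem \ref{thm: subcase} verbatim. The contradiction argument via $\alpha'$ gives $\mathbf d(u(t))\to0$; Lemma \ref{L: Modulation} gives exponential decay of $\alpha$ and $v$; integrating $\theta'$ gives convergence of $\theta(t)$; and Corollary \ref{cor:411} identifies $u$ with $e^{i(\theta+t)}Q_+(t+T)$, the $+$ branch because $\|u\|_{\dha}>\|Q\|_{\dha}$. The negative time statement follows by time reversal $u\mapsto\bar u(-t)$. The main obstacle is the step establishing $|\partial_tV|\lesssim \mathbf d(u(t))$ uniformly together with the sign of $\partial_tV$, i.e.\ excluding that $u$ remains far from $Q$ with $\partial_tV$ unbounded. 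The sign is handled by the Glassey argument above, since $u$ is assumed to exist on all of $[0,\infty)$; uniform control of $\partial_tV$ in the $xu_0\in L^2$ case remains the delicate point.
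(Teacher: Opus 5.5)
Your overall architecture matches the paper's: concavity of a virial quantity gives \eqref{531}, and then the modulation argument of Theorem \ref{thm: subcase} and Corollary \ref{cor:411} finishes. However, the decisive step in the finite variance case is missing, as you note yourself. The bound $|V'(t)|\lesssim\mathbf d(u(t))$ cannot be obtained the way you propose. Away from $Q$, Cauchy--Schwarz gives $|V'|\lesssim\|\nabla u\|_{L^2}\sqrt{V}$. Near $Q$, the decomposition \eqref{1002} gives $|V'|\lesssim\|v\|_{H^1}(1+\|xu\|_{L^2})$, because the weight is now $x$ rather than the bounded $\nabla\phi_R$ behind \eqref{VR}. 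So in both regimes a factor $\sqrt{V(t)}$ survives, and $V'>0>V''$ alone only yields $V(t)\le V(0)+tV'(0)$.

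The missing idea is to keep this factor instead of trying to remove it a priori. Since $V(t)\ge V(0)>0$, the two estimates combine into $(V')^2\lesssim\mathbf d(u(t))^2V=C(V'')^2V$, i.e. $V'/\sqrt V\le -CV''$. Integrating on $[0,t]$ gives $\sqrt{V(t)}\le\sqrt{V(0)}+CV'(0)$, so $V$ is uniformly bounded. Equivalently, $(\sqrt V)'\lesssim\mathbf d$ combined with the bound $\int_0^\infty\mathbf d\le V'(0)/(2pd-8)$ that you already have also bounds $V$. Only after this does $V'\le -CV''$ follow. Gr\"onwall then gives $V'(t)\le Ce^{-ct}$, and $(2pd-8)\int_t^\infty\mathbf d(u(s))\,ds=V'(t)$ is exactly \eqref{531}.

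In the radial case, you base the estimate for $\mathbf d\ge\delta_0$ on a uniform-in-time bound for $\|u(t)\|_{H^1}$. That bound is not available: above the ground state, $E(u)=E(Q)$ together with Gagliardo--Nirenberg gives no upper bound on $\|u\|_{\dha}$, since $pd/2>2$. It is also not needed. Write $\|u(t)\|_{\dha}^2=\|Q\|_{\dha}^2+\mathbf d(u(t))$. The radial Sobolev inequality then gives $\int_{|x|>R}|u|^{p+2}dx\lesssim R^{-(d-1)p/2}(\|Q\|_{\dha}^2+\mathbf d)^{p/4}$. Because $p/4<1$, this is $\lesssim R^{-(d-1)p/2}\mathbf d$ whenever $\mathbf d\ge\delta_0$. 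Similarly, $\int_{|x|>R}|u|^2|x|^{-2}dx\le R^{-2}M(Q)\lesssim R^{-2}\mathbf d$ in that regime.

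With \eqref{AR1} and the second line of \eqref{AR2}, this gives \eqref{441} for all $t$, with no a priori bound on the solution. Concavity and $V_{R_0}\ge0$ then give $\partial_tV_{R_0}>0$. Hence $\int_t^\infty\mathbf d\lesssim\partial_tV_{R_0}(t)\lesssim\mathbf d(u(t))$ follows directly, without passing through a sequence $t_n$.
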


As in the previous section, the key step is establishing the exponential decay estimate:
\begin{align}\label{531}
\int_t^\infty \mathbf{d}(u(s))ds \le C e^{-ct}. 
\end{align}
We focus on proving this estimate for both finite variance and radial cases.

\subsubsection{The finite variance case}  For solutions with finite variance, we consider the virial quantity: 
\[
V(t)=\int_{\mathbb{R}^{d}}|x|^{2}|u(t,x)|^2dx,
\]
whose derivatives satisfy:
\begin{eqnarray*}
V^{\prime }(t) =4\;\mathbf{Im}\int_{\mathbb{R}^{d}}x\cdot \nabla u(t)%
\overline{u(t)}dx, \qquad
V^{\prime \prime }(t) =(8-2pd)\mathbf{d}(u(t)).
\end{eqnarray*}
Clearly $V(t)>0, V''(t)<0$, and due to the concavity,  
\[
V'(t)>0, \ \forall \ t\ge 0. 
\]

We first show that \eqref{531} can be derived from the inequality 
\begin{equation}
(V^{\prime }(t))^{2}\leq C\mathbf{d}^{2}(u(t))V(t)=CV^{\prime \prime
}(t)^{2}V(t). \label{V}
\end{equation}%
 Indeed, rewriting \eqref{V} as:
\begin{equation}\label{1020}
\frac{V^{\prime }(t)}{\sqrt{V(t)}}\leq -CV^{\prime \prime }(t),
\end{equation}%
we integrate over $[0,t]$ to get%
\begin{equation*}
\sqrt{V(t)}-\sqrt{V(0)}\leq -C(V^{\prime }(t)-V^{\prime }(0))\leq CV^{\prime
}(0),
\end{equation*}%
showing $V(t)$ is uniformly bounded. Thus \eqref{1020} gives
\begin{equation*}
V^{\prime }(t)\leq -CV^{\prime \prime }(t), 
\end{equation*}%
which by Gr\"onwall yields
\[
V^{\prime }(t)\leq Ce^{-ct}. 
\]
Finally, integrating $V''(t)$ proves \eqref{531}:
\begin{equation*}
 Ce^{-ct}\ge V^{\prime }(t)=-\int_{t}^{\infty }V^{\prime \prime
}(s)ds=(2pd-8)\int_{t}^{\infty }\mathbf{d}(u(s))ds.
\end{equation*}%

We establish \eqref{V} by discussing two cases.

If $\mathbf{d}(u(t))\geq \delta _{0}$, \eqref{531} follows directly from H\"{o}lder inequality:
\begin{eqnarray*}
|V^{\prime }(t)|^{2} &\leq &\Vert \nabla u(t)\Vert _{L_x^2}^{2}\Vert xu(t)\Vert
_{L_x^2}^{2}\lesssim (\mathbf{d}(u(t))+\Vert Q\Vert _{\dot{H}_{a}^{1}}^{2})V(t)
\\
&\lesssim &\mathbf{d}(u(t))V(t)\lesssim \mathbf{d}(u(t))^{2}V(t),
\end{eqnarray*}%
where the implicit constants 
%are allowed to 
depend on $\delta _{0}$. 
%which is acceptable. 
When $\mathbf{d}%
(u(t))<\delta _{0}$, we use the decomposition \eqref{1002} to estimate:
\begin{align*}
|V^{\prime }(t)|& =\bigl|4\mathbf{Im}\int_{\mathbb{R}^{d}}x\cdot \nabla
(Q+v(t))(Q+\bar{v}(t))dx\bigr| \\
& \leq 4\left\vert \int_{\mathbb{R}^{d}}x\cdot \nabla Q\bar{v}%
(t)dx\right\vert +4\int_{\mathbb{R}^{d}}\left\vert \frac{x\cdot \nabla v(t)}{%
|x|}|x|\bar{u}(t)\right\vert dx \\
& \leq 4\Vert x\cdot \nabla Q\Vert _{L_x^2}\Vert v\Vert _{L_x^2}+4\Vert \nabla
v(t)\Vert _{L_x^2}\Vert xu(t)\Vert _{L_x^2}\\
& \lesssim \Vert v(t)\Vert _{H_{a}^{1}}\mathbf{(}1+\sqrt{V(t)}\mathbf{)} \\
& \lesssim \mathbf{d}(u(t))\sqrt{V(t)},
\end{align*}%
where we used the fact that $V(t)\geq V(0)\gtrsim 1$ due to the positivity of $V'$. Squaring this estimate yields \eqref{V}.

\subsubsection{The radial case}

Let $V_R(t)$ be the truncated virial as defined in Section \ref{virial}  and let $A_R(t)$ be the error term in \eqref{Vtt}. In the radial case, we use the decay properties of radial functions to establish the following estimate for $A_R(t)$: there exists a constant $R_0(\delta_0, Q)>0$ such that 
%for all $R\ge R_0$, 
\begin{equation}
A_{R}(u(t))\leq (pd-4)\mathbf{d}(u(t)), \quad \forall R\ge R_0.  \label{441}
\end{equation}
 To prove \eqref{441}, we consider two cases. If
$\mathbf{d}(u(t))<\delta _{0}$, then \eqref{441} follows directly from the second row in (\ref{AR2}). If $\mathbf{d}(u(t))\geq \delta _{0}$, we use the radial Sobolev embedding in the form 
\begin{equation*}
|x|^{d-1}|u(t)|^{2}\lesssim \Vert u(t)\Vert _{L_x^2}\Vert \nabla u(t)\Vert _{L_x^2}
\end{equation*}%
to estimate 
\begin{eqnarray*}
\int_{|x|>R}|u(t)|^{p+2}dx &\leq &\int_{|x|>R}|u(t)|^{2}\left( \frac{\Vert
u(t)\Vert _{L_x^2}\Vert \nabla u(t)\Vert _{L_x^2}}{|x|^{d-1}}\right) ^{\frac{p}{2}%
}dx\leq CR^{-\frac{(d-1)p}2}\Vert u(t)\Vert _{\dot{H}_{a}^{1}}^{\frac{p}{2}} \\
&\leq &CR^{-\frac{(d-1)p}2}(\mathbf{d}(u(t))+\Vert Q\Vert _{\dot{H}_{a}^{1}}^{2})^{\frac{p%
}{4}}.
\end{eqnarray*}%
Combining this with (\ref{AR1}), we obtain
\begin{align*}
A_{R}(u(t))& \lesssim \int_{|x|>R}|u(t)|^{p+2}dx+\int_{|x|>R}\frac{|u(t)|^{2}%
}{|x|^{2}}dx \\
& \lesssim R^{-\frac{(d-1)p}2}(\mathbf{d}(u(t))+\Vert Q\Vert _{\dot{H}_{a}^{1}}^{2})^{%
\frac{p}{4}}+R^{-2} \\
& \lesssim \bigl(R^{-\frac{(d-1)p}2}+R^{-2}\big)\mathbf{d}(u(t)),
\end{align*}%
where in the last step we used the fact that $\frac{p}{4}<%
\frac{1}{d-2}\leq 1$. Combining both cases yields \eqref{441}.

With \eqref{441} and \eqref{Vtt}, we deduce the concavity of $V_{R_0}(t)$:
\begin{equation}
\partial _{tt}V_{R_0}(t)\leq (4-pd)\mathbf{%
d}(u(t))<0, \label{Vsup}
\end{equation}%
which further implies 
\begin{equation}
\partial _{t}V_{R_0}(t)>0,\ \forall t\geq 0.  \label{Vr mono}
\end{equation}%
Additionally, from \eqref{VR}, we have $\partial_t V_{R_0}(t)\le CR_0^2\mathbf{d}(u(t))$. Integrating \eqref{Vsup} over $[t,T]$ gives
\begin{equation*}
\int_{t}^{T}\mathbf{d}(u(s))ds\lesssim \int_{t}^{T}-\partial
_{tt}V_{R_0}(s)ds\leq \partial _{t}V_{R_0}(t)-\partial _{t}V_{R_0}(T)\leq \partial
_{t}V_{R_0}(t)\lesssim \mathbf{d}(u(t)).
\end{equation*}%
 Finally, \eqref{531} is obtained by taking $T\rightarrow \infty $ and applying Gr\"{o}nwall's inequality.
 
\section{Appendix: Spectral Analysis on the linearization at the ground state}\label{S:App}

In the notations of Section \ref{S:groundS}, 
%and \ref{S:uniqueness}, for $b_0\in S^0$, i.e. $q(b_0,r)$ is a 
let $Q(r)$ denote ground state solution and 
% for \eqref{12} thus satisfies the asymptotics in \eqref{18} and \eqref{64}. For simplicity of the discussion, in this part, we denote 
\begin{align*}
%Q(r)=q(b_0,r),\; \tilde q(r)=q_b(b_0,r), \;
Q_1(r)
%= \frac \p{\p \lambda} \big( \lambda^{\frac 2p} Q(\lambda  r)\big)|_{\lambda=1} 
=\tfrac 2pQ(r)+rQ_r(r).
\end{align*}

Let $L_1$ and $L_2$ be the operators defined in \eqref{l1l2}, and we recall the following equations:
\begin{equation}\label{79.1}
L_1 \tq:=0,\; 
L_1 Q_1=-2Q, \:
L_2 Q=0.
\end{equation}
The goal of this section is to prove the following Lemma:
\begin{lemma}\label{lm: l2} 
 Let $p\in (\frac 4d, \frac 4{d-2})
%\cap[1,\infty)
$.  Then the following properties hold. 
%for $L_1$ and $L_2$: 

(i) In $H^1(\R^d)$, $L_1$ and $L_2$ satisfy
%has exactly one negative direction and
\begin{align}\label{524}
\ker(L_1)=\{0\}, \quad \ker(L_2)=\spa \{Q\}.
\end{align}

(ii) 
%$\ker(L_2)=\spa \{Q\}$,  and 
$L_1, L_2$ satisfy uniform ellipticity: there exist $0<c<C<\infty$ such that for all $v\perp_{\la+1}Q$ (i.e., $\langle v, (\la+1)Q\rangle=0)$,
\begin{align*}
c\|v\|_{H^1}^2\le \langle L_i v, v\rangle \le C\|v\|_{H^1}^2, \quad i=1,2.
\end{align*}

(iii) For $L=\begin{pmatrix} L_1&0\\0&L_2\end{pmatrix}$ and $J=\begin{pmatrix}0&1\\-1&0\end{pmatrix}$, 
 \begin{gather*}
 \ker{(JL)}=\spa\bigl\{\begin{pmatrix}0\\Q\end{pmatrix}\bigr\}, 
 %\textit{ for }k=1, 
 \; \text{ and }
 \ker{(JL)^k}=\spa\bigl\{\begin{pmatrix}Q_1\\0\end{pmatrix}, \begin{pmatrix}0\\Q\end{pmatrix}\bigr\}, \textit{ for }k\ge 2. 
 \end{gather*} 
\end{lemma}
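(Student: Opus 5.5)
Three things have to be established: the kernels of $L_1$ and $L_2$, their coercivity on $\{v\perp_{\la+1}Q\}$, and the generalized kernel of $JL$.

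\textbf{Kernels in (i).} For $L_2$ we use $L_2Q=0$ together with $Q>0$. The quadratic form $\langle L_2 v,v\rangle=\|\nabla v\|^2+\int\frac a{|x|^2}|v|^2+\|v\|^2-\int Q^p|v|^2$ can be rewritten by the ground state substitution $v=Qw$, which gives $\langle L_2 v,v\rangle=\int Q^2|\nabla w|^2\ge0$. The identity can be justified away from the origin using the asymptotics of Lemma \ref{lm: q} and Lemma \ref{lm: infinity}. Equality forces $w$ to be constant, so $\ker L_2=\spa\{Q\}$.

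For $L_1$, decompose $v$ into spherical harmonics. In the degree $k\ge1$ sectors the effective potential is $a+k(k+d-2)$. For $k=1$ this is $a+d-1$, and the radial function $-Q_r>0$ solves $\mathcal L_{a+d-1}(-Q_r)+(-Q_r)-(p+1)Q^p(-Q_r)=\frac{2a}{r^3}Q$; this is obtained by differentiating \eqref{12}. Since $a<0$, the right-hand side is negative, so $-Q_r$ is a positive strict supersolution-type object. Using it as a ground state substitute, I would show that the $k=1$ operator is strictly positive and that the $k\ge2$ sectors are then more positive. The equality case would be excluded via Corollary \ref{cor: g}, which gives the behaviour at $0$.

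In the radial sector, a kernel element must be a local $H^1$ solution of $L_1\phi=0$ near $0$. By Lemma \ref{lm: q} it is a multiple of $\tq=q_b(b_0,\cdot)$. Lemma \ref{lm: qb} shows that $\tq$ grows like $e^{r}$, so $\tq\notin H^1$. Hence $\ker L_1=\{0\}$.

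\textbf{Coercivity in (ii).} By \eqref{5.10}, $\langle L_1g,g\rangle\ge0$ on $Q^{\perp_{\la+1}}$. Combined with $\ker L_1=\{0\}$ and a standard compactness argument (Weyl: $Q^p$ decays, so $L_1-(\la+1)$ is relatively compact), this upgrades to strict positivity and then to $c\|v\|_{H^1}^2$. The argument is by contradiction: take a minimizing sequence with $\|v_n\|_{H^1}=1$ and $\langle L_1v_n,v_n\rangle\to0$ and pass to a weak limit $v$. If $v=0$, the compact term vanishes and $\langle(\la+1)v_n,v_n\rangle\to0$, a contradiction. If $v\neq0$, then $v$ is a minimizer with value $0$ under the constraint, so a Lagrange multiplier gives $L_1v=\mu(\la+1)Q$.

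This constrained case is the main obstacle. From $L_1Q_1=-2Q$ alone one cannot conclude; I would rather use $L_1Q=-pQ^{p+1}=-p(\la+1)Q$, so that $v=-\frac{\mu}{p}Q+(\text{kernel})=-\frac\mu pQ$. Then $0=\langle v,(\la+1)Q\rangle$ forces $\mu=0$, and $v\in\ker L_1=\{0\}$, a contradiction.

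For $L_2$, the same scheme applies once positivity holds, which comes from the identity above. Equality is possible only for $v\parallel Q$, and this is excluded by $\langle Q,(\la+1)Q\rangle\ne0$. The upper bounds are trivial.

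\textbf{Generalized kernel in (iii).} We have $JL(f,g)^T=(L_2g,-L_1f)$, so its kernel is $\{f\in\ker L_1,\ g\in\ker L_2\}$, which by (i) is $\spa\{(0,Q)\}$. For $(JL)^2u=0$ we need $JLu=c(0,Q)$, that is, $L_2g=0$ and $L_1f=-cQ$. By invertibility of $L_1$ and $L_1Q_1=-2Q$, this gives $f=\frac c2Q_1$ and $g\in\spa\{Q\}$.

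For $k=3$ we would need $JLu\in\spa\{(Q_1,0),(0,Q)\}$, which requires $L_2g=\alpha Q_1$. This is solvable only if $\langle Q_1,Q\rangle=0$. Computing,
\[
\langle Q_1,Q\rangle=\Big(\tfrac2p-\tfrac d2\Big)\|Q\|_{L^2}^2\neq0,
\]
since $p>\frac4d$. Hence $\alpha=0$ and the chain stabilizes at $k=2$.
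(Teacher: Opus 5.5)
Your proposal is correct once one sign is fixed, and part (i) takes a different route from the paper's.

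\textbf{The sign.} Differentiating \eqref{12} gives (compare \eqref{815})
$\bigl(\mathcal L_{a+d-1}+1-(p+1)Q^p\bigr)(-Q_r)=-\tfrac{2a}{r^3}Q$.
This is \emph{positive} because $a<0$. As you wrote it, the right side would be negative, so $-Q_r$ would be a positive subsolution, which gives no lower bound. With the correct sign, setting $f=(-Q_r)w$ yields
\[
\int_0^\infty\Bigl(f_r^2+\tfrac{a+d-1}{r^2}f^2+f^2-(p+1)Q^pf^2\Bigr)r^{d-1}dr=\int_0^\infty\Bigl(Q_r^2\,w_r^2+\tfrac{-2aQ}{r^3(-Q_r)}\,f^2\Bigr)r^{d-1}dr .
\]
The right side is strictly positive for $f\neq0$. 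The identity extends from $C_c^\infty((0,\infty))$ to all admissible $f$: by Lemma \ref{lm: q} and Lemma \ref{lm: infinity}, $|Q_{rr}/Q_r|\lesssim 1+r^{-1}$ and $Q/(r|Q_r|)$ is bounded near $0$, so Hardy's inequality controls every term. You therefore do not need Corollary \ref{cor: g} in this sector.

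\textbf{Comparison with the paper.}
\begin{itemize}
\item \emph{Radial sector.} The paper argues by oscillation. A radial kernel element would be the second eigenfunction, so it changes sign exactly once at some $r_0$. Pairing it with $Q-\tfrac p2Q(r_0)^pQ_1$, whose $L_1$-image changes sign exactly at $r_0$, gives a contradiction. You instead identify the kernel element with a multiple of $\tq$ and use the exponential growth in Lemma \ref{lm: qb}. This reuses the shooting analysis and avoids Sturm theory at the singular endpoint. Your phrase ``by Lemma \ref{lm: q}'' should be backed by the Wronskian remark that a second solution behaves like $r^{-\frac{d-2+\beta}2}$ at $0$ and so is not locally $H^1$.
\item \emph{Sector $\mu_j=d-1$.} The paper gets only nonnegativity from \eqref{5.10}. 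It then deduces that a kernel element is a positive first eigenfunction, and reaches a contradiction through the Wronskian identity \eqref{825} and the endpoint behaviour of $Q'/G$. Your Picone identity is the same computation, arranged so that strict positivity comes out directly.
\item \emph{$\ker L_2$.} The paper compares the eigenvalues of $I-K$ and $I-(p+1)K$, so it depends on the $L_1$ results. Your substitution $v=Qw$ is self-contained.
\item \emph{Part (ii).} The two arguments agree in substance. The paper's constant $c=\mu_2$ rests on $(\la+1)^{\frac12}Q$ being exactly the negative eigenvector of $(\la+1)^{-\frac12}L_1(\la+1)^{-\frac12}$, which is your identity $L_1Q=-p(\la+1)Q$. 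The spectral version gives an explicit constant; your Lagrange-multiplier version gives a non-explicit one.
\item \emph{Part (iii).} Your condition $\langle Q_1,Q\rangle\neq0$ is the paper's $\langle LQ_1,Q_1\rangle=-2\langle Q,Q_1\rangle\neq0$, written as a Fredholm solvability condition.
\end{itemize}
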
 
\begin{proof}

 To prove \eqref{524} for $L_1$, we decompose $u\in H^1(\R^d)$ using spherical harmonics. Let $\{Y_j(\theta)\}_{j=0}^\infty$ be the spherical harmonics satisfying 
\begin{equation*}
- \Delta_{\mathbb S^{d-1}} Y_j(\theta)=\mu_j Y_j(\theta), \ \textit{ with } \|Y_j\|_{L^2 (\mathbb S^{d-1})} =1, 
\end{equation*}
where,
\[
0=\mu_0<\mu_1\le \mu_2\le \cdots\to \infty, \; \mu_1=d-1, \; Y_0= (C_{d-1})^{-\frac 12} \triangleq \big(\area (\mathbb S^{d-1})\big)^{-\frac 12}.
\]
For $u=\sum_{j=0}^\infty f_j(r) Y_j(\theta)$, 
\[L_1 u=\sum_{j=0}^\infty \Big( \bigl(\mathcal L_{a+\mu_j}+1-(p+1)Q^p\bigr)f_j(r) \Big) Y_j(\theta).\]
By orthogonality between $Y_j(\theta)$, \eqref{524} for $L_1$ reduces to showing that for each $j\ge 0$,  \begin{align}\label{1023}
(\mathcal L_{a+\mu_j}+1-(p+1)Q^p)G=0
\end{align}
has no non-trivial solution $G\in H_{rad}^1(\R^{d})$. We proceed case by case. 

{\it Case 1.} $j=0$. Assume for contradiction that $G\in H_{rad}^1(\R^d)$ solves
\begin{align}
\label{a531}
L_1 G=(\la+1-(p+1)Q^p)G=0.
\end{align}
Since $L_1$ has only one negative direction (by Lemma \ref{lm: existence}),
%the operator $L_2$ has only one negative direction, equation \eqref{531} implies that 
$0$ is the second eigenvalue of $L_1$, consequently, $G(r)$ must change sign exactly once at some $r_0>0$. 

Define $v(r)=Q(r)-\tfrac p2Q(r_0)^p Q_1(r)$, a direct computation yields: 
\begin{align*}
L_1 v(r)=pQ(r)(Q(r_0)^p-Q(r)^p):=f(r).
\end{align*}
Since $Q$ is decreasing, $f(r)$ changes sign exactly once at $r_0$. Thus,
\[
0= \langle v, L_1 G\rangle = \langle f, G\rangle= C_{d-1}\int_0^\infty f(r)G(r)r^{d-1}dr\neq 0,
\]
which is a contradiction. 
% the orthogonality between the range and kernel for self-adjoint operators. 
Hence $G=0$. \\

{\it Case 2.} $j=1, 2, \ldots, d$, and $\mu_j=d-1$. 
Assume $G\in H_{rad}^1$ solves
\[
\bigl(\mathcal L_{a+d-1}+1-(p+1)Q^p\bigr)G=0. 
\]
We first show $G$ does not change sign. For any $v=\sum_{j=0}^{\infty}v_j(r)Y_j(\theta) \ne 0,$
we evaluate 
\begin{align*}
&\langle (\mathcal L_{a+d-1}+1-(p+1)Q^p)v, v\rangle\\
&=\sum_{j=0}^\infty \langle (\mathcal L_{a+d-1}+1-(p+1)Q^p)v_j(r), v_j(r)\rangle+\sum_{j=1}^\infty \mu_j\int_{\R^d}\frac{ |v_j(x)|^2}{|x|^2} dx \\
&= \sum_{j=0}^\infty\langle L_1 \bigl (v_j(r)Y_1(\theta)\bigr), v_j(r)Y_1(\theta)\rangle
 +\sum_{j=1}^\infty \mu_j\int_{\R^d} \frac{|v_j(x)|^2 }{|x|^2}dx \ge 0,
\end{align*}
where the non-negativity of the first term follows from Lemma \ref{lm: existence} and the orthogonality between $v_j(r)Y_1(\theta)$ and $Q(r)$. Therefore $0$ is the first eigenvalue of $\mathcal L_{a+d-1}+1-(p+1)Q^p$ and
$G>0$.

Next, using the radial equations of $G$ and $Q'$:
\begin{gather}
-G''-\tfrac {d-1}r G'+G+\tfrac{a+d-1}{r^2}G-(p+1)Q^p G=0,\label{814}\\
-Q'''-\tfrac {d-1}r Q''+Q'+\tfrac{a+d-1}{r^2}Q'-\tfrac{2a}{r^3}Q-(p+1)Q^pQ'=0.\label{815}
\end{gather}
and computing $[\eqref{814}\cdot r^{d-1}Q'-\eqref{815}\cdot r^{d-1}G]$ yields: 
\[r^{d-1}Q'''G+(d-1)r^{d-2}Q''G-r^{d-1}Q'G''-(d-1)r^{d-2}Q'G'+{2a}r^{d-4}QG=0.\]
This equation can be rewritten as:
\begin{align}\label{825}
\tfrac d{dr}[r^{d-1}(Q''G-Q'G')]+{2a}r^{d-4} QG=0. 
\end{align}
On the one hand, from the fact $Q>0, G>0$ and $a<0$, \eqref{825} implies that $r^{d-1}(Q''G-Q'G')(r)$ is monotone increasing in $r$. On the other hand, using the exponential decay from Lemma \ref{lm: infinity}, we have
\[\lim_{r\to \infty} r^{d-1}(Q''G-Q'G')=0.\]
Therefore 
\[(Q''G-Q'G)(r)<0, \; \forall r>0.\]
Dividing both sides of this inequality by $G^2$ gives
\[\bigl (\tfrac{Q'}G\bigr)'<0,\; \forall r>0.\]
However, from Lemma \ref{lm: infinity}, Lemma \ref{lm: q} and Corollary \ref{cor: g}, we have
\begin{align*}
\tfrac {Q'(r)}{G(r)}\to -\infty,\textit{ as } r\to 0^+,\;
\tfrac{Q'(r)}{G(r)}\to C<0,\textit{ as } r\to \infty. 
\end{align*}
leading to a contradiction. Thus, $G=0$. 

{\it Case 3.} $j>d$, $\mu_j>d-1$. 
For any nontrivial function $G(x)\in H_{rad}^1(\R^d)$,
\begin{align*}
\langle (\mathcal L_{a+\mu_j}+1-(p+1)Q^p)G(r), G(r)\rangle&=\langle(\mathcal L_{a+d-1}+1-(p+1)Q^p)G(r),G(r)\rangle\\
&+(\mu_j-d+1)\int_{\R^d}\frac{|G(x)|^2}{|x|^2} dx>0. 
\end{align*}
So equation \eqref{1023} has only trivial radial solutions in this case. Combining all cases yields \eqref{524} for $L_1$.

We turn to proving the uniform ellipticity (ii) and focus only on the lower bound estimates, as the upper bound follows
directly from H\"older inequality and Sobolev embedding.
Define the symmetric bounded linear operator $K: L^2(\R^d) \to L^2(\R^d)$ by
\[K=(\la+1)^{-\frac 12}Q^p(\la+1)^{-\frac 12},\]
which yields the factorization 
\[I-(p+1)K=(\la+1)^{-\frac 12}L_1(\la+1)^{-\frac 12}.\]
Our goal reduces to proving  
\[\langle (I-(p+1)K)u, u\rangle \ge c\|u\|_{L_x^2}, \; \text{for all } \langle u, (\la+1)^{\frac 12}Q\rangle =0.\]
From 
Lemma \ref{lm: l2}(i)  
 and Lemma \ref{lm: existence}(3), we know $\ker (I-(p+1)K)=\{0\} $ and $I-(p+1)K$ has only one negative direction $(\la+1)^{\frac 12}Q$. 
Assuming $K$ is compact (to be verified), the eigenvalues $\{\mu_j\}_{j=1}^\infty$ of $I-(p+1)K$ satisfy:
\begin{align*}
\mu_1<0<\mu_2\le \mu_3\le\cdots\to 1.
\end{align*}
The uniform ellipticity follows with the constant $c=\mu_2>0$. 
onential decay of $Q$ yields

We now prove that $K: L^2(\R^d)\hookrightarrow L^2(\R^d)$ is compact.  
Let $\{f_n\} \subset L^2(\R^d)$ be a bounded sequence. Then  
$\{(\la+1)^{-\frac12} f_n\}$ is bounded in $H^1(\R^d)$.  

Let $\varepsilon > 0$ be a small number to be determined later. By the compact embedding 
$H^1 \hookrightarrow L^{\frac{2d}{d-2+\varepsilon}}$ on bounded domains and a diagonal argument, 
there exists a subsequence $\{f_{n_j}\}$ such that 
$(\la+1)^{-\frac12} f_{n_j}$ converges in 
$L^{\frac{2d}{d-2+\varepsilon}}(B_{\R^d}(0,R))$ for every $R > 0$.  

Since $\phi \to Q\phi$ is a bounded operator from $L^{\frac{2d}{d-2+\varepsilon}}$ to $L^{\frac{2d}{d+2}}$ on any domain in $\R^d$ due to the H\"older’s inequality for sufficiently small $\eps$:
\begin{align*}
\|Q^p \phi \|_{L^{\frac{2d}{d+2}}}
\le \|Q\|_{L^{\frac{pd}{2-\varepsilon/2}}}^p 
   \|\phi\|_{L^{\frac{2d}{d-2+\varepsilon}}} 
\lesssim \|\phi\|_{L^{\frac{2d}{d-2+\varepsilon}}},
\end{align*}
we have that $Q^p (\la+1)^{-\frac12} f_{n_j}$ converges in 
$L^{\frac{2d}{d+2}}(B_{\R^d}(0,R))$ for any $R > 0$.  
Here the norm of $Q$ is independent of $R$ due to $\beta > 0$ or
$p < \tfrac{4}{d-2}$ together with the asymptotics of $Q$ in Lemma~\ref{lm: q} and Lemma~\ref{lm: infinity}. The convergence of 
$Q^p (\la+1)^{-\frac12} f_{n_j}$ in 
$L^{\frac{2d}{d+2}}(\R^d)$ then follows quickly from the decay of $Q$ given in Lemma~\ref{lm: infinity}:
\begin{align*}
\|Q^p (\la+1)^{-\frac12} f_{n_j}\|_{L^{\frac{2d}{d+2}}(B_{\R^d}(0,R))^c}
   \le C e^{-cR}. 
\end{align*}

Finally, since $(\la+1)^{-\frac12}$ is a bounded operator from 
$L^{\frac{2d}{d+2}}(\R^d)$ to $L^2(\R^d)$, we conclude that 
$(\la+1)^{-\frac12} Q^p (\la+1)^{-\frac12} f_{n_j}$ converges in 
$L^2(\R^d)$.

For the operator $L_2$, we consider 
 \[(\la+1)^{-\frac 12}L_2(\la+1)^{-\frac 12}=I-K,\]
and denote its eigenvalues by $\lambda_1\le \lambda_2\le \cdots \to 1$. Since for any $u\in L^2_x(\R^d)$, 
\[\langle (I-K)u, u\rangle >\langle (I-(p+1)K)u, u\rangle,\]
we know the eigenvalues satisfy $\lambda_j>\mu_j$. This together with the fact that $(\la+1)^{\frac 12}Q\in \ker(I-K)$ and $\mu_2>0$ implies that $\lambda_1=0$ and $\lambda_2>0$. Thus both $\ker(I-K)$ and $\ker (L_2)$ are one dimensional 
\[
\ker(I-K) = \text{span}\{(\la+1)^{\tfrac{1}{2}}Q\}, \ \ker(L_2) = \text{span}\{Q\}.
\]
The uniform ellipticity of $L_2$ then follows with the constant $c=\lambda_2>0$.

Finally, we prove the structure of the generalized kernel of $JL$. The case $k=1$ follows immediately from the kernel properties of $L_1, L_2$. Now for any $v=v_1+iv_2\in H^1(\R^d)$ satisfying
\[(JL)^2 v=0, \]
we obtain the system:
\[L_1v_1=cQ,\; L_2v_2=0, \]
for some constant $c\neq 0$. The solutions are $v_1=c_1Q_1$ and $v_2=c_2Q$ for some constants $c_1, c_2$.  Thus $\ker\{(JL)^2\}=\text{span}\{Q_1, iQ\}$.

We now examine the case $k=3$. Suppose there exists $v\in \ker\{(JL)^3 \setminus \ker\{(JL)^2$. 
%H^1(\R^d)$ with $v\notin \ker\{(JL)^2\}$ such that $(JL)^3v=0$. 
Since $JLv\in \ker\{(JL)^2\} \setminus \ker\{(JL)$, there must exist constants $c_1\neq 0$ and $c_2\in \R$ such that 
\begin{align}\label{1116}
JLv=c_1Q_1+c_2 iQ. 
\end{align}
Without loss of generality, we normalize to $c_1=1$. 

We now compute the quadratic term in two different ways. 
First, using \eqref{1116} we obtain: 
\begin{align*}
\langle LQ_1, Q_1\rangle&=\langle L(JLv-c_2iQ), JLv-c_2iQ\rangle =\langle LJLv, JL v\rangle-\langle LJL v, c_2 iQ\rangle\\ 
&=\langle LJL v, JLv\rangle =-\langle (JL)^2 v, Lv\rangle =-\langle (JL)^3v, Jv\rangle=0. 
\end{align*}
However, from \eqref{79.1} we directly compute:
\begin{align}\label{503}
\langle LQ_1, Q_1\rangle =\langle -2Q, Q_1\rangle =(d-\tfrac 4p)\int_{\R^d} Q^2 (x) dx> 0,
\end{align}
since $p>\frac 4d$. This contradiction shows that $\ker\{(JL)^3\}=\ker\{(JL)^2\}$. It also implies  $\ker\{(JL)^{k+1}\}= \ker\{(JL)^k\}$ for general $k \ge 2$ following from a simple linear algebra argument.  

This completes the proof of (iii) and Lemma \ref{lm: l2}. 
\end{proof}


\begin{thebibliography}{99}

%\bibitem{R: Aubin} T. Aubin, \emph{Probl\'{e}mes isop\'{e}rim\'{e}triques et
%espaces de Sobolev.} J. Diff. Geom. \textbf{11} (1976), 573--598.
%%\msn{0448404}

%\bibitem{R: Gerard critical wave} H. Bahouri and P. G\'{e}rard, \emph{High
%frequency approximation of solutions to critical nonlinear wave equations.}
%Amer. J. Math. \textbf{121} (1999), no. 1, 131--175. %MR1705001

%\bibitem{R: Berger laplace} M. Berger, P. Gauduchon, and E. Mazet, \emph{Le spectre d'une vari\'{e}t\'{e} riemannienne.}
%2nd ed., Lecture Note in Mathematics, \textbf{194}, Springer-Verlag, New York/Berlin, 1971.



%\bibitem{R: Smith} M.D. Blair, H.F. Smith, and C.D. Sogge, \emph{Strichartz estimates and nonlinear Schr\"{o}dinger equation on manifolds with boundary.}
% Math. Ann. \textbf{354} (2012), 1397-1430.

%\bibitem{R: Bliss} G. A. Bliss, \emph{An integral inequality.} J. London
%Math. Soc. \textbf{5} (1930), 40--46.

%\bibitem{Bour98} J. Bourgain, \emph{Scattering in the energy space and below for 3D NLS.} Journal D'Analyse Mathematique. \text{75} (1998), 267-297.

%\bibitem{R: Bourgain finite app}  J. Bourgain, \emph{Approximation of solutions of the cubic nonlinear Schr\"{o}dinger equations by finite-dimensional equations and nonsqueezing properties.}
% Internat. Math. Res. Notices, 1994, no. 2, 79--88.

\bibitem{Bourgain} J. Bourgain, \emph{Global well-posedness of
defocusing 3D critical NLS in the radial case}. J. Amer. Math. Soc. \textbf{12} (1999), 145--171.

%\bibitem{R: Bourgain 1} J. Bourgain,\emph{\ New global well-posedness
%results for nonlinear Schr\"{o}dinger equations. }AMS Colloquium
%Publications, \textbf{46}, 1999.

%\bibitem{R: BrezisLieb} H.~Br\'{e}zis and E.~Lieb, \emph{A relation between
%pointwise convergence of functions and convergence of functionals.} Proc.
%Amer. Math. Soc. \textbf{88} (1983), 486--490.

\bibitem{BPSTZ03} N. Burq, F. Planchon, J. Stalker, and A. S.
Tahvildar-Zadeh, \emph{Strichartz estimates for the wave and Schr\"{o}dinger
equations with the inverse-square potential.} J. Funct. Anal. \textbf{203}
(2003), 519--549.

%\bibitem{R: Strichartz inverse 1} N. Burq, F. Planchon, J. Stalker, and A.
%S. Tahvildar-Zadeh, \emph{Strichartz estimates for the wave and Schr\"{o}%
%dinger equations with potentials of critical decay.} Indiana Univ. Math. J.
%\textbf{53} (2004), 1665--1680.

\bibitem{CFR22} L. Campos, L. G. Farah, and S. Roudenko, \emph{Threshold solutions for the nonlinear Schr\"odinger equation.} Rev. Mat. Iberoam, \textbf{38} (2022), no. 5, 1637--1708.

%\bibitem{R: Caze} T. Cazenave, \emph{Semilinear Schr\"{o}dinger equations.}
%Courant Lecture Notes in Mathematics, Vol. 10. New York: New York University
%Courant Institute of Mathematical Sciences, 2003. ISBN: 0-8218-3399-5.


\bibitem{C72}
{ C. V. Coffman,} \emph{Uniqueness of the ground state solution
for $\Delta u-u+u\sp{3}=0$ and a variational characterization
of other solutions.}
{  Arch. Rational Mech. Anal.} {\bf 46} (1972), 81--95.

\bibitem{CJ93}
{\sc C. B. Clemons and C. Jones,}
A geometric proof of the Kwong-McLeod uniqueness result.
{\it SIAM J. Math. Anal.} {\bf 24} (1993),  436--443.

%\bibitem{Cav} T. Cazenave, \emph{Semilinear Schr\"odinger equations.} Courant Lecture Notes in Mathematics,
%Vol. 10. New York: New York University Courant Institute of Mathematical Sciences, 2003. ISBN: 0-8218-3399-5.


%\bibitem{R: CW Cauchy NLS} T. Cazenave and F. Weissler, \emph{The Cauchy
%problem for the critical nonlinear Schr\"{o}dinger equation in }$H^{s}$\emph{%
%.} Nonlinear Anal. \textbf{14} (1990), 807--836.

%\bibitem{R: CK lemma} M. Christ and A. Kiselev, \emph{Maximal functions
%associated to filtrations.} J. Funct. Anal. \textbf{179} (2001), 409--425.

%\bibitem{R: fractional chain rule} M.~Christ and M.~Weinstein, \emph{%
%Dispersion of small amplitude solutions of the generalized Korteweg--de
%Vries equation.} J. Funct. Anal. \textbf{100} (1991), 87--109.

%\bibitem{R: CKSTT annal} J. Colliander, M. Keel, G. Staffilani, H. Takaoka,
%and T. Tao, \emph{Global well-posedness and scattering for the
%energy-critical nonlinear Schr\"{o}dinger equation in $\mathbb{R}^{3}$.}
%Annals of Math. \textbf{167} (2008), 767--865.

%
%\bibitem{R: EB Davis} E. B. Davies, \emph{Spectral theory and differential operators.}
%Cambridge Studies in Advanced Mathematics, \textbf{42}, Cambridge University Press, Cambridge, 1995.




\bibitem{DHR08}T. Duyckaerts, J. Holmer, and S. Roudenko, \emph{Scattering for the non-radial 3D cubic nonlinear Schr\"{o}dinger equation.}
Math. Res. Lett., \textbf{15} (2008), no. 6, 1233-1250.

\bibitem{DM08} T. Duyckaerts and F. Merle, \emph{Dynamic of threshold solutions for energy-critical NLS.}
Geometric And Functional Analysis. \textbf{08} (2008), 1787-1840.

\bibitem{DM08a} T. Duyckaerts and F. Merle, \emph{Dynamics of threshold solutions for energy-critical Wave equation.}
International Mathematics Research Papers 2008, rpn002.

\bibitem{DR10} T. Duyckaerts and S. Rodenko, \emph{Threshold solutions for the focusing 3D cubic Schr\"odinger equation.} Rev. Mat. Iberoamericana, \textbf{26} (2010), no. 1, 1--56.

%\bibitem{R: Fang general} D. Fang, J. Xie and T. Cazenave, \emph{Scattering for the focusing energy-subcritical nonlinear Schr\"{o}dinger equation.}
%Sci. China Math., \textbf{54} (2011), 2037--2062.

%\bibitem{DFVV}
%P. D'Ancona, L. Fanelli, L. Vega, and N. Visciglia, \emph{Endpoint Strichartz estimates for the magnetic Schr\"odinger equation.}
%J. Funct. Anal. \textbf{258} (2010), 3227--3240.

%\bibitem{FFFP} L. Fanelli, V. Felli, M. A. Fontelos, and A. Primo,
%\emph{Time decay of scaling critical electromagnetic Schr\"{o}dinger flows.} Commun. Math. Phys. \textbf{324} (2013), 1033--1067.

%\bibitem{R: Fanelli} L. Fanelli, V. Felli, M. A. Fontelos, and A. Primo,
%\emph{Time decay of scaling critical electromagnetic Schr\"{o}dinger flows.}
%Comm. Math. Phys. \textbf{324} (2013), no. 3, 1033--1067.

%\bibitem{GV2010}  J. Ginibre and G. Velo, \emph{Quadratic Morawetz inequalities and asymptotic completeness in the energy space for nonlinear
%Schr\"{o}dinger and Hartree equations}, Quart. Appl. Math. \textbf{68} (2010), 113--134.

%\bibitem{GVV}
%M. Goldberg, L. Vega, and N. Visciglia,  \emph{Counterexamples of Strichartz inequalities for Schr\"{o}dinger equations with repulsive potentials.}
%Int. Math Res. Not., 2006, 13927 (2006).
%
%\bibitem{R: Grill} G. Grillakis, \emph{On nonlinear Schr\"{o}dinger
%equations.} Comm. PDE \textbf{25} (2000), 1827--1844.

%\bibitem{R: Gromov} M. Gromov, \emph{Pseudoholomorphic curves in symplectic manifolds.} Invent. Math. \textbf{82} (1985), no. 2, 307--347.

%\bibitem{R: Holmer} J. Holmer and S. Roudenko, \emph{A sharp  condition for scattering of the radial 3D cubic nonlinear Schr\"{o}dinger equation.}
%Communications in Mathematical Physics, \textbf{282} (2008), no. 2, 435-467.

%\bibitem{R: Iv} O. Ivanovici, \emph{On the Schr\"{o}dinger equation outside strictly convex obstacles.}
% Anal. PDE, \textbf{3} (2010), no. 3, 261--293.

%\bibitem{R: IonPaus1} A. D. Ionescu and B. Pausader, \emph{Global
%well-posedness of the energy-critical defocusing NLS on $\mathbb{R}\times
%\mathbb{T}^{3}$.} Comm. Math. Phys. \textbf{312} (2012), no. 3, 781--831.
%%\msn{2925134}
%
%\bibitem{R: IonPaus} A. D. Ionescu and B. Pausader, \emph{The
%energy-critical defocusing NLS on $\mathbb{T}^{3}$.} Duke Math. J. \textbf{%
%161} (2012), no. 8, 1581--1612. %\msn{2931275}.
%
%\bibitem{R: KSWW} H. Kalf, U. W. Schmincke, J. Walter, and R. W\"{u}st,
%\emph{\ On the spectral theory of Schr\"{o}dinger and Dirac operators with
%strongly singular potentials. In Spectral theory and differential equations.}
%182--226. Lect. Notes in Math. \textbf{448} (1975) Springer, Berlin.

\bibitem{KT98} M. Keel and T. Tao, \emph{Endpoint
Strichartz estimates.} Amer. J. Math. \textbf{120} (1998), 955--980.

%\bibitem{R: Kenig half} C. Kenig and F. Merle, \emph{Scattering for $\dot{H}^{\frac{1}{2}}$ bounded solutions to the cubic, defocusing NLS in 3 dimensions.}
%Trans. Amer. Math. Soc., \textbf{362} (2010), 1937-1962.

\bibitem{KM06} C. Kenig and F. Merle, \emph{Global
well-posedness, scattering, and blow-up for the energy-critical focusing
nonlinear Schr\"{o}dinger equation in the radial case.} Invent. Math.
\textbf{166} (2006), 645--675.

%\bibitem{R: Keraani} S. Keraani, \emph{On the blow up phenomenon of the
%critical nonlinear Schr\"{o}dinger equation.} J. Funct. Anal. \textbf{235}
%(2006), no. 1, 171--192. %MR2216444

%\bibitem{R: KKSV:gKdV} R. Killip, S. Kwon, S. Shao, and M. Visan, \emph{On
%the mass-critical generalized KdV equation.} DCDS-A \textbf{32} (2012),
%191--221. %\msn{2837059}

\bibitem{KMVZZ18} R. Killip, C. Miao, M. Visan, J. Zhang,
and J. Zheng, \emph{Sobolev spaces adapted to the Schr\"{o}dinger operator
with inverse-square potential.} Math. Z. {\bf 288} (2018), no. 3-4, 1273-1298.

\bibitem{KMVZZ17} R. Killip, C. Miao, M. Visan, J. Zhang,
and J. Zheng, \emph{The energy-critical NLS with inverse-square potential.}
Discrete Contin. Dyn. Syst. {\bf 37} (2017), no. 7, 3831-3866.

\bibitem{KV10} R. Killip and M. Visan, \emph{The
focusing energy-critical nonlinear Schr\"{o}dinger equation in dimensions
five and higher.} Amer. J. Math. \textbf{132} (2010), no. 2, 361--424.
MSN{2654778}

%\bibitem{R: Clay note} R. Killip and M. Visan, \emph{Nonlinear Schr\"{o}%
%dinger equations at critical regularity.} In \textquotedblleft Evolution
%equations\textquotedblright , 325--437, Clay Math. Proc., \textbf{17}. Amer.
%Math. Soc., Providence, RI, 2013.

%\bibitem{R: KV:quintic} R. Killip and M. Visan, \emph{Global well-posedness
%and scattering for the defocusing quintic NLS in three dimensions.} Anal.
%PDE, \textbf{5} (2012), no. 4, 855--885.
%\bibitem{R: KVZ12 obstacle Harmonic} R. Killip, M. Visan, and X. Zhang, \emph{Riesz transforms outside a convex obstacle.}
%Int. Math. Res. Not., 2016, no. 19, 5875-5921.
%
%\bibitem{R: KVZ12 obstacle} R. Killip, M. Visan, and X. Zhang, \emph{Quintic
%NLS in the exterior of a strictly convex obstacle.} Amer. J. Math., \textbf{%
%138} (2016), no. 5, 1193-1346.
%
%\bibitem{R: KVZ12 obstacle sub} R. Killip, M. Visan, and X. Zhang,
%\emph{The focusing cubic NLS on exterior domains in three dimensions.}
% Applied Mathematics Research eXpress, \textbf{1} (2016), 146-180.
%
%\bibitem{R: KVZ non-sqz} R. Killip, M. Visan, and X. Zhang,
%\emph{Finite-dimensional approximation and non-squeezing for the cubic nonlinear
%Schr\"{o}dinger equation on $\mathbb{R}^{2}$. } Preprint \texttt{arXiv:1606.07738.}
%%%%%%%%%%%%%%%%%%%%%%%%%%%%%%%%%%%%%%%%%%%%%%%%%%%%%%%%%%%%%%%%%%%%%%%%%%%%%%%%%%%%%%%




\bibitem{KMVZ17} R. Killip, J. Murphy, M. Visan, and J. Zheng, \emph{The focusing cubic NLS with inverse-square potential in three space dimensions.} Differential Integral Equations, \textbf{30} (2017), no. 3-4, 161--206.

\bibitem{Kwong}
{\sc M. K. Kwong,}
Uniqueness of positive solutions of $\Delta u-u+u^p=0$ in $\mathbb{R}^n$.
{\it Arch. Rational Mech. Anal.} {\bf 105} (1989), 243--266.

\bibitem{KL92}
{\sc M. K. Kwong and Y. Li,} Uniqueness of radial solutions of semilinear elliptic equations.
{\it Trans. Amer. Math. Soc.} {\bf 333} (1992), 339--363.

\bibitem{KZ91}
{\sc M. K. Kwong and L. Zhang,}
Uniqueness of the positive solution of $\Delta u+f(u)=0$ in
an annulus. {\it Diff. Int. Eqs.} {\bf 4} (1991), 583--596.

%\bibitem{R: KVZ non-sqz R1} R. Killip, M. Visan, X. Zhang, \emph{Symplectic non-squeezing for the cubic NLS on the line.}
%Preprint \texttt{arXiv:1606.09467}.


\bibitem{LZ09} D. Li and X. Zhang, \emph{Dynamics for the energy critical nonlinear Schr\"{o}dinger equation in high dimensions.}
J. Funct. Anal., \textbf{256} (2009), no. 6, 1928-1961.

\bibitem{LZ11} D. Li and X. Zhang, \emph{Dynamics for the energy critical nonlinear Wave equation in high dimensions.}
Trans. AMS., \textbf{363} (2011), 1137–1160.

\bibitem{LZ22} Z. Lin and C. Zeng, \emph{Instability, index theorem, and exponential trichotomy for linear Hamiltonian PDEs.} Mem. Am. Math. Soc., \textbf{275} (2022), no. 136.



\bibitem{LMM18}J. Lu, C. Miao, and J. Murphy, \emph{Scattering in $H^1$ for the intercritical NLS with an inverse-square potential.} J. Differential Equations,  \textbf{264} (2018), no. 5, 3174–3211. 


\bibitem{MMMZ25}Z. Ma, C. Miao, J. Murphy and J. Zheng, \emph{Dynamics of subcritical threshold solutions for the 4d energy-critical NLS.} Preprint, arXiv: 2508.02608.


\bibitem{McLeod}
{  K. McLeod,}
\emph{Uniqueness of positive radial solutions of $\Delta u+f(u)=0$ in $\mathbb{R}^n$, II.}
{\it Tran. Amer. Math. Soc.} {\bf 339}  (1993), 495--505.

\bibitem{MS81}
{  K. McLeod and J. Serrin,} \emph{Uniqueness of solutions of semilinear Poisson equations.}
{\it Proc. Nat. Acad. Sci. USA}, {\bf 78} (1981), 6592--6595.


\bibitem{MS87}
{ K. McLeod and J. Serrin,}
\emph{Uniqueness of positive radial solutions of $\Delta u+f(u)=0$ in $\mathbb{R}^n$.}
{  Arch. Rational Mech. Anal.}
{\bf 99} (1987), 115--145.

\bibitem{MMZ23} C. Miao, J. Murphy, and J. Zheng, \emph{Threshold scattering for the focusing NLS with a repulsive potential.} Indiana Univ. Math. J. \textbf{72} (2023), no. 2, 409–453. 
 
\bibitem{MNN21} D. Mukherjee, P. T. Nam and P.-T. Nguyen, \emph{Uniqueness of ground state and minimal-mass blow-up solutions for focusing NLS with Hardy potential.} J. Funt. Anal., \textbf{281} (2021), no. 5, 109092.

%\bibitem{R: LiebLoss} E. Lieb and M. Loss, \emph{Analysis.} Second edition.
%Graduate Studies in Mathematics, \textbf{14}. American Mathematical Society,
%Providence, RI, 2001. %\msn{1817225}

%\bibitem{MZZ} C. Miao, J. Zhang and J. Zheng, \emph{Strichartz estimates for wave equation with
%inverse-square potential.} Communications in Contemporary Mathematics, \text{15} (2013), DOI: 10.1142/S0219199713500260.

%\bibitem{R: partial Banach fixed point} S.G. Matthews, \emph{Partial metric
%topology.} Proc. 8th Summer Conference on General Topology and Applications,
%Ann. New York Acad. Sci., \textbf{728} (1994), 183--197.


%\bibitem{R: Miao Hartree} C. Miao, Y. Wu and G. Xu, \emph{Dynamics for the focusing, energy-critical nonlinear Hartree equation.}
%Forum Mathematicum, \textbf{27} (2015), no. 1, 373-447.


%\bibitem{R: PausTzW} B. Pausader, N. Tzvetkov, and X. Wang, \emph{Global
%regularity for the energy-critical NLS on $\mathbb{S}^{3}$.} Ann. Inst. H.
%Poincar\'{e} Anal. Non Lin\'{e}aire \textbf{31} (2014), no. 2, 315--338.
%%MR3181672

%\bibitem{PSS} F. Planchon, J. Stalker and A. S. Tahvildar-Zadeh, \emph{ $L^p$ estimates for the wave equation with the inverse-square potential.}
%Discrete Contin. Dynam. Systems, \text{9} (2003), 427-442.

%\bibitem{R: PSS1 dispersive of inverse} F. Planchon, J. Stalker, and A. S.
%Tahvildar-Zadeh, \emph{Dispersive estimates for wave equation with the
%inverse-square potential.} Discrete Contin. Dynam. Systems, \textbf{9}
%(2003), 1387--1400.

%\bibitem{R: Rey} O. Rey, \emph{The role of the Green’s function in a nonlinear elliptic equation involving the critical
%Sobolev exponent.} J. Funct. Anal., \textbf{89} (1990), no. 1, 1-52.

%\bibitem{RS4}  M. Reed and B. Simon, \emph{Methods of modern mathematical physics. IV. Analysis of operators.} Academic Press, New York-London, 1978.
%MR0493421

%\bibitem{RS} I. Rodnianski and W. Schlag,
%\emph{Time decay for solutions of Schr\"odinger equations with rough and time-dependent potentials.} Invent. Math., \text{155} (2004), 451-513.

%\bibitem{R: 1+4 Monica} E. Ryckman and M. Visan, \emph{Global well-posedness
%and scattering for the defocusing energy-critical nonlinear Schr\"{o}dinger
%equation in $\mathbb{R}^{1+4}$.} Amer. J. Math. \textbf{129} (2007), 1--60.

%\bibitem{Schlag} W. Schlag, \emph{ Dispersive estimates for Schr\"odinger operators: a survey.} Ann. of Math., \text{163} (2007), 255-285.

%\bibitem{SSS1} W. Schlag, A. Soffer and W. Staubach, \emph{Decay for the wave and Schr\"odinger evolutions on manifolds with conical ends, I.}
%Trans. Amer. Math. Soc., \text{362} (2010), 19-52.

%\bibitem{SSS2} W. Schlag, A. Soffer and W. Staubach, \emph{Decay for the wave and Schr\"odinger evolutions on manifolds with conical ends, II.}
%Trans. Amer. Math. Soc., \text{362} (2010), 289-318.

\bibitem{P65}
 S. I. Pohozaev,
\emph{Eigenfunctions of the equation $\Delta u + \lambda f(u) = 0$.}   Soviet Math.,
{\bf 5} (1965), 1408--1411.


\bibitem{SW13} N. Shioji and K. Watanabe, \emph{A generalized Pohozaev identity and uniqueness of positive radial solutions of $\Delta u + g(r)u + h(r)u^p = 0$.} J. Differ. Equ., \textbf{255} (2013), 4448--4475.

\bibitem{SZ23} Q. Su and Z. Zhao, \emph{Dynamics of subcritical threshold solutions for energy-critical NLS.} Dyn. Partial Differ. Equ., \textbf{20} (2023), no. 1, 37--72.

\bibitem{Tang03} {\sc M. Tang,} Uniqueness of positive radial solutions for $\Delta u -u + u^p
= 0$ on an annulus. {\it J. Diff. Eqs.} {\bf 189} (2003), 148--160.

%\bibitem{R: Strauss} W. A. Strauss, \emph{Existence of solitary waves in higher dimensions.}
%   Commun. Math. Phys., \textbf{55} (1977), no. 2, 149–-162.
%
%\bibitem{R: Talenti} G. Talenti, \emph{Best constant in Sobolev inequality.}
%Ann. Mat. Pura. Appl. \textbf{110} (1976), 353--372. %\msn{0463908}

%\bibitem{R: TaoRadial energy} T. Tao, \emph{Global well-posedness and
%scattering for higher-dimensional energy-critical non-linear Schr\"{o}dinger
%equation for radial data.} New York J. of Math. \textbf{11} (2005), 57--80.

%\bibitem{R: stability high d} T. Tao and M.~Visan, \emph{Stability of
%energy-critical nonlinear Schr\"{o}dinger equations in high dimensions.}
%Electron. J. Diff. Eqns. \textbf{118} (2005), 1--28.

%\bibitem{Tit} E. C. Titchmarsh,
%\emph{Eigenfuction expansions associated with second-order differential equations.} University press, Oxford, 1946.

%\bibitem{R: VZ  hardy inverse} J. L. Vazquez and E. Zuazua, \emph{The Hardy
%inequality and the asymptotic behaviour of the heat equation with an
%inverse-square potential.} J. Funct. Anal. \textbf{173} (2000), 103--153.

%\bibitem{R: Monica defocusing high} M. Visan, \emph{The defocusing
%energy-critical nonlinear Schr\"{o}dinger equation in higher dimensions.}
%Duke Math. J. \textbf{138} (2007) 281--374.
%
\bibitem{Y91} E. Yanagida, \emph{Uniqueness of positive radial solutions of $\Delta u + g(r)u + h(r)u^p = 0$ in $\mathbb{R}^n$.} Arch. Ration. Mech. Anal., \textbf{115} (1991), 257--274.

\bibitem{Y20} K. Yang,  \emph{Scattering of the energy-critical NLS with inverse square potential.} J. Math. Anal. Appl., 487(2020), 124006.

\bibitem{Y21} K. Yang,  \emph{Scattering of the focusing energy-critical NLS with inverse square potential in the radial case.}   Commun. Pure Appl. Anal., {\bf 20} (2021), no.1, 77-99.

\bibitem{YZZ22} K. Yang, C. Zeng and X. Zhang, \emph{Dynamics of threshold solutions for energy critical NLS with inverse square potential}. SIAM J. Math. Anal., \textbf{54} (2022), no. 1, 173-219.

\bibitem{YZ22} K. Yang and X. Zhang, \emph{Dynamics of threshold solutions for energy critical NLW with inverse square potential.} Math. Z., \textbf{302} (2022), 353--389.

\bibitem{YZ23} K. Yang and X. Zhang, \emph{Scattering of
the focusing energy-critical NLS with inverse square potential.} Discrete Contin. Dyn. Syst., \textbf{43}(2023), no. 7, 2608-2636.

\bibitem{ZZ20} J. Zhang and J. Zheng, \emph{Strichartz estimates and wave equation in a conic singular space.} Math. Ann., \textbf{376} (2020), 525--581.



 







 
\end{thebibliography}
\end{document}